\newtheorem{theo}{Theorem}[section]
\newtheorem{prop}[theo]{Proposition}
\newtheorem{coro}[theo]{Corollary}
\newtheorem{lemm}[theo]{Lemma}
\theoremstyle{definition}
\newtheorem{def1}[theo]{Definition}
\theoremstyle{remark}
\newtheorem{rema}[theo]{Remark}
\newcommand{\nwc}{\newcommand}
\nwc{\eps}{\epsilon}
\nwc{\ep}{\epsilon}
\nwc{\vareps}{\varepsilon}
\nwc{\Oph}{\operatorname{Op}_\hbar}
\nwc{\la}{\langle}
\nwc{\ra}{\rangle}
\nwc{\mf}{\mathbf} 
\nwc{\blds}{\boldsymbol} 
\nwc{\ml}{\mathcal} 
\nwc{\defeq}{\stackrel{\rm{def}}{=}}
\nwc{\cE}{\ml{E}}
\nwc{\cN}{\ml{N}}
\nwc{\cO}{\ml{O}}
\nwc{\cP}{\ml{P}}
\nwc{\cU}{\ml{U}}
\nwc{\cV}{\ml{V}}
\nwc{\cW}{\ml{W}}
\nwc{\tU}{\widetilde{U}}
\nwc{\IN}{\mathbb{N}}
\nwc{\IR}{\mathbb{R}}
\nwc{\R}{\mathbb{R}}
\nwc{\IZ}{\mathbb{Z}}
\nwc{\Z}{\mathbb{Z}}
\nwc{\N}{\mathbb{N}}
\nwc{\IC}{\mathbb{C}}
\nwc{\C}{\mathbb{C}}
\nwc{\IT}{\mathbb{T}}
\nwc{\T}{\mathbb{T}}
\nwc{\IS}{\mathbb{S}}
\nwc{\tP}{\widetilde{P}}
\nwc{\tPi}{\widetilde{\Pi}}
\nwc{\tV}{\widetilde{V}}
\nwc{\supp}{\operatorname{supp}}
\nwc{\rest}{\restriction}
\let \d \relax
\nwc{\d}{\partial}
\nwc{\Cor}{\mathscr{C}}
\nwc{\CLam}{\overline{\C}_+^\Lambda}
\definecolor{ao(english)}{rgb}{0.0, 0.5, 0.0}
\newcommand{\vio}{\color{violet}}
\nwc{\todo}[1]{$\clubsuit$ {\tt #1}}
\DeclareMathOperator{\Vol}{Vol}
\DeclareMathOperator{\Res}{Res}
\DeclareMathOperator{\Sp}{Sp}
\DeclareMathOperator{\Id}{Id}
\DeclareMathOperator{\WF}{WF}
\DeclareMathOperator{\dR}{dR}
\DeclareMathOperator{\Int}{Int}
\renewcommand{\Im}{\operatorname{Im}}
\renewcommand{\Re}{\operatorname{Re}}
\numberwithin{equation}{section}
\begin{document}

\title[Length orthospectrum of convex bodies on flat tori]{Length orthospectrum of convex bodies on flat tori}

\author{Nguyen Viet Dang}

\address{Sorbonne Université and Université Paris Cité, CNRS, IMJ-PRG, F-75005 Paris, France.}

\address{Institut Universitaire de France, Paris, France}

\email{dang@imj-prg.fr}

\author{Matthieu L\'eautaud}

\address{Laboratoire de Math\'ematiques d'Orsay, UMR 8628, Universit\'e Paris-Saclay, CNRS, B\^atiment 307, 91405 Orsay Cedex France} 

\email{matthieu.leautaud@universite-paris-saclay.fr}

\author{Gabriel Rivi\`ere}

\address{Laboratoire de Math\'ematiques Jean Leray, Nantes Universit\'e, UMR CNRS 6629, 2 rue de la Houssini\`ere, 44322 Nantes Cedex 03, France}

\address{Institut Universitaire de France, Paris, France}

\email{gabriel.riviere@univ-nantes.fr}


\begin{abstract}
In analogy with the study of Pollicott-Ruelle resonances on negatively curved manifolds, we define anisotropic Sobolev spaces that are well-adapted to the analysis of the geodesic vector field associated with any translation invariant Finsler metric on the torus $\mathbb{T}^d$. Among several applications of this functional point of view, we study properties of geodesics that are orthogonal to two convex subsets of $\T^d$ (i.e. projection of the boundaries of strictly convex bodies of $\mathbb{R}^d$). Associated with the set of lengths of such orthogeodesics, we define a geometric Epstein function and prove its meromorphic continuation. We compute its residues in terms of intrinsic volumes of the convex sets. 
We also prove Poisson-type summation formulae relating the set of lengths of orthogeodesics and the spectrum of magnetic Laplacians.
\end{abstract}

\maketitle

%

\section{Introduction}

Motivated by recent developments on analytical and spectral properties of geodesic flows on negatively curved manifolds, we study in this article related questions in the opposite setting of a completely integrable system.
Such an analysis is now known to have several types of applications ranging from the study of correlation functions to counting problems and equidistribution properties. 
Before discussing analytical and spectral properties of the geodesic flow on the torus (see Section~\ref{s:mainresult}), we thus start by presenting one of their applications in the context of convex geometry. All along the article, we use the following slightly abusive terminology.
\begin{def1}
\label{d:def-convex} 
We say that $K \subset \R^d$ is a strictly convex compact set if 
\begin{itemize}
\item either $K$ is a strictly convex compact set with nonempty interior and smooth boundary $\partial K= K \setminus \Int(K)$ having all its sectional curvatures \emph{positive};
\item or $K$ is a point.
\end{itemize}
\end{def1}
By a classical Theorem of Hadamard~\cite{Hadamard1897} and Sacksteder~\cite{Sacksteder60}, if $S$ is a smooth, compact, connected, orientable hypersurface embedded in $\IR^d$ and if $S$ has all its sectional curvatures \emph{nonnegative}, then it is the boundary of a convex body -- see also~\cite{DoCarmoLima69}.

We let $K_1$ and $K_2$ be two strictly convex and compact subsets of $\IR^d$ ($d\geq 2$) with smooth boundaries $\partial K_1$ and $\partial K_2$.  
 Through the canonical projection $\mathfrak{p}:\IR^d\rightarrow \IT^d:=\IR^d/2\pi\IZ^d$, the boundaries of $K_1$ and $K_2$ can be identified with immersed submanifolds of the flat torus that may have selfintersection points. We fix an orientation on each submanifold $\partial K_i$ either by the outgoing normal vector to $K_i$ or by the ingoing one. This orientation induces an orientation on $\Sigma_i:=\mathfrak{p}(\partial K_i)$. The choice of orientation is not necessarily the same on each $\partial K_i$ (hence on each $\Sigma_i$) and, once each orientation is fixed, we denote by $\mathcal{P}_{K_1,K_2}$ the set of geodesic arcs (parametrized by arc length) on $\mathbb{T}^d$ that are \emph{directly} orthogonal\footnote{In the case where $K_i$ is reduced to a point, every geodesic passing through $K_i$ is said to be orthogonal to it and we fix the natural orientation using the Euclidean volume on $\IR^d$.} to $\Sigma_1$ and $\Sigma_2$. The orientation of the sets $K_1,K_2$ is implicit in our notation (in the introduction); the results however depend on this choice.
Using the strict convexity of $K_1$ and $K_2$, one can first verify the following statement.
\begin{lemm}
\label{l:starting-point}
There exists $T_0>0$ large enough, such that, for any $T>0$ the subset 
$$
\left\{\gamma\in\mathcal{P}_{K_1,K_2}: T_0<\ell(\gamma) \leq T \right\}
$$ 
of $\mathcal{P}_{K_1,K_2}$ is finite.
\end{lemm}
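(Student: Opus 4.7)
The plan is to lift everything to the universal cover $\R^d$. An orthogeodesic $\gamma\in\mathcal{P}_{K_1,K_2}$ of length $\ell(\gamma)$ lifts to a straight segment in $\R^d$ joining some $x\in\partial K_1$ to a point of the form $y+2\pi n$ with $y\in\partial K_2$ and $n\in\Z^d$, the segment being orthogonal to $\partial K_1$ at $x$ and to $\partial K_2+2\pi n$ at $y+2\pi n$ with the orientations chosen on $\Sigma_1,\Sigma_2$, and one has $\ell(\gamma)=|y+2\pi n-x|$. The triangle inequality yields $2\pi|n|\leq \ell(\gamma)+\mathrm{diam}(K_1)+\mathrm{diam}(K_2)$, so under the constraint $\ell(\gamma)\leq T$ only finitely many lattice vectors $n\in\Z^d$ can contribute. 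It therefore suffices to bound, for each such $n$, the number of admissible pairs $(x,y)$.

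Fix such an $n$. The orthogonality conditions at both endpoints are precisely the critical point equations for the function
\[
f_n(x,y)\defeq|y+2\pi n-x|^2\quad\text{on the compact manifold }\partial K_1\times\partial K_2.
\]
By strict convexity, each Gauss map $G_i:\partial K_i\to\IS^{d-1}$ is a diffeomorphism, so every critical point of $f_n$ is uniquely encoded by a unit vector $v\in\IS^{d-1}$ via $x=G_1^{-1}(v)$ and $y=G_2^{-1}(\eps v)$, where $\eps\in\{\pm 1\}$ is fixed by the chosen orientations; $v$ must moreover satisfy the compatibility condition
\[
G_2^{-1}(\eps v)+2\pi n-G_1^{-1}(v)\in\R_{>0}\,v.
\]
The key step is then to prove that whenever the distance between $\partial K_1$ and $\partial K_2+2\pi n$ is bounded below by a positive constant, every such critical point is Morse. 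At a critical point, the Hessian of $f_n$ decomposes into a diagonal ``length'' part, roughly $|y+2\pi n-x|$ times the sum of the second fundamental forms of $\partial K_1$ and $\partial K_2$, plus an off-diagonal ``transport'' part, and the strict positivity of the second fundamental forms combined with the positive distance between the two surfaces is precisely what prevents the Hessian from being degenerate. By compactness of $\partial K_1\times\partial K_2$, this forces the number of critical points of $f_n$ to be finite, and summing over the finitely many admissible $n$'s gives the desired finiteness statement.

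The main obstacle is the handling of short orthogeodesics, which explains why the threshold $T_0>0$ cannot be taken arbitrarily small. When $\partial K_1$ and a translate $\partial K_2+2\pi n$ touch or meet tangentially, the nondegeneracy of the Hessian above breaks down and one may even produce entire connected families of critical points of $f_n$ with arbitrarily small critical value (the most extreme case being $K_1=K_2$ and $n=0$, where $f_0$ vanishes on the whole diagonal of $\partial K_1\times\partial K_1$). Only finitely many lattice vectors $n\in\Z^d$ are close enough to produce such ``collision'' configurations, so one may choose $T_0$ larger than the maximal length arising from this finite exceptional set; for all $n$ giving a translate at distance $>T_0$ from $\partial K_1$, the Morse argument above applies and completes the proof.
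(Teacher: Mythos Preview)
Your argument is correct and complete in spirit, though the Hessian step is compressed. The paper proves this lemma by a genuinely different (though ultimately equivalent) route: it lifts the problem to the unit tangent bundle $S\IT^d$, where an orthogeodesic of length $t$ corresponds to a point of the intersection $N_{\sigma_1}(\Sigma_1)\cap e^{tV}(N_{\sigma_2}(\Sigma_2))$ of the two Legendrian normal bundles, and shows that for $t_0$ large the flowed $d$-dimensional submanifold $\bigcup_{|t-t_0|<\eps}e^{tV}(N_{\sigma_2}(\Sigma_2))$ is transverse to $N_{\sigma_1}(\Sigma_1)$ inside $S\IT^d$. Transversality is read off directly from the explicit tangent map of the geodesic flow on the flat torus (upper-triangular with a factor $t$ in the off-diagonal block), which pushes the tangent space of $e^{tV}(N_{\sigma_2}(\Sigma_2))$ towards the horizontal bundle as $t\to\infty$; compactness then yields finitely many intersection points on any compact time interval. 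Your Morse argument on $f_n$ and the paper's transversality argument are two sides of the same coin: nondegeneracy of the Hessian of $f_n$ at a critical point is precisely the transversality of the two Legendrians at the corresponding intersection. Your route is more elementary and self-contained; the paper's route sets up exactly the framework (currents on $S\IT^d$, pullback by $e^{tV}$) used throughout the rest of the article.

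One caveat on your Hessian heuristic: the diagonal blocks are $I\pm\ell S_i$ (with shape operators $S_i$ and signs fixed by the orientations $\sigma_i$), not just $\ell S_i$, and for orientations giving a minus sign the block can degenerate at isolated values $\ell\sim 1/\kappa$. So ``positive distance between the two surfaces'' is not quite the operative criterion; what one really needs is $\ell$ larger than a constant depending only on the minimal principal curvatures of $\partial K_1,\partial K_2$. Your global strategy (absorb the finitely many $n$'s with nearby translates by enlarging $T_0$) handles this anyway, so the conclusion stands.
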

Note that the complementary set in $\mathcal{P}_{K_1,K_2}$ might be uncountable, depending on the choice of orientations of $\Sigma_i$.
We will be interested on the asymptotic properties of the lengths of these orthogeodesics. We will for instance verify in Theorem~\ref{t:asymptotic-counting} below that, for $T_0>0$ as in Lemma~\ref{l:starting-point},
\begin{equation}\label{e:asymptotic-counting}
 \sharp \left\{\gamma\in\mathcal{P}_{K_1,K_2}: T_0<\ell(\gamma)\leq T\right\}=\frac{\pi^{\frac{d}{2}}T^d}{(2\pi)^{d}\Gamma\left(\frac{d}{2}+1\right)}+\mathcal{O}(T^{d-1}).
\end{equation}
In the case where $K_1=K_2=\{0\}$, this exactly amounts to count the number of lattice points in $2\pi\IZ^d$ of norm less than $T$ and understanding the optimal size of the remainder is a deep problem in number theory. Here, we consider the setting of orthogeodesics for much more general convex sets. Thus, in some sense less arithmetical tools are available and we do not necessarily expect as strong properties on the size of the remainder. In fact, rather than refining these asymptotic formulas, our main purpose is to study various zeta functions associated with these length orthospectra and some of their analytical properties. We also aim at determining the geometric quantities encoded by these functions. Recall that counting orthogeodesics to convex subsets in the setup of hyperbolic geometry was much studied (see e.g.~\cite{ParkkonenPaulin2016, BroiseParkkonenPaulin2019} and the references therein) and similar questions arise even if the asymptotic formulae are of different nature.

\subsection{Epstein zeta functions in convex geometry} The most natural way to form a zeta function starting from $\mathcal{P}_{K_1,K_2}$ is to define, for $T_0>0$ as in Lemma~\ref{l:starting-point}, a generalized Epstein zeta function:
\begin{equation}
\label{e:epstein-zeta-intro}
\boxed{\zeta_{\beta}(K_1,K_2,s):=\sum_{\gamma\in\mathcal{P}_{K_1,K_2}:\ell(\gamma)>T_0}\frac{e^{i\int_\gamma\beta}}{\ell(\gamma)^s},}
\end{equation}
where $\beta$ is a \emph{closed and real-valued} one form on $\mathbb{T}^d$. Recall that any such form writes $\beta = \sum_{i=1}^d \beta_i dx_i + d f$ where $\beta_i \in \R$ and $f \in \mathcal{C}^\infty(\T^d;\R)$. The one-form $[\beta]=\sum_{i=1}^d\beta_i dx_i$ is identified with the de Rham cohomology class of $\beta$. The first de Rham cohomology group will be denoted by $H^1(\T^d,\R) =H^1_{\dR}(\IT^d)$: it corresponds to the kernel of the Laplacian acting on smooth real-valued one-forms and it can be identified with the first singular cohomology group. See~\cite[Chapter~17 and 18]{Lee13} or~\cite[Chapter~10]{Lee09}. We say that the cohomology class $[\beta]$ of $\beta$ is in $H^1(\IT^d,\IZ)$ if, in the above decomposition, $\beta_i \in \Z$ for all $i \in \{1,\dots,d\}$.

Thanks to~\eqref{e:asymptotic-counting}, $s\mapsto \zeta_{\beta}(K_1,K_2,s)$ defines a holomorphic function on $\{\text{Re}(s)>d\}$ and our first main result describes its meromorphic continuation to $\mathbb{C}$:
\begin{theo}\label{t:maintheo-Mellin} Let $K_1$ and $K_2$ be two strictly convex compact sets of $\IR^d$ ($d\geq 2$) and let $\beta$ be a closed and real-valued one form on $\IT^d$.
The following holds:
\begin{enumerate}
 \item if the cohomology class $[\beta]$ of $\beta$ is in $H^1(\IT^d,\IZ)$, then
 $$s\in \{\operatorname{Re}(s)>d\}\mapsto \zeta_{\beta}(K_1,K_2,s)$$
 extends meromorphically to $\mathbb{C}$, its poles are located at $s=1,\ldots,d$ and are simple;
 \item otherwise, $\zeta_{\beta}(K_1,K_2,s)$ extends holomorphically to $\mathbb{C}$.
\end{enumerate}
\end{theo}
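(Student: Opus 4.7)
The plan is to realize $\zeta_\beta(K_1,K_2,s)$ as the Mellin transform of a geometric theta function and to extract its meromorphic continuation from the behaviour of that theta function as $t\to 0^+$. Using the identity $\ell^{-s}\Gamma(s)=\int_0^{+\infty}t^{s-1}e^{-t\ell}dt$ valid for $\Re(s)>0$, I write, for $\Re(s)>d$,
\begin{equation*}
\Gamma(s)\,\zeta_\beta(K_1,K_2,s) \;=\; \int_0^{+\infty} t^{s-1}\,\Theta_\beta(t)\,dt,\qquad \Theta_\beta(t):=\sum_{\gamma\in\mathcal{P}_{K_1,K_2},\,\ell(\gamma)>T_0} e^{i\int_\gamma\beta}\,e^{-t\ell(\gamma)}.
\end{equation*}
Lemma~\ref{l:starting-point} together with the polynomial bound~\eqref{e:asymptotic-counting} guarantees that $\Theta_\beta$ is smooth on $(0,+\infty)$ and decays rapidly at $+\infty$, so that the whole problem reduces to the behaviour of $\Theta_\beta(t)$ as $t\to 0^+$.

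Any orthogeodesic on $\T^d$ lifts to a straight segment in $\R^d$ from some $x_1\in\partial K_1$ to $x_2+2\pi n$ with $x_2\in\partial K_2$ and $n\in\Z^d$, whose direction $\omega\in\mathbb{S}^{d-1}$ is the oriented normal prescribed at both endpoints. Strict convexity of $K_1,K_2$, combined with the support functions $h_{K_i}$, yields through the implicit function theorem, for $|n|$ large enough, a unique such orthogeodesic $\gamma_n$, with $\omega_n$ a smooth function of $n/|n|$, and length
\begin{equation*}
\ell(\gamma_n)=2\pi|n|+\varphi(n/|n|)+O(|n|^{-1}),\qquad \varphi\in C^\infty(\mathbb{S}^{d-1})
\end{equation*}
(essentially the support function of $K_2-K_1$ evaluated at $\omega_n$). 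Decomposing $\beta=\sum_i\beta_i\,dx_i+df$ and lifting to the universal cover gives $e^{i\int_{\gamma_n}\beta}=e^{2\pi i n\cdot[\beta]}\,g(n/|n|)+O(|n|^{-\infty})$ for some smooth $g$ independent of $n$. Expanding the angular factors in spherical harmonics and applying the Poisson summation formula over $\Z^d$ converts $\Theta_\beta(t)$ into a sum over the dual lattice translated by $[\beta]$, of Fourier transforms of functions of the form $A(\xi)\,e^{-2\pi t|\xi|-tB(\xi/|\xi|)}$; recall that the Fourier transform of $e^{-2\pi t|\xi|}$ on $\R^d$ is the explicit Cauchy kernel $c_d\,t/(t^2+|\eta|^2)^{(d+1)/2}$.

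At this stage the dichotomy is transparent. If $[\beta]\notin H^1(\T^d,\Z)$ then $|m-[\beta]|\geq c>0$ uniformly in $m\in\Z^d$; a standard non-stationary-phase argument forces $\Theta_\beta(t)$ to vanish to infinite order at $t=0$, so the Mellin integral is entire in $s$, proving (ii). If instead $[\beta]\in H^1(\T^d,\Z)$ there is exactly one $m_0\in\Z^d$ with $m_0-[\beta]=0$; this singleton contributes a full asymptotic expansion
\begin{equation*}
\Theta_\beta(t)\underset{t\to 0^+}{\sim}\sum_{k=0}^{+\infty}\frac{c_k(K_1,K_2,\beta)}{t^{d-k}},
\end{equation*}
whose coefficients are moments of the Cauchy kernel against the spherical-harmonic expansion of $A$ and $B$ (they will later be identified with intrinsic volumes of $K_1,K_2$). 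Splitting $\int_0^{+\infty}=\int_0^1+\int_1^{+\infty}$ and subtracting a truncation of this expansion from $\Theta_\beta$ in the first integral yields the classical meromorphic continuation with at most simple poles at $s\in\{d,d-1,d-2,\dots\}$; the zeros of $1/\Gamma(s)$ at $s\in\{0,-1,-2,\dots\}$ then cancel all but the poles at $s=1,\dots,d$, proving (i). The main obstacle in this plan is the geometric parametrization: one must control, uniformly in $n\in\Z^d$ with $|n|$ large, the implicit data $(\omega_n,\varphi,g)$ attached to $\gamma_n$ and the smoothness of these data in $n/|n|$, which is precisely where the positivity of the sectional curvatures of $\partial K_1,\partial K_2$ enters, via the implicit function theorem for the direct-orthogonality condition.
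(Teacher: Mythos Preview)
Your strategy—write $\Gamma(s)\zeta_\beta=\int_0^\infty t^{s-1}\Theta_\beta(t)\,dt$ and read the poles off the small-$t$ expansion of $\Theta_\beta$—is different from the paper's and is in principle sound, but the argument as written has a genuine error at its core.

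The problematic step is the claim that, when $[\beta]\notin H^1(\T^d,\Z)$, ``a standard non-stationary-phase argument forces $\Theta_\beta(t)$ to vanish to infinite order at $t=0$''. There is no oscillatory phase here: the weight is $e^{-t\ell}$, not $e^{it\ell}$. After Poisson summation the $m$-th dual term is (essentially) the Cauchy kernel $c_d\,t/(t^2+|m-[\beta]|^2)^{(d+1)/2}$, which for $m\neq[\beta]$ is $O(t)$, not $O(t^\infty)$; with the angular factors $A,B$ present, the dual terms are only $O(1)$. In fact $\Theta_\beta(t)=\mathcal Z_\beta(K_1,K_2,t)$ is exactly the Poincar\'e series of the paper evaluated at real $t>0$, and Theorem~\ref{t:maintheo-laplace} (more precisely Theorem~\ref{l:regularity-poincare}) shows that when $[\beta]\notin\Z^d$ it extends to a $\mathcal C^\infty$ function at $t=0$ with a generically nonzero value there. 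So $\int_0^1 t^{s-1}\Theta_\beta(t)\,dt$ has simple poles at $s=0,-1,-2,\dots$; these are killed by $1/\Gamma(s)$, and (ii) still follows—but the mechanism is the Taylor expansion of $\Theta_\beta$ combined with the zeros of $1/\Gamma$, not rapid vanishing. Proving this Taylor expansion from your Poisson-summation setup is the real work (one has to analyze $\int_{\mathbb S^{d-1}}G(\omega)(t+i\omega\!\cdot\!\eta)^{-j-1}d\omega$ as $t\to0^+$ and check it is smooth in $t$), and it is comparable in difficulty to what the paper does.

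A second, smaller gap: the approximation $\ell(\gamma_n)=2\pi|n|+\varphi(n/|n|)+O(|n|^{-1})$ is only first order, and $e^{i\int_{\gamma_n}\beta}=e^{2\pi i n\cdot[\beta]}g(n/|n|)+O(|n|^{-\infty})$ is not correct (the error is $O(|n|^{-1})$). To obtain a \emph{full} asymptotic expansion of $\Theta_\beta(t)$ in integer powers of $t$, you should avoid truncating: work with the exact smooth function $\ell:\R^d\setminus(\text{compact})\to\R_+$ defined by the diffeomorphism $(\ell,\omega)\mapsto \ell\omega - (x_{K_2}(-\omega)-x_{K_1}(\omega))$, apply Poisson summation to the exact summand, and use the Jacobian $P_{K_1-K_2}(\ell,\omega)$ (a polynomial of degree $d-1$ in $\ell$, cf.~\eqref{e:polynomial-volume}). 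The $\eta=0$ dual term then yields precisely $\sum_{j=0}^{d-1} c_j t^{-(j+1)}$, giving the poles at $s=1,\dots,d$ after the $\Gamma$-cancellation.

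For comparison, the paper does not go through $\Theta_\beta$ and Poisson summation at all. It lifts the problem to $S\T^d$, expresses $\zeta_\beta$ directly as the action of the Mellin-transformed flow $\int_1^\infty t^{-s}e^{-t\mathbf V_{\beta_0}}\,dt$ on the integration currents $[N_\pm(\Sigma_i)]$ (Lemma~\ref{l:dang-riviere-mellin}), and proves meromorphic continuation of this operator on anisotropic Sobolev spaces via non-stationary phase on $\mathbb S^{d-1}$ (Theorem~\ref{t:general-mellin}); the poles come from the single Fourier mode $\xi=\beta_0$, which exists iff $[\beta]\in\Z^d$ (Theorem~\ref{l:prolongement-zeta}). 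Your approach is closer in spirit to the alternative sketched in Appendix~\ref{a:proof}, and would buy a more elementary proof of Theorem~\ref{t:maintheo-Mellin} alone, at the cost of losing the unified functional framework (and the general orientation/twist conventions).
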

In the case where both $K_1$ and $K_2$ are reduced to points, this theorem recovers a classical property of Epstein zeta functions~\cite{Epstein03}. See~\S\ref{ss:number} below for a brief reminder on such arithmetic functions. Yet, to the best of our knowledge, this result seems to be new in the case of general smooth strictly convex subsets. Under lower regularity assumptions on the boundary of our convex sets and eventhough we did not write all the details, our proof should in principle allow to perform the analytic continuation up to $\{\text{Re}(s)>d-N\}$ with $\mathcal{C}^N$ being the regularity of the boundary and $N>d$.

As for classical zeta functions in number theory, it is natural to compute the explicit values of the residues and, due to the geometric nature of the problem, one would like to express them in terms of natural geometric quantities attached to the convex sets $K_1$ and $K_2$. In order to be more explicit on the residues when $\beta=0$, recall Steiner's formula for a compact and convex subset $K$ of $\IR^d$~\cite[\S4]{Schneider14}:
\begin{equation}\label{e:steiner}\text{ for all } t>0,\quad \text{Vol}_{\IR^d}\left(K+tB_d\right) =\sum_{\ell=0}^dV_{d-\ell}\left(K\right)\frac{\pi^{\frac{\ell}{2}}}{\Gamma\left(\frac{\ell}{2}+1\right)} t^{\ell},\end{equation}
where $V_{\ell}\left(K\right)\geq0$ is the $\ell$-\emph{intrinsic volume} of the convex set $K$, and~\eqref{e:steiner} may be taken as a definition of the numbers $V_{\ell}\left(K\right)$. Note that $V_0(K)=1$, $V_d(K)=\text{Vol}_{\IR^d}(K)$. Moreover, if $\partial K$ has smooth boundary, one finds by the Minkowski-Steiner formula~\cite[4.2]{Schneider14}~\cite[p.~86]{teissier2016bonnesen}:
$$V_{d-1}(K)=\frac{1}{2}\text{Vol}(\partial K),$$
where $\text{Vol}$ is the $(d-1)$-volume measure on $\partial K$ induced by the Euclidean structure on $\IR^d$. Observe that $V_{d-\ell}\left(K\right)=0$ for any $0\leq \ell\leq d-1$ when $K$ is reduced to a point. Other properties of these intrinsic volumes are their invariance under Euclidean isometries (i.e. any composition of a rotation with a translation), their continuity with respect to the Hausdorff metric and their additivity\footnote{A functional satisfying such an additive property is referred as a \emph{valuation}~\cite[\S6.1]{Schneider14}.} on convex subsets of $\IR^d$, i.e.
$$\forall\ 0\leq \ell \leq d,\quad V_{\ell}\left(K\right)+V_{\ell}\left(L\right)=V_{\ell}\left(K\cup L\right)+V_{\ell}\left(K\cap L\right),$$
whenever $K$, $L$, $K\cup L$, $K\cap L$ are convex subsets of $\IR^d$. In fact, a classical Theorem of Hadwiger states that any functional on the convex subsets of $\IR^d$ enjoying these three properties is a linear combination of these intrinsic volumes~\cite[Th.~6.4.14]{Schneider14}.

Our second main theorem expresses the residues of $\zeta_{0}(K_1,K_2,s)$ (stated for $\beta=0$ to improve readability) in terms of these intrinsic volumes:
\begin{theo}\label{t:maintheo-residues}
 Let $K_1$ and $K_2$ be two strictly convex compact sets of $\IR^d$ ($d\geq 2$). Suppose in addition that $\Sigma_1=\mathfrak{p}(\partial K_1)$ (resp. $\Sigma_2=\mathfrak{p}(\d K_2)$) is oriented by the outgoing (resp. ingoing) normal vector to $K_1$ (resp. $K_2$).
 
 Then, the function
 $$s\mapsto \zeta_{0}(K_1,K_2,s)-\frac{1}{(2\pi)^d}\sum_{\ell=1}^d\frac{\pi^{\frac{\ell}{2}}\ell}{\Gamma\left(\frac{\ell}{2}+1\right)}\frac{V_{d-\ell}\left(K_1-K_2\right)}{s-\ell}$$
 extends holomorphically from $\{\operatorname{Re}(s)>d\}$ to $\mathbb{C}.$
\end{theo}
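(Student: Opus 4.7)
The plan is to convert the counting of orthogeodesics into a lattice-point problem for the convex body $L := K_1 - K_2 = K_1 + (-K_2) \subset \R^d$, recognize $\zeta_0(K_1, K_2, s)$ as a Mellin transform of the associated counting function, and extract the residues directly from Steiner's formula~\eqref{e:steiner}.

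The key geometric identification is the following. Given the prescribed orientations (outward on $\d K_1$, inward on $\d K_2$), any orthogeodesic $\gamma$ lifts to a Euclidean segment joining some $p_1 \in \d K_1$ to $p_2 + 2\pi k$ with $p_2 \in \d K_2$ and $k \in \Z^d$, whose direction $\omega$ is simultaneously the outward normal to $K_1$ at $p_1$ and the inward normal to $K_2$ at $p_2$, equivalently the outward normal to $L$ at $q := p_1 - p_2 \in \d L$. Strict convexity of $L$ (inherited from that of $K_1$ and $-K_2$ through the additivity of the support function) makes the Gauss map $\d L \to \IS^{d-1}$ a diffeomorphism, so $(p_1,p_2)$ is recovered from $\omega$; the segment becomes $\{q + t\omega : 0 \leq t \leq \ell(\gamma)\}$ with far endpoint $2\pi k \notin L$, and $\ell(\gamma) = d_L(2\pi k)$. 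Consequently, for $\operatorname{Re}(s) > d$,
\begin{equation*}
\zeta_0(K_1, K_2, s) = \sum_{\substack{k \in \Z^d \\ d_L(2\pi k) > T_0}} d_L(2\pi k)^{-s} = \int_{T_0}^{\infty} T^{-s}\, dN(T),
\end{equation*}
where $N(T) := \#\{k \in \Z^d : d_L(2\pi k) \leq T\}$.

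Now the inclusion $\{2\pi k : d_L(2\pi k) \leq T\} = 2\pi\Z^d \cap (L + T\overline{B}_d)$, together with Steiner's formula~\eqref{e:steiner} applied to $L$ and the classical lattice-point count for smooth strictly convex bodies, yields
\begin{equation*}
(2\pi)^d N(T) = \sum_{\ell=0}^{d} V_{d-\ell}(L) \frac{\pi^{\ell/2}}{\Gamma(\ell/2+1)}\, T^{\ell} + R(T), \qquad R(T) = O(T^{d-1}).
\end{equation*}
An integration by parts in the Mellin representation then shows that the polynomial part of $N(T)$ contributes to $\zeta_0(K_1,K_2,s)$, for $\operatorname{Re}(s) > d$, the meromorphic function
\begin{equation*}
(2\pi)^{-d} \sum_{\ell=1}^{d} V_{d-\ell}(L) \frac{\pi^{\ell/2}\, \ell}{\Gamma(\ell/2+1)} \cdot \frac{T_0^{\ell - s}}{s - \ell},
\end{equation*}
which extends to $\C$ with simple poles at $s = 1, \ldots, d$ and residues precisely those stated in the theorem, since $L = K_1 - K_2$.

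It remains to verify that the remainder contribution $s\int_{T_0}^{\infty} R(T)\, T^{-s-1}\, dT$ is holomorphic across each $s = \ell$. The bound $R(T) = O(T^{d-1})$ already forces holomorphy on $\operatorname{Re}(s) > d - 1$, identifying the residue at $s = d$ unconditionally. For the lower poles $\ell < d$, the cleanest independent route is a Poisson summation argument on $2\pi\Z^d$ applied to a smoothed version of $\mathbf{1}_{\{d_L > T_0\}}(x)\, d_L(x)^{-s}$: the $m = 0$ Fourier mode, via the tubular change of variables $x = q + t\omega_q$ on $\R^d \setminus L$, reproduces exactly the explicit polar part above; each $m \neq 0$ mode is an oscillatory integral over $\d L$ whose rapid decay in $|m|$ -- ensured by stationary phase on the strictly convex hypersurface $\d L$ (positive Gaussian curvature) -- makes its Mellin transform entire in $s$. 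Alternatively, one may combine the unconditional residue at $s = d$ with Theorem~\ref{t:maintheo-Mellin}, which restricts the poles of $\zeta_0$ to $\{1,\dots,d\}$, and match residues using the uniqueness of the polar decomposition. The main obstacle is precisely this control on the remainder for $\ell < d$: the crude lattice-point bound is insufficient, and one genuinely needs the stationary-phase decay on $\d L$, which is available thanks to the strict positivity of curvature inherited by $L$ from $K_1$ and $-K_2$ under Minkowski summation.
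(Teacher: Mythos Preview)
Your geometric identification is correct and your approach is genuinely different from the paper's. Rather than lifting to $S\T^d$ and running the current-theoretic machinery of Sections~\ref{s:background}--\ref{s:laplace-mellin} to obtain abstract residues (Theorem~\ref{l:prolongement-zeta}), then computing them afterwards in Section~\ref{s:convex}, you go straight to the lattice-point count for $L=K_1-K_2$ and read the polar part off the Steiner polynomial. This is precisely the alternative the paper itself outlines in Appendix~\ref{a:proof} via formula~\eqref{e:formule-magnifique}; as noted there, it makes the intrinsic volumes appear transparently through the zero Fourier mode, at the price of being tied to the specific outward/inward orientation and of losing the dynamical picture and the generality (twists by $\beta$, other orientations) that the current formalism buys.

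There is, however, a real gap in your final paragraph. Your option (b) does not work: Theorem~\ref{t:maintheo-Mellin} only locates the poles of $\zeta_0$ in $\{1,\dots,d\}$; it says nothing about their residues. Writing $\zeta_0=A+B$ with $A$ your explicit polar piece and $B(s)=s(2\pi)^{-d}\int_{T_0}^\infty R(T)T^{-s-1}\,dT$ plus entire terms, the bound $R(T)=O(T^{d-1})$ shows $B$ is holomorphic on $\operatorname{Re}(s)>d-1$, which pins down the residue at $s=d$; but for $\ell\le d-1$ nothing prevents $B$ from carrying its own pole at $\ell$, and ``uniqueness of the polar decomposition'' is a tautology that computes nothing. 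Even sharpened lattice remainders of Hlawka--Herz type never push holomorphy past $\operatorname{Re}(s)>0$. So option (a) is the only viable route, and it is not a footnote: after Poisson summation the nonzero modes are exactly the oscillatory integrals $\int_{\IS^{d-1}}e^{i\xi\cdot(x_L(\theta)+t\theta)}P_L(t,\theta)\,d\theta$ of Section~\ref{s:analysis}, and showing that their Mellin transforms are entire in $s$ \emph{and} summable over $\xi\neq 0$ requires the stationary/non-stationary phase estimates of Corollaries~\ref{c:non-stat-points}--\ref{c:stat-points} together with the integration-by-parts of Lemma~\ref{l:f-phi-lam}. As the paper's appendix concedes, ``the analytical difficulties would remain exactly the same.'' Your sketch of (a) is right in spirit, but you should drop (b) entirely and acknowledge that (a) carries the full analytic weight of the theorem.
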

Note that $-K_2$ is a convex set and thus so is $K_1-K_2$.
Here we only describe the case where $\beta=0$ and  geodesics are pointing outward $K_1$ and inward $K_2$. Yet our proof yields an explicit expression for any $\beta$ and for all possible orientation conditions on the $\Sigma_i$. See formula~\eqref{e:residues} and \S\ref{s:convex} for more details. 
When $K_2$ is reduced to a point, this theorem in particular solves the following geometric inverse problem: recover all $\ell$-intrinsic volumes of $K_1$ for $0\leq\ell\leq d-1$ from the lengths of the geodesics of $\IR^d$ orthogonal to $K_1$ and joining $K_1$ to an element of $2\pi\IZ^d$ (see Figure~\ref{f:lattice}). 

\begin{figure}[ht]
\includegraphics[scale=0.3]{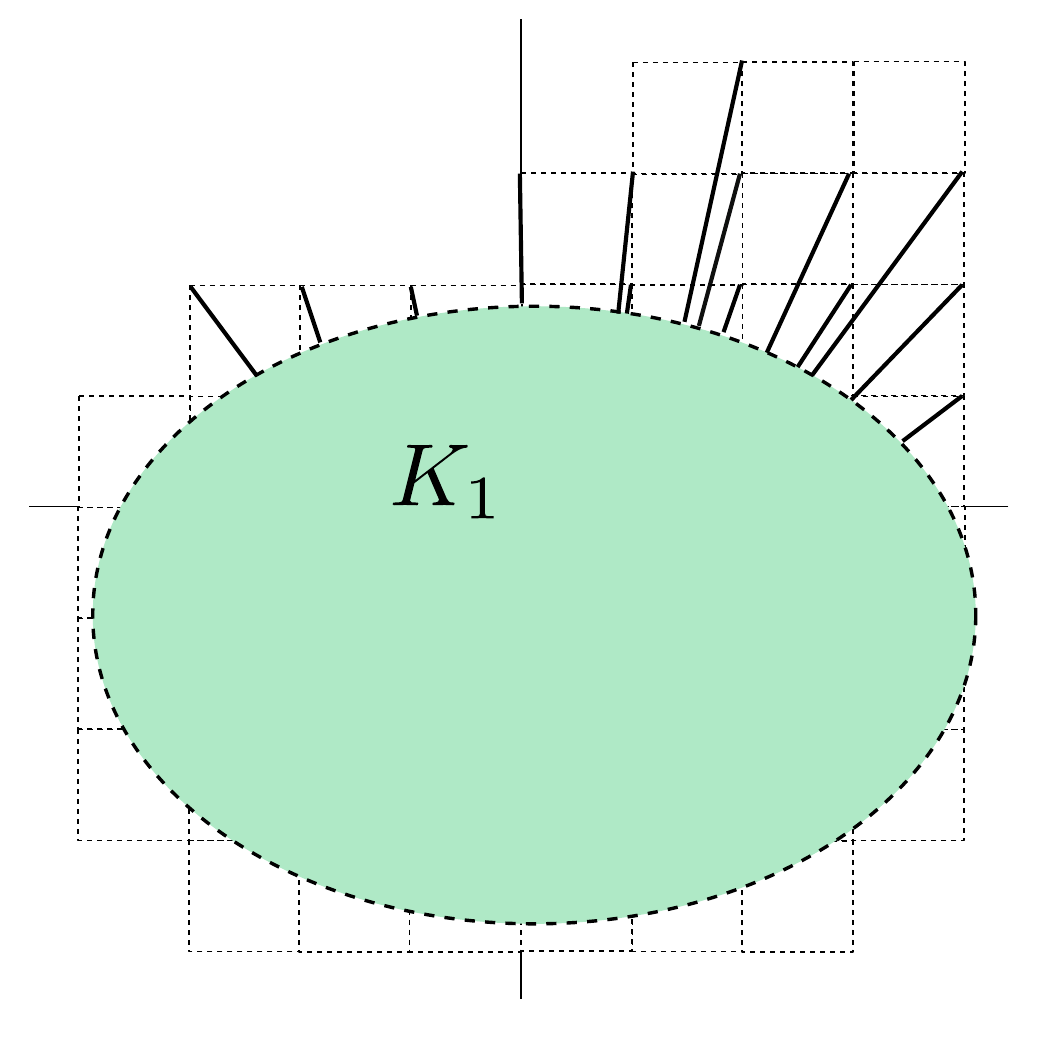}
\centering
\caption{\label{f:lattice}Lift of the orthogeodesic arcs when $K_2=\{0\}$.}
\end{figure}

\subsection{Poincar\'e series in convex geometry}
In analogy with the case of negatively curved manifolds~\cite{ParkkonenPaulin2016, BroiseParkkonenPaulin2019, DangRiviere20d}, one can also define generalized Poincar\'e series for the length orthospectrum:
\begin{equation}
\label{e:poincare-series-intro}
\boxed{\mathcal{Z}_{\beta}(K_1,K_2,s):=\sum_{\gamma\in\mathcal{P}_{K_1,K_2}:\ell(\gamma)>T_0}e^{i\int_\gamma\beta-s\ell(\gamma)},}
\end{equation}
which, as a consequence of~\eqref{e:asymptotic-counting}, is holomorphic on $\{\text{Re}(s)>0\}.$ As above, $\beta$ is a closed and real-valued one form on $\IT^d$. For such functions, we are able to describe the continuation up to $\text{Re}(s)=0$ in the following sense:
\begin{theo}[Continuous continuation of Poincar\'e series]
\label{t:maintheo-laplace}
 Let $K_1$ and $K_2$ be two strictly convex compact sets of $\IR^d$ ($d\geq 2$) and let $\beta$ be a closed and real-valued one form on $\IT^d$. 

 Then, the function
 $$s\in \{\operatorname{Re}(s)>0\}\mapsto \mathcal{Z}_{\beta}(K_1,K_2,s)$$
 extends continuously to
 $$\{\operatorname{Re}(s)\geq 0\}\setminus \{\pm i|\xi-[\beta]|:\xi\in\IZ^d\}.$$
Moreover, given $\xi_0\in\IZ^d$, one has
\begin{enumerate}
 \item if $\xi_0-[\beta]=0$, then there exist $a_{\beta}^{(1)}(K_1, K_2),\ldots,a_{\beta}^{(d)}(K_1, K_2)$ in $\mathbb{C}$ such that 
 $$\mathcal{Z}_{\beta}(K_1,K_2,s)-\sum_{\ell=1}^{d}\frac{a_{\beta}^{(\ell)}(K_1, K_2)}{s^\ell}$$
 converges as $s\rightarrow 0$ (with $\operatorname{Re}(s)>0$);
 \item if $\xi_0-[\beta]\neq0$ and $d$ is odd, then there exist $a_{\xi_0,\beta}^{(0)}(K_1, K_2),\ldots,a_{\xi_0,\beta}^{(\frac{d-1}{2})}(K_1, K_2)$ and $b_{\xi_0,\beta}(K_1, K_2)$ in $\IC$ such that 
 $$\mathcal{Z}_{\beta}(K_1,K_2,s)-\sum_{\ell=0}^{\frac{d-1}{2}}\frac{a_{\xi_0,\beta}^{(\ell)}(K_1, K_2)}{(s\mp i|\xi_0-[\beta]|)^{\frac{d+1}{2}-\ell}}-b_{\xi_0,\beta}(K_1, K_2)\ln(s\mp i|\xi_0-[\beta]|)$$
 converges as $s\rightarrow \pm i|\xi_0-[\beta]|$ (with $\operatorname{Re}(s)>0$);
 \item if $\xi_0-[\beta]\neq0$ and $d$ is even, then there exist $a_{\xi_0,\beta}^{(0)}(K_1, K_2),\ldots,a_{\xi_0,\beta}^{(d/2)}(K_1, K_2)$ such that 
 $$\mathcal{Z}_{\beta}(K_1,K_2,s)-\sum_{\ell=0}^{\frac{d}{2}}\frac{a_{\xi_0\beta}^{(\ell)}(K_1, K_2)}{(s\mp i|\xi_0-[\beta]|)^{\frac{d+1}{2}-\ell}}.$$
 converges as $s\rightarrow \pm i|\xi_0-[\beta]|$ (with $\operatorname{Re}(s)>0$).
\end{enumerate}
\end{theo}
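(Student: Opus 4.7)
The plan is to express $\mathcal{Z}_\beta(K_1,K_2,s)$ as a Poisson-dual sum of oscillatory Laplace integrals on $\IR^d$, and to read off the singularities on $\{\operatorname{Re}(s) = 0\}$ from the explicit Fourier-Laplace transform of $e^{-s|v|}$. First I would parametrize orthogeodesics of length $> T_0$ by lattice points: using strict convexity of $K_1,K_2$ and the implicit function theorem, for each $k \in \IZ^d$ of sufficiently large norm the orientation conditions pick unique $x_1(k) \in \partial K_1$ and $x_2(k) \in \partial K_2$ and a corresponding orthogeodesic whose lift has displacement $v_k := 2\pi k + x_2(k) - x_1(k)$, length $\ell(\gamma_k) = |v_k|$, and holonomy $\int_{\gamma_k}\beta = \langle [\beta], v_k\rangle + f(x_2(k)) - f(x_1(k))$ after writing $\beta = [\beta] + df$. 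Up to a holomorphic error from finitely many small $k$,
\begin{equation*}
\mathcal{Z}_\beta(K_1,K_2,s) = \sum_{k \in \IZ^d} A_\beta(k)\, e^{i\langle [\beta], v_k\rangle - s|v_k|},
\end{equation*}
with $A_\beta$ extending smoothly to $\IR^d$. Applying Poisson summation in $k$ and the diffeomorphism $x \mapsto v(x) := 2\pi x + x_2(x) - x_1(x)$ (at infinity) then rewrites the series, modulo a smoother remainder, as $\sum_{\xi \in \IZ^d} I_\xi(s)$ with
\begin{equation*}
I_\xi(s) = \int_{\IR^d} b(v;\xi,\beta)\, e^{i\langle [\beta] - \xi, v\rangle - s|v|}\,dv,
\end{equation*}
the amplitude $b$ encoding the support functions of $K_1$, $-K_2$ and the function $f$.

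The singular structure of each $I_\xi$ then follows from an oscillatory-Laplace analysis. Passing to polar coordinates $v = r\omega$ and expanding $b(r\omega;\xi,\beta)$ in powers of $r^{-1}$ reduces $I_\xi(s)$ to a sum of model integrals of the form $\int_0^\infty r^{d-1-j} e^{-sr}\bigl(\int_{S^{d-1}} b_j(\omega;\xi,\beta)\, e^{-ir\langle \xi - [\beta], \omega\rangle}\,d\omega\bigr)\,dr$. When $\xi_0 - [\beta] = 0$, the spherical phase is trivial and the radial integrals equal $\Gamma(d-j)\,s^{-(d-j)}$, giving exactly the poles $s^{-\ell}$, $\ell = 1,\ldots,d$, of item~(1). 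When $\xi_0 - [\beta] \neq 0$, non-degenerate stationary phase on $S^{d-1}$ at the two critical points $\omega = \pm(\xi_0 - [\beta])/|\xi_0 - [\beta]|$ produces a full expansion in $r^{-(d-1)/2 - m} e^{\pm i r |\xi_0 - [\beta]|}$, and the elementary radial Laplace identity
\begin{equation*}
\int_0^\infty r^{\alpha}\, e^{(\pm i |\xi_0 - [\beta]| - s)r}\, dr = \frac{\Gamma(\alpha + 1)}{(s \mp i|\xi_0 - [\beta]|)^{\alpha + 1}}
\end{equation*}
then delivers the descending sequence of singularities $(s \mp i|\xi_0 - [\beta]|)^{-(d+1)/2 + \ell}$ of items~(2) and~(3). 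In odd dimension, the exponent $\alpha = (d-1)/2 - m - j$ eventually reaches $-1$, at which step the corresponding radial integral produces a logarithmic term $\ln(s \mp i|\xi_0 - [\beta]|)$ with a computable coefficient $b_{\xi_0,\beta}$; in even dimension, $\alpha$ remains half-integer throughout, so only branch-point singularities appear, in exact accordance with the statement.

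It then remains to check absolute convergence of $\sum_\xi I_\xi(s)$ on compact subsets of $\{\operatorname{Re}(s) \geq 0\} \setminus \{\pm i|\xi - [\beta]| : \xi \in \IZ^d\}$, which follows from repeated non-stationary integration by parts on $S^{d-1}$ combined with the smoothness of $b$ inherited from the implicit function theorem. The main obstacle will be the uniform bookkeeping in the oscillatory-Laplace expansion — controlling remainders simultaneously in $r$ and $\xi$ while isolating each pole order and, in odd dimension, the logarithm coefficient — together with ensuring that the discrepancy between the lattice amplitude $A_\beta(k)$ and its smooth extension $A_\beta(x)$ does not destroy the polynomial decay in $\xi$ needed for the continuous summation claimed by the theorem.
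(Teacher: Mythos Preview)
Your approach is sound and lands on the same analytical core as the paper, but through a different packaging. The paper lifts the problem to $S\T^d$, expresses $\mathcal{Z}_\beta$ as a Laplace-in-$t$ transform of a correlation between the integration currents on the normal bundles $N_{\sigma_i}(\Sigma_i)$ (their equation~\eqref{e:Z=L}), and then invokes a general operator statement (Theorem~\ref{t:general-laplace}) about the truncated Laplace transform acting on anisotropic Sobolev spaces of currents; the specific currents $\delta_{2\pi\Z^d}$ and $\iota_V\delta_{2\pi\Z^d}$ lie in those spaces. Your Poisson summation in $k$ is exactly dual to the paper's Fourier expansion of $\delta_{[0]}$ on $\T^d$, and after your change of variables the radial variable $r=|v|$ is the paper's time $t$: the model integrals $\int_0^\infty r^{d-1-j}e^{-sr}\big(\int_{S^{d-1}}\cdots\big)\,dr$ you isolate are precisely the integrals treated by the paper's Lemma~\ref{l:f-phi-lam-Laplace} composed with the stationary-phase expansion of Corollary~\ref{c:stat-points}. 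The paper's route buys a reusable framework (the same anisotropic machinery proves Theorems~\ref{t:maintheo-Mellin},~\ref{t:maintheo-residues},~\ref{e:maintheo-realaxis},~\ref{e:GuinandWeil} and the equidistribution results); yours is more direct for this single statement. One point to watch: your amplitude $b$ inherits an oscillating factor $e^{i\langle\xi,\,x_2(v)-x_1(v)\rangle}$ from the change of variables (the exact analogue of the paper's $e^{i\xi\cdot\tilde x(\theta)}$), whose $\omega$-derivatives grow like $|\xi|^n$; this is what makes the uniform-in-$\xi$ remainder control delicate, and the paper resolves it by splitting the $\xi$-sum into $|\xi-\beta_0|\le 2\Lambda$ (stationary phase on a finite sum) and $|\xi-\beta_0|>2\Lambda$ (integration by parts in $t$) in the proof of Theorem~\ref{t:general-laplace}. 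The paper's Appendix~\ref{a:proof} in fact outlines your direct route and confirms the analytical difficulties are identical.
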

This Theorem is a weakened version of Theorem~\ref{l:regularity-poincare} where the $\mathcal{C}^{k}$ regularity of the continuation of $\mathcal{Z}_{\beta}$ will also be discussed. Note that the set of singular points $\{\pm i|\xi-[\beta]|:\xi\in\IZ^d\}$ is linked to the spectrum of a natural magnetic Laplacian on $\T^d$, namely\footnote{The eigenvalues of $\Delta_{-[\beta]}$ and $\Delta_{[\beta]}$ coincide.} $\Delta_{-[\beta]} = (\partial_x-i[\beta])^2$ --  see the discussion in Sections~\ref{s:emergence-quantum} and~\ref{s:magnetic-laplacian}. For $\beta=0$ and for the orientation conventions of Theorem~\ref{t:maintheo-residues}, the ``residues'' at $s=0$ can be explicitly expressed as
$$\forall 1\leq \ell\leq d,\quad a_0^{(\ell)}:=\frac{(-1)^{d-1}\ell!\pi^{\frac{\ell}{2}}}{(2\pi)^d\Gamma\left(\frac{\ell}{2}+1\right)}V_{d-\ell}(K_1-K_2).$$

\subsection{Application to Poisson type formulas}

According to~\eqref{e:asymptotic-counting}, and for $T_0>0$ as in Lemma~\ref{l:starting-point} above, we emphasize that, on the imaginary axis, the Poincar\'e series
\begin{align}
\label{e:distrib-T-fourier}
 \mathcal{Z}_{\beta}(K_1,K_2,it)= \sum_{\gamma\in\mathcal{P}_{K_1,K_2}:\ell(\gamma)>T_0}e^{i\int_\gamma\beta} e^{-it \ell(\gamma)}
\end{align}
is the Fourier transform of the 
counting measure 
\begin{align}
\label{e:distrib-T}
\mathsf{T}_{\beta,K_1,K_2}(t) := \sum_{\gamma\in\mathcal{P}_{K_1,K_2}:\ell(\gamma)>T_0}e^{i\int_\gamma\beta} \delta(t-\ell(\gamma)) \quad \in \mathcal{S}'(\IR) 
\end{align}
which is a tempered distribution supported in the cone $(0,+\infty)$. Therefore, thanks to~\cite[Thm IX.16 p.~23]{ReedSimonII} 
(see also Proposition~\ref{prop:boundaryvaluesholo} below), $\widehat{\mathsf{T}}_{\beta,K_1,K_2}(t)$ can be obtained as the boundary value of a holomorphic function as follows:
\begin{align}
\label{e:bound-value-holo}
\mathcal{Z}_{\beta}(K_1,K_2,it+\alpha)\rightharpoonup\widehat{\mathsf{T}}_{\beta,K_1,K_2}(t)\quad \text{in}\ \mathcal{S}'(\IR),\quad \text{as}\ \alpha\rightarrow 0^+.
\end{align}
The holomorphic function is nothing but the analytic extension of the Fourier transform to the upper half--plane. Hence, the reader can think of Theorem~\ref{t:maintheo-laplace} as a loose version of the Paley--Wiener--Schwartz Theorem, stating that the Fourier transform of a distribution supported on the half--line $(0,+\infty)$ is the boundary value on $\R$ of a holomorphic function on the lower half--plane (sometimes called its Fourier-Laplace transform).
Note that, as a consequence of~\eqref{e:bound-value-holo}, the Poincar\'e series completely determines the distribution of the twisted length orthospectrum.
\begin{rema}
In case $\beta=0$, $\mathsf{T}_{0,K_1,K_2} $ gives precisely the distribution of length of orthogeodesics: namely if $\mathscr{L}(K_1,K_2) = \{\ell(\gamma), \gamma\in\mathcal{P}_{K_1,K_2}\}$ denotes the set of length of orthogeodesics, then we simply have 
\begin{align}
\mathsf{T}_{0,K_1,K_2}(t) := \sum_{\ell >T_0, \ell \in \mathscr{L}(K_1,K_2)} \mathsf{m}_\ell \delta(t-\ell), \quad \text{ where }\mathsf{m}_\ell = \sharp \, \{\gamma\in\mathcal{P}_{K_1,K_2}, \ell(\gamma) = \ell \} 
\end{align}
 denotes the multiplicity of the length $\ell \in \mathscr{L}(K_1,K_2)$.
\end{rema}
As a direct application of a refined version of Theorem~\ref{t:maintheo-laplace} (namely Theorem~\ref{l:regularity-poincare} below) together with~\eqref{e:bound-value-holo}, we also obtain a new Poisson-type summation formula, describing the distributional singularities of $\widehat{\mathsf{T}}$.
\begin{theo}[Poisson type formula]\label{e:maintheo-realaxis} Let $K_1$ and $K_2$ be two strictly convex compact subsets of $\IR^d$ ($d\geq 2$) and let $\beta$ be a closed and real-valued one form on $\IT^d$. Then, with $\mathsf{T}_{\beta,K_1,K_2}(\tau)$ defined in~\eqref{e:distrib-T}, the singular support of
 $$\widehat{\mathsf{T}}_{\beta,K_1,K_2}(\tau) = \sum_{\gamma\in\mathcal{P}_{K_1,K_2}:\ell(\gamma)>T_0}e^{i\int_\gamma\beta} e^{-i\tau \ell(\gamma)}$$
 is included in $\operatorname{Sp}(\pm \sqrt{-\Delta_{[\beta]}})$ and the singularities are explicitly described by Theorem~\ref{l:regularity-poincare}. 
\end{theo}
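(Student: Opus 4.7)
The plan is to derive the statement directly from the refined continuation result, Theorem~\ref{l:regularity-poincare}, combined with the boundary value identity~\eqref{e:bound-value-holo}. First I would identify the candidate singular set: since the magnetic Laplacian $-\Delta_{[\beta]}=-(\partial_x-i[\beta])^2$ is diagonalized on $\T^d$ in the Fourier basis $(e^{i\xi\cdot x})_{\xi\in\Z^d}$ with eigenvalues $|\xi-[\beta]|^2$, one has $\operatorname{Sp}(\pm\sqrt{-\Delta_{[\beta]}})=\{\pm|\xi-[\beta]|:\xi\in\Z^d\}$, which is precisely the exceptional set appearing in Theorem~\ref{t:maintheo-laplace}.

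Next, pick $\tau_0\in\R\setminus\operatorname{Sp}(\pm\sqrt{-\Delta_{[\beta]}})$. By Theorem~\ref{l:regularity-poincare} (the $\mathcal{C}^k$ refinement of Theorem~\ref{t:maintheo-laplace}, which I would take as available), the function $s\mapsto \mathcal{Z}_{\beta}(K_1,K_2,s)$ extends as a $\mathcal{C}^k$ function, for every prescribed $k\in\N$, to a neighborhood of $i\tau_0$ inside $\{\operatorname{Re}(s)\geq 0\}$. Its restriction to the imaginary axis is then a $\mathcal{C}^k$ function of $\tau$ near $\tau_0$, and by~\eqref{e:bound-value-holo} this restriction coincides with $\widehat{\mathsf{T}}_{\beta,K_1,K_2}$ in $\mathcal{S}'(\R)$ on a small interval around $\tau_0$. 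Since $k$ is arbitrary, $\widehat{\mathsf{T}}_{\beta,K_1,K_2}$ is smooth near $\tau_0$, proving the claimed singular support inclusion.

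To describe the singularities at a point $\tau_0=\pm|\xi_0-[\beta]|$, I would substitute the local expansions provided by Theorem~\ref{l:regularity-poincare} into~\eqref{e:bound-value-holo}. Each of the singular model terms appearing there, namely $(s\mp i|\xi_0-[\beta]|)^{-\nu}$ (for integer or half-integer $\nu>0$) and $\ln(s\mp i|\xi_0-[\beta]|)$, admits an explicit distributional boundary value from the right half-plane onto $\R$, namely $(i(\tau\mp|\xi_0-[\beta]|)+0)^{-\nu}$ and $\ln(i(\tau\mp|\xi_0-[\beta]|)+0)$ respectively. Each such distribution is smooth away from $\tau=\pm|\xi_0-[\beta]|$ and has a classical explicit singularity at that point. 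Collecting the contributions yields an explicit local description of $\widehat{\mathsf{T}}_{\beta,K_1,K_2}$ near each element of $\operatorname{Sp}(\pm\sqrt{-\Delta_{[\beta]}})$, as asserted.

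The real content is already contained in Theorem~\ref{l:regularity-poincare}, so the argument above is an essentially routine distributional reformulation. Consequently, the main obstacle is not the present statement but the preliminary task of establishing the refined continuation with $\mathcal{C}^k$ control up to $\{\operatorname{Re}(s)=0\}$ away from the singular set; this presumably relies on the anisotropic Sobolev space framework for the geodesic vector field on $\T^d$ announced in the abstract.
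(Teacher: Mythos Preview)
Your proposal is correct and follows exactly the route the paper takes: Theorem~\ref{e:maintheo-realaxis} is stated as a direct application of Theorem~\ref{l:regularity-poincare} together with the boundary-value identity~\eqref{e:bound-value-holo}, and the paper gives no separate proof beyond that observation. Your unpacking of the argument (identifying $\operatorname{Sp}(\pm\sqrt{-\Delta_{[\beta]}})=\{\pm|\xi-[\beta]|:\xi\in\Z^d\}$, invoking the $\mathcal{C}^k$ extension away from this set, and reading off the singularities from the explicit model terms $\mathsf{F}_\alpha$ via their distributional boundary values) matches the paper's reasoning and its explicit description of the singularities in Theorem~\ref{l:regularity-poincare} and~\eqref{e:def-boundaryvaluesF}.
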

 As the singular support of the geometric distribution $\widehat{\mathsf{T}}_{\beta,K_1,K_2}$ is given by the eigenvalues of the magnetic Laplacian, it does not depend on the convex sets $K_1,K_2$. We would like to remark that Theorem \ref{e:maintheo-realaxis} looks like a trace formula and we refer to paragraph~\ref{sss:trace} for a more detailed comparison. The precise form of the singularities depends on the geometry of the convex sets $K_1$ and $K_2$ -- see Theorem~\ref{l:regularity-poincare} for precise expressions of the leading term at each singularity. We emphasize that the singularities are obtained as the boundary values of simple holomorphic functions as in~\cite[Th.3.1.11]{Hormander90}. 

In view of having simpler singularities and motivated by the recent developments on crystalline measures~\cite{Meyer2022}, one can symmetrize (and renormalize) the distribution $\mathsf{T}_{\beta,K_1,K_2}(t)$. This is the content of our last main result which extends in our geometric setup the Guinand--Meyer summation formula~\cite[Th.~5]{Meyer2016}.
 \begin{theo}[Guinand--Meyer type formula]\label{e:GuinandWeil} Let $K_1$ and $K_2$ be two strictly convex compact sets of $\IR^d$ ($d\geq 2$) and let $\beta$ be a closed and real-valued one form on $\IT^d$ such that $[\beta]\notin H^1(\IT^d,\IZ)$. Let $\mu$ be the signed measure defined as 
\begin{eqnarray*}
\boxed{\mu(t)= \sum_{\gamma\in\mathcal{P}_{K_1,K_2}:\ell(\gamma)>T_0}\frac{e^{i\int_\gamma\beta}}{\ell(\gamma)^{\frac{d-1}{2}}} \delta(t-\ell(\gamma))+(-i)^{d-1}\sum_{\gamma\in\mathcal{P}_{K_2,K_1}:\ell(\gamma)>T_0}\frac{e^{-i\int_\gamma\beta}}{\ell(\gamma)^{\frac{d-1}{2}}} \delta(t+\ell(\gamma)),}
\end{eqnarray*} 
where we take the same orientation conventions for\footnote{In particular, both sets are a priori distinct.} $\mathcal{P}_{K_2,K_1}$ and $\mathcal{P}_{K_1,K_2}$. 

Then, there exist complex numbers $(c_\lambda)_{\lambda\in \operatorname{Sp}(\pm \sqrt{-\Delta_{[\beta]}})}$ and $r$ belonging to $L^p_{\operatorname{loc}}(\IR)$ for every $1\leq p<\infty$ such that
\begin{eqnarray*}
\boxed{\widehat{\mu}(\tau)=\sum_{\lambda\in \operatorname{Sp}(\pm \sqrt{-\Delta_{[\beta]}})} c_\lambda \delta(\tau-\lambda)+r.}
\end{eqnarray*} 
\end{theo}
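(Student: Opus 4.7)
The plan is to obtain $\widehat{\mu}$ as the boundary value on $\R$ of a holomorphic function on the right half--plane, and to read off its singularities from Theorem~\ref{l:regularity-poincare}. Decompose $\mu = \mu_+ + \mu_-$, where $\mu_+$ is supported in $(T_0, +\infty)$ (coming from $\mathcal{P}_{K_1,K_2}$) and $\mu_-$ in $(-\infty, -T_0)$ (coming from $\mathcal{P}_{K_2,K_1}$). Using the Gamma-integral representation $\ell^{-(d-1)/2} = \Gamma((d-1)/2)^{-1} \int_0^\infty u^{(d-3)/2} e^{-u\ell}\, du$ (which is valid for $d \geq 2$, with an obvious modification when $d=2$), the Laplace transform of $\mu_+$ can be recast as
\begin{align*}
\widetilde{\mathcal{Z}}_\beta(K_1,K_2,s) &:= \sum_{\gamma \in \mathcal{P}_{K_1,K_2},\, \ell(\gamma)>T_0} \frac{e^{i\int_\gamma \beta}}{\ell(\gamma)^{(d-1)/2}}\, e^{-s\ell(\gamma)} \\
 &= \frac{1}{\Gamma((d-1)/2)} \int_0^\infty u^{(d-3)/2}\, \mathcal{Z}_\beta(K_1,K_2, s+u)\, du, \qquad \operatorname{Re}(s) > 0,
\end{align*}
with an analogous expression for $\mu_-$ involving $\mathcal{Z}_\beta(K_2,K_1,\cdot)$. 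By~\eqref{e:bound-value-holo} (and Proposition~\ref{prop:boundaryvaluesholo}), one has in $\mathcal{S}'(\R)$:
\[
\widehat{\mu}(\tau) = \lim_{\alpha \to 0^+} \Bigl[ \widetilde{\mathcal{Z}}_\beta(K_1,K_2, \alpha+i\tau) + (-i)^{d-1}\, \overline{\widetilde{\mathcal{Z}}_\beta(K_2,K_1, \alpha+i\tau)} \Bigr].
\]

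Next I would feed the precise singular expansions from Theorem~\ref{l:regularity-poincare} into this integral representation. Near any $i\lambda$ with $\lambda \in \operatorname{Sp}(\sqrt{-\Delta_{[\beta]}})$ (the assumption $[\beta]\notin H^1(\T^d,\Z)$ rules out $\lambda=0$), $\mathcal{Z}_\beta(K_1,K_2,s)$ is a sum of (half-)integer powers $(s - i\lambda)^{-(d+1)/2 + k}$ plus a $\log(s - i\lambda)$ term when $d$ is odd. The Beta-integral identity $\int_0^\infty u^{(d-3)/2} (v+u)^{-\nu}\, du = B\bigl((d-1)/2,\, \nu - (d-1)/2\bigr)\, v^{(d-1)/2 - \nu}$, analytically continued in $\nu$, shifts each power $(s-i\lambda)^{-\nu}$ to $(s-i\lambda)^{(d-1)/2 - \nu}$. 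Consequently, the leading pole of order $(d+1)/2$ of $\mathcal{Z}_\beta$ is converted to a \emph{simple} pole of $\widetilde{\mathcal{Z}}_\beta$, while subleading terms become logarithms or bounded powers. As $\alpha \to 0^+$, a simple pole $A/(s - i\lambda)$ yields the boundary value $\pi A\, \delta(\tau - \lambda) - i A\, \operatorname{p.v.}(\tau - \lambda)^{-1}$, and $\log(s - i\lambda)$ and $(s - i\lambda)^k$ for $k \geq 0$ yield boundary values in $L^p_{\operatorname{loc}}(\R)$ for every $p < \infty$.

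The decisive step is that the principal-value pieces---and, more generally, all non-Dirac, non-$L^p_{\operatorname{loc}}$ contributions---cancel between the two pieces of $\widehat{\mu}$, thanks to the phase $(-i)^{d-1}$. Reversing the orientation of an orthogeodesic interchanges the roles of $K_1$ and $K_2$, so the residues of $\widetilde{\mathcal{Z}}_\beta(K_2,K_1,s)$ at $s = i\lambda$ are related to those of $\widetilde{\mathcal{Z}}_\beta(K_1,K_2,s)$ at the same point by an explicit parity-dependent phase, read off from the residue formula~\eqref{e:residues} combined with the Gamma factors produced by the Beta integral above. The constant $(-i)^{d-1}$ is precisely the one making the imaginary parts of the leading singularities (hence the principal values) opposite for the two sums, so they cancel after summation and only real Dirac masses at $\lambda \in \operatorname{Sp}(\pm\sqrt{-\Delta_{[\beta]}})$ plus an $L^p_{\operatorname{loc}}$ remainder survive. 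This mirrors the symmetrization trick underlying the classical Guinand--Meyer formula in~\cite[Th.~5]{Meyer2016}.

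The hardest step will be verifying that the phase $(-i)^{d-1}$ cancels \emph{all} non-Dirac, non-$L^p_{\operatorname{loc}}$ contributions and not merely the leading one: this requires careful bookkeeping of the $\Gamma$-function factors produced by the fractional integration, of the parity-dependent alternation between integer powers, half-integer powers and logarithms appearing in Theorem~\ref{l:regularity-poincare}, and of the orientation conventions relating $\mathcal{P}_{K_1,K_2}$ and $\mathcal{P}_{K_2,K_1}$. A secondary technical point will be to establish uniform $L^p_{\operatorname{loc}}$-summability of the remainder terms as $\lambda$ ranges over $\operatorname{Sp}(\sqrt{-\Delta_{[\beta]}})$, which should follow from the Weyl-type polynomial growth of the spectrum together with the polynomial weight $\ell(\gamma)^{-(d-1)/2}$ built into the definition of $\mu$.
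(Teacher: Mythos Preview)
Your strategy is valid in spirit but takes a substantially more circuitous route than the paper. The paper does \emph{not} pass through the fractional-integral representation $\widetilde{\mathcal Z}_\beta = \Gamma((d-1)/2)^{-1}\int_0^\infty u^{(d-3)/2}\,\mathcal Z_\beta(\,\cdot\,+u)\,du$; instead it simply re-runs the proof of Theorem~\ref{l:regularity-poincare} (equivalently Theorem~\ref{t:general-laplace}) for the renormalised series $\widetilde{\mathcal Z}_\beta$ itself. The sole effect of the extra factor $t^{-(d-1)/2}$ in the time integral is to shift the index $\alpha$ in the elementary functions $F_{\chi_\infty,\alpha}$ of Lemma~\ref{l:f-phi-lam-Laplace} by $(d-1)/2$. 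Thus the most singular term, which for $\mathcal Z_\beta$ was $\mathsf F_{-(d-1)/2}(s\mp i|\xi-\beta_0|)$, becomes $\mathsf F_{0}(s\mp i|\xi-\beta_0|)=1/(s\mp i|\xi-\beta_0|)$, a \emph{simple} pole; every subleading term has $\alpha\geq 1$ and hence boundary value in $L^p_{\mathrm{loc}}$ for all $p<\infty$ by Lemma~\ref{l:f-phi-lam-Laplace}. One then applies Sokhotski--Plemelj $\lim_{x\to0^+}[\frac1{y+ix}-\frac1{y-ix}]=-2i\pi\delta_0(y)$ to the appropriately phased combination, with an orientation flip $(\sigma_1,\sigma_2)\mapsto(-\sigma_1,-\sigma_2)$ so that the explicit coefficients in front of the two simple poles coincide.

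Your Beta-integral manipulation achieves exactly this index shift, so the endpoint is the same; but it adds genuine technical overhead. First, the expansion of $\mathcal Z_\beta(s+u)$ from Theorem~\ref{l:regularity-poincare} is only \emph{local} near each $i\lambda$, while your integral runs over all $u>0$; you would need to localise near $u=0$ and argue that the tail contributes only smoothly. Second, the Beta integral $\int_0^\infty u^{(d-3)/2}(v+u)^{-\nu}\,du$ diverges or hits $\Gamma$-poles for several of the subleading $\nu$'s, so ``analytic continuation in $\nu$'' must be made precise. Third, your ``hardest step'' --- cancelling all non-Dirac, non-$L^p_{\mathrm{loc}}$ pieces --- is in fact a non-issue in the paper's framing: after the index shift there is a \emph{single} non-$L^p$ term (the principal value from the simple pole), and the phase/orientation matching handles it directly. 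Finally, the reference to~\eqref{e:residues} is off: those are the residues of the \emph{Mellin} transform at $s=1,\dots,d$, not the coefficients of the Poincar\'e-series singularities on $i\mathbb R$; the relevant coefficients are the ones in Item~\ref{i:near-ir0} of Theorem~\ref{l:regularity-poincare}.
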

In the case where $\beta\in H^1(\IT^d,\IZ)$, the result would be similar except for an extra singularity of the Fourier transform at $\tau=0$ that may be more singular than the Dirac distribution. Following our proof, one could in fact describe explicitly this singularity at $\tau=0$ even if we do not carry out the calculation explicitly. In the case where $K_1$ and $K_2$ are distinct points and where $d=3$, it was in fact proved that $r\equiv 0$ in~\cite[Th.~5]{Meyer2016}. The proof of this last fact is briefly recalled in \S\ref{sss:guinand} using our formalism. We also explain how it can be extended to higher dimensions (when $d$ is odd) to give rise to \emph{crystalline distributions}, as first shown in~\cite[\S2]{LevReti2021}. We finally deduce from this discussion that $r$ is not identically $0$ as soon as $d\geq 5$, even in the case where $K_1,K_2$ are points. See also~\cite{Guinand59,LevOlevskii2016,RadchenkoViazovska2019} for earlier related results and~\cite{Meyer2022} for a review on recent developments in that direction.

\subsection{Related results} Before discussing the relation of these results to the analytical properties of geodesic flows, let us comment how these applications to zeta functions and Poisson formulas in convex geometry compare with similar properties and objects appearing in different contexts, most notably in arithmetic, spectral geometry and hyperbolic geometry.

\subsubsection{Comparison with zeta functions from analytic number theory}\label{ss:number}

The zeta functions appearing in Theorem~\ref{t:maintheo-Mellin} are natural generalizations in the setup of convex geometry of the Hurwitz zeta function~\cite[Ch.~12]{Apostol1998}:
$$\zeta_{\text{Hur}}(q,s):=\sum_{\xi\in \IZ: \xi\neq- q}\frac{1}{|\xi+q|^s},$$
where $q$ is some fixed element in $[0,1)$. In the case $q=0$, this is nothing else than twice the Riemann zeta function $\zeta_R(s)$. It is well known that these functions extend meromorphically from $\{\text{Re}(s)>1\}$ to $\IC$ with a simple pole at $s=1$ whose residue is equal to $2$. The relation with our zeta functions is as follows. Assume that both $K_1$ and $K_2$ are points in $\IT^1=\IR/2\pi\IZ$ that are at a distance $\ell=2\pi \min\{ q,1-q\}$ of each other. Then, one can verify that $\zeta_0(K_1,K_2,s)=(2\pi)^{-s}\zeta_{\text{Hur}}(q,s)$. The fact that we are in higher dimensions is responsible for the presence of extra poles at $s=1,\ldots d$ and Theorem~\ref{t:maintheo-residues} gives us an explicit expression of their residues in terms of geometric quantities. Due to our use of stationary phase arguments, we note that our proof does not work (strictly speaking) for $d=1$ even if the functions are of the same nature from the perspective of convex geometry.

Here we choose to call our functions generalized Epstein zeta functions in analogy with the zeta functions defined by Epstein~\cite[Eq.~(2)]{Epstein03} as higher-dimensional analogues of the Riemann zeta function:
$$\zeta_{\text{Eps}}(q,\beta,s):=\sum_{\xi\in\IZ^d\setminus\{-q\}}\frac{e^{2i\pi\xi\cdot\beta}}{|\xi+q|^s}.$$
where $q$ and $\beta$ are two fixed elements in $\IR^d$ and $\vert . \vert$ is the Euclidean norm. When $K_1$ and $K_2$ are reduced to two points $ x_1$ and $x_2$, one has 
$$\zeta_{\text{Eps}}\left(\frac{x_2-x_1}{2\pi},\beta,s\right)=(2\pi)^{s}e^{i(x_1-x_2)\cdot\beta}\zeta_{\beta}(K_1,K_2,s),$$ 
where $\beta\in\IR^d$ is identified with a closed one form, and $\zeta_\beta$ is defined in~\eqref{e:epstein-zeta-intro}. Hence, up to a multiplicative factors, our zeta functions $\zeta_{\beta}(K_1,K_2,s)$ are the natural extension of Epstein zeta functions when one considers general convex subsets of $\IR^d$ instead of points. It is well-known that the ``classical'' Epstein zeta functions extend meromorphically to the whole complex plane with at most a simple pole at $s=d$. Theorems~\ref{t:maintheo-Mellin} and~\ref{t:maintheo-residues} show that, for more general convex sets, one may also have poles at $s=1,\ldots, d-1$.
Note that we recover the continuation of the ``classical'' Epstein case since, if $K_1,K_2$ are both points, $V_{\ell}(K_1-K_2)=0$ for all $1\leq \ell \leq d-1$.

In Theorem~\ref{t:maintheo-Mellin}, we saw that if we weight our series with some unitary twist involving a closed and real-valued one form $\beta$, then our zeta functions have in fact a holomorphic extension as soon as $[\beta]\notin H^1(\IT^d,\IZ)$. These unitary twists can be thought of as geometric analogues of the (arithmetic) twisting factors used when one extends the Riemann zeta function to more general Dirichlet series~\cite[Ch.~12]{Apostol1998}. Recall that these are defined in the following manner. Fix a positive integer $D$ and a morphism $\chi: (\IZ/D\IZ)^*\rightarrow \IS^1:=\{z\in\IZ:|z|=1\}$ (the Dirichlet character). Such a morphism can be extended into a $D$-periodic function $\chi:\IZ\rightarrow \IS^1$ by letting $\chi(\xi)=0$ for every $\xi$ such that $\xi$ and $D$ are not coprime. Dirichlet series (or $L$-functions of weight $\chi$) are then defined as
$$L(\chi,s):=\sum_{\xi\in\IZ^*}\frac{\chi(\xi)}{|\xi|^s}=\sum_{r=1}^D\sum_{q\in\IZ\setminus\{-r/D\}}\frac{\chi(qD+r)}{|qD+r|^s}=\frac{1}{D^s}\sum_{r=1}^D\chi(r)\zeta_{\text{Hur}}\left(s,\frac{r}{D}\right),$$
and they have a holomorphic extension to $\IC$ except for the trivial character $\chi=1$ where one has a simple pole at $s=1$. Understanding the \emph{holomorphic} continuation of more general $L$-functions~\cite{Artin24} on algebraic number fields (for arbitrary irreducible representations) is in fact a classical topic in analytic number theory: this is for instance at the heart of Artin's conjecture. Here, we emphasize that our unitary twists do not have any particular arithmetic meaning and our (strictly) convex sets are a priori arbitrary. Despite that and thus for seemingly different reasons, these twisting factors have the same effect as Dirichlet characters for the Riemann zeta function in the sense that, under some natural ``non-rationality'' assumption on $\beta$, our zeta functions extend holomorphically to $\IC$.

\subsubsection{Relation with trace formulas}
\label{sss:trace}
Our main result on the singular support of the oscillatory series $\widehat{\mathsf{T}}_{0,K_1,K_2}(t)=\sum_\gamma e^{-it\ell(\gamma)}$ is very reminiscent to the celebrated wave trace formula proved by Chazarain~\cite{Chazarain74} and Duistermaat--Guillemin~\cite{DuistermaatGuillemin75} extending previous results by Selberg~\cite{Selberg56} and Colin de Verdi\`ere~\cite{ColindeVerdiere73}. These formulas may be seen as generalizations in spectral geometry of the Poisson summation formula, letting $\Sp(\sqrt{-\Delta_g})$ denote the spectrum of the square root of the Laplace-Beltrami operator $\Delta_g$, one considers the series
$$\widehat{\mathsf{T}}(t)=\sum_{\lambda\in \Sp(\sqrt{-\Delta_g})} e^{-it\lambda }\in \mathcal{S}^\prime(\mathbb{R}) ,$$ 
which converges in tempered distributions thanks to the Weyl law. The wave trace formula states that the singular support of the distribution $\widehat{\mathsf{T}}$ is exactly the set of lengths of periodic geodesic curves for the metric $g$. Furthermore, when the geodesic flow is nondegenerate, they described the singularity of $\widehat{\mathsf{T}}$ at each period in terms of geometric data attached to the periodic orbits and of distributions of the form $(t\pm \ell+i0)^{-1}$. In other words, the quantum spectrum determines the classical length spectrum and these wave trace formulas are often referred as generalized Poisson formulas. Recall from~\cite[p.72]{Hormander90} that the singularities in this formula can also be rewritten as follows
\begin{equation}\label{e:finite-part}\lim_{y\rightarrow 0^+}\frac{1}{t\pm \ell +iy}=(t\pm \ell+i0)^{-1}=\text{FP}\left(\frac{1}{t\pm\ell}\right)-i\pi\delta(t\pm \ell),\end{equation}
where $\text{FP}\left(.\right)$ is the finite part of the (non-integrable) function $(t\pm \ell)^{-1}$.

Theorem~\ref{e:maintheo-realaxis} has a similar flavour except that the correspondence is in the other sense and that it involves orthogeodesics of two given convex sets. More precisely, we start  from the length orthospectrum between two convex sets, we then form the series $\widehat{\mathsf{T}}_{\beta,K_1,K_2}(t)=\sum_\gamma e^{i\int_\gamma\beta}e^{-it\ell(\gamma)}$, and its singular support coincides with the quantum spectrum $\Sp(\pm i\sqrt{-\Delta_{[\beta]}})$ where $\Delta_{[\beta]}$ is the magnetic Laplacian. Another notable difference is that the singularities are more complicated in the sense that they involve distributions of the form $(t\pm \lambda-i 0)^{-k}$ with $k\geq 1$ that may not even be an integer if $d$ is even. We emphasize from~\eqref{e:finite-part} that, as in the Chazarain--Duistermaat--Guillemin formula, the singularities of $\widehat{T}_{\beta,K_1,K_2}$ are {\em not} purely Dirac type distributions (and their derivatives). This is due to the fact that the counting measure $T_{\beta,K_1,K_2}$ is supported on the half--line, hence its Fourier transform $\mathcal{Z}_{\beta,K_1,K_2}(it)$ must have its ($C^\infty$ and analytic) wave front set contained in the half cotangent cone $\{(t;\tau); \tau<0\}\subset T^*\mathbb{R}$. This prevents the presence of purely $\delta^{(k)}(t)$--like singularities whose contribution to the wave front set would contain both positive and negative frequencies $\tau$. 

As alluded above, the formulation in Theorem~\ref{e:GuinandWeil} is itself motivated by recent developments on crystalline measures~\cite{Meyer2022}, i.e. measures on $\IR$ carried by a discrete locally finite set, belonging to $\mathcal{S}'(\R)$, whose Fourier transform is still a measure carried by a discrete locally finite set. Here, we started from a complex valued measure carried by a discrete locally finite set on $\IR$ (defined from our convex orthospectrum) and we ended up with a \emph{Radon measure} carried by the spectrum of the magnetic Laplacian (which is a discrete locally finite set) modulo some (absolutely continuous) remainder lying in $L^p_{\text{loc}}$. Hence, in general, it does not fall into the category of crystalline measures due to this a priori nonvanishing remainder.

\subsubsection{Poincar\'e series on negatively curved manifolds}

Poincar\'e series appear naturally when one studies counting problems on a negatively curved manifold $(M,g)$~\cite{ParkkonenPaulin2016,BroiseParkkonenPaulin2019}. In that context, one aims for instance at couting the number of common perpendicular geodesic curves of two convex subsets of the universal cover $(\tilde{M},\tilde{g})$. Due to the exponential growth of the number of such orthogeodesics, it is natural to consider $e^{-s\ell(\gamma)}$ rather than $\ell(\gamma)^{-s}$ in order to ensure the convergence of the sums. The study of the meromorphic continuation of Poincar\'e series on compact manifolds of \emph{constant} negative curvature goes back to the works of Huber in the late fifties~\cite[Satz A]{Huber56},~\cite[Satz 2]{Huber59}. In that setting, one can obtain the meromorphic continuation through the relation between Poincar\'e series and the spectral decomposition of the Laplacian. In the case of variable negative curvature, the relation with the Laplacian is less explicit and one rather needs to exploit the ergodic properties of the geodesic flow directly. This approach was initiated by Margulis in~\cite{Margulis69, Margulis04}. Using this dynamical approach and the theory of Pollicott-Ruelle resonances, two of the authors recently proved the meromorphic continuation of Poincar\'e series on manifolds of \emph{variable} negative curvature~\cite{DangRiviere20d}. Here, as in the works of Huber, we will use the tools from harmonic analysis that are available on the torus to study the continuation of Poincar\'e series. Yet, rather than making the connection with the Laplacian\footnote{The fact that we aim at dealing with general convex sets (and not only points) seems to prevent us from working with the Laplacian on $\IT^d$.}, we will directly study the analytical properties of the geodesic flow on the torus when acting on spaces of distributions with anisotropic regularity as it was the case for negatively curved manifolds. See \S\ref{ss:strategy} for more details. In the negatively curved setting, it is shown in~\cite{DangRiviere20d} that one has meromorphic continuation beyond the threshold $\text{Re}(s)=h_{\text{top}}.$ In the case of the flat torus, Theorem~\ref{t:maintheo-laplace} shows that there is barrier at $\text{Re}(s)=h_{\text{top}}=0$ where logarithmic or square root singularities may occur at certain points that correspond to the eigenvalues of the (magnetic) Laplacian. Outside these singularities, we are however able to continuously/smoothly extend the function up to $\text{Re}(s)=0$. As already alluded, our study is intimately related to the analytic properties of the geodesic vector field 
\begin{align}
\label{e:def-V}
V:=\theta\cdot\partial_x
\end{align} on the unit tangent bundle
$$S\IT^d:=\{(x,\theta)\in\IT^d\times\IS^{d-1}\}.$$
When studying the resolvent of this operator, we will verify that there is a barrier at $\text{Re}(s)=0$ when trying to make some analytic continuation. This phenomenon is retated  to observations made by Dyatlov and Zworski at the end of the introduction of~\cite{DyatlovZworski15}, where stochastic perturbations of geodesic vector fields on Anosov manifolds are studied. In that reference, the authors studied stochastic perturbations of geodesic vector fields on Anosov manifolds. In the opposite setup of the flat $2$-torus, they described the spectrum of $P_\epsilon:=V+\epsilon\Delta_{S\IT^2}$ (with $V$ given by~\eqref{e:def-V}) and they observed that, in the limit $\epsilon\rightarrow 0^+$, the spectrum of $P_\epsilon$ fills $Y$-shaped lines in the halfplane $\{\text{Re}(s)\leq 0\}$ that are based at the same singularities as our Poincar\'e series. See e.g. Figure $3$ in that reference and the companion article~\cite{BonthonneauDangLeautaudRiviere22} for more details on this issue.

\subsubsection{Orthospectrum identities in hyperbolic geometry}
Finally, let us mention the following related problem in hyperbolic geometry. Consider some hyperbolic manifold $X$ with nonempty totally geodesic smooth boundary. In that framework, an orthogeodesic $\gamma$ is a geodesic arc which is properly immersed in $X$ and which is perpendicular to $\partial X$ at its endpoints. The lengths of these orthogeodesics verify certain identities connecting them to the volume of the boundary of $X$ (Basmajian’s identity)~\cite{Basmajian93}:
$$\text{Vol}(\partial X)=\sum_{\gamma} V_{d-1}\left(\ln\left(\text{coth}\frac{\ell(\gamma)}{2}\right)\right),$$
where $V_{d-1}(r)$ is the volume of a ball of radius $r$ on the hyperbolic space $\mathbb{H}^{d-1}$ (with the convention that $V_1(r)=2r$). Similar equalities also relate this length orthospectrum with the volume of the unit tangent bundle $SX$ (Bridgeman-Kahn's identity~\cite{BridgmanKahn2010}) and the analogues of these results on manifolds with cusps are due to McShane~\cite{McShane91, McShane98}. We refer to~\cite{BridgemanTan2016} for a recent review on this topic. In some sense, this formula has the same flavour as Theorem~\ref{t:maintheo-residues} as it relates some length orthospectrum with some volumes associated with our convex. However, while the right-hand side of Basmajian's identity converges in a standard sense, our zeta functions are defined by analytic continuation and the volumes appear as the residues of these functions.

\section{Analytical results: a functional setup for the geodesic vector field}
\label{s:mainresult}

Let us now discuss the relation of these problems from convex geometry with the analytical properties of geodesic flows on flat tori and come to the statement of our main analytical results. For simplicity, we now restrict ourselves to the case where $\beta=0$ and where we look at geodesic arcs pointing outside $K_1$ and inside $K_2$ (as in Theorem~\ref{t:maintheo-residues}). 

\subsection{Lifting the problem to the unit tangent bundle}
In order to prove Theorems \ref{t:maintheo-Mellin}, \ref{t:maintheo-residues} and \ref{t:maintheo-laplace}, one way is to rewrite the series we are interested in under an integral form as follows:
\begin{equation}\label{e:formule-magnifique}
 \sum_{\gamma\in\mathcal{P}_{K_1,K_2}}\chi(\ell(\gamma))= \int_{\IR^d}\delta_{[0]}(x)\left(\int_0^{+\infty}\chi(t)\delta_{\partial(K_1-K_2+tB_d)}(x,|dx|)|dt|\right),
\end{equation}
where $\chi$ is a nice enough function on $\IR_+^*$ (in the applications we have in mind, $\chi \in \mathcal{C}^{\infty}_c(\IR_+^*)$ or $\chi(t)=t^{-s}$ or $\chi(t)=e^{-st}$), where $\delta_{\partial(K_1-K_2+tB_d)}(x,|dx|)$ is the volume measure on $\partial(K_1-K_2+tB_d)$ induced by the Euclidean structure on $\IR^d$ and where
\begin{equation}\label{e:delta-periodique}\delta_{[0]}(x)=\frac{1}{(2\pi)^d}\sum_{\xi\in\IZ^d}e^{i\xi\cdot x}.
 \end{equation}
 A precise signification of the right hand-side of~\eqref{e:formule-magnifique} together with a proof of this formula are given in Appendix~\ref{a:proof}. With that expression at hand, proving our main results on convex geometry amounts to discuss the allowed functions $\chi$ in~\eqref{e:formule-magnifique}, to decompose $\delta_{[0]}$ according to~\eqref{e:delta-periodique} and to analyze the oscillatory integrals that come out. Yet, as explained in the beginning of the article, rather than doing that directly, we will obtain these results as a by-product of a more general analysis\footnote{Similar oscillatory integrals will of course appear in our analysis.} of the geodesic vector field on $S\IT^d$. In fact, since the seminal work of Margulis~\cite{Margulis69, Margulis04}, it is well understood that on negatively curved manifolds, it is convenient to lift this kind of geometric problems to the unit cotangent bundle of the manifold. For instance, properties of Poincar\'e series are related to the asymptotic properties of the geodesic flow, and more specifically to its mixing properties. In a recent work~\cite{DangRiviere20d}, two of the authors formulated this relation using the theory of De Rham currents and we will see that this still makes sense in the case of flat tori where the curvature vanishes everywhere. See Section~\ref{s:poincare} for details. Let us explain this connection without being very precise on the sense of the various integrals. We denote by $N(K_i)$ the outward unit normal to $K_i$ inside $S\IT^d$:
$$N(K_i):=\left\{(\mathfrak{p}(x),d\mathfrak{p}(x)\theta),  x\in \partial K_i,  \theta\text{ directly orthogonal to } \partial K_i \ \text{at}\ x\right\}.$$
Then, given any nice enough function $\chi(t)$ (say again in $\mathcal{C}^{\infty}_c(\IR_+^*)$, $t^{-s}$ or $e^{-st}$), we will prove that
\begin{equation}\label{e:current-zeta}\sum_{\gamma\in\mathcal{P}_{K_1,K_2}}\chi(\ell(\gamma))=(-1)^{d-1}\int_{S\IT^d}[N(K_1)] \wedge\int_{\IR}\chi(t)\iota_Ve^{-tV*}[N(K_2)]|dt|.
 \end{equation}
where $[N(K_i)]$ is the current of integration on $N(K_i)$ and where
\begin{align}
\label{e:geodesic-flow}
e^{tV}:(x,\theta)\in S\IT^d \rightarrow (x+t\theta, \theta)\in S\IT^d
\end{align}
is the geodesic flow. Compared with~\eqref{e:formule-magnifique}, this new formula has the advantage to explicitly involve the geodesic vector field. This current theoretic approach also allows to deal directly with the exponential weights appearing in our zeta functions together with the more general orientation conventions considered in the introduction. On the contrary, the approach using~\eqref{e:formule-magnifique} (performed in Appendix~\ref{a:proof}) seems to only apply (at least directly) to the outgoing/ingoing convention of Theorem~\ref{t:maintheo-residues}.
\begin{rema}
Formula~\eqref{e:current-zeta} derives from the observation that elements in $\mathcal{P}_{K_1,K_2}$ are in one-to-one correspondance with the geodesic orbits in $S\IT^d$ joining the two Legendrian submanifolds $N(K_1)$ and $N(K_2)$. In the framework of symplectic topology, such orbits are referred as the \emph{Reeb chords} of these two Legendrian submanifolds.
\end{rema}

\subsection{Defining a proper functional framework for the geodesic flow}\label{ss:strategy}
Hence, rather than proceeding directly to the calculation of zeta functions from~\eqref{e:formule-magnifique}, we choose to view this as a consequence of analytical properties of geodesic vector fields. More precisely, we will define appropriate functional frameworks to study the operators appearing in~\eqref{e:current-zeta}:
$$\widehat{\chi}(-iV):=\int_{\IR}\chi(t)e^{-tV*}|dt|,$$
where $\chi$ is a nice enough function (say $e^{-st}$ or $t^{-s}$). In the end, our main geometrical theorems on length orthospectra for convex bodies will be simple corollaries of this analysis -- see Section~\ref{s:poincare}. Even if slightly longer, we believe that this sharp analysis, which is the content of Sections~\ref{s:analysis} to~\ref{s:laplace-mellin}, is interesting on its own and that it allows to capture the dynamical mechanism at work when proving this kind of results. Along the way, it also has the advantage of applying directly to other questions such as equidistribution properties of the geodesic flow. See Theorems~\ref{t:maintheo-correlations} or~\ref{t:twisted-correlations} for instance. 

On top of these applications, this analysis is motivated by the study of similar questions arising on negatively curved manifolds where one defines appropriate spaces of anisotropic Sobolev distributions in order to make sense of the spectrum of the geodesic vector field: the so-called Pollicott-Ruelle spectrum~\cite{Ruelle76, Pollicott85}. More precisely, given any $N>0$, one aims at defining a Banach space $\mathcal{B}_N$ such that the geodesic vector field (viewed as an unbounded order $1$ differential operator) has discrete spectrum on $\{\text{Re}(s)>-N\}$. One of the difficulties when analyzing such an operator on the unit tangent bundle $SX$ of some manifold $(X,g)$ is that its symbol $H(x,\theta;\xi)=\xi(V(x,\theta))$ is not elliptic and that it vanishes on the noncompact set:
$$\mathcal{C}:=\{(x,\theta;\xi)\in T^*SX: \xi(V(x,\theta))=0\}.$$
In the case of a negatively curved manifold, this characteristic region is generated by two subbundles: the unstable direction and the stable one. Using this duality, one is able to construct Banach (or Hilbert) spaces adapted to the operator $V$ by requiring some negative (resp. positive) Sobolev regularity along the unstable (resp. stable) direction and by exploiting the contraction properties along these directions. The construction of such functional spaces was made explicit through different methods in various geometric contexts: Anosov flows~\cite{Liverani04, ButterleyLiverani07, Tsujii10, FaureSjostrand11, Tsujii12, GiuliettiLiveraniPollicott13, DyatlovZworski16, FaureTsujii17}, Axiom A flows~\cite{DyatlovGuillarmou16, Meddane21}, billiard dynamics~\cite{BaladiDemersLiverani18, BaladiDemers20}, Morse-Smale flows~\cite{DangRiviere20a}, manifolds with cusps~\cite{GuedesBonthonneauWeich17}, analytic Anosov flows~\cite{Jezequel21, GuedesBonthonneauJezequel20}, etc. We also refer the reader to~\cite{Baladi18} for a detailed account of the (related) case of hyperbolic diffeomorphisms. 

In our framework, the geodesic flow does not belong to any of these classes of flows as it is an integrable dynamical system without any hyperbolic property. Despite that and using the fact that the curvature is $0$ (and thus nonpositive), there is a notion of stable and unstable bundles~\cite[Ch.~3]{Ruggiero07}. Yet, as opposed to the negatively curved setting, both bundles are equal and they do not generate the whole characteristic region. They correspond in fact to the tangent space to $\IT^d$ intersected with $\mathcal{C}$. See~\S\ref{ss:riemannian} for details. As we will recall in~\S\ref{ss:riemannian}, this bundle is in some sense attractive/repulsive for the lifted dynamics on the cotangent bundle to $SX$. This observation is somehow enough to implement similar ideas (with of course also some major differences) as for geodesic flows on negatively curved manifolds and in order to define spaces with anisotropic regularity adapted to the geodesic flow on $S\IT^d$. On $S\T^d$, the ``mixing'' properties of the geodesic flow are much weaker than for geodesic flows on negatively curved manifolds, but they turn out to be sufficient in view of proving our main results using formula~\eqref{e:current-zeta}. To that aim, we will use tools from harmonic analysis that are available on the torus in order to construct the spaces adapted to $V$. In this respect, our approach is in some sense reminiscent of the one used by Ratner~\cite{Ratner87} to study the decay of correlations on hyperbolic surfaces. Even though anisotropic Sobolev spaces are not explicitly mentioned in her analysis, the Fourier type reduction made in~\cite[\S2]{Ratner87} and the way it is handled there is close to the strategy we will follow in Sections~\ref{s:correlation-diff-forms} and~\ref{s:analysis} of the present work. 

\subsection{Anisotropic Sobolev spaces}

Let us now describe more precisely the analytical properties we are aiming at in the simplified setting where we consider functions rather than currents of integrations as in~\eqref{e:current-zeta}. We define anisotropic Sobolev spaces of distributions on $S\IT^d$ as follows
$$\mathcal{H}^{M,N}(S\IT^d):=\left\{u\in\ml{D}^\prime(S\IT^d):\ \sum_{\xi\in\IZ^d} \langle \xi \rangle^{2N}\|\widehat{u}_\xi\|_{H^M(\IS^{d-1})}^2<+\infty\right\}, \quad \langle \xi \rangle = (1+|\xi|^2)^{\frac{1}{2}} , $$
where $(M,N)\in\IR^2$ and  
$$u(x,\theta)=\sum_{\xi\in\IZ^d}\widehat{u}_\xi(\theta)\frac{e^{i\xi\cdot x}}{(2\pi)^{\frac{d}{2}}},$$
with $\widehat{u}_\xi\in\ml{D}^{\prime}(\IS^{d-1})$, and where $\|.\|_{H^{M}}$ denotes the standard Sobolev norm on $\IS^{d-1}$. Roughly speaking, $u = u (x,\theta)\in \mathcal{H}^{M,N}(S\IT^d)$ if $u$ has $H^N$ regularity in the variable $x \in \T^d$ and $H^{M}$ regularity in the variable $\theta \in \IS^{d-1}$.
With this convention at hand, we will prove the following type of results:
\begin{theo}[Mellin transform, function case]\label{t:maintheo-mellin-function} Let $\chi\in\ml{C}^\infty_c([1,+\infty))$ such that $\chi=1$ in a neighborhood of $1$ and let $N\in\IZ_+$. Then, the operator
$$\mathcal{M}(s):=\int_1^{\infty}t^{-s}e^{-tV*}|dt|:\mathcal{C}^{\infty}(S\IT^d)\rightarrow \mathcal{D}^{\prime}(S\IT^d)$$
splits as
$$\mathcal{M}(s)=\mathcal{M}_0(s)+\mathcal{M}_\infty(s),$$
where
$$\mathcal{M}_0(s):=\int_1^{\infty}\chi(t)t^{-s}e^{-tV*}|dt|:\ml{H}^{N,-N/2}(S\IT^d)\rightarrow\ml{H}^{N,-N/2}(S\IT^d)$$
is a holomorphic family of bounded operators on $\mathbb{C}$ and where
$$\mathcal{M}_\infty(s):=\int_1^{\infty}(1-\chi(t))t^{-s}e^{-tV*}|dt|:\ml{H}^{N,-N/2}(S\IT^d)\rightarrow\ml{H}^{-N,N/2}(S\IT^d)$$
extends as a meromorphic family of bounded operators from $\{\operatorname{Re}(s)>1\}$ to $\{\operatorname{Re}(s)>1-N\}$
 with only a simple pole at $s=1$ whose residue is given by
 $$\forall\psi\in\mathcal{C}^{\infty}(S\IT^d),\quad\Res_{s=1}\left(\mathcal{M}_\infty(s)\right)(\psi)(x,\theta)=\frac{1}{(2\pi)^d}\int_{\IT^d}\psi(y,\theta)dy.$$
\end{theo}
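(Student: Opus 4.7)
My plan is to reduce the problem to a multiplier estimate on $\IS^{d-1}$ by Fourier decomposition in the toroidal variable. For $u=\sum_{\xi\in\IZ^d}\hat u_\xi(\theta)\,e^{i\xi\cdot x}/(2\pi)^{d/2}$, the flow pullback satisfies $(e^{-tV*}u)(x,\theta)=u(x-t\theta,\theta)$, hence $(e^{-tV*}u)^\wedge_\xi(\theta)=e^{-it\xi\cdot\theta}\hat u_\xi(\theta)$. Both $\mathcal{M}_0(s)$ and $\mathcal{M}_\infty(s)$ are therefore of the form $\hat u_\xi\mapsto m_\xi(\theta,s)\,\hat u_\xi$ with
\[
m_\xi^0(\theta,s)=\int_1^\infty\chi(t)t^{-s}e^{-it\xi\cdot\theta}dt,\qquad m_\xi^\infty(\theta,s)=\int_1^\infty(1-\chi(t))t^{-s}e^{-it\xi\cdot\theta}dt,
\]
and the whole analysis reduces to uniform-in-$\xi$ multiplier bounds on $H^{\pm N}(\IS^{d-1})$, weighted by the powers of $\langle\xi\rangle$ prescribed by $\ml{H}^{M,N}$.

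For $\mathcal{M}_0(s)$, the compact $t$-support of $\chi$ makes $m_\xi^0(\cdot,s)$ entire in $s$, and differentiation under the integral yields holomorphy on $\IC$ with values in bounded operators. The uniform $H^N\to H^N$ multiplier estimate rests on the structural observation that $m_\xi^0(\theta,s)$ depends on $\theta$ only through the one-dimensional quantity $\xi\cdot\theta$, so that its variation along the level sets $\{\xi\cdot\theta=\text{const}\}$ is trivial. Combining this with pointwise $L^\infty$-control near the great subsphere $\{\hat\xi\cdot\theta=0\}$ and iterated $t$-integrations by parts in the complement (each trading a transverse $\theta$-derivative of size $|\xi|$ for a factor $1/|\xi\cdot\theta|$) yields the desired $\xi$-uniform bound, provided $N>(d-1)/2$.

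For $\mathcal{M}_\infty(s)$, the $\xi=0$ mode reduces to the scalar $m_0^\infty(s)=\int_1^\infty(1-\chi(t))t^{-s}dt=\tfrac{1}{s-1}-\int_1^\infty\chi(t)t^{-s}dt$, meromorphic on $\IC$ with the unique simple pole at $s=1$ of residue $1$; since the $\xi=0$ Fourier coefficient carries the normalization $(2\pi)^{-d/2}\hat\psi_0(\theta)=(2\pi)^{-d}\int_{\IT^d}\psi(y,\theta)dy$, this produces exactly the claimed residue operator. For $\xi\neq 0$, the identity $e^{-it\xi\cdot\theta}=(-i\xi\cdot\theta)^{-1}\partial_t e^{-it\xi\cdot\theta}$ allows $M$ successive integrations by parts in $t$, with the boundary terms at $t=1$ vanishing to all orders (since $1-\chi$ does) and those at $t=\infty$ absent in the absolute convergence regime $\operatorname{Re}(s)>1$; this yields the continuation formula
\[
m_\xi^\infty(\theta,s)=\frac{1}{(-i|\xi|)^M(\hat\xi\cdot\theta)^M}\int_1^\infty\partial_t^M\bigl[(1-\chi(t))t^{-s}\bigr]e^{-it\xi\cdot\theta}dt,
\]
holomorphic in $s$ on $\{\operatorname{Re}(s)>1-M\}$ for each fixed $\xi\neq 0$.

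The main obstacle is to upgrade this pointwise identity into the operator bound $\mathcal{M}_\infty(s):\ml{H}^{N,-N/2}\to\ml{H}^{-N,N/2}$, uniformly on compact subsets of $\{\operatorname{Re}(s)>1-N,\,s\neq 1\}$, by taking $M$ of order $N$. The prefactor $\langle\xi\rangle^{-M}$ supplies the gain needed to bridge the source weight $\langle\xi\rangle^{-N/2}$ and the target weight $\langle\xi\rangle^{N/2}$, but the singular factor $(\hat\xi\cdot\theta)^{-M}$ must still be shown to send $H^N(\IS^{d-1})$ into $H^{-N}(\IS^{d-1})$ with operator norm uniform in $\hat\xi\in\IS^{d-1}$. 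I would attack this by recognizing $(\hat\xi\cdot\theta)^{-M}$ as a conormal distribution of order $M$ along the smooth codimension-$1$ great subsphere $\{\hat\xi\cdot\theta=0\}\subset\IS^{d-1}$, splitting $\IS^{d-1}$ accordingly, and using a Sobolev trace estimate in the direction transverse to this submanifold to convert the order-$M$ singularity into a controlled loss of at most $2N$ derivatives in $\theta$, while additional $t$-integrations by parts on the complement provide any remaining decay. Since the only singularity of the meromorphic extension comes from the $\xi=0$ computation above, this simultaneously identifies the unique pole at $s=1$ and pins down its residue as stated.
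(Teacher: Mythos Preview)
Your Fourier reduction, the identification of the pole and residue from the mode $\xi=0$, and the use of $t$-integration by parts in the polar caps $\{|\hat\xi\cdot\theta|\ge\varepsilon_0\}$ are correct and parallel the paper. The genuine gap is in the equatorial region $\{\hat\xi\cdot\theta\approx 0\}$ for $\xi\neq 0$. Your $M$-fold $t$-IBP identity holds only pointwise where $\xi\cdot\theta\neq 0$, and the plan to treat the prefactor $(\hat\xi\cdot\theta)^{-M}$ as a conormal distribution does not close the argument: for $M\ge 1$ that function is not locally integrable, any regularization is an extraneous choice, and---more decisively---bounding the two factors separately throws away the cancellation between the singularity of $(\hat\xi\cdot\theta)^{-M}$ and the vanishing of the integral factor at the equator. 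The obstruction is already visible at the level of the multiplier: for $\xi\neq 0$ the \emph{pointwise} analytic continuation of $m_\xi^\infty(\theta,s)$ carries, at every $\theta$ on the equator $\{\xi\cdot\theta=0\}$, the same pole at $s=1$ as the $\xi=0$ term (since there $m_\xi^\infty(\theta,s)=\int_1^\infty(1-\chi)t^{-s}\,dt$). Hence no multiplier bound uniform in $\theta$ can hold across $s=1$; the continuation must be performed at the level of the \emph{operator}, not of its symbol.

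The paper's remedy (Theorem~\ref{t:general-mellin}) is to reverse the order of integration near the equator. One keeps the bilinear form
\[
\int_1^\infty(1-\chi(t))\,t^{-s}\Bigl(\int_{\IS^{d-1}}\chi_0(\hat\xi\cdot\theta)\,e^{-it\xi\cdot\theta}\,\hat\varphi_\xi(\theta)\,\hat\psi_{-\xi}(\theta)\,d\theta\Bigr)dt
\]
and applies \emph{nonstationary phase in $\theta$} (Corollary~\ref{c:non-stat-points}) to the inner integral: on $\operatorname{supp}\chi_0$ the tangential gradient of the phase $t\xi\cdot\theta$ has size $\sim t|\xi|$, so $N$ integrations by parts in $\theta$ produce a factor $(t|\xi|)^{-N}$ at the cost of $N$ derivatives on $\hat\varphi_\xi\hat\psi_{-\xi}$. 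The remaining $t$-integral $\int_1^\infty(1-\chi)t^{-s-N}\,dt$ is then holomorphic precisely on $\{\operatorname{Re}(s)>1-N\}$, with no singularity in $\theta$ and with the $|\xi|^{-N}$ gain needed to pass from the weight $\langle\xi\rangle^{-N/2}$ to $\langle\xi\rangle^{+N/2}$. Combined with your $t$-IBP on the polar caps (the paper's Lemma~\ref{l:f-phi-lam}), this yields the $\mathcal{H}^{N,-N/2}\to\mathcal{H}^{-N,N/2}$ bound. The idea missing from your sketch is exactly this $\theta$-nonstationary-phase step on the equator.
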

In particular, this Theorem tells us that the operator
$$\mathcal{M}(s):=\int_1^{\infty}t^{-s}e^{-tV*}|dt|:\mathcal{C}^{\infty}(S\IT^d)\rightarrow \mathcal{D}^{\prime}(S\IT^d)$$
extends meromorphically from $\{\operatorname{Re}(s)>1\}$ to the whole complex plane with only a simple pole at $s=1$. Yet, the statement is more precise as it allows us to describe the allowed regularity for this meromorphic continuation. We emphasize that the mapping properties of $\mathcal{M}_0(s)$ are rather immediate from the definition of our anisotropic norms and the main difficulty in this statement is about the ``regularizing'' properties of $\mathcal{M}_\infty(s)$. This Theorem is a direct consequence of the much more general Theorem~\ref{t:general-mellin} (together with Proposition~\ref{p:terms-near-zero}), and it is one of the main results of this article. In other words, our meromorphic continuations are valid on spaces of distributions that are regular along the vertical bundle to $S\IT^d$ (i.e. the tangent space to $\IS^{d-1}$) and that may have negative Sobolev regularity along the horizontal bundle (i.e. the tangent space to $\IT^{d}$). In particular, the anisotropic Sobolev spaces $\mathcal{H}^{N,-N/2}$ contain the Dirac distribution $\delta_{[0]}(x)$ for $N>d$, and this is typically the kind of distributions that we will pick as test functions in order to derive our main applications on convex geometry using~\eqref{e:current-zeta}. In order to prove Theorems~\ref{t:maintheo-Mellin} and~\ref{t:maintheo-residues}, we will in fact need to prove more general statements for the action of $\mathcal{M}(s)$ on differential forms or more precisely on certain anisotropic Sobolev spaces of currents. Among other things, the action on differential forms will be responsible for the presence of the extra poles at $s=2,\ldots,d$ but this simplified statement already illustrates the kind of properties we are aiming at. 

The same spaces will also allow us to prove the following statement:
\begin{theo}[Laplace transform, function case, continuous continuation]
\label{t:maintheo-resolvent-function} Let $\chi\in\ml{C}^\infty_c([0,+\infty))$ such that $\chi=1$ in a neighborhood of $0$ and let $N\in 2\IZ_+^*+d$. Then, the operator
$$\mathcal{L}(s):=(V+s)^{-1}=\int_0^{\infty}e^{-st}e^{-tV*}|dt|:\mathcal{C}^{\infty}(S\IT^d)\rightarrow \mathcal{D}^{\prime}(S\IT^d)$$
splits as
$$\mathcal{L}(s)=\mathcal{L}_0(s)+\mathcal{L}_\infty(s),$$
where
$$\mathcal{L}_0(s):=\int_0^{\infty}\chi(t)e^{-st}e^{-tV*}|dt|:\ml{H}^{N,-N}(S\IT^d)\rightarrow\ml{H}^{N,-N}(S\IT^d)$$
is a holomorphic family of bounded operators on $\mathbb{C}$ and where
$$\mathcal{L}_\infty(s):=\int_0^{\infty}(1-\chi(t))e^{-st}e^{-tV*}|dt|:\ml{H}^{N,-N/2}(S\IT^d)\rightarrow\ml{H}^{-N,N/2}(S\IT^d)$$
extends continuously from $\{\operatorname{Re}(s)>0\}$ to 
 \begin{enumerate}
  \item $\{\operatorname{Re}(s)\geq 0\}\setminus\{0\}$ if $d\geq 4$,
  \item $\{\operatorname{Re}(s)\geq 0\}\setminus\{\pm i|\xi|:\xi\in\IZ^d\}$ if $d=2,3$.
 \end{enumerate}
Moreover, in any dimension, one has
 $$(V+s)^{-1}\psi=\frac{1}{(2\pi)^ds}\int_{\IT^d}\psi(y,\theta)dy+\mathcal{O}_{\ml{D}^\prime}(1), \quad  \text{ as } s \to 0, \Re(s)>0 ,$$
and, when $d=2,3$, one has, for $|\xi_0|\neq 0$,
\begin{align*}(V+s)^{-1}\psi
& =\frac{e^{\mp i\pi\frac{d-1}{4}}g_d(s\mp i|\xi_0|)}{(2\pi)^{\frac{d+1}{2}}|\xi_0|^{\frac{d-1}{2}}}\sum_{\xi:|\xi|=|\xi_0|}e^{i\xi\cdot x}\delta_0\left(\theta\mp \frac{\xi}{|\xi|}\right)\int_{\IT^d}\psi\left(y,\pm\frac{\xi}{|\xi|}\right)e^{-i\xi\cdot y}dy \\
& \quad +\mathcal{O}_{\ml{D}^\prime}(1), \quad   \text{ as } s\rightarrow \pm i|\xi_0| , \Re(s)>0 ,
\end{align*}
where
$$g_2(z):=\frac{\sqrt{2\pi}}{\sqrt{z}},\quad\text{and}\quad g_3(z):=-\ln(z).$$
\end{theo}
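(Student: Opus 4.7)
The plan is to diagonalize $V=\theta\cdot\partial_x$ via Fourier decomposition in the $x$ variable. Writing $u=\sum_{\xi\in\Z^d}\widehat{u}_\xi(\theta)e^{i\xi\cdot x}/(2\pi)^{d/2}$, the pull-back $e^{-tV*}$ acts as multiplication by $e^{it\xi\cdot\theta}$ on each Fourier mode, so that for $\operatorname{Re}(s)>0$
\begin{equation*}
\widehat{\ml{L}(s)u}_\xi(\theta)=\frac{1}{s-i\xi\cdot\theta}\,\widehat{u}_\xi(\theta).
\end{equation*}
All the analysis thereby reduces to the boundary behaviour on $\{\operatorname{Re}(s)=0\}$ of this family of $\theta$-multipliers on $\IS^{d-1}$, uniformly in $\xi\in\Z^d$.

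For the short-time piece $\ml{L}_0(s)$, the multiplier $m_0(s,\xi,\theta)=\int_0^\infty \chi(t)e^{-(s-i\xi\cdot\theta)t}dt$ is the Laplace transform of a compactly supported smooth function, hence entire in $s$ and Schwartz in the real variable $\xi\cdot\theta-\operatorname{Im}(s)$; the controlled $\langle\xi\rangle^N$-loss produced by $N$ derivatives in $\theta$ matches the weight in the $\ml{H}^{N,-N}$ norm, giving bounded and holomorphic dependence on $s\in\C$. For the zero Fourier mode of $\ml{L}_\infty(s)$, the multiplier equals $1/s$ modulo an entire piece, which accounts for the simple pole with residue $(2\pi)^{-d}\int_{\T^d}\psi(y,\theta)dy$. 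For fixed $\xi\neq 0$ and $\tau\in(-|\xi|,|\xi|)$, the Plemelj--Sokhotski formula shows that $\theta\mapsto 1/(i\tau-i\xi\cdot\theta+0^+)$ is an order-zero distribution on $\IS^{d-1}$ (principal value plus a Dirac along the smooth codimension-one set $\{\xi\cdot\theta=\tau\}$), hence a bounded multiplier on $H^N(\IS^{d-1})$ for $N$ large enough, giving the continuous extension of $\ml{L}_\infty$ on $\{\operatorname{Re}(s)\geq 0\}\setminus\{\pm i|\xi|:\xi\in\Z^d\}$.

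The delicate step is the thresholds $s=\pm i|\xi|$, where the level set $\{\xi\cdot\theta=\pm|\xi|\}\cap\IS^{d-1}$ degenerates to the single point $\theta_\pm=\pm\xi/|\xi|$. I would exploit the Laplace representation
\begin{equation*}
\frac{\widehat{u}_\xi(\theta)}{s-i\xi\cdot\theta}=\int_0^\infty e^{-st}e^{it\xi\cdot\theta}\widehat{u}_\xi(\theta)dt
\end{equation*}
and apply stationary phase to $\int_{\IS^{d-1}}e^{it\xi\cdot\theta}f(\theta)d\theta$. The phase $\theta\mapsto\xi\cdot\theta$ has two non-degenerate Morse critical points at $\theta_\pm$ with opposite Morse indices, producing
\begin{equation*}
\int_{\IS^{d-1}}e^{it\xi\cdot\theta}f(\theta)d\theta=\Bigl(\frac{2\pi}{t|\xi|}\Bigr)^{\frac{d-1}{2}}\sum_\pm f(\theta_\pm)e^{\pm it|\xi|}e^{\mp i\pi(d-1)/4}+\mathcal{O}(t^{-\frac{d+1}{2}}).
\end{equation*}
Inserting this in the $t$-integral reduces the leading singularity at $s=\pm i|\xi|$ to $\int_1^\infty e^{-(s\mp i|\xi|)t}t^{-(d-1)/2}dt$. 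When $d\geq 4$ the exponent $(d-1)/2\geq 3/2$ makes the integral continuous at the boundary, so no singularity persists there. For $d=2$ one recovers the square-root singularity $g_2(z)=\sqrt{2\pi/z}$ from $\Gamma(\tfrac12)z^{-1/2}$, and for $d=3$ the borderline case degenerates into the exponential integral, yielding $g_3(z)=-\log z$. The Dirac factor $\delta_0(\theta\mp\xi/|\xi|)$ encodes localisation to the unique critical point, and the prefactor $e^{\mp i\pi(d-1)/4}/((2\pi)^{(d+1)/2}|\xi_0|^{(d-1)/2})$ is obtained by tracking the stationary-phase constants together with the identity $\int_{\T^d}\psi(y,\pm\xi/|\xi|)e^{-i\xi\cdot y}dy=(2\pi)^{d/2}\widehat{\psi}_\xi(\pm\xi/|\xi|)$.

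The main obstacle is the uniformity in $\xi\in\Z^d$ of the stationary-phase remainders and the summation over the finite-modulus shells $\{\xi:|\xi|=|\xi_0|\}$, which explains the requirement $N>d$: the weight $\langle\xi\rangle^{-N}$ must beat both the polynomial volume of lattice shells of modulus $|\xi|$ and the stationary-phase prefactor $|\xi|^{-(d-1)/2}$. One must also verify that, outside any neighbourhood of the singular set, the multipliers remain uniformly bounded on $H^N(\IS^{d-1})$; this follows by isolating finitely many low modes and noting that on the remaining high modes the quantity $\min_\theta|s-i\xi\cdot\theta|$ is bounded below, so that symbolic bounds on the multiplier and its $\theta$-derivatives are uniform.
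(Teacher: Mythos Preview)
Your overall strategy---Fourier decomposition in $x$, reduction to the multipliers $(s-i\xi\cdot\theta)^{-1}$, and stationary phase on $\IS^{d-1}$ to capture the threshold singularities at $s=\pm i|\xi_0|$---is the same as the paper's, and the identification of the leading singularities via $\int_1^\infty e^{-zt}t^{-(d-1)/2}dt$ is correct. The gap is in the argument for the \emph{continuous extension as a bounded map} $\mathcal{H}^{N,-N/2}\to\mathcal{H}^{-N,N/2}$ away from the singular set.

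Your final claim, that for high modes $|\xi|$ the quantity $\min_\theta|s-i\xi\cdot\theta|$ is bounded below, is false: for $s=i\tau$ with $|\tau|<|\xi|$ the level set $\{\xi\cdot\theta=\tau\}$ is a nonempty $(d-2)$-sphere, so the minimum is zero for \emph{every} high mode. Consequently the ``symbolic bounds'' you invoke do not hold, and the Plemelj--Sokhotski argument alone yields only $O(|\xi|^{-1})$ decay of the multiplier (from the gradient factor $|\nabla_\theta(\xi\cdot\theta)|^{-1}\sim|\xi|^{-1}$ in the surface Dirac), far short of the $O(|\xi|^{-N})$ needed to gain $N$ orders of $x$-regularity. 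The paper avoids this by never analyzing the boundary multiplier directly. Instead it partitions $\IS^{d-1}$ into a polar cap near $\pm\xi/|\xi|$ and an equatorial band: on the equatorial band the \emph{non-stationary phase in $\theta$} (Corollary~4.3) gives $(t|\xi|)^{-N_0}\|f\|_{W^{N_0,1}}$ decay of the oscillatory integral \emph{before} integrating in $t$; on the polar cap one has $|\xi\cdot\theta|\geq\varepsilon_0|\xi|$, and for $|\xi|>2\Lambda\geq 2|\Im s|$ this does give $|s-i\xi\cdot\theta|\gtrsim|\xi|$, so integration by parts in $t$ produces the required decay there. Only the finitely many low modes $|\xi|\leq 2\Lambda$ are treated by the full stationary-phase expansion you describe. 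In short, your lower-bound claim is true on the polar cap but not globally, and the missing ingredient is the non-stationary-phase estimate on the equator.
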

Again, this result is the consequence of the much more precise Theorem~\ref{t:general-laplace} (together with Proposition~\ref{p:terms-near-zero}) which is valid on certain anisotropic Sobolev spaces of currents and which will also lead us to the proof of Theorem~\ref{t:maintheo-laplace}. In Theorem~\ref{t:general-laplace}, the $\mathcal{C}^{k}$ continuation of $\mathcal{L}(s)$ is also discussed, and shows that the Laplace transform actually exhibits $\mathcal{C}^{k}$-singularities at the points $\{\pm i|\xi|:\xi\in\IZ^d\}$ in {\em any} dimension (but for larger values of $k$ in higher dimension).
 In the companion article~\cite{BonthonneauDangLeautaudRiviere22}, we show that this result can be ``improved'' if we replace the Sobolev norm on $\IS^{d-1}$ by some appropriate analytic norm built from the norms used in~\cite{GalkowskiZworski19} for the study of analytic pseudodifferential operators of order $0$. In fact, after Fourier decomposition, studying the resolvent of $V$ acting on functions amounts to study a family of resolvents of multiplication operators (i.e. of pseudodifferential operators of order $0$) on $\IS^{d-1}$. In Sobolev regularity (as we are dealing here), one could apply the results from~\cite[\S7.6]{AmreinBoutetGeorgescu96} (e.g. Th.~7.6.2) based on Mourre's commutator method. See also~\cite{ColindeVerdiereSaintRaymond20, DyatlovZworski19b} for recent developments for more general pseudodifferential operators of order $0$ in dimension $2$. Modulo some extra work to sum over all Fourier modes, this would yield in principle that
 $$\forall \epsilon>0,\ \forall N\in\IR,\quad (V+s)^{-1}:\mathcal{H}^{\frac{1}{2}+\epsilon, N}(S\IT^d)\rightarrow\mathcal{H}^{-\frac{1}{2}-\epsilon, -N}(S\IT^d)$$
 extends continuously from $\{\text{Re}(s)>0\}$ to $\{\text{Re}(s)\geq 0\}\setminus\{\pm i|\xi|:\xi\in\IZ^d\}.$ In view of our geometric applications to convex geometry, this analysis does not suffice since we aim at considering distributions having the same regularity as $\delta_{[0]}$, which does not belong to such spaces.


\subsection{Emergence of quantum dynamics}
\label{s:emergence-quantum}
Theorems~\ref{t:maintheo-mellin-function} and~\ref{t:maintheo-resolvent-function} (as well as their analogues in the case of differential forms) are consequences of the fact that, through standard stationary phase asymptotics, we can give a full expansion of the Schwartz kernel of the geodesic flow. For instance, the first term in the asymptotic expansion reads
\begin{theo}[Time asymptotics of the geodesic flow, function case, leading term]
\label{t:maintheo-correlations} For every smooth function $\psi\in\ml{C}^{\infty}(S\IT^d)$, one has
\begin{eqnarray*}
t^{\frac{d-1}{2}}\left(\psi\circ e^{-tV}(x,\theta)-\frac{1}{(2\pi)^d}\int_{\IT^d}\psi(y,\theta)dy\right)&=& (2\pi)^{\frac{d-1}{2}}\sum_{\vareps\in\{\pm\}}\mathbf{P}_{\vareps}^\dagger\frac{e^{i\vareps \left(t\sqrt{-\Delta}-\frac{\pi}{4}(d-1)\right)}}{(-\Delta)^{\frac{d-1}{4}}}\mathbf{P}_{\vareps}\\
&+&\ml{O}_{\ml{D}^{\prime}(S\IT^d)}(t^{-1})
\end{eqnarray*}
where $\Delta=\sum_{j=1}^d\partial_{x_j}^2$ is the Euclidean Laplacian on $\IT^d$, 
$$\mathbf{P}_{\pm}:\psi\in\ml{C}^{\infty}(S\IT^d)\mapsto\sum_{\xi\neq 0}\frac{1}{(2\pi)^d}\int_{\IT^d}\psi\left(y,\pm\frac{\xi}{|\xi|}\right)e^{i(y-x)\cdot\xi}dy\in\ml{C}^{\infty}(\IT^d)$$
and
 $$\mathbf{P}_{\pm}^\dagger:f\in\ml{C}^{\infty}(\IT^d)\mapsto \sum_{\xi\neq 0}\frac{1}{(2\pi)^d}\left(\int_{\IT^d}f\left(y\right)e^{i(y-x)\cdot\xi}dy\right)\delta_0\left(\theta\mp\frac{\xi}{|\xi|}\right)\in\ml{D}^{\prime}(S\IT^d)$$
\end{theo}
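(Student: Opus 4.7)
The proof would proceed in three steps: reduction to oscillatory integrals over $\IS^{d-1}$ via Fourier decomposition on the torus, application of stationary phase to each Fourier mode, and identification of the leading terms with the operator-theoretic right-hand side.

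First I would exploit that the flow acts explicitly as $e^{-tV}(x,\theta)=(x-t\theta,\theta)$ and expand $\psi$ in Fourier modes in the torus variable to obtain
$$\psi\circ e^{-tV}(x,\theta)=\sum_{\xi\in\IZ^d}\frac{\widehat\psi_\xi(\theta)}{(2\pi)^{d/2}}e^{i\xi\cdot x}e^{-it\xi\cdot\theta}.$$
The $\xi=0$ term is exactly $\frac{1}{(2\pi)^d}\int_{\IT^d}\psi(y,\theta)\,dy$, so after subtraction and pairing against a test function $\phi\in\ml{C}^{\infty}(S\IT^d)$ (computing the $x$-integral first), the statement reduces to the large-$t$ asymptotics of
$$t^{\frac{d-1}{2}}\sum_{\xi\neq 0}\int_{\IS^{d-1}}a_\xi(\theta)\,e^{-it\xi\cdot\theta}\,d\theta,\qquad a_\xi:=\widehat\psi_\xi\,\widehat\phi_{-\xi}.$$

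Next I would apply stationary phase on $\IS^{d-1}$ to each mode with large parameter $\tau_\xi=t|\xi|\geq t$. The phase $\theta\mapsto -\hat\xi\cdot\theta$, with $\hat\xi=\xi/|\xi|$, has exactly two nondegenerate critical points at $\theta=\pm\hat\xi$, with critical values $\mp 1$, unit absolute Hessian determinant, and Morse signatures $\pm(d-1)$. The leading stationary-phase contribution for each mode is therefore
$$\Big(\frac{2\pi}{t|\xi|}\Big)^{\frac{d-1}{2}}\sum_{\vareps=\pm}e^{-i\vareps\pi(d-1)/4}\,\widehat\psi_\xi(-\vareps\hat\xi)\,\widehat\phi_{-\xi}(-\vareps\hat\xi)\,e^{i\vareps t|\xi|}.$$
A routine reindexing $\xi\mapsto -\xi$, combined with the observation that $\sqrt{-\Delta}$ acts on the $e^{i\xi\cdot x}$-mode by multiplication by $|\xi|$, lets one recognize the sum of these leading terms (multiplied by $t^{(d-1)/2}$) as exactly the pairing of $\phi$ against $(2\pi)^{(d-1)/2}\sum_{\vareps}\mathbf{P}_{\vareps}^\dagger\frac{e^{i\vareps(t\sqrt{-\Delta}-\pi(d-1)/4)}}{(-\Delta)^{(d-1)/4}}\mathbf{P}_{\vareps}\psi$: indeed, unpacking $\mathbf{P}_\vareps$ and $\mathbf{P}_\vareps^\dagger$ in Fourier shows that their composition produces precisely the factors $\widehat\psi_{-\xi}(\vareps\hat\xi)\,\widehat\phi_\xi(\vareps\hat\xi)$ that arise after reindexing.

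The main obstacle is the uniform control of the stationary phase remainder in the Fourier variable $\xi$, so that the resulting sum of error terms is summable. The standard stationary phase remainder is bounded by $C(t|\xi|)^{-(d+1)/2}$ times a finite-order $\ml{C}^k$-seminorm of $a_\xi$ on the sphere; multiplying by $t^{(d-1)/2}$ yields a per-mode bound of order $t^{-1}|\xi|^{-(d+1)/2}\|\widehat\psi_\xi\|_{\ml{C}^k(\IS^{d-1})}\|\widehat\phi_{-\xi}\|_{\ml{C}^k(\IS^{d-1})}$. Since $\psi$ and $\phi$ are smooth on $S\IT^d$, the coefficients $\widehat\psi_\xi$ and $\widehat\phi_{-\xi}$ decay faster than any polynomial in $|\xi|$, uniformly in $\theta$ together with all their $\theta$-derivatives, so summation over $\xi\neq 0$ converges uniformly in $t\geq 1$ and delivers an $\ml{O}(t^{-1})$ remainder in the topology of $\ml{D}'(S\IT^d)$. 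I expect the only genuinely delicate book-keeping to be the matching of signs, Morse indices, and the reparametrization $\xi\mapsto -\xi$ needed to convert the stationary-phase formula into the operator form involving $\mathbf{P}_\vareps$ and $\mathbf{P}_\vareps^\dagger$.
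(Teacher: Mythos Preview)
Your proposal is correct and follows essentially the same route as the paper: Fourier decomposition in the torus variable reduces the correlation to the oscillatory integrals $\int_{\IS^{d-1}}e^{it\xi\cdot\theta}a_\xi(\theta)\,d\theta$, and stationary phase at the two nondegenerate critical points $\pm\xi/|\xi|$ together with the rapid decay of $\widehat\psi_\xi,\widehat\phi_{-\xi}$ in $\ml{C}^k(\IS^{d-1})$ gives the leading term and the $\ml{O}(t^{-1})$ remainder in $\ml{D}'$. The paper carries this out slightly more explicitly by localizing with cutoffs $\chi_{-1},\chi_0,\chi_1$ near the two poles and the equator (Corollaries~\ref{c:non-stat-points} and~\ref{c:stat-points}) and packages the result as Theorem~\ref{t:twisted-correlations} with $\beta_0=0$, $N=1$, but the analytical content is the same.
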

This Theorem is a corollary of the much more precise statement given in Theorem~\ref{t:twisted-correlations} which provides a full asymptotic expansion with a precise description of the remainder terms at each step. Once again, this result could (and will) be expressed in terms of anisotropic Sobolev norms. Yet, due to the absence of integration over time, this requires a refined version of the spaces $\ml{H}^{N,-N/2}(S\IT^d)$ with an additional regularity imposed in the direction of the vector field $\theta\cdot\partial_x$.

In order to keep track of the comparison with negatively curved manifolds, such a result can be viewed as a simple occurence of the emergence of quantum dynamics (through the half-wave group $(e^{\pm it \sqrt{-\Delta}})_{t \in \R}$ on the torus) in the long time dynamics of geodesic flows (i.e. $(e^{tV})_{t\in\R}$ on $S\T^d$). This phenomenon was recently exhibited by Faure and Tsujii in the general context of contact Anosov flows~\cite{FaureTsujii15, FaureTsujii17, FaureTsujii17b, FaureTsujii21}. See also~\cite{DyatlovFaureGuillarmou2015} for related results of Dyatlov, Faure and Guillarmou in the particular case of geodesic flows on \emph{hyperbolic} manifolds. Compared with the results of Faure and Tsujii, we emphasize that our analysis heavily relies on the algebraic structure of our flows as in the hyperbolic settings treated in~\cite{Ratner87, DyatlovFaureGuillarmou2015}. Moreover, we are dealing with completely integrable systems which have in some sense opposite behaviours compared with the dynamical situations considered in all these references. In particular, due to the integrable nature of our system, the asymptotic expansion in terms of the quantum propagator is polynomial rather than exponential as in~\cite[Th.~1.2]{FaureTsujii21}. This is reminiscent to the much weaker mixing properties of the geodesic flow in this situation.
Finally, we refer to~\cite{DangLeautaudRiviereJEDP} for a short note presenting some of the results of the present paper as well a sketches of proofs.

\subsection{Organization of the article}

In Section~\ref{s:background-convex}, we review the necessary background on convex and differential geometry in order to define the Poincaré series and Epstein zeta functions.
 In particular, we introduce in Section~\ref{s:Finsler-Ham} the general Finsler structures on $\T^d$, to which our results apply and discuss some of their properties. In Section~\ref{ss:relation-resolvent-series}, we introduce the general versions of the Zeta function and Poincar\'e series studied in the paper, and describe some of their basic properties.

 In Section~\ref{s:correlation-diff-forms}, after some brief recollection of basic facts on de Rham currents, we give a current theoretic interpretation of mixed volumes in convex geometry and we define some dynamical correlation functions for currents. 
 We explain how the Poincaré series and Epstein zeta functions may be understood as the Mellin and Laplace transforms of the correlation function associated with certain currents.
 Then we close the section by relating the dynamical correlators with the Poincaré series and Epstein zeta function.

 The long time asymptotics of the dynamical correlations involves some estimates for oscillatory integrals.
The analysis of these oscillatory integrals is a standard topic in harmonic analysis~\cite{Herz62, Hormander90, Steinbook, DyatlovZworski19} and in Section~\ref{s:analysis}, we rediscuss some of their properties and pay some attention on the control of certain estimates in terms of the frequency parameter.
%
%

Then, in Section~\ref{s:correlation}, we apply this analysis to define spaces with anisotropic Sobolev regularity, in which we describe the asymptotic expansion of the operator $e^{-tV*}$ as $t\rightarrow+\infty$ acting on functions, as in Theorem~\ref{t:maintheo-correlations}. 

In Section~\ref{s:anis-spaces}, we come back to the general forms/currents setting of the article, we define anisotropic Sobolev spaces adapted to the dynamics i.e. on which the operator $\widehat{\chi}(-iV)$ is well defined. When $\chi$ depends on some complex parameter $s\in\IC$, we show in Section~\ref{s:laplace-mellin} that the operator $\widehat{\chi}(-iV)$ can be continued, as in Theorems~\ref{t:maintheo-mellin-function} and~\ref{t:maintheo-resolvent-function}.

In Section~\ref{s:poincare}, we apply these results to particular currents to make the connection between these kinds of operators and geometric zeta functions/Poincar\'e series, using the strategy initiated in~\cite{DangRiviere20d}. Along the way, we prove slightly more general versions of Theorems~\ref{t:maintheo-Mellin},~\ref{t:maintheo-laplace},~\ref{e:maintheo-realaxis} and~\ref{e:GuinandWeil}. 

We conclude the proof of Theorem~\ref{t:maintheo-residues} in Section~\ref{s:convex} by identifying the values of certain residues using tools from convex geometry~\cite{Schneider14}. 


\subsection{Comments on generalizations}\label{ss:comments-general}

The choice of the lattice $2\pi\IZ^d$ is somewhat arbitrary and it makes the presentation slightly simpler. However, our analysis could be adapted to handle more general flat tori of the form $\mathbb{R}^d/\Gamma$ where $\Gamma$ is a lattice in $\mathbb{R}^d$ of maximal rank. 

 Up to this point, for the sake of the exposition, we only considered the natural Euclidean structure on $\T^d$, the associated distance function, the associated geodesic vector field $\theta\cdot \d_x$ and flow~\eqref{e:geodesic-flow} on $S\IT^d$. 
 Yet, our analysis works if we replace the Euclidean structure by any translation invariant Finsler structure on $\T^d$. As explained in Section~\ref{s:Finsler-Ham} below, the latter corresponds to studying on $S\IT^d$ the vector field $\mathbf{v}(\theta) \cdot \d_x$ and the associated flow:
$$(x,\theta)\in S\IT^d\mapsto (x+t\mathbf{v}(\theta),\theta) \in S\IT^d,$$
where $\theta\in\IS^{d-1}\mapsto \mathbf{v}(\theta)\in \IR^d$ is the parametrization by its outward normal of the boundary of a strictly convex compact subset $K$ having $0 \in \Int(K)$. This general set-up is described in Section~\ref{s:background-convex} and we state the results at this level of generality all along the article.


\section{Background on convex and differential geometry}\label{s:background-convex}
In this preliminary section, we review a few facts from convex geometry. We also define precisely the general Finsler structures and the associated vector fields, as well as their zeta functions.

\subsection{Normal bundles to convex sets}\label{ss:convex}

Let $K$ be a compact and convex subset of $\mathbb{R}^d$ with smooth boundary $\partial K$.  We define the \emph{unit normal bundle to $\partial K$} as
$$N(\partial K):=\left\{(x,\theta)\in\partial K\times\IS^{d-1}:\forall v\in T_x\partial K,\ \theta\cdot v=0\right\}.$$
Except when $K$ is reduced to a point, this submanifold of $\mathbb{R}^d\times\mathbb{S}^{d-1}$ has two connected components and, in that case, we introduce the direct normal bundle to $K$ as 
$$N_+(K):=\left\{(x,\theta)\in N(\partial K):\ \theta\ \text{is pointing outward }K\right\},$$
and the indirect normal bundle to $K$ as $N_-(K):=N(\partial K)\setminus N_+(K)$.
The boundary $\partial K$ of $K$ is thus naturally oriented by the outward normal.
 In the case where $K$ is reduced to a point, we set $N_+(K)=N(\partial K)=N_-(K)$.
\begin{rema}\label{r:shape-operator}
Recall that, when $K$ is not reduced to a point, the shape operator of the smooth hypersurface $\partial K$ is the map
$$S(x):v\in T_x\partial K\mapsto \nabla_v\theta\in T_x\partial K,$$ 
where $x\in \partial K \mapsto\theta(x)\in N_{+,x}(\partial K)\subset\IS^{d-1}$ and where $\nabla$ is the (standard) covariant derivative in $\IR^d$~\cite[\S 4.2]{Lee09}. In particular, $S(x)$ is the selfadjoint map associated with the second fundamental form of $\Sigma$ and it is \emph{invertible if and only if $\partial K$ has nonvanishing Gauss curvature}~\cite[Def.~4.24]{Lee09}. 
If $K$ is a strictly convex body (not reduced to a point), then $\partial K$ has all its sectional curvatures~\cite[p.557]{Lee09} positive by definition (and thus nonvanishing Gaussian curvature). This is equivalent to saying that all the eigenvalues of the shape operator are non zero and have the same sign thanks to the Gauss curvature equation~\cite[Eq.~4.10, p.~172]{Lee09}.
\end{rema}
If we suppose that $K$ is strictly convex, then the Gauss map 
\begin{equation}\label{e:gaussmap}
G:(x,\theta)\in N_+(K)\mapsto \theta\in\IS^{d-1}
\end{equation} is a diffeomorphism and there exists a smooth map $x_K:\IS^{d-1}\rightarrow\IR^d$ such that $G^{-1}(\theta)=(x_K(\theta),\theta)$. The map $x_K$ is the \emph{inverse Gauss} map. Note that this remains true when $K=\{x_0\}$ by letting $x_K(\theta)=x_0$.
In both cases, it is natural to say that we can parametrize the convex set by the normal and, when $K$ is not reduced to a point, the map $x_K:\IS^{d-1}\rightarrow\partial K$ is in fact a diffeomorphism which is orientation preserving. For later purposes, we also define the following vector field on $\IR^d\times\IS^{d-1}$:
$$V_K^\pm:=x_K(\pm\theta)\cdot\partial_x.$$
If $\theta\mapsto x_K(\theta)$ parametrizes the convex $K$ by the outward normal, then $\theta\mapsto -x_K(\theta)$ parametrizes the 
reflected convex $-K$ by the inward normal, therefore $\theta\mapsto -x_K(-\theta)$ parametrizes $-K$ with the outward normal. In terms of the vector fields $V^{\pm}_{\pm K}$, 
we note that this correspondence reads $V_{-K}^+=-V_K^-$. \textbf{From this point on of the article,} we fix a strictly compact and convex body $K$ with smooth boundary in the sense of Definition~\ref{d:def-convex} that \emph{contains $0$ in its interior, $0 \in \Int(K)$}. We will be interested in the induced vector field on $S\IT^d=\IT^d\times\IS^{d-1}$:
\begin{equation}\label{e:vector-field}
 V:=\mathbf{v}(\theta)\cdot\partial_x,\quad\text{with}\quad \mathbf{v}:=x_K.
\end{equation}
The case of the classical geodesic flow described in the introduction corresponds to $K=B_d$, where $B_d$ is the Euclidean ball of radius $1$ centered at $0$ and $\mathbf{v}(\theta)=\theta$.
\begin{rema}\label{r:interior-0} The fact that $K$ has $0$ in its interior has the following consequence that will be used later on:
\begin{equation}\label{e:interior-0}
 \text{for all }  \theta\in\IS^{d-1},\quad \mathbf{v}(\theta)\cdot\theta>0.
\end{equation}
Indeed, if there is $\theta_0\in\IS^{d-1}$ such that $\mathbf{v}(\theta_0)\cdot\theta_0\leq 0$, we may suppose without loss of generality that $\theta_0=(1,0,\ldots,0).$ This would imply that $\mathbf{v}_1(\theta_0)\leq 0$ and contradict the fact that $\IR\theta_0\cap K$ is a closed interval containing $0$ in its interior.
\end{rema}

\subsection{Finsler geometry and Hamiltonian structure} 
\label{s:Finsler-Ham}
In this section, we introduce the general translation invariant Finsler geometry in which we consider our Epstein Zeta function and Poincar\'e series. We recall how it naturally enjoys a Hamiltonian structure and  how both are linked to vector fields of the form~\eqref{e:vector-field}.

\subsubsection{Generalities on convex and Finsler geometries}
For $\mathsf{K} \subset \R^d$ a convex set, we start by recalling the notation for its polar set 
$$
\mathsf{K}^\circ = \{p \in \R^{d} : p\cdot x\leq 1 \text{ for all } x \in \mathsf{K} \} , 
$$
and the relation $\mathsf{K}^{\circ\circ}=\mathsf{K}$ if $0 \in \Int(\mathsf{K})$, see~\cite[Theorem~1.6.1]{Schneider14}. We also recall the definition of the support function of the convex set $\mathsf{K}$:
\begin{align}
\label{e:defh_K}
h_\mathsf{K}(p) :=  \sup\{ p\cdot x : x \in \mathsf{K} \} , \quad p\in \R^{d}  .
\end{align}
Given a convex function $g :\R^d \to \R$, its Legendre transform is defined as 
$$
g^*(p) = \sup \{p\cdot x - g(x): x \in \R^d\} , 
$$
which is a convex function on $\R^{d}$.

Next, a function $F:\mathbb{R}^{d}\mapsto \mathbb{R}$ is called a Minkowski norm if it satisfies the following properties~\cite[p.~2]{ChernShen}:
\begin{itemize}
\item $F(\lambda y)=\lambda F(y), \forall \lambda>0$,
\item $F:\mathbb{R}^{d}\setminus \{0\}\mapsto \mathbb{R}$ is smooth and for any $y\in \mathbb{R}^{d}\setminus \{0\}$, the bilinear form $B_y$
\begin{eqnarray*}
B_y(v,w)=\frac{1}{2} \frac{d^2}{dsdt}F^2(y+sv+tw)|_{s=t=0}
\end{eqnarray*} 
is positive definite.
\end{itemize}
The pair $(\mathbb{R}^{d},F)$ is called a Minkowski space. Let us list some properties of the pair $(\mathbb{R}^{d},F)$ which are consequences of the above definition~\cite[p.~3]{ChernShen}~\cite[Thm 1.2.2 p.~6]{baochern}:
\begin{enumerate}
\item $F(y)\geqslant 0$ and $F(y)=0\implies y=0$,
\item the unit sphere $\{F=1\}$ is a smooth, strictly convex hypersurface diffeomorphic to the unit Euclidean sphere,
\item $F(x+y)\leqslant F(x)+F(y)$ with equality iff $x,y$ are colinear which means the Minkowski norm satisfies the strict triangle inequality.
\end{enumerate}

We collect in the following lemma links between Minkowski norms and convex sets.
\begin{lemm}
\label{l:finsler-hamilton}
Assume that $\mathsf{K} \subset \R^{d}$ is a strictly convex compact set in the sense of Definition~\ref{d:def-convex} such that $0 \in \Int(\mathsf{K})$.
One can associate with $\mathsf{K}$ its support function $h_\mathsf{K}$ and the Minkowski norm $F_\mathsf{K}$ s.t. $\mathsf{K}=\{F_K\leqslant 1\}$.
Then one has the following relations 
\begin{itemize}
\item the norm $F_\mathsf{K} $ and support functions $h_\mathsf{K}$ are related via Legendre transformation: $F_\mathsf{K}=h_\mathsf{K}^* $,
\item the map $\mathsf{K}\mapsto h_\mathsf{K}$ exchanges polarity $\mathsf{K}\mapsto \mathsf{K}^\circ$ and the Legendre transform which reads 
$$ h_{\mathsf{K}^\circ}=h_\mathsf{K}^* .$$
\end{itemize}

%
%
%
%

If we define the Lagrangian $L(y)=\frac{1}{2}F^2(y)$ on the velocity space $\mathbb{R}^d$ and the Hamiltonian 
  \begin{align}
  \label{e:hamiltonien}
H(p)=\frac{1}{2}(F^*)^2(p) = \frac{1}{2}h_K^2(p) 
\end{align} on the dual space $\R^{d*}\simeq\R^d$, we have $H^*=L$ and $L^*=H$. Moreover, the maps $x \mapsto \nabla L (x)$ and $u \mapsto \nabla H(u)$ are bijective and inverse to each other. 
\end{lemm}
This lemma is essentially proved in~\cite[p55-56]{Schneider14}. 
Let us quickly remind how to recover the Minkowski norm $F$ from the convex $\mathsf{K}$ we started with. We would like to realize the boundary hypersurface $\partial \mathsf{K}$ as the unit sphere $F=1$. Note that the positive definiteness of the bilinear forms $(g_u)_{u\in \{F=1\}}$ in the definition of Minkowski norms exactly means that the Gauss map $x\in \{F=1\}\mapsto \frac{\nabla F(x)}{\vert \nabla F(x)\vert}\in \mathbb{S}^{d-1}$ is invertible. For every $x\in \mathbb{R}^d$, define $F(x)=\frac{1}{t}$ for $t\geqslant 0$ s.t. $tx\in \partial \mathsf{K}$, which according to~\cite[Lemma 1.7.13]{Schneider14} is $F=h_{\mathsf{K}^\circ}$.
The fact that $F^* = h_\mathsf{K}$ is proved in~\cite[Eq. (14.7.4), p.~404)]{baochern}.
By~\cite[Lemma 3.1.2, p.~38]{shen2001}, $F^*$ is also a Minkowski norm on $\mathbb{R}^{d}$ and its unit ball is nothing but the polar convex set $\mathsf{K}^\circ$.
Finally that the Lagrangian $L(y)=\frac{1}{2}F^2(y)$ on velocity space corresponds to the Hamiltonian $H$ on momentum space given by~\eqref{e:hamiltonien}, follows for instance from~\cite[Eq. (14.8.2), p.~407]{baochern}. For more informations on the Legendre transformation and convex geometry, see~\cite[\S 14.8]{baochern}).

\medskip
On $T\mathbb{T}^d$, we endow each tangent space $T_x\mathbb{T}^d$ with the Minkowski metric $F$, then this turns $\mathbb{T}^d$ into a Riemann--Finsler manifold $(\T^d,F)$ whose Finsler metric is translation invariant. 
The associated distance function is 
$$
d_{F} (x,y) = \inf \left\{ \int_0^1 F(\dot{\gamma}(t))dt, \gamma \in W^{1,1}([0,1];\T^d), \gamma(0)=x, \gamma(1)=y \right\} .
$$
As in the Riemannian framework and in classical mechanics, one deduces from Lemma~\ref{l:finsler-hamilton} that curves on $\T^d$ minimizing the distance $d_F$ (which we call geodesic curves of $(\T^d,F)$ as in the Riemannian case) are projections of Hamiltonian flow on $\T^d$, namely $(x(t),p(t))$ where $\dot{x} =\nabla H(p), \dot{p}=0$.
That is to say $(x(t),p(t)) = (x_0 + t \nabla H(p_0) , p_0)$. In particular, translation invariance implies that geodesics are straight lines in $\T^d$.


\subsubsection{Parametrization by the unit normal}
Let us conclude this section with a Hamiltonian interpretation of the vector field $V=\mathbf{v}(\theta)\cdot\partial_x$ defined in~\eqref{e:vector-field}. Here, given $K \subset \R^{d}$ a strictly convex compact set in the sense of Definition~\ref{d:def-convex} such that $0 \in \Int(K)$, we recall that we have defined $\mathbf{v}=x_K$ as the inverse of the Gauss map on $\partial K$.
 Recalling~\eqref{e:hamiltonien} and~\eqref{e:defh_K}, one has:
$$H_K(x,\xi):=\frac{1}{2}h_K(\xi)^2=\frac{1}{2}\left(\xi\cdot \mathbf{v}\left(\frac{\xi}{|\xi|}\right)\right)^2,\quad (x,\xi)\in T^*\IT^d\setminus\underline{0}_{\IT^d},$$
 where the second equality comes from the study of the critical points of the function $\theta\in\IS^{d-1}\mapsto \mathbf{v}(\theta)\cdot\xi$ -- see \S\ref{s:analysis} for instance or~\cite[Remark~1.7.14 p53]{Schneider14}. Recalling that the normal to $K$ at $\mathbf{v}(\theta)$ is $\theta$ by construction, we deduce that the image of $d\left(\mathbf{v}\left(\frac{\xi}{|\xi|}\right)\right)$ is orthorgonal to $\xi$ and thus the corresponding Hamiltonian vector field is given by
$$X_{H_K}=\left(\mathbf{v}\left(\frac{\xi}{|\xi|}\right)+ d\left(\mathbf{v}\left(\frac{\xi}{|\xi|}\right)\right)^T\xi\right)\cdot\partial_x=\mathbf{v}\left(\frac{\xi}{|\xi|}\right)\cdot\partial_x.$$
This looks very much like our vector field except that the level line $\mathcal{E}:=\{H=\frac{1}{2}\}$ is not equal to $\IS^{d-1}$. In view of fixing this issue, we define the following diffeomorphism:
\begin{align}
\label{e:def-Phi-K}
\Phi:(x,\theta)\in S\IT^d\mapsto \left(x,\frac{\theta}{\mathbf{v}(\theta)\cdot\theta }\right)\in\mathcal{E},
\end{align}
and one finds $\Phi^*X_{H_K}=V$. In other words, our vector fields are pullbacks on $S\IT^d$ of Hamiltonian vector fields whose expressions are given in terms of $K$ or equivalently $(\Phi^{-1})^* e^{-tX_H}\Phi^*= e^{-tV}$ where $e^{-tV}$ acts on $S\T^d$ and $e^{-tX_H}$ acts on $\mathcal{E}\subset T^*\T^d$.

\begin{rema} Note that $e^{-tV}$ preserves the contact form. Indeed, by the Lie--Cartan formula:
$$\mathcal{L}_V \theta\cdot dx=d(\mathbf{v}(\theta)\cdot\theta)+\iota_{V}\left(d\theta\wedge dx\right)=\mathbf{v}(\theta)\cdot d\theta-\mathbf{v}(\theta)\cdot d\theta=0 $$
where we used the orthogonality relation $\theta\cdot d\mathbf{v}(\theta)=0$ and the fact that $V$ is horizontal hence $\iota_Vd\theta=0$. This property allows to give another proof of identity~(\ref{e:legendrian}). In fact, since $N_+(\Sigma_1)=e^{V_{K_1}^+*}(S_0\mathbb{T}^d)$ (with $V_{K_1}^+=x_{K_1}(\theta)\cdot \d_x$), it is the transport by some contact flow of the fiber $S_0\mathbb{T}^d$ which is Legendrian, $N_+(\Sigma_1)$. Hence it is also Legendrian. 
\end{rema}

\medskip
Finally, let us notice that conversely, if we are given a Hamiltonian 
\begin{lemm}
Let $H :\R^{d} \to \R$ be such that 
\begin{enumerate}
\item $\mathcal{E}=\{H=\frac{1}{2}\}$ is compact and connected,
\item $H$ is smooth near $\mathcal{E}$ and $\nabla H \neq 0$ on $\mathcal{E}$,
\item the Hessian $D^2H(p)( \cdot , \cdot )$ is positive definite for all $p \in \mathcal{E}$ when restricted to $T_p\mathcal{E} \times T_p\mathcal{E}$,
\item \label{e:fins-ham-ch} $p\mapsto p\cdot \nabla H(p)$ is constant on $\mathcal{E}$. 
\end{enumerate}
Then there is a strictly convex set $K$ in the sense of Definition~\ref{d:def-convex}, with $0 \in \Int(K)$ and a constant $c_0>0$ such that on $\mathcal{E}$, $e^{-tX_H}= e^{-c_0 t X_{H_{K}}} = \Phi^* e^{-c_0 tV} (\Phi^{-1})^*$ with $V=\mathbf{v}(\theta)\cdot \d_x$ where $\mathbf{v}(\theta)$ associated with $K$ and $\Phi$ defined by~\eqref{e:def-Phi-K}.
\end{lemm}
As a consequence of this lemma, the analysis of the flow $e^{-tX_H}$ on the set $\mathcal{E}\subset T^*\T^d$ is equivalent to that of $e^{-tV}$ on $S\T^d$.   
Therefore, the present article includes the study of a family of completely integrable Hamiltonian flows associated with strictly convex Hamiltonians. 
Note that the condition~\eqref{e:fins-ham-ch} is reminiscent to the homogeneity of $H$ near $\mathcal{E}$. In particular, it shows that, up to conjugation by $\Phi$, our analysis includes the case of flows associated with Hamiltonians of the form $H(p):=\frac{1}{2}p\cdot Ap$, where $A$ is any positive definite symmetric matrix.

\begin{proof}
By assumption, $\mathcal{E}$ is a smooth compact, connected and oriented hypersurface embedded in $\R^{d}$. Moreover, the convexity assumption on $H$ implies that its sectional curvatures are positive. Hence, by Hadamard-Sacksteder Theorem~\cite{Hadamard1897, Sacksteder60, DoCarmoLima69}, it is the boundary of a strictly convex body $\mathsf{K} \subset \R^{d}$ (in the sense of Definition~\ref{d:def-convex}), i.e. $\partial\mathsf{K}= \mathcal{E}$. 
Moreover, assumption~\eqref{e:fins-ham-ch} implies that $0 \in \Int (\mathsf{K})$ and $p\mapsto p\cdot \nabla H(p) = c_0>0$ for all $p \in \mathcal{E}$.
Then, according to Lemma~\ref{l:finsler-hamilton}, $F := h_{\mathsf{K}}$ is a Minkowski norm on $\R^d$ and $h_{\mathsf{K}^\circ}$ is a Minkowski norm on $\R^{d}$ such that $\mathsf{K}= \{p \in \R^{d}, h_{\mathsf{K}^\circ} (p) \leq 1\}$. Setting $H_{\mathsf{K}^\circ} := \frac12 h_{\mathsf{K}^\circ}^2$, we have, locally near $\mathcal{E}$, $\mathsf{K}=\{H\leq\frac12\}=\{H_{\mathsf{K}^\circ}\leq\frac12\}$, and, more precisely, $H - \frac12$ and $H_{\mathsf{K}^\circ} - \frac12$ are two defining functions of $\partial \mathsf{K}$. Hence, there exists a smooth non-vanishing function $f$ such that $H_{\mathsf{K}^\circ}(p) - \frac12= f(p) \left( H(p) - \frac12 \right)$. Differentiating this identity and restricting it to $\mathcal{E}$, we obtain $\nabla H_{\mathsf{K}^\circ}(p) = f (p)\nabla H(p)$ for all $p \in \mathcal{E}$. 
This implies $p \cdot \nabla H_{\mathsf{K}^\circ}(p) = f (p) p\cdot \nabla H(p)$ for all $p \in \mathcal{E}$. The Hamiltonian $H_{\mathsf{K}^\circ}$ is homogeneous of degree two, whence $p \cdot \nabla H_{\mathsf{K}^\circ}(p) = 2 H_{\mathsf{K}^\circ} =1$ for all $p \in \mathcal{E}$. We deduce that $f(p) = c_0^{-1}$ for all $p \in \mathcal{E}$. This implies $\nabla H(p)= c_0 \nabla H_{\mathsf{K}^\circ}(p)$ and hence $e^{-tX_H}= e^{-c_0 t X_{H_{\mathsf{K}^\circ}}} = \Phi^* e^{-c_0 tV} (\Phi^{-1})^*$ where $V=\mathbf{v}(\theta)\cdot \d_x$ and $\mathbf{v}(\theta)$ is associated with the convex set $\mathsf{K}^\circ=K$.
\end{proof}

\subsection{Decomposition of the tangent space of $S\IT^d$}\label{ss:riemannian}

The tangent space to a point $(x,\theta)\in S\IT^d$ decomposes in a way which is adapted to the dynamical features of our problem. First, we write
$$T_{x,\theta}S\IT^d\simeq T_x\IT^d \times T_\theta\IS^{d-1}.$$ Given $\theta\in \IS^{d-1}$, we consider a family $(e_1(\theta),\ldots, e_{d-1}(\theta))$, depending smoothly on $\theta$, and such that the family $(\theta,e_1(\theta),\ldots,e_{d-1}(\theta))$ is orthonormal and $$\text{det}\left(\theta, e_1(\theta), \ldots, e_{d-1}(\theta)\right)>0.$$
At a given point $(x,\theta)\in S\IT^d$, we define the horizontal space as
$$\mathcal{H}_{x,\theta}:=\text{Span}_{T_x\IT^d}(e_1(\theta),\ldots, e_{d-1}(\theta))\times\{0\}\subset T_x\IT^d\times\{0\}\subset T_{x,\theta}(S\IT^d).$$
Similarly, we introduce the vertical space
$$\mathcal{V}_{x,\theta}:=\{0\}\times\text{Span}_{T_\theta\IS^{d-1}}(e_1(\theta),\ldots, e_{d-1}(\theta))=\{0\}\times T_\theta\IS^{d-1}\subset T_{x,\theta}(S\IT^d).$$
Note that $\ml{V}_{x,\theta}$ is the tangent space to the submanifold $S_{x}\IT^d$, or equivalently the kernel of the tangent map of $\Pi:(x,\theta)\in S\IT^d\mapsto x\in\IT^d.$ One has then some canonical identification of all tangent fibers as:
$$T_{x,\theta}S\IT^d=\IR \theta\cdot\partial_x \oplus \mathcal{H}_{x,\theta}\oplus \mathcal{V}_{x,\theta}.$$
In the terminology of symplectic geometry, $ \mathcal{H}_{x,\theta}\oplus \mathcal{V}_{x,\theta}$ is the kernel of the Liouville (contact) form $\alpha(x,\theta,dx,d\theta):=\theta\cdot dx.$ 
This decomposition of the tangent space allows to write the following nice expression of the tangent map $D(e^{tV}) :T_{x,\theta}S\IT^d \mapsto T_{x+t\mathbf{v}(\theta),\theta}S\IT^d\simeq  T_{x,\theta}S\IT^d $ at a point $(x,\theta)$:
\begin{equation}\label{e:tangentmap}[D(e^{tV})(x,\theta)]_{\IR \theta\cdot\partial_x\oplus \mathcal{H}_{x,\theta}\oplus \mathcal{V}_{x,\theta}}=\left(\begin{array}{ccc} 1 & 0 & 0\\
                                0 & \Id & t D\mathbf{v}(\theta) \\
                                0 & 0 & \Id
                               \end{array}\right).\end{equation}
Indeed, $e^{tV}$ commutes with translations on $\mathbb{T}^d$ therefore $D(e^{tV})=\Id$ on the horizontal part 
$\theta\cdot\partial_x\oplus \mathcal{H}_{x,\theta}$. Then for the other part, we just compute the derivative along a $C^1$ curve: $s\mapsto \theta(s)\in S_x\mathbb{T}^d$, we have $\frac{d}{ds}(x+t\mathbf{v}(\theta(s)),\theta(s))=(tD\mathbf{v}(\theta)(\theta^\prime(s)),\theta^\prime(s))$ which yields the full matrix of the differential.

\begin{rema}\label{r:stable-bundle}
Recall that $\mathbf{v}:\theta\in\IS^{d-1}\rightarrow \partial K$ is a diffeomorphism. Moreover, by construction, $D\mathbf{v}(\theta)$ can be identified with an isomorphism of $T_\theta\IS^{d-1}$, as the normal to $\partial K$ at $\mathbf{v}(\theta)$ is given by $\theta$. From this expression of the tangent map, we deduce that every vector in $\mathcal{H}_{x,\theta}\oplus \mathcal{V}_{x,\theta}$ is stretched in the direction of the horizontal bundle under the action of the tangent map as $t\rightarrow\pm\infty$. See Figure~\ref{f:kernel}.
\end{rema}

\begin{rema}\label{r:transverse-vectorfield} Recalling~\eqref{e:interior-0}, one knows that $V$ is transversal to $\mathcal{H}_{x,\theta}\oplus \mathcal{V}_{x,\theta}$.
\end{rema}

\begin{figure}[ht]
\includegraphics[scale=0.55]{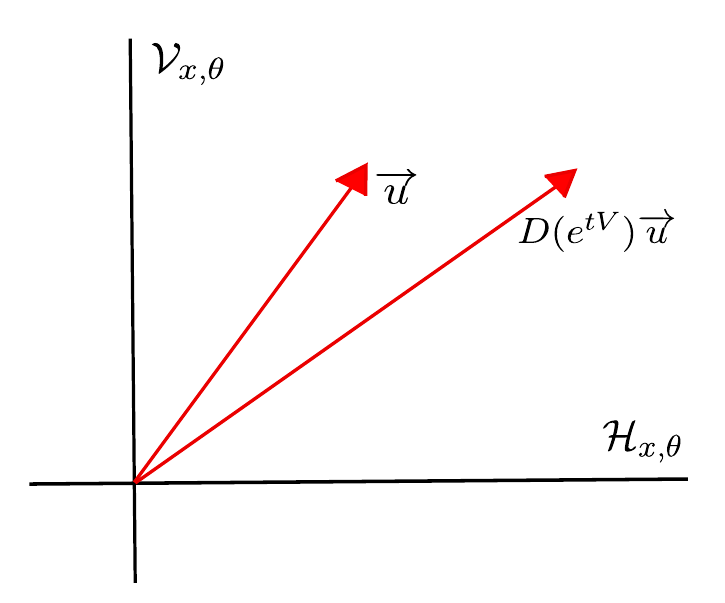}
\centering
\caption{\label{f:kernel}Action of the tangent map on the kernel of the contact form.}
\end{figure}

\begin{rema} We will denote by $E_0^*=\IR\alpha \subset T^*S\IT^d$ the annihilator of $\ml{H}\oplus \ml{V}$, by $\ml{H}^*\subset T^*S\IT^d$ the annihilator of $\IR V\oplus\ml{V}$ and by $\ml{V}^*\subset T^*S\IT^d$ the annihilator of $\IR V\oplus\ml{H}$. The action of the tangent map on $T^*S\IT^d$ reads
\begin{equation}\label{e:cotangentmap}[D(e^{tV})(x,\theta)^T]^{-1}_{\IR \alpha(x,\theta)\oplus \mathcal{H}_{x,\theta}^*\oplus \mathcal{V}_{x,\theta}^*}=\left(\begin{array}{ccc} 1 & 0 & 0\\
                                0 & \Id & 0 \\
                                0 & -tD\mathbf{v}(\theta)^T & \Id
                               \end{array}\right).\end{equation}
\end{rema}

\subsection{Admissible submanifolds}\label{ss:normal}
In view of defining our zeta functions, we will need the following admissibility property:
\begin{def1} 
\label{d:admissible}
We say that $\Sigma_{ 1}\subset\IT^d$ is \emph{admissible} if there exists a strictly convex and compact subset $K_{ 1}\subset \IR^d$ with smooth boundary such that 
$$\mathfrak{p}(\partial K_{ 1})=\Sigma_{1}.$$
\end{def1}
This definition includes the case where $\Sigma_{ 1}$ is reduced to a point. We also observe that the map $\mathfrak{p}:\partial K_{1}\rightarrow\IT^d$ is a smooth immersion but it is not necessarily injective, i.e. $\Sigma_{1}$ may have selfintersection points. Using the inverse of the Gauss map~\eqref{e:gaussmap}, one can then define the direct/indirect normal bundles to $\Sigma$:
\begin{equation}\label{e:gauss-coordinates}
N_\pm(\Sigma_{ 1}):=\left\{\left(\tilde{x}(\theta):=\mathfrak{p}\circ x_{K_{ 1}}(\theta),\pm\theta\right):\ \theta\in\IS^{d-1}\right\},\quad N(\Sigma_{ 1})=N_{+}(\Sigma_{ 1})\cup N_{-}(\Sigma_{ 1}),
\end{equation}
where $\theta\in\IS^{d-1}\mapsto\tilde{x}(\theta)\in\Sigma_1\subset \IT^d$ is the parametrization of $\Sigma_1$ by its normal. 
Even if $\Sigma_{1}$ is not a proper submanifold (as it may have selfintersection points), $N(\Sigma_{1})$ and $N_{\pm}(\Sigma_{1})$ are smooth, compact and embedded submanifolds of $\IT^d\times\IS^{d-1}$. Using the conventions of \S\ref{ss:riemannian}, one has
\begin{lemm}
\label{l:legendrian-transversal} Let $\Sigma_{1}\subset\IT^d$ be admissible. Then, for every $(x,\theta) = (\tilde{x}(\theta),\theta)\in N_{\pm}(\Sigma_{1})$,
\begin{align}
\label{e:legendrian}
&T_{x,\theta}N_\pm (\Sigma_{ 1})\subset \mathcal{H}_{x,\theta}\oplus \mathcal{V}_{x,\theta}, \\
\label{e:transversal}
&T_{x,\theta}S\IT^d=\IR  \theta\cdot\partial_x\oplus \ml{H}_{x,\theta}\oplus T_{x,\theta}N_\pm(\Sigma_{1}).
 \end{align}
 \end{lemm}
In the terminology of symplectic geometry,~\eqref{e:legendrian} says that $N_\pm(\Sigma_{1})$ is a \emph{Legendrian} submanifold as its tangent space lies in the kernel of the Liouville contact form. 
Property~\eqref{e:transversal} is a transversality property, see Figure~\ref{f:tangent}. It says that our unit normal bundle is never tangent to the horizontal bundle inside $S\mathbb{T}^d $. 
\begin{figure}[ht]
\includegraphics[scale=0.6]{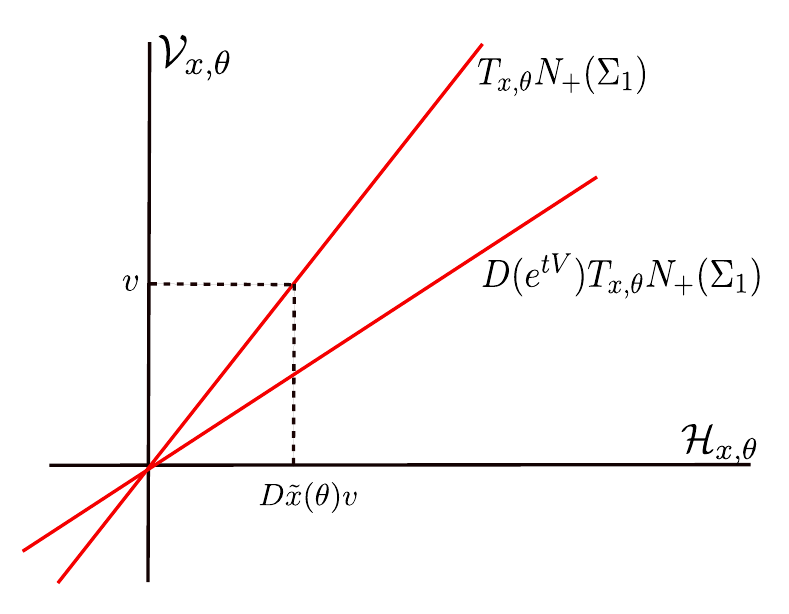}
\centering
\caption{\label{f:tangent}Tangent space to $N_+(\Sigma_{ 1})$.}
\end{figure}
\begin{proof}[Proof of Lemma~\ref{l:legendrian-transversal}] We start proving~\eqref{e:legendrian} and only discuss the case of $N_+(\Sigma_{1})$ (the other case can be handled similarly). We recall that $N_+(\Sigma_{1})$ is defined in~\eqref{e:gauss-coordinates}.  
In particular, the tangent set to a point $(\tilde{x}(\theta),\theta)$ is given by
\begin{equation}\label{e:tangent-space}
T_{\tilde{x}(\theta),\theta}N_+(\Sigma_{1})=\left\{(D\tilde{x}(\theta)v,v): v\in \ml{V}_{\tilde{x}(\theta),\theta}\right\}\subset \left(  \IR \theta\cdot\partial_x\oplus\ml{H}_{\tilde{x}(\theta),\theta} \right) \oplus \ml{V}_{\tilde{x}(\theta),\theta} 
\end{equation}
Thus, in order to prove~\eqref{e:legendrian}, we need to check that $D\tilde{x}(\theta)v \perp V(\tilde{x}(\theta),\theta)$ for every $v\in \ml{V}_{\tilde{x}(\theta),\theta}$. To see this, we only need to discuss the case where $D\tilde{x}(\theta)v\neq 0$. By construction, this means that $Dx_{K_{ 1}}(\theta)v\neq 0$. By definition, such a vector is an element in $T_{x_{K_1}(\theta)}\partial K_{1}$ and thus orthogonal to $\theta\cdot\partial_x$ which is the outward normal to $\partial K_{ 1}$ at $x_{K_{1}}(\theta)$. This implies that $D\tilde{x}(\theta)v$ is orthogonal to $V(\tilde{x}(\theta),\theta)$, hence the conclusion~\eqref{e:legendrian}.

Finally, recalling the decomposition of the tangent space~\eqref{e:tangent-space}, these Legendrian submanifolds verify the transversality property~\eqref{e:transversal}.
\end{proof}

\subsection{Epstein zeta functions and Poincar\'e series for admissible submanifolds}\label{ss:relation-resolvent-series}
Let now $\Sigma_1$ and $\Sigma_2$ be two admissible subsets of $\IT^d$ and let $\sigma_1$ and $\sigma_2$ be two elements in $\{\pm\}$. The ultimate goal of the present section is to prove Lemma~\ref{l:starting-point} stating finiteness of the set of certain geodesic curves of length $\leq T$, as well as an a priori upper bound on its cardinal, namely $\mathcal{O}(T^d)$. In turn, this will imply that generalized Epstein functions like~\eqref{e:epstein-zeta-intro} are well defined for $\Re(s)>d$ and that Poincar\'e series like~\eqref{e:poincare-series-intro} are well-defined for $\Re(s)> 0$.
 This relies on a reformulation of length of geodesics in terms of the geodesic flow on $S\T^d$.
 We follow this program in the general context of Finsler invariant structures on the torus described in Section~\ref{s:Finsler-Ham}, that is to say for general flows $(e^{-tV})$ associated with a strictly convex compact set in the sense of Definition~\ref{d:def-convex} such that $0 \in \Int(K)$. 
 
To this aim, we define for $t>0$,
\begin{align}
\label{e:def-Et}
\mathcal{E}_t(\Sigma_1,\Sigma_2) := N_{\sigma_1}(\Sigma_1)\cap e^{tV}(N_{\sigma_2}(\Sigma_2)) \subset S\T^d .
\end{align}
and describe the set of times $t\in\IR_+$ such that $\mathcal{E}_t(\Sigma_1,\Sigma_2) \neq\emptyset$.
Note that the orientations $\sigma_i$ are implicit in this notation.

\subsubsection{A priori bounds on the number of intersection points}
The first basic statement concerns finiteness of $\mathcal{E}_t(\Sigma_1,\Sigma_2)$ together with  the set of times $t$ for which $\mathcal{E}_t(\Sigma_1,\Sigma_2)$ is nonempty.
\begin{lemm}
\label{l:basic-simple}
There exists some $T_0>0$ such that, for every $t\geq T_0$, $\mathcal{E}_t(\Sigma_1,\Sigma_2)$ is a (possibly empty) finite set. Moreover, setting
$$m_{\Sigma_1,\Sigma_2}(t):=\sharp \, \mathcal{E}_t(\Sigma_1,\Sigma_2)<\infty, \quad t \geq T_0 ,$$
 for any $[a,b]\subset [T_0,+\infty)$,
$$\{t\in[a,b]:\  \mathcal{E}_t(\Sigma_1,\Sigma_2)\neq \emptyset \} = \{t\in[a,b]:\ m_{\Sigma_1,\Sigma_2}(t)\neq 0\} ,$$
is as well a finite set.
\end{lemm}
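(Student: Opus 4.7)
The plan is to reformulate $\mathcal{E}_t(\Sigma_1,\Sigma_2)$ as a set of preimages of a smooth map and to apply a transversality argument. I would introduce the smooth map
$$
\Phi : (0,+\infty)\times N_{\sigma_2}(\Sigma_2) \to S\IT^d, \qquad \Phi(t,y) = e^{tV}(y),
$$
whose source has dimension $d$ and target dimension $2d-1$. By construction, $\mathcal{E}_t(\Sigma_1,\Sigma_2)$ is in bijection with the slice $\Phi^{-1}(N_{\sigma_1}(\Sigma_1))\cap(\{t\}\times N_{\sigma_2}(\Sigma_2))$, and $N_{\sigma_1}(\Sigma_1)$ has codimension $d$ in $S\IT^d$. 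I plan to prove that, for $t$ sufficiently large, $\Phi$ is transversal to $N_{\sigma_1}(\Sigma_1)$, so that $\Phi^{-1}(N_{\sigma_1}(\Sigma_1))\cap((T_0,+\infty)\times N_{\sigma_2}(\Sigma_2))$ is a $0$-dimensional embedded submanifold, i.e.\ a discrete subset.

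The transversality computation runs as follows. At $(t,y)$ with $\Phi(t,y) = (x,\theta)\in N_{\sigma_1}(\Sigma_1)$, the differential $D\Phi$ sends $\partial_t$ to $V(x,\theta)\in\IR V$ and $T_y N_{\sigma_2}(\Sigma_2)$ to $D(e^{tV})(T_y N_{\sigma_2}(\Sigma_2))$. By Lemma~\ref{l:legendrian-transversal}, both tangent spaces lie in $\ml{H}\oplus \ml{V}$ and, via the inverse Gauss maps $x_{K_i}$ (which exist globally thanks to strict convexity), they can be parameterized as
$$
T_y N_{\sigma_2}(\Sigma_2) = \{(\sigma_2 Dx_{K_2}(\sigma_2\theta)v,\, v) : v\in T_\theta\IS^{d-1}\},
$$
and analogously for $N_{\sigma_1}(\Sigma_1)$ at $(x,\theta)$. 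The explicit form~\eqref{e:tangentmap} of $D(e^{tV})$ on $\ml{H}\oplus\ml{V}$ gives the shearing action $(u,v)\mapsto(u+tv,v)$. Since $T_{(x,\theta)}S\IT^d = \IR V\oplus \ml{H}\oplus \ml{V}$, the transversality $D\Phi(T)+T_{(x,\theta)}N_{\sigma_1} = T_{(x,\theta)}S\IT^d$ boils down to checking that the sum $T_{(x,\theta)}N_{\sigma_1}(\Sigma_1) + D(e^{tV})(T_y N_{\sigma_2}(\Sigma_2))$ is direct inside the $(2d-2)$-dimensional space $\ml{H}\oplus\ml{V}$. Matching the $\ml{V}$-components of elements in the intersection forces a common $v$, and matching the $\ml{H}$-components then forces
$$
A_\theta(t)\, v := \bigl(\sigma_1 Dx_{K_1}(\sigma_1\theta) - \sigma_2 Dx_{K_2}(\sigma_2\theta) - t\,\Id\bigr) v = 0.
$$
Since $\IS^{d-1}$ is compact and $x_{K_1}, x_{K_2}$ are smooth, the operator $A_\theta(t)$ is a bounded perturbation of $-t\,\Id$ uniformly in $\theta$, hence invertible as soon as $t$ exceeds some threshold $T_0 = T_0(K_1,K_2)>0$, which forces $v = 0$ and proves transversality.

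Once transversality is established, $\Phi^{-1}(N_{\sigma_1}(\Sigma_1))\cap((T_0,+\infty)\times N_{\sigma_2}(\Sigma_2))$ is discrete; intersecting it with the compact set $[a,b]\times N_{\sigma_2}(\Sigma_2)$ for $[a,b]\subset[T_0,+\infty)$ yields a finite set. Projecting onto the first factor gives finiteness of $\{t\in[a,b]:\mathcal{E}_t(\Sigma_1,\Sigma_2)\neq\emptyset\}$, while intersecting with each slice $\{t\}\times N_{\sigma_2}(\Sigma_2)$ yields finiteness of $\mathcal{E}_t(\Sigma_1,\Sigma_2)$ for every fixed $t\geq T_0$ (applied, e.g., with $[a,b]=[t,t]$ or a small interval around $t$). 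The equality between the two sets in the last display is a tautology from the definition $m_{\Sigma_1,\Sigma_2}(t) = \sharp\,\mathcal{E}_t(\Sigma_1,\Sigma_2)$. The main obstacle is the transversality computation itself: it requires combining the Legendrian structure from Lemma~\ref{l:legendrian-transversal} with the shearing action~\eqref{e:tangentmap} of $D(e^{tV})$, and crucially exploits strict convexity of $K_1$ and $K_2$ so that the inverse Gauss maps are globally defined on $\IS^{d-1}$ and their differentials are uniformly bounded.
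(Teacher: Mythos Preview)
Your proof is correct and follows essentially the same route as the paper: both arguments establish transversality between $N_{\sigma_1}(\Sigma_1)$ and the $d$-dimensional object swept out by $e^{tV}(N_{\sigma_2}(\Sigma_2))$, using the Legendrian property~\eqref{e:legendrian} together with the shearing action~\eqref{e:tangentmap}, and then conclude finiteness by compactness. The only cosmetic difference is that the paper phrases this as transversality of two submanifolds (the swept set $\bigcup_{t\in(t_0-\eps,t_0+\eps)} e^{tV}(N_{\sigma_2}(\Sigma_2))$ versus $N_{\sigma_1}(\Sigma_1)$), whereas you phrase it equivalently as transversality of the map $\Phi$ to $N_{\sigma_1}(\Sigma_1)$; your explicit reduction to the invertibility of $A_\theta(t)=\sigma_1 Dx_{K_1}(\sigma_1\theta)-\sigma_2 Dx_{K_2}(\sigma_2\theta)-t\,\Id$ makes the large-$t$ threshold more quantitative than the paper's terse invocation of~\eqref{e:transversal} and~\eqref{e:tangentmap}.
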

Note that Lemma~\ref{l:starting-point} is then a reformulation of Lemma~\ref{l:basic-simple}. The latter states that after some time $T_0$, all possible intersections in~\eqref{e:def-Et} are transverse and thus our counting problem is well--posed. This property can be visualized as follows. When pushing a convex hypersurface $\partial K_1$ by the flow of $V$ in the cover $\mathbb{R}^d$, it behaves more and more like a flat hypersurface; Therefore, after some time $T_0$, its sectional curvatures will be strictly less than the sectional curvatures of $\partial K_2$. In turn, this implies that the intersections occurring in $S\mathbb{T}^d$ of the corresponding normal bundles will become transverse right after $T_0$.

\begin{proof}[Proof of Lemma~\ref{l:basic-simple}]
Given $t_0\in\IR_+$, one can always find some $\epsilon>0$ such that 
$$\bigcup_{t\in(t_0-\eps,t_0+\eps)}e^{tV}(N_{\sigma_2}(\Sigma_2))$$
is a smooth submanifold (with boundary) of dimension $d$ inside $S\IT^d$. Moreover, thanks to~\eqref{e:transversal} and to~\eqref{e:tangentmap}, we know that, for $t_0$ large enough, one can find $\eps>0$ such that, for every $(x,\theta)\in N_{\sigma_1}(\Sigma_1)\cap \cup_{t\in(t_0-\eps,t_0+\eps)}e^{tV}(N_{\sigma_2}(\Sigma_2))$,
\begin{equation}\label{e:transversality}T_{x,\theta}S\IT^d= T_{x,\theta}N_{\sigma_1}(\Sigma_1)\oplus T_{x,\theta}\left(\cup_{t\in(t_0-\eps,t_0+\eps)}e^{tV}(N_{\sigma_2}(\Sigma_2))\right).\end{equation}
In other words, the two submanifolds are transversal and, by compactness, they intersect at only finitely many points.
Note that the transversal intersection implies that the boundary of $\cup_{t\in(t_0-\eps,t_0+\eps)}e^{tV}(N_{\sigma_2}(\Sigma_2))$ does not meet $N_{\sigma_1}(\Sigma_1)$.
\end{proof}

We now provide with an a priori polynomial upper bound on $m_{\Sigma_1,\Sigma_2}(t)$. This is essential to ensure that Epstein functions like~\eqref{e:epstein-zeta-intro} and Poincar\'e series like~\eqref{e:poincare-series-intro} do converge for $\Re(s)$ large enough.
\begin{lemm}\label{l:apriorigrowth} Let $\Sigma_1$ and $\Sigma_2$ be two admissible subsets of $\IT^d$. Then, for $T_0$ as in Lemma~\ref{l:basic-simple}, there is $C_0>0$ such that, for every $T\geq T_0$,
$$\sum_{T\leq t\leq T+1} m_{\Sigma_1,\Sigma_2}(t)\leq C_0 T^{d-1}.$$ 
In particular, as $T\rightarrow+\infty$,
$$\sum_{T_0\leq t\leq T} m_{\Sigma_1,\Sigma_2}(t)=\ml{O}(T^d).$$ 
\end{lemm}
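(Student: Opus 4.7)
My plan is to reduce the intersection-counting problem on $S\IT^d$ to a lattice-point count in $\IZ^d$ via the inverse Gauss map. Fix strictly convex smooth compact $K_1, K_2 \subset \IR^d$ with $\mathfrak{p}(\partial K_i) = \Sigma_i$, and, for concreteness, assume $\sigma_1 = \sigma_2 = +$; the other cases follow identically after replacing $K_i$ by $-K_i$. Using the parametrization~\eqref{e:gauss-coordinates}, the condition $(x,\theta) \in \mathcal{E}_t(\Sigma_1,\Sigma_2)$ amounts to $x = \mathfrak{p}(x_{K_1}(\theta))$ and $x - t\theta = \mathfrak{p}(x_{K_2}(\theta))$, where $x_{K_i}:\IS^{d-1}\to\partial K_i$ is the (smooth) inverse Gauss map. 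Lifting to $\IR^d$, this is equivalent to the existence of $\xi = \xi(x,\theta) \in \IZ^d$ with
\begin{equation}\label{e:orthog-lift}
 t\, \theta \ = \ x_{K_1}(\theta) - x_{K_2}(\theta) - 2\pi\xi.
\end{equation}

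Setting $R := \sup_{\theta \in \IS^{d-1}}\bigl(|x_{K_1}(\theta)| + |x_{K_2}(\theta)|\bigr) < \infty$, taking norms in~\eqref{e:orthog-lift} and applying the reverse triangle inequality yields $\bigl| t - 2\pi|\xi| \bigr| \leq R$. Hence, every element of $\mathcal{E}_t(\Sigma_1,\Sigma_2)$ with $t \in [T,T+1]$ produces a lattice point $\xi(x,\theta) \in \IZ^d$ lying in the spherical annulus $\{\xi : 2\pi|\xi| \in [T-R, T+1+R]\}$. The standard lattice-point bound on such an annulus (its Euclidean volume being $O(T^{d-1})$ for $T$ large) supplies $O(T^{d-1})$ such $\xi$'s, so the first assertion of the lemma reduces to showing that the map $(x,\theta) \mapsto \xi(x,\theta)$ is injective as soon as $T_0$ is taken large enough.

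For this injectivity, I rewrite~\eqref{e:orthog-lift} as the fixed-point equation
$$ \theta \ =\ \frac{h_\xi(\theta)}{|h_\xi(\theta)|}, \qquad h_\xi(\theta)\ :=\ x_{K_1}(\theta) - x_{K_2}(\theta) - 2\pi\xi, $$
the length $t = |h_\xi(\theta)|$ then being uniquely determined by the corresponding $\theta$. Since $x_{K_1} - x_{K_2}$ is smooth on the compact sphere and $|h_\xi(\theta)| \sim 2\pi|\xi|$ as $|\xi|\to\infty$, an elementary computation shows that the Lipschitz constant of $\theta \mapsto h_\xi(\theta)/|h_\xi(\theta)|$ on $\IS^{d-1}$ is $O(1/|\xi|)$. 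Thus, for $|\xi|$ larger than some threshold $\xi_0 = \xi_0(K_1,K_2)$, the map is a contraction and the fixed point is unique. Enlarging $T_0$ if necessary, the length bound $|t - 2\pi|\xi|| \leq R$ forces $|\xi| \geq \xi_0$ whenever $t \geq T_0$, yielding the first claim. The second claim is then immediate from telescoping: $\sum_{T_0 \leq t \leq T} m_{\Sigma_1,\Sigma_2}(t) \leq C_0 \sum_{k=\lfloor T_0\rfloor}^{\lfloor T\rfloor}(k+1)^{d-1} = \mathcal{O}(T^d)$.

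The main technical obstacle is the uniqueness step, since a priori equation~\eqref{e:orthog-lift} could have several solutions $\theta \in \IS^{d-1}$ for a single $\xi$; the contraction estimate is exactly where strict convexity of $K_1, K_2$ enters decisively, by ensuring that the inverse Gauss maps $x_{K_i}$ are globally smooth on $\IS^{d-1}$ with uniformly bounded derivatives. Geometrically, the statement is that for $|\xi|$ large, the common outward perpendicular between $K_1$ and the translate $K_2 + 2\pi\xi$ must be almost aligned with the direction $-\xi/|\xi|$, and strict convexity rigidifies this alignment into uniqueness.
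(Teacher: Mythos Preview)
Your proof is correct and follows essentially the same route as the paper: lift the intersection problem to $\IR^d$, observe that each element of $\bigcup_{t\in[T,T+1]}\mathcal{E}_t(\Sigma_1,\Sigma_2)$ produces a lattice point in a spherical annulus of width $O(1)$ and radius $\sim T$, and invoke the $O(T^{d-1})$ volume bound. The one genuine addition in your write-up is the explicit contraction argument showing that the correspondence $(t,\theta)\mapsto\xi$ is injective for $T\geq T_0$; the paper simply passes from the parametrization by $\theta$ to the inequality $\sum_{T\leq t\leq T+1}\sharp\,\mathcal{E}_t\leq \sharp(2\pi\IZ^d\cap\mathcal{C}_T)$ without commenting on this point, so your version is in fact more careful here.
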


\begin{proof} In order to obtain such an upper bound, it is more convenient to lift the problem to $\mathbb{R}^d$ and to recall that the lift of $\Sigma_j$ is by definition the smooth boundary of a compact and strictly convex set.
As a consequence, we have for any $t>0$ 
$$ 
m_{\Sigma_1,\Sigma_2}(t) =\sharp\, \mathcal{E}_t(\Sigma_1,\Sigma_2) = \sharp \, \mathcal{E}_t (\d K_1,\d K_2) , 
$$ where $$
 \mathcal{E}_t (\d K_1,\d K_2) : = \left\{(x,\theta)\in N_{\sigma_1}(\partial K_1+2\pi\IZ^d): (x-t\mathbf{v}(\theta),\theta)\in N_{\sigma_2}(\partial K_2)\right\} \subset S\R^d. $$
Recalling that, when lifting the problem to $\mathbb{R}^d$,
$$N_\pm(\partial K_j):=\left\{(x_{K_j}(\theta),\sigma_j\theta):\theta\in\IS^{d-1}\right\} =\left\{( x_{K_j}(\sigma_j\theta),\theta):\theta\in\IS^{d-1}\right\}  , $$
 we notice that 
 $$
 (x,\theta) \in  \mathcal{E}_t (\d K_1,\d K_2) \implies t \mathbf{v}(\theta) +x_{K_1}(\sigma_1 \theta)- x_{K_2}(\sigma_2\theta)  \in 2\pi\IZ^d ,
 $$
 whence 
$$
\sharp \, \mathcal{E}_t (\d K_1,\d K_2) = \sharp \left\{\theta\in\IS^{d-1}: t\left( \mathbf{v}(\theta)+\frac{ x_{K_1}(\sigma_1 \theta)- x_{K_2}(\sigma_2\theta)}{t}\right) \in 2\pi\IZ^d\right\} .
 $$
This implies that
\begin{multline*}
\sum_{T\leq t\leq T+1} \sharp \, \mathcal{E}_t (\d K_1,\d K_2)\\
\leq \sharp\left\{\xi\in\IZ^d:\exists T\leq t\leq T+1,\ \exists\theta\in\IS^{d-1},\ 2\pi\xi=t \mathbf{v}(\theta)+ x_{K_1}(\sigma_1 \theta)-x_{K_2}(\sigma_2\theta)\right\}.
\end{multline*}
As we made the assumption that $K$ contains $0$ in its interior, there exists some $c_0,C>0$ (depending only on $K_1$, $K_2$ and $K$) such that, for $T>0$ large enough,
$$\sum_{T\leq t\leq T+1} \sharp \, \mathcal{E}_t (\d K_1,\d K_2)\leq \sharp \left(2\pi\IZ^d \cap \mathcal{C}_T \right) \leq C \Vol_{\R^d}(\mathcal{C}_T ) , \quad \text{with} \quad   \mathcal{C}_T = (T+1+c_0)K \setminus (T-c_0)K .
$$

We finally deduce from~\cite[Th.~1]{Herz62b} that, for $T_0>0$ as in Lemma~\ref{l:basic-simple}, 
$$
\sum_{T\leq t\leq T+1} m_{\Sigma_1,\Sigma_2}(t)  = \sum_{T\leq t\leq T+1}\sharp \, \mathcal{E}_t (\d K_1,\d K_2)  \leq C T^{d-1} , \quad T\geq T_0 ,
$$ 
which concludes the proof of the lemma.
\end{proof}

\subsubsection{Generalized Epstein zeta functions and Poincar\'e series} 
\label{general-ep-series-poin}
We may now come back to Epstein zeta functions and Poincar\'e series. We fix $T_0>0$ large enough to ensure that Lemma~\ref{l:basic-simple} (and thus~\ref{l:apriorigrowth}) apply. We also fix $\beta=\beta_0+df$ with 
$$\beta_0\in H^1(\IT^d,\IR):=\left\{\sum_{j=1}^d\beta_jdx_j:\ (\beta_1,\ldots,\beta_d)\in\IR^d\right\}.$$
In particular, for such a $T_0$ and such a $\beta$, we can define, for $\text{Re}(s)>d$, the \emph{generalized Epstein zeta function} as
\begin{align}
\label{e:def-zeta-general}
\zeta_\beta(K_2,K_1,s):=\sum_{t>T_0:\mathcal{E}_t(\Sigma_1,\Sigma_2)\neq\emptyset}t^{-s}\left(\sum_{(x,\theta) \in \mathcal{E}_t(\Sigma_1,\Sigma_2)}e^{-i\int_{-t}^0\beta(V)(x+\tau\mathbf{v}(\theta),\theta)|d\tau|}\right),
\end{align}
where $\mathcal{E}_t(\Sigma_1,\Sigma_2)$ is defined in~\eqref{e:def-Et} and is a finite set according to Lemma~\ref{l:basic-simple}. Lemma \ref{l:apriorigrowth} ensures that this defines a holomorphic function in $\{\text{Re}(s)>d\}$.

Similarly, for $\text{Re}(s)>0$, we define the \emph{generalized Poincar\'e series} as
\begin{align}
\label{e:def-Z-general}
\mathcal{Z}_\beta(K_2,K_1,s):=\sum_{t>T_0:\mathcal{E}_t(\Sigma_1,\Sigma_2)\neq\emptyset}e^{-st}\left(\sum_{(x,\theta)\in \mathcal{E}_t(\Sigma_1,\Sigma_2)}e^{-i\int_{-t}^0\beta(V)(x+\tau\mathbf{v}(\theta),\theta)|d\tau|}\right).
\end{align}
 Again Lemma~\ref{l:apriorigrowth} shows that this defines a holomorphic function in $\{\text{Re}(s)>0\}$.

\begin{rema}
\label{r:zeta-zeta-Z-Z}
 When $V=\theta\cdot\partial_x$ (i.e. for $K=B_d$ in the definition of $V$) and except for the role of $K_1$ and $K_2$ that are reversed compared with the introduction, these two functions are exactly the two series defined in~\eqref{e:epstein-zeta-intro} and~\eqref{e:poincare-series-intro} in the introduction. 
\end{rema}

\begin{rema}
 When $K_1=K_2=\{0\}$, the times $t$ appearing in these zeta functions correspond to dilation parameters $t$ such that $tK$ intersects $2\pi\IZ^d$, with $K$ being the convex subset used to define $V=\mathbf{v}(\theta)\cdot\partial_x$. 
\end{rema}

\subsection{Sums of convex subsets}
\label{ss:convex-schneider}
Let $K$ be a compact and convex subset of $\IR^d$. In this paragraph, it is not necessarily strictly convex or with a smooth boundary. Following~\cite[\S1.7]{Schneider14}, we define the supporting hyperplane to $K$ with exterior normal $v\in\IR^d\setminus\{0\}$ as
$$H(K,v):=\left\{w\in\IR^d:  v \cdot w=\max_{x\in K} v \cdot x\right\}.$$
Note that the maximum in this definition is necessarily attained at a point $x_0\in\partial K$. In particular, if a point $x$ lies in $H(K,v)\cap K$, then it belongs to $\partial K$. For such a point, $v$ is called an outward normal vector of $K$ at $x$. Then, the normal bundle $N_x(K)$ to $K$ at the point $x$ consists in the collection of all the outward normal vectors of $K$ at $x$ together with the zero vector~\cite[\S 2.2]{Schneider14}. 
\begin{rema}
Note that for strictly convex smooth domains, as considered in the present article, $N_x(K)$ is a one-dimensional cone. This is not  necessarily the case for general convex sets, see for instance $K:=\{(x,y)\in\IR_+^2: x+y\leq 1\}$. 
\end{rema}
In that manner, we can extend the definition given in \S\ref{ss:convex} to any compact and convex subset of $\IR^d$:
$$N_+(K):=\bigcup_{x\in K}\{(x,v): v\in N_x(K)\}\cap\IS^{d-1}.$$
Given two compact and convex subsets $K$ and $L$ of $\IR^d$, one has according to~\cite[Th.~2.2.1]{Schneider14}
\begin{equation}\label{e:sum-normal-bundle}\forall (x,y)\in K\times L,\quad N_{x+y}(K+L)=N_x(K)\cap N_y(L)\end{equation}
In particular, if $K$ and $L$ are two strictly convex bodies with smooth boundaries $N_{x+y}(K+L)$ is not reduced to $0$ if and only if the outward unit normal vectors at $x\in \partial K$ and $y\in\partial L$ coincide.  
We may summarize the formula for the normal bundle of the sum of two strictly convex subsets as
\begin{equation}
N_+\left(K+L \right)=\{(x+y;\xi); (x;\xi)\in N_+(K),(y;\xi)\in N_+(L)  \} .
\end{equation}
Thanks to~\eqref{e:sum-normal-bundle}, we deduce the following fact of independent interest 
\begin{lemm}\label{l:sum-convex}
Let $K_1,K_2$ be two  strictly convex subsets of $\mathbb{R}^d$ with smooth boundaries $\partial K_1,\partial K_2$ that are parametrized by their outward normals through the maps $ x_{K_1},x_{K_2}:\mathbb{S}^{d-1}\mapsto \mathbb{R}^d$. Then,
the Minkowski sum $K_1+K_2$ is such that its boundary $\partial \left( K_1+K_2 \right)$ is parametrized by the sum: 
$$x_{K_1}+ x_{K_2}:\mathbb{S}^{d-1}\mapsto \mathbb{R}^d.$$
\end{lemm}
\begin{rema}\label{r:remark-mixed-volume}
With the conventions of the previous paragraphs, the boundary of the compact convex set $K_2-K_1+tK$ is parametrized by the map $\theta\in\IS^{d-1}\mapsto x_{K_2}(\theta)- x_{K_1}(-\theta)+t \mathbf{v}(\theta)\in\IR^d.$
\end{rema}

\section{Correlation functions for differential forms}
\label{s:correlation-diff-forms}
In view of defining our functional setup for the vector field $V$ and of its applications to convex geometry, we need to describe as precisely as possible the long time behaviour of the flow $e^{tV}$ acting on differential forms of $S\IT^d$. More precisely, given $(\varphi,\psi)\in\Omega^{2d-1-k}(S\IT^d)\times\Omega^k(S\IT^d)$ (where $\Omega^l(S\IT^d)$ is the space of differential of degree $l$), we aim at describing the correlation function
\begin{equation}\label{e:correlation}\Cor_{\varphi,\psi}(t):=\int_{S\IT^d}\varphi\wedge e^{-tV*}(\psi),\quad\text{as}\quad t\rightarrow+\infty.\end{equation}
In this section, we will show how to write a Fourier decomposition of such integrals in view of reducing our analysis to the study of classical oscillatory integrals. Using the notion of current~\cite[Ch.IX]{Schwartz66}, we will also explain how such correlation functions naturally appears when studying the zeta functions of Section~\ref{s:background-convex}.


\subsection{Some conventions}
\label{ss:differential}

Given an open subset $U$ of $\IS^{d-1}$, we will denote by $\Omega^k_m(\IT^d\times U)$ the space of (complex valued) differential forms~\cite[Ch.~8]{Lee09} of degree $k$ compactly supported in $\IT^d\times U$ and having $\mathcal{C}^m$-regularity with respect to the variable $(x,\theta)\in\IT^d\times U$. When $U=\IS^{d-1}$, we will just write $\Omega^k(S\IT^d)$. For smooth forms, we will use the standard convention
$$\Omega^k(\IT^d\times U)=\bigcap_{m\geq 0}\Omega^k_m(\IT^d\times U),  \quad \Omega^k(S\IT^d)=\bigcap_{m\geq 0}\Omega^k_m(S\IT^d).
$$
We will decompose differential forms as follows:
$$\psi(x,\theta,dx,d\theta)=\sum_{I\subset\{1,\ldots,d\}, |I|\leq k}dx^I\wedge\psi^{I}(x,\theta,d\theta),$$
where $dx^I=dx_{i_1}\wedge dx_{i_2}\ldots \wedge dx_{i_l}$ for some $I=\{i_1<i_2<\ldots<i_l\}\subset\{1,\ldots,d\}$ and where $\psi^I:x\in\IT^d\mapsto \psi^I(x,\theta,d\theta)\in\Omega^{k-|I|}(U)$.

\begin{rema}
 The convention $dx$ and $d\theta$ in the arguments of $\psi\in\Omega^k_m(S\IT^d)$ will always indicate that we are considering differential forms in the $x$ and $\theta$ variables.
\end{rema}

Similarly to the case of distributions, one can define, for any $k \in \{0,\dots ,2d-1\}$, the set $\mathcal{D}^{\prime k}(\IT^d\times U)$ of currents of degree $k$~\cite[Ch.IX, \S2]{Schwartz66} as the topological dual to $\Omega^{2d-1-k}(\IT^d\times U)$
with respect to bilinear pairing
$$(\psi_1,\psi_2)\in\Omega^{k}(S\IT^d)\times \Omega^{2d-1-k}(S\IT^d)\mapsto \langle \psi_1,\psi_2\rangle:=\int_{S\IT^d}\psi_1\wedge\psi_2 \in\IC.$$
In the following, we let $\mathbf{V}=\mathcal{L}_V$ be the operator acting as the \textbf{Lie derivative} along $V$ on differential forms (or currents). In particular, we write the action by pullback as $e^{t\mathbf{V}}=e^{tV*}$.

Using the structure of the torus, we can decompose differential forms (and thus currents) into Fourier series along the $x$-variable. To that aim, we set, for every $\xi\in \IZ^d$,
$$\mathfrak{e}_\xi(x):=\frac{e^{i\xi\cdot x}}{(2\pi)^{\frac{d}{2}}}.$$
We can decompose any current $u$ in $\ml{D}^{\prime k}(S\IT^d)$ as
\begin{equation}\label{e:dec-fourier-diff-form}u(x,\theta,dx,d\theta)=\sum_{\xi\in\IZ^d}\sum_{I\subset\{1,\ldots,d\}}\mathfrak{e}_\xi(x)dx^I\wedge\widehat{u}^I_\xi(\theta,d\theta),
\end{equation}
where each $\widehat{u}^I_\xi$ is a current of degree $k-|I|$ on $\IS^{d-1}$ and $ \widehat{u}^I_\xi=0 $ if $\vert I\vert>k$. For the sake of compactness, we also set 
$$\pi_\xi^{(k)}(u)(\theta,dx,d\theta):=\sum_{I\subset\{1,\ldots,d\}}dx^I\wedge\widehat{u}^I_\xi(\theta,d\theta),$$
and
$$\Pi_\xi^{(k)}(u)(x,\theta,dx,d\theta):=\sum_{I\subset\{1,\ldots,d\}}\mathfrak{e}_\xi(x)dx^I\wedge\widehat{u}^I_\xi(\theta,d\theta)
= \mathfrak{e}_\xi(x)\pi_\xi^{(k)}(u)(\theta,dx,d\theta)
.$$
These operators are projectors in the sense that
\begin{equation}\label{e:projector-torus}\forall \xi,\xi'\in\IZ^d,\quad \Pi_\xi^{(k)}\Pi_{\xi'}^{(k)}=\delta_{\xi,\xi'}\Pi_\xi^{(k)},
 \end{equation}
where $\delta_{\xi,\xi'}$ is the Kronecker symbol. These conventions allow to decompose any element $u\in\ml{D}^{\prime k}(S\IT^d)$ as follows 
$$u=\sum_{\xi\in\IZ^d}\Pi_\xi^{(k)}(u)=\sum_{\xi\in\IZ^d}\pi_\xi^{(k)}(u)\mathfrak{e}_\xi.$$
\begin{def1}\label{r:norm} Suppose that we are given a Sobolev or H\"older type norm $\|.\|_{\ml{B}(\IS^d)}$ on some Banach space $\ml{B}(\IS^d)$ continuously included in 
$$\bigoplus_{l=0}^{d-1}\ml{D}^{\prime l}(\IS^{d-1}).$$
If we identify $ \pi_{\xi}^{(k)}(u)$ with some vector valued distribution $ (\widehat{u}^I_\xi)_{I\subset\{1,\dots,k\}}$ on $\mathbb{S}^{d-1}$, we then define
$$\|\pi_{\xi}^{(k)}(u)\|_{\ml{B}(\IS^{d-1})}:=\sup\left\{\|\widehat{u}_{\xi}^I\|_{\ml{B}(\IS^{d-1})}:\ I\subset\{1,\ldots,d\},\ |I|\leq k\right\}.$$
Similarly, for some open set $U$ of $\IS^{d-1}$, we set
$$\|\pi_{\xi}^{(k)}(u)\|_{\ml{B}(U)}:=\sup\left\{\|\widehat{u}_{\xi}^I\|_{\ml{B}(U)}:\ I\subset\{1,\ldots,d\},\ |I|\leq k\right\}.$$
\end{def1}


\subsection{Orientation conventions}\label{ss:volume-sphere} 

We note that we implicitely fix an orientation on $\IT^d$ by fixing the volume form $dx_1\wedge\ldots\wedge dx_d$ which can be identified with the Lebesgue measure on $\IT^d$. When we want to insist on the fact that we view it as the Lebesgue measure, we will use the convention $|dx|$ and we will use the same convention to distinguish volume forms and measures on $\IR^d$, $\IS^{d-1}$ or $\IR$.
In the following, we choose to orient $\IS^{d-1}$ with the standard $d-1$ form
$$\Vol_{\IS^{d-1}}(\theta,d\theta)=\sum_{p=1}^d(-1)^{p+1}\theta_p\bigwedge_{q=1,q\neq p}^dd\theta_q,\quad\theta\in\IS^{d-1},$$
and $S\IT^d$ with the $2d-1$-form
 $$dx_1\wedge\ldots \wedge dx_d\wedge \Vol_{\IS^{d-1}}(\theta,d\theta).$$
In view of alleviating notation, we will simply write $d\text{Vol}(\theta):=\Vol_{\IS^{d-1}}(\theta,d\theta)$ and, when we want to emphasize that we view it as a measure on $\IS^{d-1}$, we will write $|d\text{Vol}(\theta)|$ or $|d\text{Vol}|$.

 \begin{rema}[Orientation conventions for the sphere] If we denote by $B_d:=\{y\in\IR^d:|y|\leq 1\}$, the natural orientation on $B_d$ is given by that on $\IR^d$. In spherical coordinates $(r,\theta)$, the current of integration on $B_d$ reads $[B_d](r,\theta,dr,d\theta) =  \mathds{1}_{[0,1]}(r) $ so that
 $$[\IS^{d-1}](r,\theta,dr,d\theta)=\partial[B_d](r,\theta,dr,d\theta):=-d[B_d](r,\theta,dr,d\theta)=\delta_0(r-1)dr.$$
In particular, $\Vol_{\IS^{d-1}}(\theta,d\theta)$ is the orientation on $\IS^{d-1}=\partial ^B_d$ induced by the one on $\IR^d$ as
$$\left<[\IS^{d-1}] , \Vol_{\IS^{d-1}} \right> = \int_{\IR^d}[\IS^{d-1}]\wedge \Vol_{\IS^{d-1}}>0.$$
 \end{rema}

\subsection{Fundamental examples of currents of integration}\label{ss:example}

We now discuss an important example of current in view of our analysis. In the sequel, 
$[0]$ denotes the equivalence class of $0\ \text{mod}\ 2\pi\IZ^d$. We introduce the following current of degree $d$ on $\IT^d$:
$$\delta_{2\pi\IZ^d}(x,dx):=\frac{1}{(2\pi)^{\frac{d}{2}}}\sum_{\xi\in\IZ^d}\mathfrak{e}_\xi(x)dx_1\wedge\ldots\wedge dx_d ,$$
acting on functions $f \in \mathcal{C}^\infty(\T^d)$ by $\left< \delta_{2\pi\IZ^d}(x,dx), f \right> = f([0]).$ We will also use the notation
\begin{equation}\label{e:Dirac-Fourier}
\delta_{[0]}(x)=\frac{1}{(2\pi)^{\frac{d}{2}}}\sum_{\xi\in\IZ^d}\mathfrak{e}_\xi(x),
\end{equation}
so that we can write
\begin{equation}\label{e:Dirac-Fourier-bis}
\delta_{2\pi\IZ^d}(x,dx)=\delta_{[0]}(x)dx_1\wedge\ldots\wedge dx_d.
\end{equation}
If we view this current as a current\footnote{Again, we use the same notation for the current on the base and its pullback on $S\IT^d$. We keep this convention for simplicity and we will in fact mostly consider the pullback in the following.} on $S\IT^d$ via the pullback by the map $(x,\theta)\in S\IT^d\mapsto x\in \IT^d$, then it is in fact the current of integration on the fiber $S_{[0]}\IT^d$ viewed as a submanifold of dimension $d-1$. We can now slightly modify this example by fixing a smooth map $\tilde{x}:\IS^{d-1}\rightarrow \IR^d$ so that we can set 
\begin{equation}\label{e:example-current}
\boxed{
[\mathcal{N}]:=\delta_{[0]}(x-\tilde{x}(\theta))\bigwedge_{i=1}^dd\left(x_i-\tilde{x}_i(\theta)\right)\in\ml{D}^{\prime d}(S\IT^d)
.}\end{equation}
Note that this is the current of integration on the $d-1$ dimensional submanifold $$\mathcal{N}:=\{(\tilde{x}(\theta),\theta):\theta\in \IS^{d-1}\}\subset \IT^{d}\times\IS^{d-1}$$ that we have oriented with~$\Vol_{\IS^{d-1}}(\theta,d\theta)$~\cite[Cor.~D.4]{DangRiviere18}. This is typically the kind of currents to which we will apply $e^{-tV*}$ in our applications to convex geometry. See Lemma~\ref{l:truncated-poincare} for instance.

Thanks to~\eqref{e:Dirac-Fourier}, we have the following Fourier decomposition
\begin{eqnarray*} [\ml{N}](x,\theta,dx,d\theta)&=&\delta_0(x-\tilde{x}(\theta))\bigwedge_{i=1}^dd\left(x_i-\tilde{x}_i(\theta)\right)\in\ml{D}^{\prime d}(S\IT^d)\\ &=&\frac{1}{(2\pi)^{\frac{d}{2}}}\sum_{\xi\in\IZ^d}\mathfrak{e}_\xi(x) e^{-i\xi\cdot\tilde{x}(\theta)}\bigwedge_{i=1}^dd\left(x_i-\tilde{x}_i(\theta)\right)\in\ml{D}^{\prime d}(S\IT^d).
 \end{eqnarray*}
Thus, for every $\xi\in\IZ^d$, one finds using the conventions of \S\ref{ss:differential}:
\begin{equation}\label{e:fourier-coeff-current}\pi_\xi^{(d)}([\ml{N}])(\theta,dx,d\theta)=\mathfrak{e}_\xi(-\tilde{x}(\theta))\bigwedge_{i=1}^dd\left(x_i-\tilde{x}_i(\theta)\right)\in\Omega^d(S\IT^d).\end{equation}

\subsection{Relations with volumes in convex geometry}\label{ss:volume} 
These currents of integration are naturally linked with the volumes of convex sets:
\begin{lemm}\label{l:volume-convex} With the above conventions, for any strictly convex set $K$ in the sense of Definition~\ref{d:def-convex}, one has
 $$\operatorname{Vol}_{\IR^d}(K)=\frac{1}{d!}\int_{S\IT^d}[S_{[0]}\IT^d]\wedge \mathbf{V}^{d-1} \iota_V \left(dx_1\wedge\ldots\wedge dx_d\right).$$
\end{lemm}

\begin{rema}\label{r:mixed-volume}
 Note that the assumption of having $0$ in the interior of $K$ is not useful in this lemma. Recalling that $\mathbf{V}=\mathbf{v}(\theta)\cdot\partial_x$ and Lemma~\ref{l:sum-convex}, we recover that, for strictly convex subsets $(K_1,K_2)$ and for every $\lambda_1,\lambda_2>0$, $\text{Vol}_{\IR^d}(\lambda_1K_1+\lambda_2K_2)$ is a homogeneous polynomial in $(\lambda_1,\lambda_2)$~\cite{Schneider14}. The coefficients are the so-called mixed volumes of $K_1$ and $K_2$. In particular, if we take $\lambda_1=1$, $\lambda_2=t$, $K_1=K$ and $K_2=B_d$, we recover Steiner's formula~\eqref{e:steiner} from the introduction.
\end{rema}

\begin{proof} We let $t>0$. Suppose that, near a given point $x_0\in\partial tK$, $tK$ is parametrized by $\{f\leq 0\}$ with $\nabla f(x_0)\neq 0$. Then, the direct normal bundle writes locally near $x_0$:
$$N_+(\partial (tK)):=\left\{\left(x,\frac{\nabla f(x)}{|\nabla f(x)|}\right): f(x)=0\right\}.$$
Given $\psi\in\Omega^{d-1}(\IR^d)$ (not necessarily compactly supported), we write
$$\int_{S\IR^d}[N_+(\partial (tK))]\wedge P^*(\psi)=\int_{S\IR^d}\delta_0^{\IR}(f)df\wedge \left[\left\{\theta=\frac{\nabla f(x)}{|\nabla f(x)|}\right\}\right]\wedge \psi(x,dx)=\int_{\IR^d}[\partial (tK)]\wedge \psi,$$
where $P:S\IR^d\rightarrow\IR^d$ denotes the canonical projection. Recalling that $-d[K]=[\partial K]$, we find that
$$\int_{S\IR^d}[N_+(\partial (tK))]\wedge P^*(\psi)=\int_{tK} d\psi .
$$
Making use of~\eqref{e:example-current-pullback} below, we thus obtain
$$
\int_{tK} d\psi =\int_{S\IR^d}[N_+(\partial (tK))]\wedge P^*(\psi)=\int_{S\IR^d}e^{-tV*}[S_0\IR^d]\wedge P^*(\psi)=\int_{S\IR^d}[S_0\IR^d]\wedge e^{tV*}P^*(\psi).$$
Now, we choose $\psi=x_1dx_2\wedge\ldots \wedge dx_d$ and observe that $[S_0\IR^d]=\delta_0^{\IR^d}(x)dx_1\wedge\ldots\wedge dx_d$ and that
$$ \sum_{\ell=0}^{d}\frac{t^\ell}{\ell!}\mathbf{V}^\ell P^*(\psi)= e^{tV*}P^*(\psi)=\psi(x+t\mathbf{v}(\theta),d(x+t\mathbf{v}(\theta))).$$
See for instance the proof of Lemma~\ref{l:compute-forms} below to see that this is indeed a polynomial in $t$. Hence, when making the pairing of these two quantities, we end up with $$\int_{S\IR^d}[S_0\IR^d]\wedge e^{tV*}P^*(\psi)=\frac{t^d}{d!}\int_{S\IR^d}[S_0\IR^d]\wedge \mathbf{V}^d P^*(\psi)=\frac{t^d}{d!}\int_{S\IR^d}[S_0\IR^d]\wedge \mathbf{V}^{d-1} \iota_V P^*(d\psi),$$
where we used the Cartan formula and the Stokes formula to write down the last equality. Finally, remarking that $d\psi$ is translation invariant, we obtain  
$\int_{S\IR^d}[S_0\IR^d]\wedge \mathbf{V}^{d-1} \iota_V P^*(d\psi)=\int_{S\IT^d}[S_0\IR^d]\wedge \mathbf{V}^{d-1} \iota_V P^*(d\psi)$. Setting $t=1$ in the above formula yields the expected result.
\end{proof}
Replacing $tK$ by $K_0+tK$ in the above proof and recalling Lemma~\ref{l:sum-convex}, we also get the following useful equality from the above proof:
\begin{lemm}\label{r:volume-Rd} Let $K_0$ and $K$ be two compact and strictly convex sets in the sense of Definition~\ref{d:def-convex}. Then, one has  
 \begin{multline*}\operatorname{Vol}_{\IR^d}(K_0+tK)
 = \sum_{\ell=0}^d\frac{t^\ell}{\ell!}\int_{S\IT^d}[S_{0}\IT^d]\wedge e^{V_{K_0}*} \mathbf{V}^\ell\iota_V\left(dx_1\wedge dx_2\wedge\ldots\wedge dx_d\right)\\
  = \operatorname{Vol}_{\IR^d}(K_0)+\sum_{\ell=1}^d\frac{t^\ell}{\ell!}\int_{S\IT^d}[S_{[0]}\IT^d]\wedge e^{V_{K_0}*}\mathbf{V}^{\ell-1}\iota_V \left(dx_1\wedge\ldots\wedge dx_d\right).
 \end{multline*}
\end{lemm}
In particular, when $K=B_d$, this yields a somewhat explicit expression of Steiner's formula~\eqref{e:steiner}. We also record the following result that implicitely appeared in the proof of Lemma~\ref{l:volume-convex}:
\begin{lemm}
\label{l:compute-forms}
Setting $\omega := \iota_V(dx_1 \wedge \dots \wedge dx_d) \in \Omega^{d-1}(S\T^d)$, then the form $e^{-t\mathbf{V}}\omega$ is a polynomial of degree $d-1$ in $t$ with coefficients in $\Omega^{d-1}(S\T^d)$, with leading coefficient 
$\frac{1}{(d-1)!} \mathbf{V}^{d-1}\omega.$
\end{lemm}

\begin{proof} We start from 
$$
\omega = \sum_{j=1}^d \mathbf{v}_j(\theta) (dx_1 \wedge \dots\wedge dx_d)(\d_{x_j}) = \sum_{j=1}^d (-1)^{j+1} \mathbf{v}_j(\theta)  \, dx_1 \wedge \dots \wedge \widehat{dx_j} \wedge \dots \wedge dx_d ,
$$
from which we deduce that 
$$
e^{-t\mathbf{V}}\omega =  \sum_{j=1}^d (-1)^{j+1}  \mathbf{v}_j(\theta)  \, d(x_1-t  \mathbf{v}_1(\theta) ) \wedge \dots \wedge \widehat{d(x_j-t  \mathbf{v}_j(\theta) )} \wedge \dots \wedge d(x_d-t \mathbf{v}_d(\theta) )
$$
is a polynomial of degree $d-1$ in $t$.
\end{proof}

\subsection{Translation maps}\label{ss:pullback}

If we fix a smooth map $\tilde{x}:\IS^{d-1}\rightarrow \IR^d$, then we can define a translation map on $S\IT^{d}$:
\begin{align}
\label{e:def-xtilde}
\mathbf{T}_{\tilde{x}}:(x,\theta)\in S\IT^d\mapsto (x+\tilde{x}(\theta),\theta)\in S\IT^d.
\end{align}
As we saw, these operators naturally appear when we consider the currents we are interested in. For instance, we can rewrite Example~\eqref{e:example-current} as
\begin{equation}\label{e:example-current-pullback}
\boxed{[\mathcal{N}]=\mathbf{T}_{-\tilde{x}}^*[S_{[0]}\IT^d]=[\mathbf{T}_{\tilde{x}}(S_{[0]}\IT^d)].}
\end{equation}

\begin{rema} An important example for our analysis will be of course given by the action of the flow, 
that is to say $e^{tV} = \mathbf{T}_{t\mathbf{v}}$ .
\end{rema}
We also record the following useful properties of these translation maps:

\begin{lemm}\label{l:commute-pullback} Let $\tilde{x}:\IS^{d-1}\rightarrow\IR^d$ be a smooth map and $\mathbf{T}_{\tilde{x}}$ defined by~\eqref{e:def-xtilde}, then one has
\begin{equation}\label{e:projector-pullback}
 \forall\xi\in\IZ^d,\quad \mathbf{T}_{\tilde{x}}^*\Pi_\xi^{(k)}(u)(x,\theta,dx,d\theta)=e^{i\xi\cdot\tilde{x}(\theta)}\Pi_\xi^{(k)}(u)(x,\theta,dx+D\tilde{x}(\theta),d\theta),
\end{equation}
and
\begin{equation}\label{e:commute-shift-lie}
 \mathbf{T}_{\tilde{x}}^*\mathbf{V}=\mathbf{V}\mathbf{T}_{\tilde{x}}^*,\quad\iota_V\mathbf{T}^*_{\tilde{x}}=\mathbf{T}^*_{\tilde{x}}\iota_V.
\end{equation}

\end{lemm}
\begin{proof} 
The first point is a direct consequence of the action by pullback. For the second point, we notice that $e^{tV}\mathbf{T}_{\tilde{x}}(x,\theta) = (x+\tilde{x}(\theta)+ t\theta,\theta) =\mathbf{T}_{\tilde{x}}e^{tV}(x,\theta)$, from which the first equality in~\eqref{e:commute-shift-lie} follows.

Next, due to the specific forms of the operators $\mathbf{T}^*_{\tilde{x}}$, it is sufficient to verify the last on the smooth forms $dx^I$ with $I\subset\{1,\ldots,d\}.$ To that aim, we write
\begin{eqnarray*}
\iota_V\mathbf{T}^*_{\tilde{x}}(dx^I)&=&\iota_V\left(\bigwedge_{m=1}^ld(x_{i_m}+\tilde{x}_{i_m}(\theta))\right)\\
&=&\sum_{m=1}^l(-1)^{l+1} \mathbf{v}_{i_m}(\theta) \bigwedge_{m'=1, m'\neq m}^ld(x_{i_{m'}}+\tilde{x}_{i_{m'}}(\theta))\\
&=&\mathbf{T}^*_{\tilde{x}}\left(\sum_{m=1}^l(-1)^{l+1} \mathbf{v}_{i_m}(\theta) \bigwedge_{m'=1, m'\neq m}^ldx_{i_{m'}}\right)\\
&=&\mathbf{T}^*_{\tilde{x}}\iota_V(dx^I).
\end{eqnarray*}
\end{proof}

\subsection{Twisted correlations}


 We fix\footnote{The index $\IR$ means that the coefficients of the form are real valued.} $\beta\in\Omega^1_{\IR}(\IT^d)$ such that $d\beta=0$. Recall that any such $1$-form can be written as $\beta_0+df$ where $f\in\ml{C}^\infty(\IT^d,\IR)$ and $\beta_0$ is a constant $1$-form on $\IT^d$. Hence, we can define the twisted Lie derivative for such a general closed $1$-form $\beta$,
\begin{equation}\label{e:witten-twist}\mathbf{V}_{\beta}:=\mathbf{V}+i\beta(V)=(d+i\beta\wedge)\iota_V+\iota_V(d+i\beta\wedge)=e^{-if}\mathbf{V}_{\beta_0}e^{if}.
\end{equation}


We can record the following a priori estimate:
 \begin{lemm}\label{l:functional-calculus} Let $\beta\in\Omega^1_{\IR}(\IT^d)$ such that $d\beta=0$. Let $\chi\in\ml{C}^{\infty}(\IR)$ such that $t^{d-1}\chi(t)\in L^1(\IR)$. Then, for every $0\leq k\leq 2d-1$,
 $$\widehat{\chi}(-i\mathbf{V}_{\beta}):\psi\in\Omega^k_0(S\IT^d)\mapsto \int_{\IR}\chi(t)e^{-t\mathbf{V}_{\beta}}(\psi)|dt|\in\Omega^k_0(S\IT^d)$$
  is a bounded operator on the space of continuous $k$-forms $\Omega^k_0(S\IT^d)$.
 \end{lemm}
\begin{proof}
This follows from the fact that for any $k$--form $\psi\in \Omega^k(S\mathbb{T}^d)$, we have the polynomial bound $\Vert e^{-t\mathbf{V}_\beta}\psi \Vert_{L^\infty}=\mathcal{O}(t^{d-1})$ which comes from the definition of the flow exactly like in the proof of Lemma~\ref{l:compute-forms}.
\end{proof}

\begin{rema} Note that there is a slight abuse of notations when writing $\widehat{\chi}(-i\mathbf{V}_{\beta})$ as the operator $-i\mathbf{V}_{\beta}$ is not selfadjoint even on $L^2$-spaces (except if $k=0$ or $2d-1$). Yet, this convention from functional calculus is convenient and we will use it in the following except when it may create some confusions with the standard spectral Theorem~\cite[\S~12.7]{Sternberg05}.  
\end{rema}

In fact, slightly more generally than the correlation function~\eqref{e:correlation}, we want to fix a smooth map $\tilde{x}:\IS^{d-1}\rightarrow \IR^d$ and some element $\beta_0\in H^1(\IT^d,\IR)$ and to describe the twisted correlations
\begin{align}
\label{e:def-correlation-T}
\boxed{\Cor_{\varphi,\mathbf{T}_{-\tilde{x}}^*(\psi)}(t,\beta_0) : =\int_{S\IT^d}\varphi\wedge e^{-t\mathbf{V}_{\beta_0}}\mathbf{T}_{-\tilde{x}}^*(\psi),}
\end{align}
as $t\rightarrow+\infty$ in terms of $\varphi$, $\psi$ and the map $\tilde{x}$ which will be needed later in our applications to convex geometry. The element $\beta_0$ plays the role of magnetic potential. 
Here, $\varphi$ and $\psi$ are two differential forms on $S\IT^d$ of respective degree $k_1$ and $k_2$ such that $k_1+k_2=2d-1$, and $\mathbf{T}_{\tilde{x}}$ is defined in~\eqref{e:def-xtilde}.

Coming back to our correlation function, we will also use at some points that, for every smooth map $\tilde{x}:\IS^{d-1}\rightarrow\IR^d$,
\begin{equation}\label{e:duality}\int_{S\IT^d}\varphi\wedge e^{-t\mathbf{V}_{\beta_0}}\mathbf{T}_{-\tilde{x}}^*(\psi)=\int_{S\IT^d}e^{t\mathbf{V}_{-\beta_0}}\mathbf{T}_{\tilde{x}}^*(\varphi)\wedge \psi ,
 \end{equation}
as a consequence of the change of variable formula and of the commutation of $e^{t\mathbf{V}_{-\beta_0}}$ and $\mathbf{T}_{\tilde{x}}$. With the above conventions, we can decompose differential forms using Fourier series. Following~\S\ref{ss:differential}, we write
\begin{equation}\label{e:fourier-phi}\varphi(x,\theta,dx,d\theta)=\sum_{\xi\in\IZ^d}\mathfrak{e}_{\xi}(x)\pi_\xi^{(k_1)}(\varphi)\left(\theta,dx,d\theta\right).\end{equation}
We collect in the following lemma several useful properties of $\Cor_{\varphi,\mathbf{T}_{-\tilde{x}}^*(\psi)}(t,\beta_0)$.
\begin{lemm}\label{l:reduction} Using the above conventions, one has, for every $(\varphi,\psi)\in\Omega^{k_1}(S\IT^d)\times \Omega^{k_2}(S\IT^d)$, and $t\in \R$,
 \begin{align}
 \label{e:decomposition1}
 \Cor_{\varphi,\mathbf{T}_{-\tilde{x}}^*(\psi)}(t,\beta_0) 
&  =\sum_{l=0}^{\min\{k_1,k_2\}}
 \Cor^l_{\varphi,\mathbf{T}_{-\tilde{x}}^*(\psi)}(t,\beta_0) \quad  \text{ with } \\
  \label{e:decomposition2}
\Cor^l_{\varphi,\mathbf{T}_{-\tilde{x}}^*(\psi)}(t,\beta_0) & = \frac{t^l}{l!}\sum_{\xi\in\IZ^d}\int_{\IS^{d-1}}e^{it(\xi-\beta_0)\cdot\mathbf{v}(\theta)}e^{i\xi\cdot\tilde{x}(\theta)}B_{\tilde{x},\xi}^{(k_2,l)}(\varphi,\psi)(\theta)d\Vol(\theta)
 \end{align}
 and  \begin{equation}\label{e:coeff-horrible} B_{\tilde{x},\xi}^{(k_2,l)}(\varphi,\psi)d\Vol:=(-1)^l\pi_{\xi}^{(k_1)}(\varphi)\wedge \mathbf{V}^l\mathbf{T}_{-\tilde{x}}^*\pi_{-\xi}^{(k_2)}(\psi).
 \end{equation}
Moreover, for every $m\in\IZ_+$ (resp. $s \in \R$), one can find some constant $C_m>0$ (resp. $C_s>0$) (depending also on $d$, $\tilde{x}$) such that, for every open set $U\subset \IS^{d-1}$, for every $\xi\in\IZ^d$, for every $k_1+k_2=2d-1$, for every $0\leq l\leq \min\{k_2,k_1\}$ and for every $(\varphi,\psi)\in\Omega^{k_1}(S\IT^d)\times \Omega^{k_2}(S\IT^d)$,
\begin{equation}\label{e:control-Cm-norm}\left\|B_{\tilde{x},\xi}^{(k_2,l)}(\varphi,\psi)\right\|_{W^{m,1}\left(U\right)}\leq C_m  \left\|\pi_{\xi}^{(k_1)}(\varphi)\right\|_{H^m\left(U\right)} \left\|\pi_{-\xi}^{(k_2)}(\psi)\right\|_{H^m\left(U\right)},\end{equation}
resp.
\begin{equation}\label{e:control-Cm-norm-bis}\left\|B_{\tilde{x},\xi}^{(k_2,l)}(\varphi,\psi)\right\|_{L^1\left(\IS^{d-1}\right)}\leq C_s  \left\|\pi_{\xi}^{(k_1)}(\varphi)\right\|_{H^s\left(\IS^{d-1}\right)} \left\|\pi_{-\xi}^{(k_2)}(\psi)\right\|_{H^{-s}\left(\IS^{d-1}\right)},\end{equation}
where $W^{m,1}(U)$ is the Sobolev norm of order $m$ and exponent $p=1$ and where $H^{m}(U)=W^{m,2}(U)$ is the Sobolev norm of order $m$ (and exponent $p=2$) on differential forms on the set $U$. 
\end{lemm}

The decomposition of the dynamical correlator $\Cor$ in terms of Fourier series and in powers of $t$ introduced in the above lemma will play a crucial role in the sequel, especially for our applications to convex geometry.

\begin{proof}
 We first expand $\psi$ in Fourier series using~\eqref{e:fourier-phi}. Then, we can make use of~\eqref{e:projector-pullback} so that each term of the sum over $\xi$ becomes of the form $e^{-i\xi\cdot\tilde{x}(\theta)}\mathfrak{e}_\xi(x)\pi_\xi^{(k)}(\psi)(\theta,d(x-\tilde{x}(\theta)),d\theta).$ Then, the action of $e^{-t\mathbf{V}_{\beta_0}}$ yields:
\begin{multline}\label{e:fourier-pulbback}(e^{-t\mathbf{V}_{\beta_0}}\mathbf{T}_{-\tilde{x}}^*\psi)(x,\theta,dx,d\theta)
 =\sum_{\xi\in\IZ^d}\mathfrak{e}_{\xi}(x)e^{-it(\xi+\beta_0)\cdot\mathbf{v}(\theta)}e^{-i\xi\cdot\tilde{x}(\theta)}\sum_{l=0}^{\min\{k_2,d-1\}}\frac{(-1)^lt^l}{l!}\\
 \times  \mathbf{V}^l\circ \mathbf{T}_{-\tilde{x}}^*\circ \pi_\xi^{(k_2)}(\psi)\left(\theta,dx,d\theta\right).
\end{multline}
Hence, using~\eqref{e:fourier-phi} applied to $\varphi$,~\eqref{e:commute-shift-lie},~\eqref{e:fourier-pulbback} and~\eqref{e:duality}, one has
$$\Cor_{\varphi,\mathbf{T}_{-\tilde{x}}^*(\psi)}(t,\beta_0)=\sum_{l=0}^{\min\{k_1,k_2\}}\frac{t^l}{l!}\sum_{\xi\in\IZ^d}\int_{\IS^{d-1}}e^{i\xi\cdot\tilde{x}(\theta)}e^{it(\xi-\beta_0)\cdot  \mathbf{v}(\theta)}B_{\tilde{x},\xi}^{(k_2,l)}(\varphi,\psi)(\theta) d\Vol(\theta),$$
where $B_{\tilde{x},\xi}^{(k_2,l)}(\varphi,\psi)(\theta)\Vol_{S\IT^d}(\theta,dx,d\theta)$ is given by~\eqref{e:coeff-horrible}. Note that, in order to verify that the sum over $l$ runs up to $\text{min}(k_1,k_2)$, we used~\eqref{e:duality} and considered the action of $e^{\pm t\mathbf{V}_{\beta_0}}$ on the form of smallest degree. This readily yields~\eqref{e:decomposition1}--\eqref{e:decomposition2}.

Finally, estimate~\eqref{e:control-Cm-norm} and~\eqref{e:control-Cm-norm-bis} follow from the fact that the coefficients $B_{\tilde{x},\xi}^{(k_2,l)}(\varphi,\psi)$ depend in a bilinear way on $(\pi_{\xi}^{(k_1)}(\varphi),\pi_{-\xi}^{(k_2)}(\psi))$, together with the Cauchy-Schwarz inequality.
\end{proof}

According to Lemma~\ref{l:reduction}, understanding the properties of the correlation function as $t\rightarrow+\infty$ amounts to describe the behaviour of the integrals
\begin{equation}\label{e:oscillatory-integral}
\boxed{I_F(\xi-\beta_0,t):=\int_{\IS^{d-1}}e^{i(\xi-\beta_0).(t\mathbf{v}(\theta)+\tilde{x}(\theta))}e^{i\beta_0\cdot\tilde {x}(\theta)}F(\theta) d\Vol(\theta),}
\end{equation}
as $t \to + \infty$, where $\xi\in\IZ^{d}\setminus\{0\}$, where $\beta_0$ plays the role of the magnetic potential and where $F$ is a smooth function. In view of applications, one needs to make this asymptotic description with a uniform control in terms of the $W^{m,1}$-norm of $F$ and of $\xi\in\IZ^d$.

\begin{rema} The extra oscillating term $e^{i\xi\cdot\tilde {x}(\theta)}$ makes things slightly more involved than when one treats the case of dilating convex sets as for instance in~\cite{Hlawka50, Herz62b, Randol66}. Indeed, in that setup, the parameter $t$ is also in factor of $\xi\cdot\tilde {x}(\theta)$ which allows to deal with $t|\xi|$ as a large parameter. Despite this technical issue, the strategy to analyze these integrals remains the same. See Section~\ref{s:analysis} below.             
            \end{rema}

\begin{rema}\label{r:correlation-function} In the case where $k_1=2d-1$ and $k_2=0$, one has
$$\varphi(x,\theta,dx,d\theta)=\sum_{\xi\in\IZ^d}\widehat{\varphi}_\xi(\theta)\mathfrak{e}_\xi(x)dx_1\wedge\ldots dx_d\wedge d\Vol(\theta) ,$$
and
$$\psi(x,\theta,dx,d\theta)=\sum_{\xi\in\IZ^d}\widehat{\psi}_\xi(\theta)\mathfrak{e}_\xi(x).$$
Hence, one gets the simpler expression
 $$\Cor_{\varphi,\mathbf{T}_{-\tilde{x}}^*(\psi)}(t,\beta_0)=\sum_{\xi\in\IZ^d}\int_{\IS^{d-1}}e^{it(\xi-\beta_0)\cdot\mathbf{v}(\theta)} e^{i\xi\cdot\tilde{x}(\theta)}\widehat{\varphi}_{\xi}(\theta)\widehat{\psi}_{-\xi}(\theta) d\Vol(\theta)$$
\end{rema}

\subsection{Back to generalized Epstein functions and Poincar\'e series}\label{ss:currents}
Before going further in the analysis of oscillatory integrals, let us explain how the zeta functions from~\S\ref{s:background-convex} (or at least truncated versions of it) can be naturally rewritten using the theory of currents in terms of certain twisted correlation functions. This will be achieved by considering the current of integration $[N_\pm(\Sigma)]$ on the unit normal bundle to some admissible $\Sigma$ (in the sense of Definition~\ref{d:admissible}). Recall that this current was explicitely defined in~\eqref{e:example-current} if one uses the Gauss coordinates~\eqref{e:gauss-coordinates}.

\subsubsection{Wavefront sets of the currents of integration}
The following lemma studies the wavefront set of this current in view of pairing two such objects.
\begin{lemm}\label{l:WF} For any admissible subset $\Sigma$ of $\IT^d$, we have
\begin{align}\label{e:WF-coord}
\WF([N_+(\Sigma)])  \cap  T^*_{(x,\theta)}S\mathbb{T}^d   & = E_0^*(\tilde{x}(\theta),\theta)\oplus \left\{\left(0,\xi,- (D\tilde{x}(\pm\theta))^T\xi \right): \xi \in \ml{H}^* \right\}  \nonumber\\
& \subset \left(E_0^*\oplus \ml{H}^*\oplus\ml{V}^*\right)(\tilde{x}(\theta),\theta).
\end{align}
Moreover, for any conical neighborhood $\Gamma$  of $E_0^*\oplus\ml{V}^*$, there exists $T_0>0$ such that, for every $T\geq T_0$, the wavefront set of $e^{-TV*}[N_+(\Sigma)]=[e^{TV}(N_+(\Sigma))]$ satisfies
\begin{align}
\label{e:first-WF}
\WF\left( e^{-TV*}[N_+(\Sigma)] \right) \subset \{(x,\theta;\xi)\in T^*S\IT^d:\ e^{-TV}(x,\theta)\in N_+(\Sigma),\ \xi\in\Gamma(x,\theta)\}.
\end{align}
The same holds for $N_-(\Sigma)$ with the appropriate sign changes. In particular, if $\Sigma_1$ and $\Sigma_2$ are two admissible subsets and if $\sigma_1,\ \sigma_2\in\{\pm\}$, then one can find $T_0>0$ such that, for every $T\geq T_0$,
\begin{align}
\label{e:second-WF}
\WF \left([N_{\sigma_1}(\Sigma_1)]\right) \cap \WF\left( e^{-TV*}[N_{\sigma_2}(\Sigma_2)] \right) \subset E_0^* .
\end{align}
\end{lemm}
\begin{proof}
 The wavefront set of the current $[N_+(\Sigma)]$ is given according to~\cite[Th.~8.1.5]{Hormander90}\footnote{The proof in that reference is given for a linear space and it can be transfered to submanifolds through a local chart thanks to~\cite[Th.~8.2.4]{Hormander90}. See Example~8.2.5 in that reference.} by the conormal bundle to $N_+(\Sigma)$, namely
$$\ml{N}^*(N_+(\Sigma)):=\left\{(x,\theta;\xi)\in T^*S\IT^d\setminus\underline{0}:\ (x,\theta)\in N_+(\Sigma),\ \forall v\in T_{x,\theta}N_+(\Sigma),\ \xi(v)=0\right\}.$$
In particular, thanks to~\eqref{e:legendrian}, the fiber $\ml{N}_{(x,\theta)}^*(N_+(\Sigma))\subset T^*_{(x,\theta)}S\mathbb{T}^d$ over $(x;\theta)$ always contains the annihilator $E_0^*(x,\theta)$ of $\ml{H}\oplus\ml{V}(x,\theta)$. Using the description of $T_{x,\theta}N_+(\Sigma)$ in~\eqref{e:tangent-space} in the coordinates~\eqref{e:gauss-coordinates}, this wavefront set can in fact be identified as
$$
\ml{N}_{(x,\theta)}^*(N_+(\Sigma)) =E_0^*(\tilde{x}(\theta),\theta)\oplus \left\{\left(0,\xi,- (D\tilde{x}(\theta))^T\xi \right): \xi \in \ml{H}^* \right\}\subset \left(E_0^*\oplus \ml{H}^*\oplus\ml{V}^*\right)(\tilde{x}(\theta),\theta),
$$
whence the first statement~\eqref{e:WF-coord}.
Hence, thanks to~\eqref{e:cotangentmap} and to~\cite[Th.~8.2.4]{Hormander90} (see also~\cite[Prop.~5.1]{BrouderDangHelein16}), we deduce the second statement~\eqref{e:first-WF}.
The last statement~\eqref{e:second-WF} follows from the first by choosing $\Gamma$ some sufficiently small conical neighborhood of $E_0^*\oplus\ml{V}^*$ so that 
$\left(\WF \left([N_{\sigma_1}(\Sigma_1)]\right) \cap \Gamma \right)\subset E_0^*$.
\end{proof}

\subsubsection{Representation of truncated series using currents} In the end, our  goal will be to use our fine analysis of the geodesic vector field to study the continuation of these series beyond their natural halfplane of definition. As in~\cite[Prop.~4.10]{DangRiviere20d}, one starts with the following result, relating the above discrete sums on intersection points between two submanifolds with the geodesic flow acting on currents.
\begin{lemm}\label{l:truncated-poincare}  Let $\Sigma_1$ and $\Sigma_2$ be two admissible subsets of $\IT^d$ and let $\sigma_1$ and $\sigma_2$ be elements in $\{\pm\}$. Let $\beta=\beta_0+df$ be a closed one-form with $\beta_0\in H^1(\IT^d,\IR)$ and $f\in\ml{C}^{\infty}(\IT^d,\IR)$. 
There exists $T_0>0$ large enough such that for every $\chi\in\mathcal{C}^{\infty}_c((T_0,+\infty))$,
$$I_T(\chi):=(-1)^{d-1}\int_{S\IT^d}[N_{\sigma_1}(\Sigma_1)]\wedge \int_{\IR}\chi(t)e^{-t\mathbf{V}_\beta}\iota_V([N_{\sigma_2}(\Sigma_2)])|dt|$$
is well defined and is equal to
$$\sum_{t:\mathcal{E}_t(\Sigma_1,\Sigma_2)\neq\emptyset}\chi(t)\left(\sum_{(x,\theta)\in \mathcal{E}_t(\Sigma_1,\Sigma_2)}\epsilon_t(x)e^{-i\int_{-t}^0\beta(V)(x+\tau\mathbf{v}(\theta),\theta)|d\tau|}\right),$$
where $\epsilon_t(x)=1$ if
$$T_{(x,\theta)}N_{\sigma_1}(\Sigma_1)\oplus \IR V(x,\theta)\oplus D(e^{tV})(e^{-tV}(x,\theta))\left( T_{e^{-tV}(x,\theta)}N_{\sigma_2}(\Sigma_2)\right),$$
has the same orientation as $S\IT^d$ and $\epsilon_t(x)=-1$ otherwise.
\end{lemm}
This result follows from~\cite[\S2]{DangRiviere20d} together with Lemma~\ref{l:WF}. Strictly speaking, the proof from~\cite{DangRiviere20d} does not include exponential weights but it can be adapted directly to deal with such twisted transport equations. Note that, in view of applying the result from this reference, it is crucial here to have that $\mathbf{v}(\theta)\cdot\theta>0$ for every $\theta\in\IS^{d-1}$ as follows from~\eqref{e:interior-0}. Indeed, this property ensures that $\mathbf{v}(\theta)\cdot\partial_x $ is transverse to the contact plane where the tangent space to $N(\Sigma_i)$ always lies. Here, $[N_\pm(\Sigma_1)]$ is an element in $\ml{D}^{\prime d}(S\IT^d)$ and $\int_{\IR}\chi(t)e^{-tV}\iota_V([N_+(\Sigma_2)])|dt|$ is an element in $\ml{D}^{\prime d-1}(S\IT^d)$. The key point in the argument of~\cite{DangRiviere20d} is that the wavefront sets of these two currents are disjoint so that we can take their wedge product. Here, the wavefront set of $[N_{\sigma_1}(\Sigma_1)]$ is given by~\eqref{e:WF-coord} while the wavefront set of $\int_{\IR}\chi(t)e^{-tV}\iota_V([N_{\sigma_2}(\Sigma_2)])|dt|$ is contained in a small conical neighborhood of $\ml{V}^*$ thanks to Lemma~\ref{l:WF} and to the integration over time. See~\cite[Lemma~4.11]{DangRiviere20d} for more details. 
We would like to emphasize that this lemma states that the dynamical correlator
$$I_T: t\mapsto (-1)^{d-1}\int_{S\IT^d}[N_{\sigma_1}(\Sigma_1)]\wedge e^{-t\mathbf{V}_\beta}\iota_V([N_{\sigma_2}(\Sigma_2)])$$
is in fact a \textbf{distribution of the variable} $t$ which writes
as a weighted sum of $\delta$. The distribution $I_T$ is a weighted counting measure, its Mellin transform is the Epstein function and its Laplace transform is the Poincaré series.

\bigskip
In order to make the connection with the series of~\S\ref{s:background-convex}, we need to clarify the values of $\epsilon_t(x)$ in our case:
\begin{lemm}\label{l:orientation-index} There exists $T_0>0$ such that, for every $t\geq T_0$, $\epsilon_t(x)=1$. 
\end{lemm}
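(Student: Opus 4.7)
The plan is to reduce the sign $\epsilon_t(x)$ to the sign of the determinant of an explicit $(2d-1)\times(2d-1)$ matrix whose leading term in $t$ is a positive scalar multiple of the identity. I would fix $(x,\theta)\in \mathcal{E}_t(\Sigma_1,\Sigma_2)$ with $t\geq T_0$ as in Lemma~\ref{l:basic-simple}, so that $(x,\theta)\in N_{\sigma_1}(\Sigma_1)$ and $(x-t\theta,\theta)\in N_{\sigma_2}(\Sigma_2)$. Using the Gauss parametrization~\eqref{e:gauss-coordinates} together with~\eqref{e:tangent-space}, both tangent spaces $T_{(x,\theta)}N_{\sigma_1}(\Sigma_1)$ and $T_{(x-t\theta,\theta)}N_{\sigma_2}(\Sigma_2)$ lie inside $\mathcal{H}\oplus\mathcal{V}$ by the Legendrian property~\eqref{e:legendrian}, and can be described through the matrix $A^{(i)}(\theta)$ of $D\tilde{x}_i(\sigma_i\theta):\theta^\perp\to\theta^\perp$ in the orthonormal basis $(e_j(\theta))_{j=1,\dots,d-1}$. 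The tangent flow formula~\eqref{e:tangentmap} then yields
$$
D(e^{tV})\big(T_{(x-t\theta,\theta)}N_{\sigma_2}(\Sigma_2)\big)=\big\{\big((\sigma_2 A^{(2)}(\theta)+tI)v,v\big):v\in\mathcal{V}_{x,\theta}\big\}.
$$

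Next, I would assemble the matrix $M$ whose columns are the concatenated direct bases $(u_1^{(1)},\dots,u_{d-1}^{(1)},V,w_1,\dots,w_{d-1})$ of the ordered direct sum $T_{(x,\theta)}N_{\sigma_1}(\Sigma_1)\oplus \IR V\oplus D(e^{tV})T_{(x-t\theta,\theta)}N_{\sigma_2}(\Sigma_2)$, expressed in the positively oriented basis $(V,h_1,\dots,h_{d-1},v_1,\dots,v_{d-1})$ of $T_{(x,\theta)}S\IT^d$ from~\S\ref{ss:riemannian}, with $h_k=e_k(\theta)\cdot\partial_x$ and $v_k=e_k(\theta)$. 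By definition of $\epsilon_t(x)$, one then has $\epsilon_t(x)=\operatorname{sgn}(\det M)$, and the block structure is
$$
M=\begin{pmatrix} 0 & 1 & 0\\ \sigma_1 A^{(1)} & 0 & \sigma_2 A^{(2)}+tI\\ I & 0 & I\end{pmatrix}.
$$
Expanding along the $V$-column (cofactor sign $(-1)^{1+d}$), then subtracting the first block column from the third, and finally swapping the two remaining block rows of size $d-1$ (sign $(-1)^{(d-1)^2}=(-1)^{d-1}$), I obtain
$$
\det M=\det\bigl(tI+\sigma_2 A^{(2)}(\theta)-\sigma_1 A^{(1)}(\theta)\bigr),
$$
the two sign factors combining into $(-1)^{2d}=1$.

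To conclude, observe that $A^{(i)}(\theta)$ is, under the canonical identification $T_{\sigma_i\theta}\IS^{d-1}\simeq\theta^\perp$, the inverse of the shape operator of $\partial K_i$ at $x_{K_i}(\sigma_i\theta)$; the strict convexity hypothesis on $K_i$ forces this operator to be symmetric and positive definite, so $A^{(i)}(\theta)$ is as well. Since $\theta\mapsto A^{(i)}(\theta)$ is smooth on the compact sphere, the operator norm of $\sigma_1 A^{(1)}(\theta)-\sigma_2 A^{(2)}(\theta)$ is bounded uniformly by some $C_0>0$. Enlarging $T_0$ from Lemma~\ref{l:basic-simple} so that $T_0>C_0$ makes $tI+\sigma_2 A^{(2)}(\theta)-\sigma_1 A^{(1)}(\theta)$ symmetric positive definite for every $t\geq T_0$ and every intersection point, so that $\det M>0$ and hence $\epsilon_t(x)=1$. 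The only delicate point in the argument is the orientation bookkeeping in the determinant reduction above; everything else is elementary linear algebra combined with the convex geometry of the $K_i$.
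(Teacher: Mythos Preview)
Your proof is correct and follows essentially the same route as the paper's: both arguments track positively oriented bases of the three summands through the tangent map~\eqref{e:tangentmap} and check that the resulting orientation agrees with that of $S\IT^d$ for $t$ large. The only difference is that you make the lower-order contributions $\sigma_i A^{(i)}$ explicit and compute the exact determinant $\det(tI+\sigma_2 A^{(2)}-\sigma_1 A^{(1)})$, whereas the paper works directly with polyvectors, keeps only the leading $t^{d-1}$ term, and reduces to the permutation sign $(-1)^{d(d-1)}=1$; your block-determinant reduction (cofactor sign $(-1)^{1+d}$ combined with the block-row swap sign $(-1)^{d-1}$) is precisely an explicit version of that same parity check.
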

\begin{proof} Recall that we oriented $N_\pm(\Sigma_1)$ with the $d-1$-form $\Vol_{\IS^{d-1}}(\theta,d\theta)$, or equivalently with the polyvector $(e_1(\theta)\cdot\partial_\theta)\wedge \ldots\wedge (e_d(\theta)\cdot\partial_\theta)$ where $(\theta,e_1(\theta),\ldots,e_d(\theta))$ is a direct orthonormal basis of $\IR^d$. The same holds true for $\Sigma_2$ but we need to take into account the action of the tangent map as given by~\eqref{e:tangentmap} which transforms this polyvector into
$$\left(e_1(\theta)\cdot\partial_\theta+t D\mathbf{v}(\theta)e_1(\theta)\cdot\partial_x\right)\wedge \ldots\wedge \left(e_d(\theta)\cdot\partial_\theta+t D\mathbf{v}(\theta)e_d(\theta)\cdot\partial_x\right).$$
Finally, $\IR V(x)$ is oriented through the vector $ \mathbf{v}(\theta) \cdot\partial_x$ and it yields the following orientation
$$t^{d-1}\left(e_1(\theta)\cdot\partial_\theta\right)\wedge \ldots\wedge \left(e_d(\theta)\cdot\partial_\theta\right)\wedge\mathbf{v}(\theta) \cdot\partial_x\wedge \left( D\mathbf{v}(\theta)e_1(\theta)\cdot\partial_x\right)\wedge \ldots\wedge \left( D\mathbf{v}(\theta)e_d(\theta)\cdot\partial_x\right),$$
which is the same as the orientation on $S\IT^d$ up to the factor $(-1)^{d(d-1)}=1$ as the inverse Gauss map $\theta\mapsto \mathbf{v}(\theta)$ is orientation preserving. 
\end{proof}

Letting $\tilde{x}_j^\pm(\theta)=\tilde{x}_{ K_j}(\pm\theta)$, we derive the following corollary.
\begin{coro}\label{c:counting-current}
There is $T_0>0$ and $t_0\in(0,T_0)$ such that for every $\chi\in\mathcal{C}^{\infty}_c((T_0-t_0,+\infty))$ with $\chi =1$ on $[T_0,+\infty)$, we have
\begin{multline}\label{e:main-formula}
 \sum_{t>T_0-t_0}\chi(t)\left(\sum_{(x,\theta)\in \mathcal{E}_t(\Sigma_1,\Sigma_2)}e^{-i\int_{-t}^0\beta(V)(x+\tau\mathbf{v}(\theta),\theta)|d\tau|}\right)\\
 =(-1)^{d-1}\int_{S\IT^d}e^{ i\left(f(\tilde{x}_{2}^{\sigma_2})-f(\tilde{x}_{1}^{\sigma_1})\right)}\delta_{2\pi\IZ^d}\wedge \int_{\IR}\chi(t)\left(e^{-t\mathbf{V}_{\beta_0}}\mathbf{T}_{\tilde{x}_{1}^{\sigma_1}-\tilde{x}_{2}^{\sigma_2}}^*\right)\iota_V(\delta_{2\pi\IZ^d})|dt|,
\end{multline}
where we recall that the notation $(\delta_{2\pi\IZ^d})$ stands for a current of degree $d$. 
\end{coro}
We now recognize on the right hand-side the twisted dynamical correlations of Lemma~\ref{l:reduction} except that the test ``functions'' are much more singular.

\begin{proof}
According to Lemma~\ref{l:orientation-index}, for $T_0>0$ large enough and recalling~\eqref{e:witten-twist}, one deduces from Lemma~\ref{l:truncated-poincare} that, for every $\chi\in\mathcal{C}^{\infty}_c((T_0-t_0,+\infty))$,
\begin{multline}\label{e:equality-zeta-current}
\sum_{t>T_0-t_0}\chi(t)\left(\sum_{(x,\theta)\in \mathcal{E}_t(\Sigma_1,\Sigma_2)}e^{-i\int_{-t}^0\beta(V)(x+\tau\mathbf{v}(\theta),\theta)|d\tau|}\right)\\
=(-1)^{d-1}\int_{S\IT^d}e^{-if}[N_{\sigma_1}(\Sigma_1)]\wedge \int_{\IR}\chi(t)e^{-t\mathbf{V}_{\beta_0}}\iota_V(e^{if}[N_{\sigma_2}(\Sigma_2)])|dt|.
\end{multline}
Thanks to~\eqref{e:example-current}, we can write, for $j=1,2$,
\begin{align*}e^{\pm if}[N_{\sigma_j}(\Sigma_j)]=e^{\pm if(x)}\mathbf{T}_{-\tilde{x}_{j}^{\sigma_j}}^*(\delta_{2\pi\IZ^d})
=e^{\pm if(\tilde{x}_{j}^{\sigma_j})}\mathbf{T}_{-\tilde{x}_{j}^{\sigma_j}}^*(\delta_{2\pi\IZ^d}).
 \end{align*}
Combined with~\eqref{e:commute-shift-lie} in Lemma~\ref{l:commute-pullback} , this allows to rewrite~\eqref{e:equality-zeta-current} as~\eqref{e:main-formula}.
\end{proof}


\section{Asymptotics of oscillatory integrals}\label{s:analysis}

In view of describing the long time asymptotics of the correlation function (or integrated versions of it), we need to describe with some accuracy the oscillatory integrals appearing in Lemma~\ref{l:reduction}. More precisely, we want to study the behaviour as $t\rightarrow+\infty$ (and for $\xi-\beta_0\neq 0$) of the oscillatory integral $I_F(\xi-\beta_0,t)$ in~\eqref{e:oscillatory-integral}, where $\beta_0\in\IR^d$, $F\in\mathcal{C}^{\infty}(\IS^{d-1},\IC)$ and $\tilde{x}\in\ml{C}^\infty(\IS^{d-1},\IR)$. Estimating these kind of integrals as $t\rightarrow+\infty$ is a classical topic in harmonic analysis. See~\cite{Herz62, Littman63} for a rough estimate, and~\cite[Th.~7.7.14]{Hormander90}, \cite[Section~VIII-3, p347]{Steinbook} and~\cite[Th.~3.38, p140]{DyatlovZworski19} for fine asymptotic expansions. The only additional difficulty compared with these references is that we need to have a good control in terms of $|\xi|$. Thus, we need to pay a little attention to the extra term $e^{i\xi\cdot\tilde{x}(\theta)}$ when revisiting the classical stationary phase arguments used to describe these integrals: this is the content of the present section. There is a subtle competition between the large times $t$ and the large momenta $\vert\xi\vert$ which is what we deal with in the next lemmas. 

As usual, we split these oscillatory integrals in two parts: one corresponding to nonstationary points and one corresponding to stationary ones. Recall that the phase function we are interested in is of the form $\theta\in\IS^{d-1}\mapsto \mathbf{v}(\theta)\cdot\omega$ for some fixed $\omega\in\IS^{d-1}$ (typically $\omega=\pm(\xi-\beta_0)/|\xi-\beta_0|$). This is equivalent to considering the function $v\in\partial K\mapsto v\cdot\omega$ which is nothing but the height function in the direction $\omega$ which has two critical points on the convex boundary $\partial K$. One can verify that the differential of this function at $v$ can be identified with the linear form $y \mapsto \left(\omega-(\theta_{\partial K}(v)\cdot\omega)\theta_{\partial K}(v)\right) \cdot y$ where $\theta_{\partial K}(v)$ is the normal to $\partial K$ at $v$. In particular, $\theta_{\partial K}(\mathbf{v}(\theta))=\theta$. Hence, the phase has exactly two critical points $v_\pm$ that correspond to the points where $\theta_{\partial K}=\pm\omega$. Recalling the definition of the shape operator $S(v)$ in Remark~\ref{r:shape-operator}, the Hessian of the phase at these points is given by $\mp S(v_\pm)$, all of whose eigenvalues are positive (resp. negative) thanks to the strict convexity of $K$. Coming back to $\IS^{d-1}$, the phase of our oscillatory integrals has two critical points located at $\pm\omega$ and the determinant of their Hessian is exactly the Gauss curvature $\kappa(\pm \omega)$ (up to sign). We will thus decompose our integrals accordingly.

\subsection{Splitting the oscillatory integral}\label{ss:split-oscillatory-integral}
We first define cutoff functions that will be used all along the paper and that we will fix once and for all in Lemma~\ref{c:stat-points}.
\begin{def1}
\label{def-chi-j}
We let $(\chi_j)_{j\in \{-1,0,1\}}$ be any smooth partition of unity of the closed interval $[-1,1]$ such that $\chi_{1}$ is equal to one in a neighborhood of $1$, $\supp(\chi_1) \subset (0, 1]$, $\chi_{-1}(s)=\chi_1(-s)$, and $\supp(\chi_0) \subset (-1,1)$.
\end{def1}
We will make a $\xi$--dependent partition of unity of the sphere $\theta \in \IS^{d-1}$ by letting
$$
\theta \mapsto \chi_j \left(\theta \cdot  \frac{\xi-\beta_0}{|\xi-\beta_0|} \right) , \quad j\in \{-1,0,1\}  , \quad \xi \neq \beta_0, 
$$
that is to say, for $j=\pm 1$, localization near the poles $\pm \frac{\xi-\beta_0}{|\xi-\beta_0|}$, and for $j=0$, localization near the equator $\left(\frac{\xi-\beta_0}{|\xi-\beta_0|}\right)^\perp$.

We split the integral $I_F$ defined in~\eqref{e:oscillatory-integral} accordingly as
\begin{align}
\label{e:split-3-int}
I_F(\xi-\beta_0,t) & = I_F^{(-1)}(\xi-\beta_0,t) +I_F^{(0)}(\xi-\beta_0,t)+ I_F^{(1)}(\xi-\beta_0,t),\quad\text{ with }\\
\label{e:split-3-int-bis} 
I_F^{(j)}(\xi-\beta_0,t)& :=\int_{\IS^{d-1}} \chi_j \left(\theta \cdot  \frac{\xi-\beta_0}{|\xi-\beta_0|} \right)e^{i(\xi-\beta_0)\cdot(t\mathbf{v}(\theta)+\tilde{x}(\theta))}e^{i\beta_0\cdot\tilde{x}(\theta)}F(\theta) d\Vol(\theta) .
\end{align}

We first rewrite $\xi-\beta_0$ as $\xi-\beta_0 = \lambda \omega$ with $\lambda = |\xi-\beta_0| \neq 0$ and $\omega=\frac{\xi-\beta_0}{|\xi-\beta_0|} \in \IS^{d-1}$. The above splitting~\eqref{e:split-3-int} of $I_F$ into three pieces now reads
\begin{align}
I_F(\lambda\omega,t) & = I_F^{(-1)}(\lambda\omega,t) +I_F^{(0)}(\lambda\omega,t)+ I_F^{(1)}(\lambda\omega,t),\quad\text{ with }\\
\label{e:expression-I-pm1}
I_F^{(j)}(\lambda\omega,t)& :=\int_{\IS^{d-1}} \chi_j \left(\theta \cdot  \omega \right)e^{i\lambda\omega\cdot(t\mathbf{v}(\theta)+\tilde{x}(\theta))}e^{i\beta_0\cdot\tilde{x}(\theta)}F(\theta) d\Vol(\theta)  ,  
\end{align}

and we study each piece separately.
To state our results, we also define, for $\eta \in \R^d\setminus\{0\}$, 
\begin{align}
\label{e:}
\mathbf{C}_j(\eta):=\left\{\theta\in\IS^{d-1} \text{ such that } \theta \cdot  \frac{\eta}{|\eta|}  \in \supp(\chi_j)\right\}, \quad \text{for }j=-1,0,1.
\end{align}
These are three closed subsets of the sphere.
According to the above properties of the cutoff functions $\chi_j$, we have, for all $\eta \in \R^d\setminus\{0\}$, 
 $$\mathbf{C}_{-1}(\eta)\cup\mathbf{C}_{0}(\eta)\cup\mathbf{C}_{1}(\eta)=\IS^{d-1}, $$
$\mathbf{C}_{\pm 1}(\eta)$ is a neighborhood of $\pm \frac{\eta}{|\eta|}$ in $\IS^{d-1}$, and $\mathbf{C}_{0}(\eta)$ is a neighborhood of $\left(\frac{\eta}{|\eta|}\right)^\perp$ in $\IS^{d-1}$.
We also notice that these three sets only depend on $\frac{\eta}{|\eta|}=\omega$, whence $\mathbf{C}_j(\eta)=\mathbf{C}_j(\frac{\eta}{|\eta|})=\mathbf{C}_j(\omega)$.

\subsection{Nonstationary points}
We start with the integral $I_F^{(0)}$ for which we will use the following lemma.

\begin{lemm}[Nonstationary points]
\label{c:non-stat-points}
There is a function $r\mapsto C_N(r) >0$ and $\mathsf{t}_0>0$ depending only on $\tilde{x}, \mathbf{v}:\mathbb{S}^{d-1}\mapsto \mathbb{R}^d$ and $\chi_0$ from definition~\ref{def-chi-j}  such that 
for all $t\geq \mathsf{t}_0$
and $F \in \ml{C}^N(\IS^{d-1})$, we have 
\begin{align*}
I_F^{(0)}(\lambda \omega,t)= ( i\lambda t )^{-N} \int_{\IS^{d-1}}e^{i\lambda \omega\cdot(t\mathbf{v}(\theta)+\tilde{x}(\theta))}  \mathcal{L}_{N,t}^\omega \left( \chi_0 \left(\theta \cdot  \omega \right)e^{i\beta_0\cdot\tilde{x}(\theta)}F(\theta) \right) d\Vol(\theta),
\end{align*}
or equivalently for $\xi \in \R^d\setminus\{\beta_0\}$,
\begin{multline}
( i|\xi-\beta_0| t )^{N}I_F^{(0)}(\xi-\beta_0,t)\\
= \int_{\IS^{d-1}}e^{i(\xi-\beta_0)\cdot(t\mathbf{v}(\theta)+\tilde{x}(\theta))}  \mathcal{L}_{N,t}^{\frac{\xi-\beta_0}{|\xi-\beta_0|}} \left( \chi_0 \left(\theta \cdot  \frac{\xi-\beta_0}{|\xi-\beta_0|} \right)e^{i\beta_0\cdot\tilde{x}(\theta)}F(\theta) \right) d\Vol(\theta) ,
\end{multline}
where  $\mathcal{L}_{N,t}^\omega$ is a differential operator of order $N$ on $\IS^{d-1}$ with smooth coefficients only depending on $X, \omega$ and $t$ and such that for all $\psi \in \ml{C}^N(\IS^{d-1})$, for all $\omega \in \IS^{d-1}$, for all $\theta \in \mathbf{C}_0(\omega)$, and all $t \geq \mathsf{t}_0$ we have 
\begin{align*}
\left|  (\mathcal{L}_{N,t}^\omega \psi ) (\theta) \right| \leq 
C_N \left(\| \tilde{x} \|_{\ml{C}^{N+1}(\mathbf{C}_0(\omega))} \right)\left(\sum_{|\alpha| \leq N} \left|\nabla_\theta^\alpha \psi (\theta) \right| \right) .
\end{align*}
In particular, 
\begin{align*}
| I_F^{(0)}(\lambda \omega, t) |  \leq 
C_N \left(\| \tilde{x} \|_{\ml{C}^{N+1}(\mathbf{C}_0(\omega))}\right) (\lambda t )^{-N}    \|F\|_{W^{N,1}(\mathbf{C}_0(\omega))},
\end{align*}
or equivalently, for $\xi \in \R^d\setminus\{\beta_0\}$ 
\begin{align*}
| I_F^{(0)}(\xi-\beta_0, t) |  \leq C_N \left( \| \tilde{x} \|_{\ml{C}^{N+1}(\mathbf{C}_0(\xi-\beta_0))} \right) (|\xi-\beta_0| t )^{-N}   \|F\|_{W^{N,1}(\mathbf{C}_0(\xi-\beta_0))}.
\end{align*}
\end{lemm}
Again, this kind of estimates is classical and the only novelty here is the explicit control in terms of the various parameters involved which will be obtained by a careful inspection of the usual arguments.

\begin{proof} We let $\phi_{t,\omega}:=\omega\cdot\left(\mathbf{v}(\theta)+\frac{\tilde{x}(\theta)}{t}\right)$ so that 
$$\frac{\nabla\phi_{t,\omega}}{\|\nabla\phi_{t,\omega}\|^2}\left(e^{i\lambda \omega\cdot(t\mathbf{v}(\theta)+\tilde{x}(\theta))} \right)=(i\lambda t)e^{i\lambda \omega\cdot(t\mathbf{v}(\theta)+\tilde{x}(\theta))}.$$ 
Note that 
$$\nabla\phi_{t,\omega}=\nabla\phi_{\infty,\omega}(\theta)+\frac{D\tilde{x}(\theta)\omega}{t}, \quad \phi_{\infty,\omega}(\theta)=\omega\cdot\mathbf{v}(\theta),  $$
and that there exists some constant $c_0>0$ depending only on $\mathbf{v}$ such that $\|\nabla\phi_{\infty,\omega}(\theta)\|\geq c_0 d_{\IS^{d-1}}(\theta,\pm\omega)=c_0(1-|\theta\cdot\omega|)$ (as the critical points are nondegenerate). In particular, this operator is well defined as soon as $\theta\cdot\omega$ lies on the support of $\chi_0$ and as $t\geq \mathsf{t}_0 :=  c(\chi_0,\mathbf{v}) \|D\tilde{x}\|_{L^{\infty}}$ (for some constant depending only on $\chi_0$ and $\mathbf{v}$). Hence, we get the first of the Lemma by letting
$$\mathcal{L}^\omega_{N,t}:=\left(\left(\frac{\nabla\phi_{t,\omega}}{i\|\nabla\phi_{t,\omega}\|^2} \right)^*\right)^N.$$
The second part follows from the fact that this is a differential operator of order $\leq N$ whose coefficients depend on a certain number of derivatives of $\tilde{x}$ (and $\mathbf{v}$).

\end{proof}
Once and for all, we will use the function $\chi_0\in\mathcal{C}^\infty_c(-1,1)$ from definition~\ref{def-chi-j} with large enough support to meet the requirements on the support of $\chi_{\pm 1}$ in Lemma~\ref{c:stat-points} below. Then, the reference time $\mathsf{t}_0=C(\chi_0,\mathbf{v})\|D\tilde{x} \|_{L^\infty(\IS^{d-1})}$ is fixed, depending only on $\chi_0$ and the parametrization $\tilde{x}$ of some given convex boundary by the outward normal. The time $\mathsf{t}_0$ will always only depend on these three ingredients appearing in the phase function part of the oscillatory integral and not on the amplitude unless specified otherwise, it only deals with the nonstationary part of the oscillatory integral and tells us when the effect of $\tilde{x}$ becomes negligible compared to the size of $t\omega\cdot\mathbf{v}(\theta)$.

\subsection{Stationary points}
We now turn to the terms $I_F^{(\pm 1)}$ and need the following lemma, which is again more or less classical (asymptotics of the Fourier transform of the surface measure on a convex set):

\begin{lemm}
\label{c:stat-points}
For all $\chi_{1} \in \mathcal{C}^\infty([-1,1])$ compactly supported in a small enough neighborhood of $1$ and equal to one on a slightly smaller neighborhood of $1$, with $\chi_{-1}(s) = \chi_1(-s)$, and for all $N\in \N^*$, we have, for all $t>0, \xi \in \R^d\setminus\{\beta_0\}$, $\tilde{x} \in \ml{C}^{2N+ d}(\IS^{d-1})$ and all $F \in \ml{C}^{2N+ d}(\mathbf{C}_{\pm1}(\xi-\beta_0))$,
\begin{multline*}
  I_F^{(\pm 1)}(\xi-\beta_0,t)\\
  =
  \frac{e^{ it(\xi-\beta_0)\cdot\mathbf{v}\left(\pm\frac{\xi-\beta_0}{|\xi-\beta_0|}\right)} e^{\mp i\frac{\pi}{4}(d-1)}}{\sqrt{\kappa\left(\mathbf{v}\left(\pm\frac{\xi-\beta_0}{|\xi-\beta_0|}\right)\right)}}
  \left(\frac{2\pi}{t|\xi-\beta_0|}\right)^{\frac{d-1}{2}}  \sum_{j=0}^{N-1} \frac{1}{(t|\xi-\beta_0|)^j} L_{j, \frac{\xi-\beta_0}{|\xi-\beta_0|}}^\pm (e^{i \xi \cdot \tilde{x}(\cdot)}F) \left(\pm \frac{\xi-\beta_0}{|\xi-\beta_0|}\right) \\
 +\ml{O}_{N,\beta_0}(1) \frac{ \max\left\{1,\| \tilde{x}\|_{W^{2N+ d,\infty}(\mathbf{C}_{\pm1}(\xi-\beta_0))}^{2N+ d}|\xi-\beta_0|^{2N+ d}\right\}}{(t|\xi-\beta_0|)^{N+\frac{d-1}{2}}}   \|F\|_{W^{2N+ d,1}(\mathbf{C}_{\pm1}(\xi))} ,
\end{multline*}
where the constant in the remainder $\ml{O}_{N,\beta_0}(1)$ depends only on $N$, $\mathbf{v}$ and the cutoff functions, where $L_{j, \omega}^\pm$ are differential operators of order $\leq 2j$ whose coefficients are uniformly bounded in $\omega\in\IS^{d-1}$, where $L_0=1$ and $\kappa(\mathbf{v}(\pm\theta))$ is the Gauss curvature at the point $\mathbf{v}(\pm\theta)$ of $\partial K$.

\end{lemm}

\begin{rema} We note that the growth in $|\xi|$ is a priori quite bad (except if $\tilde{x}=0$, which is e.g. the case when studying dynamical correlations for functions, see Section~\ref{s:correlation} below) and we will have to pay attention to this problem in the upcoming sections. For instance, this reads, for $N=1$ and for a constant $C$ depending on $\tilde x$, 
\begin{align*}
  \left|I_F^{(\pm 1)}(\xi-\beta_0,t) - \frac{e^{it(\xi-\beta_0)\cdot\mathbf{v}\left(\pm\frac{\xi-\beta_0}{|\xi-\beta_0|}\right)} e^{-i\frac{\pi}{4}(d-1)}}{\sqrt{\kappa\left(\mathbf{v}\left(\pm\frac{\xi-\beta_0}{|\xi-\beta_0|}\right)\right)}}\left(\frac{2\pi}{t|\xi|}\right)^{\frac{d-1}{2}}  e^{i \xi \cdot \tilde{x}\left(\frac{\xi-\beta_0}{|\xi-\beta_0|}\right)}F \left(\frac{\xi-\beta_0}{|\xi-\beta_0|}\right) \right| \\
 \leq  C \frac{ |\xi-\beta_0|^{2+d}}{(t|\xi-\beta_0|)^{1+\frac{d-1}{2}}}   \|F\|_{W^{2+d,1}(\mathbf{C}_1(\xi))} .
\end{align*}
\end{rema}
Again, the proof of Lemma~\ref{c:stat-points} is classical. After using a convenient coordinate chart on the sphere of the form
\begin{equation*}
\begin{array}{rcl}
B_{\R^{d-1}}(0,1) & \to & \{ x \in \IS^{d-1} ,\pm x_d>0 \} \\
x' & \mapsto & \big( x' , \pm \sqrt{1-|x'|^2} \big) ,
\end{array}
\end{equation*}
we are in the set-up of the standard stationary phase asymptotics Lemma~\cite[Th.~3.16]{Zworski12}. Compared with the proof in this reference, only two points require a particular attention:
\begin{itemize}
 \item The first one is that we want some uniformity of the constants with respect to the parameter $\omega= \frac{\xi-\beta_0}{|\xi-\beta_0|}$. Equivalently, we need to apply some Morse Lemma with parameter $\omega$. Hence, we need to verify that the size of the chart when applying the Morse Lemma in the proof of~\cite[Th.~3.16]{Zworski12} can be made uniform and that the coordinate charts have derivatives uniformly bounded in terms of $\omega$. This can verified by recalling that the Morse lemma near a (nondegenerate) critical point $x_0=0$ of a function $\phi$ is obtained by writing the Taylor formula
 $$\phi(x)=\phi(0)+x^T\left(\int_0^1(1-t)d^2\phi(tx) dt\right)x.$$
 Then, recall that, given a symmetric matrix $Q_0$, the map $M\in\{M: Q_0M\in S_n(\IR)\}\mapsto M^TQ_0M\in S_n(\IR)$ is invertible in a neighborhood of $\Id$ (whose size depends on $Q_0$). Hence, one concludes that $\left(\int_0^1(1-t)d^2\phi(tx) dt\right)=\frac{1}{2}M(x)^Td^2\phi(0)M(x),$
 for some smooth function $x\mapsto M(x)$ defined in a neighborhood of $0$ whose size depends on $d^2\phi(0)$. The size of this neighborhood can be chosen in a way that depends linearly on the norm of $d^2\phi(0)$. As our critical points vary in a compact part, their corresponding Hessian varies in a compact part and the neighborhood (and thus the size of the support of $\chi_1$) can be chosen uniformly as well as the involved constants.
 \item The second point is that the statement of~\cite[Th.3.16]{Zworski12} involves the $\ml{C}^{2N+d}$-norm. Yet, inspecting the proof (namely, step 3 in the proof of p.43 together with the proof of Lemma 3.5(ii)), one finds a control by the $W^{2N+d,1}$-norm.
 \end{itemize}
 
 We refer to~\cite{DangLeautaudRiviere21} for detailed proofs of Lemmas~\ref{c:non-stat-points} and~\ref{c:stat-points} in case $\mathbf{v}(\theta)=\theta$.

For later purposes,  we introduce the translation invariant Hamiltoninan
\begin{align}
\label{e:def-lambda-pm}
\lambda_\pm(\xi):=(\xi-\beta_0)\cdot\mathbf{v}\left(\pm\frac{\xi-\beta_0}{|\xi-\beta_0|}\right) = \pm h_K(\pm (\xi-\beta_0)) , 
\end{align}
where $h_K$ is the support function of the convex set $K$ in the definition of $\mathbf{v}$
(see Section~\ref{s:Finsler-Ham} for the second equality). We also introduce the 
critical values of the height function that will play a special role in our analysis:
\begin{equation}\label{e:critical-values}\Lambda_{\beta_0}:=\left\{\lambda_\pm(\xi) , \xi\in\IZ^d\setminus\{\beta_0\}\right\}.
\end{equation}
Thanks to~\eqref{e:interior-0}, this forms a discrete and locally finite subset of $\IR$. Note also that, thanks to~\eqref{e:interior-0}, $\pm \lambda_\pm(\xi)\geq c_1|\xi-\beta_0|$ for some uniform $c_1>0$ (depending only on $\mathbf{v}$). In that direction, we record the following useful lemma.
\begin{lemm}\label{l:support-chi1} There exists $c_0>0$ such that, for $\chi_{1} \in \mathcal{C}^\infty([-1,1])$ compactly supported in a small enough neighborhood of $1$ and equal to one on a slightly smaller neighborhood of $1$, and for all $(\theta,\xi)$ in the support of $\chi_{\pm 1}\left(\theta\cdot\frac{\xi-\beta_0}{|\xi-\beta_0|}\right)$, one has
$$\left|\mathbf{v}(\theta)\cdot(\xi-\beta_0)\right|\geq c_0|\xi-\beta_0|.$$
\end{lemm}
\begin{proof} We let $\epsilon_0>0$ be such that $\supp(\chi_1)= [1- \epsilon_0,1]$. Then, for $(\theta,\xi)$ in the support of $\chi_{\pm 1}\left(\theta\cdot\frac{\xi-\beta_0}{|\xi-\beta_0|}\right)$ with $\xi\neq\beta_0$, one has
$$\left|\mathbf{v}(\theta)\cdot\frac{(\xi-\beta_0)}{|\xi-\beta_0|}\right|=|\lambda_\pm(\xi)/|\xi-\beta_0||+\mathcal{O}(\epsilon_0)\geq c_1- \mathcal{O}(\epsilon_0),$$
which proves the statement for $\epsilon_0$ small enough. 
\end{proof}

\section{Asymptotics of twisted dynamical correlations}\label{s:correlation}
In this section, as a first application of these fine stationary phase asymptotics, we give an accurate description of the correlation function as $t\rightarrow +\infty$. See Theorem~\ref{t:twisted-correlations} for a precise statement. As a byproduct, this shows how anisotropic Sobolev norms naturally appear when studying analytical properties of geodesic flows and it also proves Theorem~\ref{t:maintheo-correlations} from the introduction. 

For the sake of simplicity, we restrict ourselves to the case where $k_1=2d-1$, $k_2=0$ and $\tilde{x}(\theta)=0$. Namely, we fix two smooth functions $\varphi$ and $\psi$ in $\ml{C}^{\infty}(S\IT^d)$ and we want to analyze the behaviour as $t\rightarrow +\infty$ of
\begin{align*}\Cor_{\varphi,\psi}(t,\beta_0)&:=\int_{S\IT^d}\varphi(x,\theta)e^{-t\mathbf{V}_{\beta_0}}(\psi)(x,\theta)|dx| d\Vol(\theta)\\
&=\int_{S\IT^d}e^{-it\mathbf{v}(\theta)\cdot\beta_0}\varphi(x,\theta)\psi(x-t\mathbf{v}(\theta),\theta)|dx| d\Vol(\theta)
 \end{align*}
where $\beta_0\in H^{1}(\IT^d,\IR)\simeq \IR^d$. According to Remark~\ref{r:correlation-function}, this can be rewritten as
$$\Cor_{\varphi,\psi}(t,\beta_0)=\sum_{\xi\in\IZ^d}\int_{\IS^{d-1}}\widehat{\varphi}_{\xi}(\theta)\widehat{\psi}_{-\xi}(\theta)e^{it(\xi-\beta_0)\cdot\mathbf{v}(\theta)} d\Vol(\theta),$$
where
$$\varphi(x,\theta)=\sum_{\xi\in\IZ^d}\widehat{\varphi}_{\xi}(\theta)\mathfrak{e}_\xi(x),\quad\text{and}\quad\psi(x,\theta)=\sum_{\xi\in\IZ^d}\widehat{\psi}_{\xi}(\theta)\mathfrak{e}_\xi(x).$$
We will now implement the decomposition~\eqref{e:split-3-int}-\eqref{e:split-3-int-bis} together with Lemmas~\ref{c:non-stat-points} and~\ref{c:stat-points} in order to analyze the asymptotic expansion of $\Cor_{\varphi,\psi}(t,\beta_0)$ as $t\rightarrow+ \infty$.
\begin{rema} Modulo some tedious work, the analysis could be extended to the more general framework of Lemma~\ref{l:reduction} except that the terms in the asymptotic expansion will be slightly less explicit. 
\end{rema}
First, we write
$$\Cor_{\varphi,\psi}(t,\beta_0)=E_{\beta_0}(\varphi,\psi)+\sum_{\xi\in\IZ^d\setminus\{\beta_0\}}\int_{\IS^{d-1}}\widehat{\varphi}_{\xi}(\theta)\widehat{\psi}_{-\xi}(\theta)e^{it(\xi-\beta_0)\cdot\mathbf{v}(\theta)} d\Vol(\theta),$$
where
\begin{equation}\label{e:leading-term}E_{\beta_0}(\varphi,\psi):=\int_{\IS^{d-1}}\widehat{\varphi}_{\beta_0}(\theta)\widehat{\psi}_{-\beta_0}(\theta) d\Vol(\theta),\ \text{if}\ \beta_0\in\IZ^d,\end{equation}
and $E_{\beta_0}(\varphi,\psi)=0$ otherwise.

\subsection{Anisotropic Sobolev spaces of distributions, splitting the correlation function}\label{ss:splitting-correlation}

We decompose these correlations further by writing
\begin{align}
 \Cor_{\varphi,\psi}(t,\beta_0)&  =E_{\beta_0}(\varphi,\psi)+ \Cor_{-1,\varphi,\psi}(t) + \Cor_{0,\varphi,\psi}(t) +\Cor_{1,\varphi,\psi}(t) , \quad \text{with, for }j \in \{-1,0,1\} ,\label{e:decomp-C0}\\
 \Cor_{j,\varphi,\psi}(t)  & :=  \sum_{\xi\in\IZ^d\setminus\{\beta_0\}}\int_{\IS^{d-1}} \chi_j \left(\theta \cdot  \frac{\xi-\beta_0}{|\xi-\beta_0|} \right) \widehat{\varphi}_{\xi}(\theta)\widehat{\psi}_{-\xi}(\theta)e^{it(\xi-\beta_0)\cdot\mathbf{v}(\theta)} d\Vol(\theta),
 \nonumber
\end{align} 
where the $\chi_j$ are the cutoff functions defined in~\S\ref{ss:split-oscillatory-integral} and $E_{\beta_0}(\varphi,\psi)$ is defined in~\eqref{e:leading-term}. Below, we will drop the dependance in $(\varphi,\psi)$ in the index of $\Cor$ to avoid too heavy notations.

\medskip
We first consider the term $\Cor_{0}(t)$. Applying Lemma~\ref{c:non-stat-points} to $\tilde{x} =0$ and $F = \widehat{\varphi}_{\xi}\widehat{\psi}_{-\xi}$ combined with the Cauchy-Schwarz inequality $\Vert \widehat{\varphi}_{\xi}\widehat{\psi}_{-\xi} \Vert_{W^{N,1}(\mathbf{C}_0(\xi-\beta_0)))}\leqslant \Vert \widehat{\varphi}_{\xi}\Vert_{H^{N}(\mathbf{C}_0(\xi-\beta_0))} \Vert \widehat{\psi}_{-\xi}\Vert_{H^{N}(\mathbf{C}_0(\xi-\beta_0))}  $, we have the following statement: 
\begin{lemm}\label{l:correlation-remainder}
For all $N\in\N$, there is $C_N >0$ such that for every $t>0$, every $\beta_0\in\IR^d$, and every $\varphi,\psi\in\ml{C}^\infty(S\IT^d)$, we have 
\begin{align*}
\left| \Cor_{0}(t) \right| 
& \leq C_N t^{-N} \sum_{\xi\in\IZ^d\setminus\{\beta_0\}}\frac{\left\|\widehat{\varphi}_{\xi}\right\|_{H^{N}(\mathbf{C}_0(\xi-\beta_0))}\left\|\widehat{\psi}_{-\xi}\right\|_{H^{N}(\mathbf{C}_0(\xi-\beta_0))}}{|\xi-\beta_0|^{N}} .
\end{align*}
\end{lemm}

\medskip
We now consider the terms $\Cor_{\pm 1}(t)$. Applying similarly Lemma~\ref{c:stat-points} to $\tilde{x}(\theta) =0$ and $F = \widehat{\varphi}_{\xi}\widehat{\psi}_{-\xi}$, we have the following statement: 
\begin{lemm}\label{l:correlation-stat}
For all $\chi_{1} \in \mathcal{C}^\infty([-1,1])$ compactly supported in a small enough neighborhood of $1$ and equal to one on a slightly smaller neighborhood of $1$, with $\chi_{-1}(s) = \chi_1(-s)$, and for all $N\in \N^*$, we have for every $\beta_0 \in \R^d$,
\begin{multline*}
  \Cor_{\pm 1}(t)  = \sum_{\xi\in\IZ^d\setminus\{\beta_0\}}  \frac{e^{ it\lambda_\pm(\xi)} e^{\mp i\frac{\pi}{4}(d-1)}}{\sqrt{\kappa\circ\mathbf{v}\left(\pm\frac{\xi-\beta_0}{|\xi-\beta_0|}\right)}}\left(\frac{2\pi}{t|\xi-\beta_0|}\right)^{\frac{d-1}{2}}\\
  \times\sum_{j=0}^{N-1} \frac{1}{(t|\xi-\beta_0|)^j} L_{j, \frac{\xi-\beta_0}{|\xi-\beta_0|}}^\pm \left(\widehat{\varphi}_{\xi}\widehat{\psi}_{-\xi} \right) \left(\pm \frac{\xi-\beta_0}{|\xi-\beta_0|}\right) \\
 +\ml{O}_N\left(t^{-N-\frac{d-1}{2}}\right) \sum_{\xi\in\IZ^d\setminus\{\beta_0\}} \frac{\left\|\widehat{\varphi}_{\xi}\right\|_{H^{2N+ d}(\mathbf{C}_{\pm1}(\xi-\beta_0))}\left\|\widehat{\psi}_{-\xi}\right\|_{H^{2N+ d}(\mathbf{C}_{\pm1}(\xi-\beta_0))}}{|\xi-\beta_0|^{N+\frac{d-1}{2}}}  ,
\end{multline*}
as $t\to + \infty$, where the constant in the remainder $\ml{O}_N(t^{-N-\frac{d-1}{2}})$ depends also on $\beta_0\in\IR^d$ and where $\lambda_\pm(\xi)$ was defined in~\eqref{e:def-lambda-pm} and depends on $\beta_0$ and $K$. 
\end{lemm}

The above decomposition motivates the following definition of anisotropic Sobolev norms.
\begin{def1}[Anisotropic Sobolev spaces] 
\label{d:sobonorm}
Let $\gamma \in\IR^d$ and let $(s_0,s_1,N_0,N_1)$ be an element in $\IZ_+^2\times\IR^2$.
For every $\varphi(x,\theta)=\sum_\xi\widehat{\varphi}_{\xi}(\theta)\mathfrak{e}_\xi(x) \in\ml{C}^{\infty}(S\IT^d)$, we define the following anisotropic Sobolev norms:
$$
\|\varphi\|_{\mathcal{H}^{s_0,N_0,s_1,N_1}_\gamma}^2:=\sum_{\xi\in \IZ^d}
\langle \xi \rangle^{2 N_0} \left\|\widehat{\varphi}_{\xi}\right\|_{H^{s_0}\left(\mathbf{C}_0 (\xi-\gamma)\right)}^2
+\sum_{\xi\in \IZ^d ,\pm}\langle \xi \rangle^{2 N_1} \left\|\widehat{\varphi}_\xi\right\|_{H^{s_1}\left(\mathbf{C}_{\pm 1}(\xi-\gamma)\right)}^2.
$$ 
\end{def1}
In our applications, these norms are used for $\gamma = \pm \beta_0$. 
The geometric content of these anisotropic norms is discussed in Section~\ref{s:geometry-anisotropic} below.

\subsection{Asymptotics of the correlation function}

Now, combining this definition with the reduction made in~\S\ref{ss:splitting-correlation}, we find
\begin{theo}[Asymptotics of twisted correlations] 
\label{t:twisted-correlations}
Let $\beta_0\in H^1(\IT^d,\IR)$ and let $N\in \IZ_+^*$. For every $\varphi(x,\theta)=\sum_{\xi\in\IZ^d}\widehat{\varphi}_\xi(\theta)\mathfrak{e}_\xi(x)$ and $\psi(x,\theta)=\sum_{\xi\in\IZ^d}\widehat{\psi}_\xi(\theta)\mathfrak{e}_\xi(x)$ in $\mathcal{C}^{\infty}(S\IT^d)$, one has
\begin{align*}
 \Cor_{\varphi,\psi}(t,\beta_0) &:=\int_{S\IT^d}e^{-it\beta_0\cdot\mathbf{v}(\theta)}\varphi(x,\theta)\psi\circ e^{-tV}(x,\theta)|dx| d\Vol(\theta)\\
 &=E_{\beta_0}(\varphi,\psi)  \\
 &+\left(2\pi\right)^{\frac{d-1}{2}}\sum_{j=0}^{N-1}\sum_{\xi\in\IZ^d\setminus\{\beta_0\},\pm}\frac{e^{i\left(t\lambda_\pm(\xi)\mp\frac{\pi}{4}(d-1)\right)}L_{j,\frac{\xi-\beta_0}{|\xi-\beta_0|}}^\pm\left(\widehat{\varphi}_\xi\widehat{\psi}_{-\xi}\right)\left(\pm\frac{\xi-\beta_0}{|\xi-\beta_0|}\right)}{\sqrt{\kappa\circ\mathbf{v}\left(\pm\frac{\xi-\beta_0}{|\xi-\beta_0|}\right)}\left(t|\xi-\beta_0|\right)^{j+\frac{d-1}{2}}}\\
 &\quad +\mathcal{O}_{N,\varphi,\psi}\left(\frac{1}{t^{N+\frac{d-1}{2}}}\right),
\end{align*}
where $E_{\beta_0}(\varphi,\psi)$ was defined in~\eqref{e:leading-term},
$L_{j,\omega}^{\pm}$ is the differential operator of degree $2j$ appearing in Lemma~\ref{c:stat-points} and, for every integer $ s\geq N+d$ the constant in the remainder is controlled by
$$C_{N}\left\|\varphi\right\|_{\mathcal{H}^{ s,-\frac{ s}{2},2N+d,-\frac{N}{2}-\frac{d-1}{4}}_{\beta_0}}\left\|\psi\right\|_{\mathcal{H}^{s,-\frac{s}{2},2N+d,-\frac{N}{2}-\frac{d-1}{4}}_{-\beta_0}}$$
with $C_N>0$ depending only on $d$, $N$, $s$, $\beta_0$ and the cutoff functions $(\chi_{j})_{j\in\{0, \pm1\}}$ used in \S\ref{s:analysis}.
\end{theo}
Theorem~\ref{t:maintheo-correlations} from the introduction is a direct consequence of this result, obtained by taking $\beta_0=0$ and $N=1$. 

\begin{rema}
\label{rem:quantumquantum}
Note that this result states convergence of the correlation function towards a constant $E_{\beta_0}(\varphi,\psi)$ at rate $t^{-\frac{d-1}{2}}$. the fluctuations around the equilibrium are governed by the quantum evolution operators $e^{it \lambda_\pm(D)}$. The latter is the magnetic half-wave group for the translation invariant Finsler structure on  $T^*\T^d$, associated with the convex set $K$. Here, recall that the operators $\lambda_\pm(D) = \pm h_K(\pm(D-\beta_0))$ (see \eqref{e:def-lambda-pm} Section~\ref{s:Finsler-Ham} for the notation) are a translation invariant Fourier multipliers that, on account to the assumption that $0 \in \Int(K)$, are elliptic pseudodifferential operators of order one on $\T^d$. The critical set $\Lambda_{\beta_0}$ in~\eqref{e:critical-values} is precisely the union of the spectra of $\lambda_+(D)$ and $\lambda_-(D)$. 
See~\cite{Ratner87, FaureTsujii15, DyatlovFaureGuillarmou2015, FaureTsujii17, FaureTsujii17b, FaureTsujii21} for related considerations in the context of contact Anosov flows. 
 See also~\cite{DangLeautaudRiviereJEDP} and Section~\ref{s:magnetic-laplacian} below for a more explicit connection with the Laplacian in the case $\mathbf{v}(\theta)=\theta$.

 \end{rema}
  
Recalling from the proof of Lemma~\ref{c:stat-points} that the operators $L_j^\pm$ can be computed explicitly (up to some tedious work), this theorem provides an explicit asymptotic expansion of the twisted correlation function for smooth observables. Besides that, another interesting feature of this theorem is that it illustrates how anisotropic Sobolev norms naturally appears when studying the asymptotic behaviour of the geodesic flow on the torus. This is particularly clear in the case of the remainder while for the term in the asymptotic expansion, one can remark that, using the standard Sobolev inequalities~\cite[\S5.6.3]{Evans10},
\begin{eqnarray*}\left|L_{j,\frac{\xi-\beta_0}{|\xi-\beta_0|}}^\pm\left(\widehat{\varphi}_\xi\widehat{\psi}_{-\xi}\right)\left(\pm\frac{\xi-\beta_0}{|\xi-\beta_0|}\right)\right|&\leq& C_j\|\widehat{\varphi}_\xi\|_{\ml{C}^{2j}(\mathbf{C}_{\pm 1}(\xi-\beta_0))}\|\widehat{\psi}_{-\xi}\|_{\ml{C}^{2j}(\mathbf{C}_{\pm 1}(\xi_{-\beta_0}))}\\
& \leq &\tilde{C}_j\|\widehat{\varphi}_\xi\|_{H^{2j+d/2+1}(\mathbf{C}_{\pm 1}(\xi-\beta_0))}\|\widehat{\psi}_{-\xi}\|_{H^{2j+d/2+1}(\mathbf{C}_{\pm 1}(\xi-\beta_0))}.
 \end{eqnarray*}
Hence, each term in the sum over $j$ is controlled by some anisotropic Sobolev semi-norm (that depends on $j$). In summary, test functions can have a priori arbitrarily large polynomial growth in $|\xi|$ away from the direction of $\xi-\beta_0$. Close to $\xi-\beta_0$, the situations is not as good and one needs to have moderate growth in $|\xi|$ to ensure the convergence of the sums.

\subsubsection{Further comments}

Let us now comment a little bit more on Theorem~\ref{t:twisted-correlations}. First, we emphasize that our strategy can be viewed as an analogue on flat tori of the strategy used by Ratner~\cite{Ratner87} to describe the asymptotic behaviour of the correlation function for the geodesic flow on hyperbolic manifolds. Like in this reference, we use tools from harmonic analysis to describe accurately the correlations and we end up naturally with anisotropic Sobolev norms (see e.g.~\cite[Th.1]{Ratner87} for the use of spaces with anisotropic H\"older regularity). As in~\cite[Cor.1]{Ratner87}, it is interesting to look at the case where $\varphi$ and $\psi$ do not depend on $\theta$. In that case, the asymptotic expansion of Theorem~\ref{t:twisted-correlations} reads as follows
\begin{eqnarray*}
\Cor_{\varphi,\psi}(t,\beta_0)&:=&\int_{S\IT^d}e^{-it\beta_0\cdot\mathbf{v}(\theta)}\varphi(x)\psi(x-t\mathbf{v}(\theta))|dx|d\Vol(\theta)\\
&=&\frac{2\pi^{\frac{d-1}{2}}\delta_{\IZ^d,\beta_0}}{\Gamma\left(\frac{d-1}{2}\right)}\left(\int_{\IT^d}\varphi(x)\mathfrak{e}_{-\beta_0}(x)|dx|\right)\left(\int_{\IT^d}\psi(x)\mathfrak{e}_{\beta_0}(x)|dx|\right)\\
 &+&\left(2\pi\right)^{\frac{d-1}{2}}\sum_{j=0}^{N-1}\sum_{\xi\in\IZ\setminus\{\beta_0\},\pm}c_{j}^\pm( \xi-\beta_0 )\frac{e^{i\left(t\lambda_\pm(\xi)\mp\frac{\pi}{4}(d-1)\right)}}{\left(t|\xi-\beta_0|\right)^{j+\frac{d-1}{2}}}\widehat{\varphi}_\xi\widehat{\psi}_{-\xi}\\
 &+&\mathcal{O}_{N,\varphi,\psi}\left(\frac{1}{t^{N+\frac{d-1}{2}}}\right),
\end{eqnarray*} 
where $\delta_{\IZ^d,\beta_0}=1$ if $\beta_0\in\IZ^d$ and $\delta_{\IZ^d,\beta_0}=0$ otherwise and where the coefficients $c_{j}^\pm( \xi-\beta_0 )$ depend only on the geometry of $\IS^{d-1}$ and are uniformly bounded in terms of $\xi$. In particular, we can verify that the term of degree $j$ is controlled by the following quantity (up to some constant depending only on $j$ and $d$)
$$\left(\sum_{\xi\in\IZ^d\setminus\{\beta_0\}}|\xi-\beta_0|^{-j-\frac{d-1}{2}}|\widehat{\varphi}_\xi|^2\right)^{\frac12}\left(\sum_{\xi\in\IZ^d\setminus\{\beta_0\}}|\xi+\beta_0|^{-j-\frac{d-1}{2}}|\widehat{\psi}_\xi|^2\right)^{\frac12}\leq C_{\beta_0}\|\varphi\|_{L^2}\|\psi\|_{L^2}.$$
The same bound would hold on the remainder term. Hence, $L^2$ is the natural space to consider when considering observables depending only on $x$ as in the case of hyperbolic manifolds~\cite[Cor.1]{Ratner87}. See also~\cite[Prop.~2.1]{HezariRiviere17} for related results on Birkhoff averages in the case of flat tori.

\subsubsection{Relation with the magnetic Laplacian}
\label{s:magnetic-laplacian}
When $\mathbf{v}(\theta)=\theta$ and when the observables $\varphi,\psi$ depend only on $x$ and not on $\theta$, the above 
discussion can also be understood differently if we make the connection with the magnetic Laplacian
$$\Delta_{\beta_0}:=\sum_{j=1}^d\left(\partial_{x_j}+i\beta_{0,j}\right)^2.$$
Indeed, if we rewrite according to~\cite[Eq.(25),p.347]{Steinbook}
\begin{eqnarray*}
\Cor_{\varphi,\psi}(t,\beta_0)&:=&\int_{S\IT^d}e^{-it\beta_0\cdot\theta}\varphi(x)\psi(x-t\theta)|dx| d\Vol(\theta)\\
&=&\sum_{\xi\in\IZ^d}\widehat{\varphi}_{\xi}\widehat{\psi}_{-\xi}\int_{\IS^{d-1}}e^{it(\xi-\beta_0)\cdot\theta} d\Vol(\theta)\\
&=&2\pi\sum_{\xi\in\IZ^d}\widehat{\varphi}_{-\xi}\widehat{\psi}_{\xi}(t|\xi+\beta_0|)^{\frac{2-d}{2}}J_{\frac{d-2}{2}}\left(2\pi t|\xi+\beta_0|\right)\\
&=&2\pi\int_{\IT^d}\varphi(x)\left(t\sqrt{-\Delta_{\beta_0}}\right)^{\frac{2-d}{2}}J_{\frac{d-2}{2}}\left(2\pi t\sqrt{-\Delta_{\beta_0}}\right)\psi(x)|dx|,
\end{eqnarray*} 
where $J_{\nu}$ is the standard Bessel function of the first kind. In particular, if we denote by $\Pi:(x,\theta)\in S\IT^d\mapsto x\in\IT^d$ the canonical projection, we obtain the following relation between the twisted geodesic flow and the magnetic Laplacian
\begin{equation}\label{e:Bessel-Laplacian}
\Pi_*e^{it\left(iV-\beta_0(V)\right)}\Pi^*=2\pi\left(t\sqrt{-\Delta_{\beta_0}}\right)^{\frac{2-d}{2}}J_{\frac{d-2}{2}}\left(2\pi t\sqrt{-\Delta_{\beta_0}}\right).
\end{equation}

For observables depending also on the $\theta$ variable, the expressions are slightly less explicit. Yet, as in Theorem~\ref{t:maintheo-correlations}, we can for instance consider the first term in the asymptotic expansion of Theorem~\ref{t:twisted-correlations}, which is given by
\begin{align*}
&\left(2\pi\right)^{\frac{d-1}{2}}\sum_{\xi\in\IZ^d\setminus\{\beta_0\},\pm}\frac{e^{\pm i\left(t|\xi-\beta_0|-\frac{\pi}{4}(d-1)\right)}}{\left(t|\xi-\beta_0|\right)^{\frac{d-1}{2}}}\left(\widehat{\varphi}_\xi\widehat{\psi}_{-\xi}\right)\left(\pm\frac{\xi-\beta_0}{|\xi-\beta_0|}\right) \\
&=\left(2\pi\right)^{\frac{d-1}{2}}\sum_{\xi\in\IZ^d\setminus\{\beta_0\},\pm}\frac{e^{\pm i\left(t|\xi+\beta_0|-\frac{\pi}{4}(d-1)\right)}}{\left(t|\xi+\beta_0|\right)^{\frac{d-1}{2}}}\left(\widehat{\varphi}_{-\xi}\widehat{\psi}_{\xi}\right)\left(\mp\frac{\xi+\beta_0}{|\xi+\beta_0|}\right)
.\end{align*}
If we introduce the following map
$$\Pi_{\beta_0}^\pm(\varphi):=\sum_{\xi\in\IZ^d\setminus\{-\beta_0\}}\widehat{\varphi}_\xi\left(\pm \frac{\xi+\beta_0}{|\xi+\beta_0|}\right)\mathfrak{e}_\xi,$$
then the (first term) asymptotic expansion of $\Cor_{\varphi,\psi}(t,\beta_0)$ in Theorem~\ref{t:twisted-correlations} can be rewritten, modulo $\mathcal{O}_{\varphi,\psi}\left(\frac{1}{t^{1+\frac{d-1}{2}}}\right)$, as
\begin{align}\label{beta0-beta0}
 \Cor_{\varphi,\psi}(t,\beta_0)  & = \left(\frac{2\pi}{t}\right)^{\frac{d-1}{2}}\sum_{\pm}e^{\mp \frac{i\pi(d-1)}{4}}\int_{\IT^d} \left( \frac{e^{\pm i t\sqrt{-\Delta_{-\beta_0}}}}{(-\Delta_{-\beta_0})^{\frac{d-1}{4}}}\circ\Pi_{-\beta_0}^{\pm}(\varphi)\right)(x)\Pi_{\beta_0}^{\mp}(\psi)(x)|dx|  \nonumber \\
 & = \left(\frac{2\pi}{t}\right)^{\frac{d-1}{2}}\sum_{\pm}e^{\mp \frac{i\pi(d-1)}{4}}\int_{\IT^d} \Pi_{-\beta_0}^{\pm}(\varphi)(x) 
 \left( \frac{e^{\pm i t\sqrt{-\Delta_{\beta_0}}}}{(-\Delta_{\beta_0})^{\frac{d-1}{4}}}\circ \Pi_{\beta_0}^{\mp}(\psi)\right)(x)|dx| ,
 \end{align}
after having used the Plancherel Theorem. Similarly, all the terms in the asymptotic expansion can be written in the same fashion except that the expression will be slightly more involved.

\subsection{Geometry of the anisotropic Sobolev norms}
\label{s:geometry-anisotropic}
With the geometric description of \S\ref{ss:riemannian} at hand, we can give a rough geometric interpretation of our anisotropic spaces using the notion of pseudodifferential operators~\cite{Hormander3}. Usually, Sobolev spaces are designed using the quantization of a symbol of the form $(1+|(\xi,\Theta)|_{x,\theta}^2)^{\frac{s}{2}}$ where $(x,\theta,\xi,\Theta)$ is an element in $T^*S\IT^d$ and $s$ is the Sobolev regularity. Here, due to the explicit structure of the problem, we did not write exactly things in that fashion. Yet, our spaces would in principle correspond to replace $s$ by a function $s(x,\theta,\xi,\Theta)$ whose values depend on the directions in $T^*S\IT^d$ and thus to work with anisotropic symbols. More precisely, taking $\gamma=0$ for simplicity, we would in fact require using this pseudodifferential approach that
\begin{itemize}
 \item near $E_0^*$, the symbol is given by $(1+|\xi|^2)^{\frac{N_1}{2}} (1+|\Theta|_\theta^2)^{\frac{s_1}{2}}$. Thus, we are roughly requiring a Sobolev regularity $N_1$ along $E_0^*$.
 \item near $\ml{H}^*\oplus \ml{V}^*$, the symbol is given by $(1+|\xi|^2)^{\frac{N_0}{2}} (1+|\Theta|_\theta^2)^{\frac{s_0}{2}}$. In particular, on $\mathcal{V}^*$, this correspond to a Sobolev regularity of order $s_0$ while on $\mathcal{H}^*$, the Sobolev regularity is $N_0$.
\end{itemize}
See Figure~\ref{f:anisotropic}. 
\begin{figure}[ht]
\includegraphics[scale=0.8]{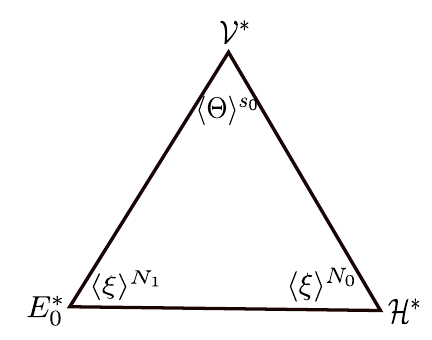}
\centering
\caption{\label{f:anisotropic}Schematic representation of the Sobolev regularity in the cotangent picture.}
\end{figure}


\section{Anisotropic spaces of currents} 
\label{s:anis-spaces}

In Lemma~\ref{l:functional-calculus}, we saw that, for $\beta_0\in H^1(\IT^d,\IR)$ and for a smooth function $\chi:\IR\rightarrow\IR$ with enough decay at infinity, the operator
$$\widehat{\chi}(-i\mathbf{V}_{\beta_0}):=\int_{\IR}\chi(t)e^{-t\mathbf{V}_{\beta_0}}|dt|$$
is bounded when acting on the space of \emph{continuous} differential forms. Now we aim at describing anisotropic Sobolev spaces adapted to the dynamics of the geodesic flow on which $\widehat{\chi}(-i\mathbf{V}_{\beta_0})$ will still extend continuously.

\subsection{Anisotropic Sobolev spaces}\label{s:cutoff}

Motivated by the norms of Definition~\ref{d:sobonorm} appearing in the description of the correlation function, we introduce the following spaces of currents.
\begin{def1}[Anisotropic Sobolev spaces of currents]
\label{def-ani-spaces}
 Let $\beta_0\in H^1(\IT^d,\IR)$, let $0\leq k\leq 2d-1$ and let $(s_0,s_1,N_0,N_1)$ in $\mathbb{Z}_+^2\times\mathbb{R}^2$. We define the following anisotropic Sobolev norm:
$$
\|\varphi\|_{\mathcal{H}^{s_0,N_0,s_1,N_1}_{k,\beta_0}}^2:=\sum_{\xi\in\IZ^d}
\langle \xi\rangle^{2 N_0} \left\|\pi_{\xi}^{(k)}(\varphi)\right\|_{H^{s_0}\left(\mathbf{C}_0 (\xi-\beta_0)\right)}^2
+  \sum_{\xi\in\IZ^d,\pm}
\langle \xi \rangle^{2 N_1} \left\|\pi_{\xi}^{(k)}(\varphi)\right\|_{H^{s_1}\left(\mathbf{C}_{\pm1}(\xi-\beta_0)\right)}^2,
$$
where $\langle\eta\rangle :=(1+|\eta|^2)^{\frac12}$ and where the Sobolev norms $H^s$ on forms are understood in the sense of Remark~\ref{r:norm}. We define the space $\mathcal{H}^{s_0,N_0,s_1,N_1}_{k,\beta_0}$ to be the completion of $\Omega^{k}(S\IT^d)$ for this norm.
\end{def1}
As above, we note that these norms depend implicitely on the cutoff functions used in~\S\ref{ss:split-oscillatory-integral}. In particular, the conic neighborhood $\mathbf{C}_{\pm 1}(\omega)$ can be chosen arbitrarily close to $\omega\in\IS^{d-1}$ but it cannot be too large in order to apply Lemma~\ref{c:stat-points}. Using these spaces, one gets
\begin{theo}\label{t:integrated-correlation} Let $0\leq k\leq 2d-1$, $M, N$ be elements in $\IZ_+$, $\beta_0\in\ H^1(\IT^d,\mathbb{R})$ and $\chi\in\ml{C}^{\infty}_c(\IR_+^*)$. Then, 
$$\widehat{\chi}(-i\mathbf{V}_{\beta_0}): \mathcal{H}^{M,-M/2,0,-N/2}_{k,-\beta_0}\rightarrow (\mathcal{H}^{M,-M/2,0,-N/2}_{2d+1-k,\beta_0})'$$
 defines a continuous linear map, where $(\mathcal{H}^{M,-M/2,0,N/2}_{2d+1-k,\beta_0})'\subset \ml{D}^{\prime k}(S\IT^d)$ is the topological dual of $\mathcal{H}^{M,-M/2,0,N/2}_{2d+1-k,\beta_0}$. 
\end{theo}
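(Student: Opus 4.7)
The plan is to prove the theorem by duality: it suffices to establish the bilinear bound
$$\left|\int_{S\IT^d} \psi \wedge \widehat{\chi}(-i\mathbf{V}_{\beta_0})\varphi\right| \leq C \|\varphi\|_{\mathcal{H}^{M,-M/2,0,-N/2}_{k,-\beta_0}} \|\psi\|_{\mathcal{H}^{M,-M/2,0,-N/2}_{2d-1-k,\beta_0}}$$
for $\varphi \in \Omega^k(S\IT^d)$ and $\psi \in \Omega^{2d-1-k}(S\IT^d)$. I would first invoke~\eqref{e:duality} and expand the left-hand side via Lemma~\ref{l:reduction} with $\tilde{x} = 0$, then swap the $t$ and $\theta$ integrations, so that the pairing becomes a finite sum (over $l$) of terms of the form
$$\frac{1}{l!}\sum_{\xi\in\IZ^d} \int_{\IS^{d-1}} B_{0, \xi}^{(k, l)}(\psi, \varphi)(\theta)\, \widehat{\chi_l}\big((\xi-\beta_0)\cdot\theta\big)\, \Vol_{\IS^{d-1}}(\theta, |d\theta|),$$
where $\chi_l(t) := \chi(t) t^l$ is compactly supported in $\IR_+^*$, so its Fourier transform $\widehat{\chi_l}$ is Schwartz. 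It will be enough to bound one such sum uniformly in $l$.

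The exceptional mode $\xi = \beta_0$, present only when $\beta_0 \in \IZ^d$, will be controlled by a trivial Cauchy--Schwarz on $\IS^{d-1}$. For $\xi \neq \beta_0$ I would split the $\theta$-integral by the partition $\sum_{j\in\{-1,0,1\}} \chi_j(\theta \cdot \omega)$ with $\omega := \frac{\xi-\beta_0}{|\xi-\beta_0|}$, treating the equatorial cap $\mathbf{C}_0(\xi-\beta_0)$ and the polar caps $\mathbf{C}_{\pm 1}(\xi-\beta_0)$ separately. On the equatorial cap I would apply Corollary~\ref{c:non-stat-points} with $\tilde{x} = 0$ and with $N=M$: since $\chi$ is supported in a compact subset of $\IR_+^*$, the variable $t$ stays bounded above and away from $0$, so $M$ integrations by parts in $\theta$ combined with~\eqref{e:control-Cm-norm} yield
$$\left|I_\xi^{(l), 0}\right| \leq C_M \langle\xi\rangle^{-M}\, \|\pi^{(2d-1-k)}_\xi(\psi)\|_{H^M(\mathbf{C}_0(\xi-\beta_0))}\, \|\pi^{(k)}_{-\xi}(\varphi)\|_{H^M(\mathbf{C}_0(\xi-\beta_0))}.$$

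For the polar caps I cannot use stationary phase in $\theta$ (only the far-too-weak $|\xi-\beta_0|^{-(d-1)/2}$ decay would emerge); instead I exploit the $t$-oscillation. On $\mathbf{C}_{\pm 1}(\xi-\beta_0)$ the support of $\chi_{\pm 1}$ forces $|\theta\cdot\omega|$ to stay bounded away from $0$, hence $|(\xi-\beta_0)\cdot\theta| \gtrsim |\xi-\beta_0|$, and the Schwartz decay of $\widehat{\chi_l}$ delivers $|\widehat{\chi_l}((\xi-\beta_0)\cdot\theta)| \leq C_N \langle \xi \rangle^{-N}$ uniformly on the cap. Combined with~\eqref{e:control-Cm-norm-bis} applied with $s=0$, this produces
$$\left|I_\xi^{(l), \pm 1}\right| \leq C_N \langle\xi\rangle^{-N}\, \|\pi^{(2d-1-k)}_\xi(\psi)\|_{L^2(\mathbf{C}_{\pm 1}(\xi-\beta_0))}\, \|\pi^{(k)}_{-\xi}(\varphi)\|_{L^2(\mathbf{C}_{\pm 1}(\xi-\beta_0))}.$$

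To conclude, I would sum these bounds in $\xi \in \IZ^d$ by Cauchy--Schwarz with the weights $\langle\xi\rangle^{-M}$ and $\langle\xi\rangle^{-N}$. The factor involving $\psi$ matches the norm $\|\psi\|_{\mathcal{H}^{M, -M/2, 0, -N/2}_{2d-1-k,\beta_0}}$ directly. In the factor involving $\varphi$, I would substitute $\xi \to -\xi$ and use the identities $\mathbf{C}_0(-\eta) = \mathbf{C}_0(\eta)$ and $\mathbf{C}_{\pm 1}(-\eta) = \mathbf{C}_{\mp 1}(\eta)$ to rewrite $\mathbf{C}_j(\xi-\beta_0)$ as $\mathbf{C}_{-j}(\xi-(-\beta_0))$, which is exactly a region appearing in $\|\varphi\|_{\mathcal{H}^{M, -M/2, 0, -N/2}_{k,-\beta_0}}$. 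The hard part of the argument is the polar contribution: since the usual stationary-phase expansion in $\theta$ is insufficient when only $L^2$ regularity is available along the poles, the crucial input is that the hypothesis $\chi \in \mathcal{C}^\infty_c(\IR_+^*)$ makes $\chi_l$ Schwartz and transfers arbitrarily strong decay in $|\xi-\beta_0|$ into the $t$-integration, realizing exactly the asymmetry between equatorial and polar regularities that the anisotropic spaces of Definition~\ref{def-ani-spaces} are designed to capture.
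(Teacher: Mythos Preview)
Your proposal is correct and follows essentially the same route as the paper. The paper derives Theorem~\ref{t:integrated-correlation} as an immediate corollary of the more general Theorem~\ref{th:correlation-cut-off} (applied with $\tilde{x}=0$), whose proof proceeds exactly as you outline: decompose via Lemma~\ref{l:reduction}, split the sphere by the partition $\chi_{-1}+\chi_0+\chi_1$, handle the equatorial piece by the nonstationary-phase Corollary~\ref{c:non-stat-points}, and gain $|\xi-\beta_0|^{-N}$ on the polar caps by integrating by parts in $t$ (the paper isolates this as Lemma~\ref{l:IPP-time}, which is precisely your ``Schwartz decay of $\widehat{\chi_l}$''), then sum by Cauchy--Schwarz.
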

Compared with the spaces appearing when describing the asymptotics of the correlation function, we now require that test currents are regular enough along the vertical space $\mathcal{V}$ while they can be singular along the horizontal space $\IR V\oplus \mathcal{H}$. See Figure~\ref{f:anisotropic} with $s_0=M$, $N_0=-M/2$ and $N_1=-N/2$.

\subsection{Mapping properties} 
For later applications to counting orthogeodesics, we also fix a smooth map
$$\tilde{x}:\IS^{d-1}\rightarrow \IR^d,$$
and our goal is to study more generally the analytical properties of the operator:
$$\widehat{\chi}(-i\mathbf{V}_{\beta_0})\mathbf{T}_{-\tilde{x}}^*:=\int_{\IR}\chi(t)e^{-t\mathbf{V}_{\beta_0}}\mathbf{T}_{-\tilde{x}}^{*}|dt|,$$
under appropriate assumptions on $\chi$. To that aim, we fix $0\leq k_1,k_2\leq 2d-1$ and two smooth forms $(\varphi,\psi)\in\Omega^{k_1}(S\IT^d)\times\Omega^{k_2}(S\IT^d)$ (with $k_1+k_2=2d-1$). Hence, for $\chi$ with enough regularity, we want to study the properties of
\begin{equation}\label{e:integrated-correlation}\int_{S\IT^d}\varphi\wedge \widehat{\chi}(-i\mathbf{V}_{\beta_0})\mathbf{T}_{-\tilde{x}}^*(\psi)=\int_{\IR}\chi(t)\Cor_{\varphi,\mathbf{T}_{-\tilde{x}}^*(\psi)}(t,\beta_0)|dt|\end{equation}
in terms of the anisotropic Sobolev norms we have just introduced. It is precisely in the present section that we will benefit from the regularization effect due to averaging on the time $t$ variable.

In order to state the main technical result of this section, let us introduce the following definition:
\begin{def1} Let $p\in\IR$ and let $N\in\IZ_+$. We say that $\chi$ is $(N,p)$-admissible if $\chi\in\mathcal{C}^{\infty}(\IR_+)$ and if it satisfies the following properties:
\begin{itemize}
 \item the support of $\chi$ does not contain $0$,
 \item for every $0\leq m\leq N$,
 $$\lim_{t\rightarrow +\infty}\frac{d^m}{dt^m}\left(t^p\chi(t)\right)=0,$$
 \item for every $0\leq m\leq N$, $t\mapsto  \frac{d^m}{dt^m}\left(t^p\chi(t)\right)\in L^1(\IR_+).$
\end{itemize}
\end{def1}

This definition includes the case of smooth compactly supported functions on $\IR_+^*$ and Theorem~\ref{t:integrated-correlation} is actually a corollary of the much more precise statement:
\begin{theo}
\label{th:correlation-cut-off}
Let $k_1+k_2=2d-1$, let $\beta_0\in H^1(\IT^d,\IR)$ and let $M,N$ be elements in $\IZ_+$. There exists a constant $C_{M,N}>0$ such that for all $(\varphi,\psi)\in\Omega^{k_1}(S\IT^d)\times\Omega^{k_2}(S\IT^d)$ and for all $\chi$ which is $(N,\min\{k_1,k_2\})$-admissible and which satisfies\footnote{The constant is the one from Lemma~\ref{c:non-stat-points}.} $\supp \chi \subset \left(  \mathsf{t}_0 , \infty \right)$, one has 
 \begin{eqnarray}
 \label{e:decomp-again}
 \int_{\IR}\chi(t) \Cor_{\varphi,\mathbf{T}_{-\tilde{x}}^*(\psi)}(t,\beta_0)|dt|  = \sum_{l=0}^{\min\{k_1,k_2\}} \mathscr{J}_\chi^{(l)}(\varphi,\psi) ,
  \end{eqnarray}
  where, for all $0\leq l \leq \min\{k_1,k_2\}$,
\begin{multline*}
\left|   \mathscr{J}_\chi^{(l)}(\varphi,\psi)  -   \frac{E^{(l)}_{\beta_0} }{l!} \int_{\IR}\chi(t) t^l  |dt| \right|\\ 
\leq  
C_{M,N} \max \left\{ \| \chi(t) t^{l-M} \|_{L^1(\R_+)} ,  \left\|\frac{d^N}{dt^N} \left(t^l\chi\right)\right\|_{L^1(\IR_+)}  \right\}\|\varphi\|_{\mathcal{H}_1}\|\psi\|_{\mathcal{H}_2}
\end{multline*}
with $\mathcal{H}_1 := \mathcal{H}^{M,-M/2,0,-N/2}_{k_1,\beta_0}$ and $\mathcal{H}_2 := \mathcal{H}^{M,-M/2,0,-N/2}_{k_2,-\beta_0}$ defined by Definition~\ref{def-ani-spaces}, and with
\begin{align}
\label{def-El}
 E^{(l)}_{\beta_0}  =   \int_{\IS^{d-1}} e^{i\beta_0\cdot\tilde {x}(\theta)}B_{\tilde{x},\beta_0}^{(k_2,l)}(\varphi,\psi)(\theta)d\Vol(\theta)  \ \text{if }\beta_0 \in  H^1(\T^d,\Z),  \quad  E^{(l)}_{\beta_0}  = 0 \  \text{otherwise},
\end{align}
where the explicit expression for $B_{\tilde{x},\beta_0}^{(k_2,l)}$ is given by~\eqref{e:coeff-horrible}.
\end{theo}
The function $\chi_0$ appearing in this Theorem is the one from Definition~\ref{def-chi-j} and we recall that each function $\chi_j$ implicitely appears in the definition of the anisotropic spaces.
The term $E_{\beta_0}^{(l)}$ is a generalization of the term $E_{\beta_0}$, defined in~\eqref{e:leading-term}, to the case of differential forms. 
Before entering the details of the proof, we start with the following observation which follows from a direct integration by parts argument:
\begin{lemm}\label{l:IPP-time} Suppose that $\chi$ is $(N,p)$-admissible. 
Then, for every $\lambda\neq 0$, one has
$$\left|\int_{\IR_+}t^p\chi(t)e^{-it\lambda }|dt|\right|\leq |\lambda|^{-N}\left\|\frac{d^N}{dt^N}\left(t^p\chi(t)\right)\right\|_{L^1(\IR_+)}.$$ 
\end{lemm}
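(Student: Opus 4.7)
The plan is to prove this estimate by integrating by parts $N$ times against the oscillatory factor $e^{-it\lambda}$. Setting $f(t) := t^p \chi(t)$, I will use the identity
$$e^{-it\lambda} = \frac{-1}{i\lambda}\frac{d}{dt} e^{-it\lambda}$$
to transfer one derivative at each step from the exponential onto $f$. Every hypothesis of $(N,p)$-admissibility plays a precise role here: the fact that $0 \notin \supp \chi$ ensures that $f$ and all its derivatives vanish identically in a neighborhood of $0$, so every boundary contribution at $t=0$ is zero, while the decay hypothesis $\lim_{t\to+\infty}\frac{d^m}{dt^m}(t^p\chi(t))=0$ for $0 \leq m \leq N$ kills each boundary term at $+\infty$. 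The $L^1$ assumption on $t^p\chi(t)$ guarantees that the initial integral makes sense; a straightforward induction using Leibniz's rule together with the previous two conditions shows that each intermediate derivative $f^{(m)}$ with $0 \leq m \leq N-1$ is again integrable, so that the integration-by-parts manipulation is fully justified at each step.

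After performing the $N$ successive integrations by parts, I expect to arrive at the clean identity
$$\int_{\IR_+} t^p\chi(t) e^{-it\lambda} |dt| = \frac{1}{(i\lambda)^N} \int_{\IR_+} \frac{d^N}{dt^N}\bigl(t^p\chi(t)\bigr) e^{-it\lambda} |dt|.$$
Taking absolute values inside the integral on the right and bounding the modulus of the exponential by $1$ then yields the claimed inequality directly, since $|(i\lambda)^{-N}| = |\lambda|^{-N}$.

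This is a textbook nonstationary phase estimate and I do not anticipate any genuine difficulty. The only minor point to be careful about is the inductive verification that the intermediate derivatives $f^{(m)}$ are $L^1$ on $\IR_+$, but this follows from writing $f^{(m)}(t) = \int_t^\infty f^{(m+1)}(s)\,ds$ using the vanishing at infinity from the admissibility condition, combined with the compactness of $\supp \chi$ away from $0$ (or at least with the integrability assumption) to handle the behavior near zero.
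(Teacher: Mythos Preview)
Your proposal is correct and is exactly the argument the paper has in mind: the paper simply states that the lemma ``follows from a direct integration by parts argument'' and gives no further details, and your $N$-fold integration by parts against $e^{-it\lambda} = (-i\lambda)^{-1}\frac{d}{dt}e^{-it\lambda}$ is precisely that. Your worry about the $L^1$ integrability of the intermediate derivatives is an unnecessary detour: either perform the integration by parts on $[0,R]$ (where everything is smooth and compactly supported away from $0$) and let $R\to\infty$ using only the vanishing of $f^{(m)}$ at $0$ and at $+\infty$, or simply note that if $\|f^{(N)}\|_{L^1(\IR_+)}=+\infty$ the claimed inequality is vacuous.
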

This lemma will allow us to gain a decay in $|\xi|$ that is lacking in the region where the phase is stationary. In other words, it will allow us to take observables that may be singular along the direction of $V$ while for the correlation function we required to have some regularity along $V$; that is to say, we can now choose $N_1\ll -1$ in Figure~\ref{f:anisotropic}. Henceforth, in the proof of Theorem~\ref{th:correlation-cut-off}, we only make use of the non-stationary phase estimate of Lemma~\ref{c:non-stat-points} and do not rely on stationary phase estimates of Lemma~\ref{c:stat-points}.

\begin{proof}[Proof of Theorem~\ref{th:correlation-cut-off}]
According to Lemma~\ref{l:reduction}, we start from the decomposition~\eqref{e:decomposition1}--\eqref{e:decomposition2} of the dynamical correlator $\Cor$ according to the polynomial degree in the variable $t$.
Integrating the expression of $\Cor_{\varphi,\mathbf{T}_{-\tilde{x}}^*(\psi)}^{l}(t,\beta_0)$ in~\eqref{e:decomposition1} against $\chi(t)$ will then yield~\eqref{e:decomp-again} with 
\begin{align}
\label{e:def-J-l}
\mathscr{J}_\chi^{(l)}(\varphi,\psi) = \int_{\R}\chi(t)\Cor_{\varphi,\mathbf{T}_{-\tilde{x}}^*(\psi)}^{l}(t,\beta_0) |dt| .
\end{align}
We decompose $\Cor_{\varphi,\mathbf{T}_{-\tilde{x}}^*(\psi)}^{l}(t,\beta_0)$ further by writing
\begin{align}
\label{e:decomposition-de-ouf}
 \Cor_{\varphi,\mathbf{T}_{-\tilde{x}}^*(\psi)}^{l}(t,\beta_0) &  = \frac{t^l}{l!} E^{(l)}_{\beta_0}  +  \frac{t^l}{l!} \Cor^{l}_{-1}(t) +  \frac{t^l}{l!} \Cor^{l}_{0}(t) + \frac{t^l}{l!} \Cor^{l}_{1}(t) , \quad \text{with, for }j \in \{-1,0,1\}  
 \end{align}
where $E_{\beta_0}^{(l)}$ is defined in~\eqref{def-El} 
 and
\begin{align*}
 \Cor^{l}_j(t)   =   \sum_{\xi\in \IZ^d \setminus\{\beta_0\}}\int_{\IS^{d-1}}  \chi_j \left(\theta \cdot  \frac{\xi-\beta_0}{|\xi-\beta_0|} \right)e^{it(\xi-\beta_0)\cdot\mathbf{v}(\theta)}e^{i\xi\cdot\tilde{x}(\theta)}B_{\tilde{x},\xi}^{(k_2,l)}(\varphi,\psi)(\theta) d\Vol(\theta),
\end{align*} 
where the functions $\chi_j$ were introduced in Definition~\ref{def-chi-j}. Note that $E^{(l)}_{\beta_0}$ concerns the Fourier coefficient $\xi=\beta_0$ (in the case $\beta_0 \in \Z^d$ and it vanishes otherwise).
Moreover, $E^{(l)}_{\beta_0}$ is a time invariant quantity.  
The above decomposition indexed by $j=-1,0,1$ corresponds to the different integration regions of $\mathbb{S}^{d-1}$ on which we study the oscillatory integral.

\medskip
We now compute each term in~\eqref{e:decomposition-de-ouf}. The first term is nothing but 
\begin{align}
\label{e:terme-nul}
\int_{\IR}\chi(t) \frac{t^l}{l!} E^{(l)}_{\beta_0}  |dt| =  \frac{E^{(l)}_{\beta_0} }{l!} \int_{\IR}\chi(t) t^l  |dt| .
\end{align}

We next consider the term involving $\Cor^{l}_0(t)$. Applying Lemma~\ref{c:non-stat-points} to the function $F(\theta) = B_{\tilde{x} ,\xi}^{(k_2,l)}(\varphi,\psi)(\theta)$, we have the following statement: there exists $\mathsf{t}_0>0$ such that for all $M\in\IZ_+$, there is $C_M >0$ such that for all $t> \mathsf{t}_0 ,|\xi-\beta_0| \geq 1$, $(\varphi,\psi)\in\Omega^{k_1}(S\IT^d)\times \Omega^{k_2}(S\IT^d)$, we have 
\begin{align*}
\left| \Cor^{l}_0(t) \right| \leq  \sum_{\xi\in\IZ^d \setminus\{\beta_0\}}C_M \frac{1}{(|\xi-\beta_0| t)^{M}}  \left\|B_{\tilde{x} ,\xi}^{(k_2,l)}(\varphi,\psi)(\cdot)\right\|_{W^{M,1}(\mathbf{C}_0(\xi-\beta_0))} .
\end{align*}
According to~\eqref{e:control-Cm-norm}, this implies for every $k_1+k_2=2d-1$, for every $0\leq l\leq \min\{k_1,k_2\}$ and for every $(\varphi,\psi)\in\Omega^{k_1}(S\IT^d)\times \Omega^{k_2}(S\IT^d)$,
\begin{align*}
\left| \Cor^{l}_0(t) \right| \leq \frac{C_M}{t^{M}} \sum_{\xi\in\IZ^d \setminus\{\beta_0\}}\frac{1}{|\xi-\beta_0|^{M}} 
\left\|\pi_{\xi}^{(k_1)}(\varphi)\right\|_{H^M\left(\mathbf{C}_0(\xi-\beta_0) \right)} \left\|\pi_{-\xi}^{(k_2)}(\psi)\right\|_{H^M\left(\mathbf{C}_0(\xi-\beta_0) \right)}.
\end{align*}
We thus obtain, if $\supp \chi \subset \left( \mathsf{t}_0 , \infty \right)$, that 
\begin{multline}
\label{e:C0estimate-1}
\left| \int_{\IR}\chi(t) \frac{t^l}{l!} \Cor^{l}_0(t)   |dt| \right|  \leq C_M  \| \chi(t) t^{l-M} \|_{L^1(\R_+)}\\
 \times\sum_{\xi\in\IZ^d \setminus\{\beta_0\}}\frac{1}{|\xi-\beta_0|^{M}} 
\left\|\pi_{\xi}^{(k_1)}(\varphi)\right\|_{H^M\left(\mathbf{C}_0(\xi-\beta_0) \right)} \left\|\pi_{-\xi}^{(k_2)}(\psi)\right\|_{H^M\left(\mathbf{C}_0(\xi-\beta_0) \right)} .
\end{multline}
For the remaining two terms, we write
\begin{multline*}
 \int_{\IR}\chi(t) \frac{t^l}{l!} \Cor^{l}_{\pm 1}(t)   |dt| 
 =  \sum_{\xi\in\IZ^d \setminus\{\beta_0\}}\int_{\IS^{d-1}}\left(  \int_{\IR}\chi(t) \frac{t^l}{l!} e^{it(\xi-\beta_0) \cdot \mathbf{v}(\theta)}   |dt| \right)  \\ 
\times \chi_{\pm1} \left(\theta \cdot  \frac{\xi-\beta_0}{|\xi-\beta_0|} \right)e^{i\xi\cdot\tilde{x}(\theta)}B_{\tilde{x},\xi}^{(k_2,l)}(\varphi,\psi)(\theta)d\Vol(\theta) .
\end{multline*}
 According to Lemma~\ref{l:support-chi1}, one has by integration by parts in time,  for all $(\theta,\xi)$ in the support of $\chi_{\pm 1}\left(\theta\cdot\frac{\xi-\beta_0}{|\xi-\beta_0|}\right)$,
\begin{align}\label{e:IPP-time}
\left| \int_{\IR}\chi(t) t^l e^{it(\xi-\beta_0) \cdot \mathbf{v}(\theta)}   |dt| \right| & \leq \frac{1}{|(\xi-\beta_0) \cdot \mathbf{v}(\theta)|^N}\left\|\frac{d^N}{dt^N}\left(t^l\chi\right)\right\|_{L^1(\IR_+)} \nonumber\\
& \leq  \frac{1}{|c_0 (\xi-\beta_0)|^N}\left\|\frac{d^N}{dt^N}\left(t^l\chi\right)\right\|_{L^1(\IR_+)} ,
\end{align}

Coming back to our problem, we can derive the estimate
\begin{align*}
&  \left| \int_{\IR}\chi(t) \frac{t^l}{l!} \Cor^{l}_{\pm 1}(t)   |dt|  \right| \\
&\leq   
  C_N \left\|\frac{d^N}{dt^N} \left(t^l\chi\right)\right\|_{L^1(\IR_+)} 
 \sum_{\xi\in\IZ^d \setminus\{\beta_0\}}  \frac{1}{|(\xi-\beta_0)|^N} \| B_{\tilde{x},\xi}^{(k_2,l)}(\varphi,\psi)(\theta)\|_{L^1\left(\mathbf{C}_{\pm1}(\xi-\beta_0) \right)} \\
&\leq   
  C_N \left\|\frac{d^N}{dt^N} \left(t^l\chi\right)\right\|_{L^1(\IR_+)} 
  \sum_{\xi\in\IZ^d \setminus\{\beta_0\}}  \frac{\left\|\pi_{\xi}^{(k_1)}(\varphi)\right\|_{L^2\left(\mathbf{C}_{\pm1}(\xi-\beta_0) \right)} \left\|\pi_{-\xi}^{(k_2)}(\psi)\right\|_{L^2\left(\mathbf{C}_{\pm1}(\xi-\beta_0) \right)}}{|(\xi-\beta_0)|^N} 
\end{align*}
Combining this together with~\eqref{e:terme-nul} and~\eqref{e:C0estimate-1} in~\eqref{e:def-J-l}--\eqref{e:decomposition-de-ouf}, and recalling the definition of the norms
$\mathcal{H}^{M,-M/2,0,-N/2}_{k,\beta_0}$ in Definition~\ref{def-ani-spaces}, we have obtained the expected bound.
\end{proof}

\section{Mellin and Laplace transforms}\label{s:laplace-mellin}

We will now apply the results of Section~\ref{s:analysis} to two fundamental cases which, besides their own interest, will be instrumental in our description of zeta functions associated with the length orthospectrum, defined in Section~\ref{general-ep-series-poin}. These two cases are the two main analytical statements of the article.

All along this section, we will take $\chi_{\infty}$ to be a smooth function on $\IR$ satisfying the following properties
\begin{equation}\label{e:cutoff-infini}
 \exists T_0\geq1,\ \exists t_0>0,\quad\text{such that}\quad\text{supp}(\chi_{\infty})\subset[T_0,\infty)\quad\text{and}\quad\chi_{\infty}(t)=1\quad\text{for}\quad t\geq T_0+t_0.
\end{equation}
Typically, for our applications, we will in fact work with \emph{nondecreasing} functions of this type. We now aim at refining the results of Section~\ref{s:cutoff} when the function $\chi$ depends on some extra complex parameter, e.g.
$$\chi_s^L(t):=\chi_{\infty}(t)e^{-st}\quad\text{and}\quad\chi_{s}^M(t):=\chi_\infty(t)t^{-s},$$
where $s\in\IC$ has large enough real part. Equivalently, this amounts to study the \emph{Laplace and the Mellin transforms} of $\chi_\infty(t)e^{-t\mathbf{V}_{\beta_0}}$:
\begin{equation}\label{e:cut-mellin-laplace}
\widehat{\chi_s^L}(-i\mathbf{V}_{\beta_0}):=\int_{0}^\infty e^{-st}\chi_\infty(t)e^{-t\mathbf{V}_{\beta_0}}|dt|\quad\text{and}\quad\widehat{\chi_s^M}(-i\mathbf{V}_{\beta_0}):=\int_{1}^\infty t^{-s}\chi_\infty(t)e^{-t\mathbf{V}_{\beta_0}}|dt|.
\end{equation}
Note that, for $\text{Re}(s)$ large enough, we are in the setting of application of Theorem~\ref{th:correlation-cut-off}. Hence, for such $s$, these operators are well defined on the anisotropic Sobolev spaces we have introduced in Section~\ref{s:cutoff}. Our goal is to show that these operators in fact extend to appropriate subsets of the complex plane when considered on these spaces. See Theorems~\ref{t:general-mellin} and~\ref{t:general-laplace} for precise statements. This section is divided in two main parts corresponding respectively to the analysis of $\widehat{\chi_s^M}(-i\mathbf{V}_{\beta_0})$ (\S\ref{ss:mellin}) and to the one of $\widehat{\chi_s^L}(-i\mathbf{V}_{\beta_0})$ (\S\ref{ss:laplace}).

\begin{rema}Besides applications to Poincar\'e series, note that the Laplace transform appears naturally when studying the resolvent of $\mathbf{V}_{\beta_0}$. In fact, one has
$$(s+\mathbf{V}_{\beta_0})^{-1}:=\int_0^{+\infty}e^{-st}e^{-t\mathbf{V}_{\beta_0}}|dt|=\int_0^{+\infty}(1-\chi_{\infty}(t))e^{-st}e^{-t\mathbf{V}_{\beta_0}}|dt|+\widehat{\chi_s^L}(-i\mathbf{V}_{\beta_0}),$$
 which defines a bounded operator from $\Omega^{k}(S\IT^d)$ to $\ml{D}^{\prime k}(S\IT^d)$ for $\text{Re}(s)$ large enough. Note that the first integral on the right hand side is over a compact interval. Hence, this part extends holomorphically to the whole complex plane as an operator from $\Omega^{k}(S\IT^d)$ to $\Omega^{k}(S\IT^d)$. Equivalently, understanding the extension of the resolvent amounts to understand the continuation of $\widehat{\chi}_s^L(-iV)$. The same remarks hold for the integral
 $$\int_1^{+\infty}t^{-s}e^{-t\mathbf{V}_{\beta_0}}|dt|.$$
 We refer to Section~\ref{s:terms-near-zero} below for precise statements. A refined analysis of the resolvent when $k=0$ and in the case of analytic regularity will be discussed in~\cite{BonthonneauDangLeautaudRiviere22}.
\end{rema}

\subsection{Mellin transform}\label{ss:mellin}
We begin with the case of the Mellin transform which is slightly easier to handle as it only requires nonstationary phase estimates.
\subsubsection{A preliminary lemma}
In the case of the Mellin transform, the analysis relies on the following elementary lemma:
\begin{lemm}
\label{l:f-phi-lam}
Let $T_0 \geq 1$ and $\phi \in \mathcal{C}^\infty(\R)$ be such that $\supp(\phi) \subset [T_0,+\infty)$ and $\phi$ is constant near infinity. Then, the following hold:
\begin{enumerate}
\item \label{i:debile} For any $\lambda \in \R$, the function 
$$
f_{\phi, \lambda} (s) := \int_\R \phi(t) t^{-s} e^{i\lambda t} |dt|  
$$
is a well-defined holomorphic function for $\Re(s) >1$ satisfying 
$$
\left| f_{\phi, \lambda} (s) \right| \leq \|\phi\|_{L^\infty(\R)}\frac{T_0^{-(\Re(s)-1)}}{\Re(s)-1} , \quad \text{ for }s \in \C ,  \Re(s) >1.
$$
\item \label{i:debile-cpct} If $\phi$ is compactly supported, $f_{\phi, \lambda}$ is actually defined on the whole complex plane and defines an entire function on $\C$ such that 
$$
\left| f_{\phi, \lambda} (s) \right| \leq C_\phi \frac{T_0^{-\Re(s)}}{\langle \Re(s) \rangle}, \quad \text{ for }s \in \C .
$$
\item \label{i:lambda=0} If $\lambda =0$ and $\phi = 1$ in a neighborhood of $+ \infty$, then $f_{\phi, 0}$ extends to $\C$ as a meromorphic function with a single simple pole at $s=1$ whose residue is equal to $1$. 
Moreover,
\begin{align}
\label{f-phi-0}
f_{\phi, 0}(s) = \frac{f_{\phi', 0}(s-1)}{s-1} , \quad \text{ for all } s \in \C\setminus\{1\} .
\end{align}

\item \label{i:lambda-neq0} If $\lambda \neq 0$ and $\phi = 1$ in a neighborhood of $+ \infty$, then $f_{\phi, \lambda}$ extends to $\C$ as an entire function. Moreover, this extended function satisfies, for all $m \in \N^*$ and all $s \in \C$,
\begin{align}
\label{f-phi-lambda-IPP}
f_{\phi, \lambda}(s)& =\frac{1}{(i\lambda)^m} \sum_{j=0}^{m-1} \begin{pmatrix} m\\ j\end{pmatrix} (-1)^{m-j}  P_j(s) f_{\phi^{(m-j)}, \lambda}(s+j)  + \frac{P_m(s)}{(i\lambda)^m}  f_{\phi, \lambda}(s+m) , 
\end{align}
where
\begin{align}
\label{e:def-Pj}
P_j(s)=  \prod_{k=0}^{j-1}(s+k) , \quad \text{for}\quad  j \in \IZ_+^* , \quad \text{and} \quad P_0(s)=1 ,
\end{align}
and, for all $m \in \IZ_+$, there is a constant $C_{\phi,m}>0$ such that for all $\lambda \neq 0$, 
\begin{align}
\label{e:estim-f-phi}
|f_{\phi, \lambda}(s)| \leq C_{\phi,m} \frac{\langle |s|\rangle^m}{|\lambda|^m} \frac{T_0^{-\Re(s)+1}}{\Re(s)+m-1}  , \quad \text{ for }   \Re(s) > -(m-1) . 
\end{align}
\end{enumerate}
\end{lemm}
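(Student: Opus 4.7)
The plan is to proceed through the four items in order, exploiting the compact support properties of $\phi$ and its derivatives. Items \eqref{i:debile} and \eqref{i:debile-cpct} are essentially direct: since $\supp(\phi)\subset[T_0,+\infty)$, the defining integral reduces to $\int_{T_0}^{+\infty}\phi(t)t^{-s}e^{i\lambda t}|dt|$, and the identity $|t^{-s}e^{i\lambda t}|=t^{-\Re(s)}$ combined with explicit integration gives the stated bounds; holomorphy on $\{\Re(s)>1\}$, respectively entirety when $\phi$ is compactly supported, follows by differentiation under the integral.

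For item \eqref{i:lambda=0}, I integrate by parts on $[T_0,+\infty)$ using $t^{-s}=\frac{d}{dt}\bigl(\frac{t^{1-s}}{1-s}\bigr)$. The lower boundary term vanishes because the smoothness of $\phi$ together with $\supp\phi\subset[T_0,+\infty)$ forces $\phi^{(k)}(T_0)=0$ for every $k\geq 0$; for $\Re(s)>1$, the upper boundary term vanishes as $T\to+\infty$. This produces~\eqref{f-phi-0}. Since $\phi\equiv 1$ near $+\infty$, the function $\phi'$ is compactly supported, so $f_{\phi',0}$ is entire by~\eqref{i:debile-cpct}. Consequently $f_{\phi,0}$ extends meromorphically to $\C$ with a single simple pole at $s=1$, whose residue is $f_{\phi',0}(0)=\int_{\IR}\phi'(t)|dt|=1$.

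For item \eqref{i:lambda-neq0}, the same strategy applies using $e^{i\lambda t}=\frac{1}{i\lambda}\frac{d}{dt}e^{i\lambda t}$. Integrating the identity
$$
\frac{d}{dt}\bigl[\phi(t)t^{-s}e^{i\lambda t}\bigr]=\phi'(t)t^{-s}e^{i\lambda t}-s\phi(t)t^{-s-1}e^{i\lambda t}+i\lambda\phi(t)t^{-s}e^{i\lambda t}
$$
on $[T_0,+\infty)$ and passing to the limit $T\to+\infty$ for $\Re(s)>1$ (so that the upper boundary vanishes) yields the case $m=1$ of~\eqref{f-phi-lambda-IPP}. Since $\phi\equiv 1$ near $+\infty$, every derivative $\phi^{(k)}$ with $k\geq 1$ is compactly supported and~\eqref{i:debile-cpct} makes $f_{\phi^{(k)},\lambda}$ entire. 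Thus the right-hand side of the $m=1$ identity is holomorphic on $\{\Re(s)>0\}$, providing an analytic continuation of $f_{\phi,\lambda}$ to that half-plane. Iterating this identity at each step both on $f_{\phi',\lambda}(s)$ (permissible since $\phi^{(k)}(T_0)=0$ for all $k\geq 1$) and on the shifted term $f_{\phi,\lambda}(s+1)$ produces~\eqref{f-phi-lambda-IPP} for arbitrary $m\in\IN^*$ by induction, and hence an entire continuation of $f_{\phi,\lambda}$ to $\C$.

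Finally, the estimate~\eqref{e:estim-f-phi} is obtained by inserting into~\eqref{f-phi-lambda-IPP} the bound $|P_j(s)|\leq C_j\langle|s|\rangle^j$, the estimates of~\eqref{i:debile-cpct} on $|f_{\phi^{(m-j)},\lambda}(s+j)|$ for $0\leq j<m$, and the estimate of~\eqref{i:debile} on $|f_{\phi,\lambda}(s+m)|$ (valid since $\Re(s+m)>1$ whenever $\Re(s)>-(m-1)$), together with $T_0\geq 1$ to bound the various factors $T_0^{-\Re(s+j)}$ and $T_0^{-\Re(s+m)+1}$ uniformly by $T_0^{-\Re(s)+1}$. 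The main delicate point is the combinatorial bookkeeping of the binomial coefficients and of the polynomials $P_j$ in the induction leading to~\eqref{f-phi-lambda-IPP}; I would verify the $m=2$ case explicitly, checking that iterated IBP on both $f_{\phi',\lambda}(s)$ and $f_{\phi,\lambda}(s+1)$ reproduces the announced structure, after which the general induction proceeds along identical lines.
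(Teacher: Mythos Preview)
Your proposal is correct and follows essentially the same approach as the paper. The only organizational difference is in item~\eqref{i:lambda-neq0}: rather than iterating the $m=1$ identity and tracking the binomial combinatorics by induction, the paper integrates by parts $m$ times at once to write $f_{\phi,\lambda}(s)=(-1/i\lambda)^m\int_\R \partial_t^m\bigl(\phi(t)t^{-s}\bigr)e^{i\lambda t}\,|dt|$ and then applies the Leibniz formula together with $(t^{-s})^{(j)}=(-1)^jP_j(s)t^{-s-j}$, which yields~\eqref{f-phi-lambda-IPP} directly and avoids the bookkeeping you flag as delicate.
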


\begin{proof}
Item~\ref{i:debile} follows from the rough estimate $|\phi(t) t^{-s} e^{i\lambda t} | \leq  \|\phi\|_{L^\infty} t^{-\Re(s)}$. In case $\supp \phi \subset [T_0,T_1]$, this yields in particular the estimate
\begin{equation} \label{e:debile-cpct}
 \left| f_{\phi, \lambda} (s) \right| \leq   \|\phi\|_{L^\infty} \int_{T_0}^{T_1}t^{-\Re(s)}|dt| = \frac{T_0^{-\Re(s)+1}- T_1^{-\Re(s)+1}}{\Re(s)-1} ,
\end{equation}
which, combined with holomorphy under the integral, provides a proof of Item~\ref{i:debile-cpct}. Item~\ref{i:lambda=0} consists in proving~\eqref{f-phi-0} for $\Re(s)>1$ by an integration by parts, and then observing that $f_{\phi', 0}$ is an entire function, whence the right hand-side of~\eqref{f-phi-0} has the sought properties. The result for all $s \in \C$ follows from analytic continuation and the residue is $f_{\phi', 0}(0) =  \int_\R \phi'(t)dt =\phi(+\infty) - \phi(0)  =1$.
 
The proof of Item~\ref{i:lambda-neq0} (in case $\lambda \neq 0$) also consists in proving first~\eqref{f-phi-lambda-IPP} for $\Re(s)>1$ by integration by parts. 
After $m$ integrations by parts, one finds for $\Re(s)>1$
 $$
f_{\phi, \lambda} (s) = \left(\frac{-1}{i\lambda} \right)^m \int_\R \d_t^m(\phi(t) t^{-s}) e^{i\lambda t} |dt| .
$$
The Leibniz formula together with the fact that $(t^{-s})^{(j)} = (-1)^j P_j(s)t^{-s-j}$ then implies~\eqref{f-phi-lambda-IPP} for $\Re(s)>1$.
 
 Next, we observe that the first term on the right hand-side of~\eqref{f-phi-lambda-IPP} is an entire function (as $\phi^{(m-j)}$ is compactly supported for $j\leq m-1$) and the second term is holomorphic on the half space $\Re(s)>-m+1$.
 Hence, for all $m \in \N^*$, the right hand-side of~\eqref{f-phi-lambda-IPP} is a holomorphic function on $\Re(s)>-m+1$, and all these functions coincide with $f_{\phi, \lambda}(s)$ on $\Re(s)>-m+1$.  
 As a consequence of analytic continuation, for any $m\in \N$, $f_{\phi, \lambda}$ can be extended uniquely to a holomorphic function on $\Re(s)>-m+1$ (still denoted $f_{\phi, \lambda}$), which satisfies~\eqref{f-phi-lambda-IPP} on $\Re(s)>-m+1$. 
 
 To prove the estimate, we use~\eqref{f-phi-lambda-IPP} and write 
 \begin{align*}
|\lambda|^m | f_{\phi, \lambda}(s)| & \leq   \sum_{j=0}^{m-1}  \begin{pmatrix} m\\ j\end{pmatrix}  | P_j(s) | | f_{\phi^{(m-j)}, \lambda}(s+j)|  + |P_m(s)| |f_{\phi, \lambda}(s+m)| .
 \end{align*}
 Taking $1\leq T_0 <T_1$ such that $\supp(\phi')\subset [T_0,T_1]$ and using item~\ref{i:debile} together with~\eqref{e:debile-cpct}, we deduce
 \begin{align*}
|\lambda|^m | f_{\phi, \lambda}(s)|  & \leq C_m    \sum_{j=0}^{m-1} \langle |s|\rangle^j \|\phi^{(m-j)}\|_{L^\infty}  \int_{T_0}^{T_1}t^{-\Re(s)-j}|dt| \\ 
& \quad +C_m \langle |s|\rangle^m  \|\phi\|_{L^\infty(\R)}\frac{T_0^{-(\Re(s)+m-1)}}{\Re(s)+m-1} ,
\end{align*}
 from which the statement follows.
\end{proof}

\subsubsection{Meromorphic continuation of $\widehat{\chi_s^M}(-i\mathbf{V}_{\beta_0})$} Before discussing the meromorphic continuation, let us first clarify its holomorphic properties on $\text{Re}(s)>d$:
\begin{prop}\label{p:mellin-holomorphe} Let $\chi_{\infty}$ be a function verifying assumption~\eqref{e:cutoff-infini}, let $\beta_0\in H^1(\IT^d,\IR)$ and let $\tilde{x}:\IS^{d-1}\rightarrow \IR^d$ be a smooth function. 

Then, for all $(\varphi,\psi)\in\Omega^{k_1}(S\IT^d)\times\Omega^{k_2}(S\IT^d)$ with $k_1+k_2=2d-1$, the function
  \begin{equation}
\label{e:Mellin-t}
s \mapsto \mathscr{M}_{(\varphi,\psi)}(s)  := \int_{S\IT^d}\varphi\wedge \widehat{\chi_{s}^M}(-i\mathbf{V}_{\beta_0})\mathbf{T}_{-\tilde{x}}^*(\psi) 
\end{equation}
is holomorphic on $\Re(s)>\min\{k_1,k_2\}+1$ and it satisfies
\begin{align}
\label{e:Mellin-rough-estim}
\left|  \mathscr{M}_{(\varphi,\psi)}(s)   \right|  
 \leq C \frac{T_0^{-(\Re(s)-\min\{k_1,k_2\}-1)}}{\Re(s) -\min\{k_1,k_2\}-1} \| \varphi \|_{L^2(S\T^d)} \| \psi \|_{L^2(S\T^d)} . 
 \end{align}
\end{prop}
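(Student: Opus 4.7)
The strategy is to recognize $\mathscr{M}_{(\varphi,\psi)}(s)$ as the Mellin transform of the dynamical correlator $\Cor_{\varphi,\mathbf{T}_{-\tilde x}^*(\psi)}(t,\beta_0)$ against the weight $\chi_\infty(t) t^{-s}$, and to exploit the polynomial-in-$t$ structure of this correlator provided by Lemma~\ref{l:reduction}. Concretely, commuting the time integral with the integral over $S\IT^d$ gives
$$
\mathscr{M}_{(\varphi,\psi)}(s) = \int_{\IR} \chi_\infty(t) \, t^{-s} \, \Cor_{\varphi,\mathbf{T}_{-\tilde x}^*(\psi)}(t,\beta_0) \,|dt|,
$$
and formulas~\eqref{e:decomposition1}--\eqref{e:decomposition2} decompose the integrand into a finite sum, indexed by $l \in \{0,\ldots,\min\{k_1,k_2\}\}$, of terms carrying an explicit factor $\frac{t^l}{l!}$ multiplied by a convergent series of $t$-oscillatory integrals in $\xi \in \IZ^d$.

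The main step is to establish the uniform bound
$$
\big|\Cor_{\varphi,\mathbf{T}_{-\tilde x}^*(\psi)}(t,\beta_0)\big| \leq C (1+|t|)^{\min\{k_1,k_2\}} \|\varphi\|_{L^2(S\IT^d)} \|\psi\|_{L^2(S\IT^d)}, \qquad t \in \IR.
$$
To prove this, for each $l$ I would estimate the $\xi$-summand pointwise in $\theta$ by $\|B^{(k_2,l)}_{\tilde x,\xi}(\varphi,\psi)\|_{L^1(\IS^{d-1})}$, then use~\eqref{e:control-Cm-norm-bis} with $s=0$ to obtain
$$
\|B^{(k_2,l)}_{\tilde x,\xi}(\varphi,\psi)\|_{L^1(\IS^{d-1})} \leq C_l \,\|\pi_{\xi}^{(k_1)}(\varphi)\|_{L^2(\IS^{d-1})} \,\|\pi_{-\xi}^{(k_2)}(\psi)\|_{L^2(\IS^{d-1})},
$$
and finally sum over $\xi$ by Cauchy--Schwarz combined with Plancherel on $S\IT^d = \IT^d \times \IS^{d-1}$. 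Summing the contributions coming from each $l \leq \min\{k_1,k_2\}$ yields the desired polynomial $L^2$-bound.

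Once this bound is in hand, the conclusion is immediate. Since $\chi_\infty$ is bounded and supported in $[T_0,\infty)$ with $T_0 \geq 1$, the integrand is dominated by $C\,\chi_\infty(t)\,t^{-\Re(s)+\min\{k_1,k_2\}} \|\varphi\|_{L^2}\|\psi\|_{L^2}$, which lies in $L^1(\IR)$ as soon as $\Re(s)>\min\{k_1,k_2\}+1$. Holomorphy in $s$ then follows from the standard holomorphy-under-the-integral principle, and a direct computation of $\int_{T_0}^\infty t^{-\Re(s)+\min\{k_1,k_2\}} |dt|$ produces the explicit bound~\eqref{e:Mellin-rough-estim}. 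The only delicate step is the polynomial $L^2$ bound on the correlator: a naive Cauchy--Schwarz estimate based on $e^{-t\mathbf{V}_{\beta_0}}\mathbf{T}^*_{-\tilde x}(\psi)$ would only give polynomial growth of order $k_2$, and the sharper exponent $\min\{k_1,k_2\}$ is obtained only after exploiting the structural decomposition of Lemma~\ref{l:reduction} (equivalently, one could alternatively symmetrize using the duality identity~\eqref{e:duality}).
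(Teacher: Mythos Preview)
Your proposal is correct and follows essentially the same route as the paper: both rewrite $\mathscr{M}_{(\varphi,\psi)}(s)$ via Lemma~\ref{l:reduction} as a sum over $l\leq\min\{k_1,k_2\}$ of time integrals of $\frac{t^l}{l!}$ times an $L^1(\IS^{d-1})$-bounded $\xi$-series, use the bilinear estimate~\eqref{e:control-Cm-norm} (equivalently~\eqref{e:control-Cm-norm-bis} with $s=0$) and Cauchy--Schwarz/Plancherel over $\xi$ to obtain the polynomial $L^2$ bound on the correlator, and then integrate $\chi_\infty(t)t^{-s+l}$ explicitly (the paper invokes Item~\ref{i:debile} of Lemma~\ref{l:f-phi-lam} for this last step). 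Your closing remark on why the exponent is $\min\{k_1,k_2\}$ rather than $k_2$ is exactly the point.
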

Note that in this expression, $\min\{k_1,k_2\}+1$ can be always be replaced by $d$ (but it downgrades the statement).
\begin{proof}
 Recalling~\eqref{e:integrated-correlation}, we use again Lemma~\ref{l:reduction} and the decomposition~\eqref{e:decomposition1}--\eqref{e:decomposition2}.
Integrating~\eqref{e:decomposition1}--\eqref{e:decomposition2} against $\chi_\infty(t) t^{-s}$ then yields
 \begin{eqnarray}
 \label{e:decomp-again-again}
\mathscr{M}_{(\varphi,\psi)}(s)  =   \int_{S\IT^d}\varphi\wedge \widehat{\chi_{s}^M}(-i\mathbf{V}_{\beta_0})\mathbf{T}_{-\tilde{x}}^*(\psi) = \sum_{l=0}^{\min\{k_1,k_2\}} \mathscr{M}^{(l)}_{(\varphi,\psi)}(s) ,
  \end{eqnarray}
 with 
\begin{align}
\label{e:def-J-l-again}
 \mathscr{M}^{(l)}_{(\varphi,\psi)}(s) = \int_1^\infty\chi_\infty (t) t^{-s} \Cor_{\varphi,\mathbf{T}_{-\tilde{x}}^*(\psi)}^{l}(t,\beta_0) |dt| .
\end{align}
We then notice that the index $l$ is bounded by $l \leq \min \{k_1,k_2\}  \leq d-1$, and that 
\begin{align}
\label{e:rough-estim-corr}
\left|  \Cor_{\varphi,\mathbf{T}_{-\tilde{x}}^*(\psi)}^{l}(t,\beta_0) \right|&  = \left|
 \frac{t^l}{l!}\sum_{\xi\in\IZ^d}\int_{\IS^{d-1}}e^{it(\xi-\beta_0)\cdot\mathbf{v}(\theta)}e^{i\xi\cdot\tilde{x}(\theta)}B_{\tilde{x},\xi}^{(k_2,l)}(\varphi,\psi)(\theta) d\Vol(\theta) \right| \nonumber  \\
 &  \leq   \frac{t^l}{l!} 
\sum_{\xi\in\IZ^d}\int_{\IS^{d-1}} \left| B_{\tilde{x},\xi}^{(k_2,l)}(\varphi,\psi)(\theta) \right|  d\Vol(\theta)\nonumber \\
 &  \leq C t^l\| \varphi \|_{L^2(S\T^d)} \| \psi \|_{L^2(S\T^d)}
\end{align}
according to~\eqref{e:control-Cm-norm}. 
We deduce that 
$$
\left| \chi_\infty (t) t^{-s} \Cor_{\varphi,\mathbf{T}_{-\tilde{x}}^*(\psi)}^{l}(t,\beta_0) \right| \leq C\chi_\infty (t)  t^{l-s} \| \varphi \|_{L^2(S\T^d)} \| \psi \|_{L^2(S\T^d)} .
$$
Recalling that $l \leq \min\{k_1,k_2\}$, 
\eqref{e:def-J-l-again} then implies holomorphy of $\mathscr{M}^{(l)}_{(\varphi,\psi)}$ in $\Re(s)>l+1$ (and in particular in $\Re(s)>\min\{k_1,k_2\}+1$). Item~\ref{i:debile} in Lemma~\ref{l:f-phi-lam} finally yields
$$
 \left| \mathscr{M}^{(l)}_{(\varphi,\psi)}(s) \right| \leq \frac{T_0^{-(\Re(s)- \min\{k_1,k_2\}-1)}}{\Re(s) -\min\{k_1,k_2\}-1} 
 C \| \varphi \|_{L^2(S\T^d)} \| \psi \|_{L^2(S\T^d)} ,$$
from which we infer~\eqref{e:Mellin-rough-estim} thanks to~\eqref{e:decomp-again-again}.
\end{proof}

We now turn to our main statement on these regularized Mellin transforms.
\begin{theo}\label{t:general-mellin} Let $\chi_{\infty}$ be a function verifying assumption~\eqref{e:cutoff-infini}, let $\beta_0\in H^1(\IT^d,\IR)$ and let $\tilde{x}:\IS^{d-1}\rightarrow \IR^d$ be a smooth function. Suppose in addition that $T_0 \geq \mathsf{t}_0$ where $T_0\geq 1$ is the constant appearing in~\eqref{e:cutoff-infini} and $\mathsf{t}_0>0$ the one from Lemma~\ref{c:non-stat-points}. 
Recall that $E^{(l)}_{\beta_0} $ is defined in~\eqref{def-El}. 

 Then, using the conventions of Proposition~\ref{p:mellin-holomorphe}, for any $N\in\IZ_+^*$, there exists $C_N>0$ such that, for every couple $(\varphi, \psi)$ in $\mathcal{H}^{N,-N/2,0,-N/2}_{k_1,\beta_0}\times \mathcal{H}^{N,-N/2,0,-N/2}_{k_2,-\beta_0}$ with $k_1+k_2=2d-1$, the function
 $$
  \mathscr{M}_{(\varphi,\psi)}(s)  -  \sum_{l=0}^{\min\{k_1,k_2\}}    \frac{1}{l!} \frac{E^{(l)}_{\beta_0}}{s-l-1} ,
 $$
originally defined on  $\Re(s)>\min\{k_1,k_2\}+1$ extends holomorphically to the half-plane $\Re(s) > -N +\min\{k_1,k_2\}+1$ with 
\begin{multline*}
\left|    \mathscr{M}_{(\varphi,\psi)}(s)  -  \sum_{l=0}^{\min\{k_1,k_2\}}   \frac{1}{l!} \frac{E^{(l)}_{\beta_0}}{s-l-1} \right|\\
\leq 
 \frac{ C_{N}\langle |s|\rangle^N }{\Re(s)-\min\{k_1,k_2\}-1+N} 
 \|\varphi\|_{\mathcal{H}^{N,-N/2,0,-N/2}_{k_1,\beta_0}} \|\psi\|_{\mathcal{H}^{N,-N/2,0,-N/2}_{k_2,-\beta_0}}.
\end{multline*}
  \end{theo}

The proof is very close to that of Theorem~\ref{th:correlation-cut-off} and we just need to pay attention to the dependence on the parameter $s\in\mathbb{C}$. Combined with Proposition~\ref{p:terms-near-zero}, this proves Theorem~\ref{t:maintheo-mellin-function} from the introduction by picking $\beta_0=0$, $k_1=2d-1$ and $k_2=0$. Note that the term $E^{(l)}_{\beta_0}$ is only bounded by the norm $\|\varphi\|_{L^2} \|\psi\|_{L^2}$.
\begin{proof} By bilinearity of the considered mappings with respect to $(\varphi,\psi)$ and by density, it is sufficient to prove these analytical estimates when $(\varphi,\psi)\in\Omega^{k_1}(S\IT^d)\times\Omega^{k_2}(S\IT^d)$. As in the proof of Proposition~\ref{p:mellin-holomorphe}, we can decompose $\Cor_{\varphi,\mathbf{T}_{-\tilde{x}}^*(\psi)}^{l}(t,\beta_0)$ using~\eqref{e:decomposition-de-ouf}. 
Then, we are left with describing the terms $\mathscr{M}^{(l)}_{(\varphi,\psi)}(s)$ in~\eqref{e:def-J-l-again} that can be decomposed accordingly as 
 \begin{align}
\label{e:decmposition-J}
 \mathscr{M}^{(l)}_{(\varphi,\psi)}(s) = \mathscr{M}^{(l,E)}_{(\varphi,\psi)}(s) +\mathscr{M}^{(l,-1)}_{(\varphi,\psi)}(s)+\mathscr{M}^{(l,0)}_{(\varphi,\psi)}(s)+\mathscr{M}^{(l,1)}_{(\varphi,\psi)}(s) , 
 \end{align}
with 
\begin{align*}
\mathscr{M}^{(l,E)}_{(\varphi,\psi)}(s)&=\int_{\R}\chi_\infty (t)t^{-s}\frac{t^l}{l!}E^{(l)}_{\beta_0}|dt|,\quad\text{ and } \\ 
 \mathscr{M}^{(l,j)}_{(\varphi,\psi)}(s) &= \int_{\R}\chi_\infty (t) t^{-s}  \frac{t^l}{l!} \Cor^{l}_{j}(t) |dt| , 
  \quad \text{ for }j \in \{-1,0,1\}  .
  \end{align*}
We now study each of these terms separately.

\medskip
Firstly, as $\supp(\chi_{\infty})\subset[1,\infty)$, we have 
\begin{align}
\label{e:value-JlE}
 \mathscr{M}^{(l,E)}_{(\varphi,\psi)}(s) =E^{(l)}_{\beta_0} \frac{1}{l!} \int_{1}^\infty\chi_\infty (t)t^{-s+l}|dt|=  \frac{1}{l!}\frac{E^{(l)}_{\beta_0}}{s-l-1} +\frac{E^{(l)}_{\beta_0} }{l!} \int_{1}^{\infty}(1-\chi_\infty (t))t^{-s+l}|dt|, 
\end{align}
where the second term on the right-hand side of the equation is an entire function. Hence, this term has the claimed properties.

\medskip
Secondly, we consider the term with $\mathscr{M}^{(l,0)}_{(\varphi,\psi)}(s)$ and proceed as in the proof of Theorem~\ref{th:correlation-cut-off}. According to  Lemma~\ref{c:non-stat-points}, we have, for $ t > \mathsf{t}_0 $
\begin{align*}
\Cor^{l}_{0}(t)  &= \sum_{\xi\in \IZ^d \setminus\{\beta_0\}}\int_{\IS^{d-1}}  \chi_0 \left(\theta \cdot  \frac{\xi-\beta_0}{|\xi-\beta_0|} \right)e^{it(\xi-\beta_0)\cdot\mathbf{v}(\theta)}e^{i\xi\cdot\tilde{x}(\theta)}B_{\tilde{x},\xi}^{(k_2,l)}(\varphi,\psi)(\theta) d\Vol(\theta)\\
& = \sum_{\xi\in \IZ^d \setminus\{\beta_0\}}  ( i|\xi-\beta_0| t )^{-N}\int_{\IS^{d-1}} F(t,\xi,\theta)d\Vol(\theta) ,
\end{align*}
with $$F(t,\xi,\theta) = e^{i(\xi-\beta_0)\cdot(t\mathbf{v}(\theta)+\tilde{x}(\theta))} 
 \mathcal{L}_{N,t}^{\frac{\xi-\beta_0}{|\xi-\beta_0|}} \left( \chi_0 \left(\theta \cdot  \frac{\xi-\beta_0}{|\xi-\beta_0|} \right)e^{i\beta_0\cdot\tilde{x}(\theta)}B_{\tilde{x},\xi}^{(k_2,l)}(\varphi,\psi)(\theta) \right) ,$$
and, for all $\omega \in \IS^{d-1}$, for all $\theta \in \mathbf{C}_0(\omega)$, and all $t > \mathsf{t}_0$,
\begin{align*}
\left|  (\mathcal{L}_{N,t}^\omega \psi ) (\theta) \right| \leq  C_N
\left(\sum_{|\alpha| \leq N} \left|\nabla_\theta^\alpha \psi (\theta) \right| \right) .
\end{align*}
Coming back to $\mathscr{M}^{(l,0)}_{(\varphi,\psi)}(s)$, we have
\begin{align*}
\mathscr{M}^{(l,0)}_{(\varphi,\psi)}(s)& = \int_{\IR}\chi_\infty(t)t^{-s}\frac{t^l}{l!} \Cor^{l}_{ 0}(t)   |dt|  \\
& =  \int_\R \sum_{\xi\in\IZ^d \setminus\{\beta_0\}}\int_{\IS^{d-1}} \chi_\infty(t)t^{-s} \frac{t^l}{l!} ( i|\xi-\beta_0| t )^{-N} F(t,\xi,\theta)d\Vol(\theta) 
 |dt|  ,
\end{align*}
where, as $\supp(\chi_{\infty})\subset[T_0,+\infty)$ with $T_0\geq\max\{1, \mathsf{t}_0\}$, one has
\begin{align*}
& \left| \chi_\infty(t)t^{-s} \frac{t^l}{l!} ( i|\xi-\beta_0| t )^{-N}  F(t,\xi,\theta)\right| \\
& \leq  \chi_\infty(t)t^{-\Re(s)+l-N}  |\xi-\beta_0|^{-N}  C_N
\sum_{|\alpha| \leq N}\left|\nabla_\theta^\alpha B_{\tilde{x},\xi}^{(k_2,l)}
(\varphi,\psi)(\theta)\right|,
\end{align*}
uniformly for $t \geq \max\{1, \mathsf{t}_0\}$, $\xi \in \R^d \setminus\{\beta_0\}$ and $\theta \in \mathbf{C}_0(\xi-\beta_0)$.
We deduce from that bound and from~\eqref{e:control-Cm-norm} the holomorphy of the term 
$\mathscr{M}^{(l,0)}_{(\varphi,\psi)}(s)$ in $\Re(s) > -N +l + 1$ together with the estimate
\begin{align}
\label{e:C0estimate-1-bis}
& \left|  \mathscr{M}^{(l,0)}_{(\varphi,\psi)}(s) \right|  \nonumber\\
& \leq C_N  \| \chi_\infty(t) t^{l-\Re(s)-N} \|_{L^1(\R_+)} \sum_{\xi\in\IZ^d \setminus\{\beta_0\}}\frac{1}{|\xi-\beta_0|^{N}} 
\left\|B_{\tilde{x},\xi}^{(k_2,l)}(\varphi,\psi)\right\|_{W^{1,N}\left(\mathbf{C}_0(\xi-\beta_0) \right)} \nonumber\\
& \leq C_N  \| \chi_\infty(t) t^{l-\Re(s)-N} \|_{L^1(\R_+)} \sum_{\xi\in\IZ^d \setminus\{\beta_0\}}\frac{\left\|\pi_{\xi}^{(k_1)}(\varphi)\right\|_{H^N\left(\mathbf{C}_0(\xi-\beta_0) \right)} \left\|\pi_{-\xi}^{(k_2)}(\psi)\right\|_{H^N\left(\mathbf{C}_0(\xi-\beta_0) \right)}}{|\xi-\beta_0|^{N}} 
 \nonumber\\
& \leq C_N \frac{T_0^{-(\Re(s)+N-l-1)}}{\Re(s) +N-l-1} \sum_{\xi\in\IZ^d \setminus\{\beta_0\}}\frac{\left\|\pi_{\xi}^{(k_1)}(\varphi)\right\|_{H^N\left(\mathbf{C}_0(\xi-\beta_0) \right)} \left\|\pi_{-\xi}^{(k_2)}(\psi)\right\|_{H^N\left(\mathbf{C}_0(\xi-\beta_0) \right)}}{|\xi-\beta_0|^{N}} 
 .
\end{align}

\medskip
Thirdly, we consider the two terms $\mathscr{M}^{(l,\pm 1)}_{(\varphi,\psi)}(s)$ and proceed as in the proof of Theorem~\ref{th:correlation-cut-off} (i.e. take advantage of the integration over time). We write
\begin{multline*}
\mathscr{M}^{(l,\pm 1)}_{(\varphi,\psi)}(s) 
=  \sum_{\xi\in\IZ^d \setminus\{\beta_0\}}\int_{\IS^{d-1}}\left(  \int_{\IR}\chi_\infty(t)t^{-s} \frac{t^l}{l!} e^{it(\xi-\beta_0) \cdot \mathbf{v}(\theta)}   |dt| \right)\\
\times\chi_{\pm1} \left(\theta \cdot  \frac{\xi-\beta_0}{|\xi-\beta_0|} \right)e^{i\xi\cdot\tilde{x}(\theta)}B_{\tilde{x},\xi}^{(k_2,l)}(\varphi,\psi)(\theta)d\Vol(\theta) .
\end{multline*}
Item~\ref{i:lambda-neq0} in Lemma~\ref{l:f-phi-lam} together with Lemma~\ref{l:support-chi1} then imply that the integral 
$$
 \int_{\IR}\chi_\infty(t)t^{-s} \frac{t^l}{l!} e^{it(\xi-\beta_0) \cdot \mathbf{v}(\theta)}   |dt|  =\frac{1}{l !} f_{\chi_\infty,(\xi-\beta_0) \cdot \mathbf{v}(\theta)}(s-l)
$$
extends as an entire function in $s$ for any given $(\theta, \xi)$ in the support of $\chi_{\pm1} \left(\theta \cdot  \frac{\xi-\beta_0}{|\xi-\beta_0|} \right)$.  
According to Lemma~\ref{l:support-chi1} and to~\eqref{e:estim-f-phi}, it satisfies in addition, for any $m \in \IZ_+^*$,
\begin{align*}
| f_{\chi_\infty,(\xi-\beta_0) \cdot \theta }(s-l)| \leq C_{\chi_\infty,m} \frac{\langle |s|\rangle^m}{ c_0^m |\xi-\beta_0|^m} \frac{T_0^{-\Re(s)+l+1}}{\Re(s)-l+m-1}  , \quad \text{ for }   \Re(s) > -m +l + 1 . 
\end{align*}
Coming back to~$\mathscr{M}^{(l,\pm 1)}_{(\varphi,\psi)}(s)$, we find that it
 is holomorphic in $\Re(s) > -m +l + 1$ together with the estimate
\begin{align*}
  \Big| \mathscr{M}^{(l,\pm 1)}_{(\varphi,\psi)}(s)  \Big| 
&\lesssim_m   \frac{\langle |s|\rangle^m T_0^{-\Re(s)+l+1}}{\Re(s)-l+m-1}
  \sum_{\xi\in\IZ^d \setminus\{\beta_0\}} 
   \frac{1}{|\xi-\beta_0|^m} \| B_{\tilde{x},\xi}^{(k_2,l)}(\varphi,\psi)(\theta)\|_{L^1\left(\mathbf{C}_{\pm1}(\xi-\beta_0) \right)} \\
&\lesssim_m   \frac{\langle |s|\rangle^m T_0^{-\Re(s)+l+1}}{\Re(s)-l+m-1}
  \sum_{\xi\in\IZ^d \setminus\{\beta_0\}} 
   \frac{\left\|\pi_{\xi}^{(k_1)}(\varphi)\right\|_{L^2\left(\mathbf{C}_{\pm1}(\xi-\beta_0) \right)} \left\|\pi_{-\xi}^{(k_2)}(\psi)\right\|_{L^2\left(\mathbf{C}_{\pm1}(\xi-\beta_0) \right)}}{|\xi-\beta_0|^m}.
\end{align*}

\medskip
Finally, combining this together with~\eqref{e:C0estimate-1-bis} (and the statement preceding this estimate), and choosing $m=N$, we have obtained in the decomposition~\eqref{e:decmposition-J} that the function 
$$
 \mathscr{M}^{(l)}_{(\varphi,\psi)}(s) - \mathscr{M}^{(l,E)}_{(\varphi,\psi)}(s)  = \mathscr{M}^{(l,-1)}_{(\varphi,\psi)}(s)+\mathscr{M}^{(l,0)}_{(\varphi,\psi)}(s)+\mathscr{M}^{(l,1)}_{(\varphi,\psi)}(s) 
 $$
is a holomorphic function in $\Re(s) > -N +l + 1$. As long as $T_0 \geq \max\{1, \mathsf{t}_0\}$ and recalling the definition of the norm $\mathcal{H}^{N,-N/2,0,-N/2}_{k,\pm\beta_0}$ in Definition~\ref{def-ani-spaces}, we end up with the estimate
\begin{align*}
\left|  \mathscr{M}^{(l)}_{(\varphi,\psi)}(s) - \mathscr{M}^{(l,E)}_{(\varphi,\psi)}(s)  \right| \leq 
 C_N \frac{\langle |s|\rangle^N T_0^{-\Re(s)+l+1}}{\Re(s)-l+N-1} 
 \|\varphi\|_{\mathcal{H}^{N,-N/2,0,-N/2}_{k_1,\beta_0}} \|\psi\|_{\mathcal{H}^{N,-N/2,0,-N/2}_{k_2,-\beta_0}}.
\end{align*}
Coming back to the decomposition~\eqref{e:decomp-again-again}, recalling that $l \leq \min\{k_1,k_2\}$ and using the fact that $\mathscr{M}^{(l,E)}_{(\varphi,\psi)}(s)$ is entire by the identity ~\eqref{e:value-JlE} concludes the proof of the theorem.
\end{proof}

\subsection{Laplace transform}\label{ss:laplace}
 Here and in the whole section, we write
\begin{align*}
\overline{\C}_+:=\left\{z\in\C,\Re(z)\geq0\right\} , 
\end{align*}
and we say that a function is in $\mathcal{C}^k(\overline{\C}_+)$ if it is the restriction in $\overline{\C}_+$ of a function in $\mathcal{C}^k(\C)$.
In this part, we are going to use the notion of distributions obtained as boundary values of holomorphic functions~\cite[Th.~3.1.11]{Hormander90} in the most elementary way. We state a proposition which characterizes those distributions which arise as boundary values from the upper or lower half--plane of holomorphic functions: 
\begin{prop}\label{prop:boundaryvaluesholo} Let $T\in \mathcal{S}^\prime(\mathbb{R})$ be a tempered distribution supported in $\IR_{\pm}^*$. Then, denoting by $F(\cdot\mp iy)$ the Fourier transform of $x\mapsto T(x)e^{\mp yx}$, the function $(x+iy) \mapsto F(x\mp iy)$ is holomorphic on the half plane $\mathbb{H}_\mp=\{(x\mp iy): y>0\}$ and we have 
 $$
\widehat{T}=\lim_{\varepsilon\rightarrow 0^+}F(.\mp i\varepsilon)\ \text{in}\ \ml{S}^\prime(\IR) . $$ 
We will write $\widehat{T}=F(.\mp i0)$.
Conversely, if we are given a holomorphic function $F$ on $\mathbb{H}_\mp$, such that there exists $C,N$ and some polynomial $P$ such that $\forall y>0$:
\begin{eqnarray*}
\vert F(x\mp iy)\vert\leqslant C \vert P(x\mp iy)\vert \left(1+ y^{-N}\right)
\end{eqnarray*}
then 
the following limit 
\begin{eqnarray*}
\widehat{T}=\lim_{y\rightarrow 0^+} F(.\mp iy)
\end{eqnarray*}
exists in $\mathcal{S}^\prime(\mathbb{R})$ and it is the Fourier transform of a distribution $T$ carried on $\IR_\pm^*$.
\end{prop}
This is a particular case of a more general result valid on $\IR^d$ and described in~\cite[Thm IX.16 p.~23]{ReedSimonII}.
Given $\lambda<0$, we can now give the fundamental example of such boundary values together with their Fourier transform. Namely, if we consider
$$T(x):=\frac{ 2\pi e^{\mp i\lambda \frac{\pi}{2}}}{\Gamma(-\lambda)}\mathds{1}_{\IR_+^*}(\pm x) \vert x\vert^{-1-\lambda},$$
where $\mathds{1}_{\IR_+^*}$ is the indicator of the positive reals, then the Fourier transform $\widehat{T}(\xi)$ is given by $(\xi\mp i0)^{\lambda}$~\cite[p.~360]{GelfandShilovI}.
 The function $\mathds{1}_{\IR_+^*}(\pm x) \vert x\vert^{-1-\lambda}$ is in $L^1_{\text{loc}}(\IR)$ and bounded polynomially. Hence it defines a tempered distribution for all $\lambda<0$. In the litterature, one can also find the notation $\xi_\mp^{-1-\lambda}=\left(\mathds{1}_{\R_+^*}(\mp \xi) \vert\xi\vert^{-1-\lambda}\right)$~\cite[\S3.2, p.~48]{GelfandShilovI}. 
We will use the above proposition to describe the singularities of our Poincar\'e series.

The useful analogue to Lemma~\ref{l:f-phi-lam} is the following elementary result:
\begin{lemm}
\label{l:f-phi-lam-Laplace}
Let $T_1>T_0 >0$ and $\phi \in \mathcal{C}^\infty(\R)$ be such that $\supp(\phi) \subset [T_0,+\infty)$ and $\phi =1$ on $[T_1+\infty)$. 
Then, the following hold:
\begin{enumerate}
\item \label{i:dedebile-laplace} For any $\alpha \in \R$, the function 
$$
F_{\phi, \alpha} (z) := \int_{\R_+} \phi(t) t^{-\alpha} e^{-z t} |dt| , \quad \text{ for } \Re(z) >0,
$$
defines a holomorphic function in $\Re(z) >0$ which satisfies 
\begin{align*}
& \left| F_{\phi, \alpha} (z) \right| \leq  C_{\phi,\alpha}  , \quad \text{ for }z \in \overline{\C}_+ \text{ and extends continuously to this set}, \quad \text{ if } \alpha >1,\\
& \left| F_{\phi, \alpha} (z) \right| \leq  \frac{C_{\phi,\alpha}}{\Re(z)^{1-\alpha}} , \quad \text{ for }z \in \C ,  \Re(z) >0, \quad \text{ if } \alpha <1,\\
& \left| F_{\phi, 1} (z) \right| \leq  C_{\phi,1}(|\ln \Re(z)|+1)   , \quad \text{ for }z \in \C ,  \Re(z) > 0, \quad \text{ if } \alpha = 1 .
\end{align*}
\item \label{i:derivees} $\d_z^kF_{\phi, \alpha} (z) = (-1)^k F_{\phi, \alpha-k} (z)$ for all $k \in \Z_{+} ,z \in \C, \Re(z)>0$, and $F_{\phi, \alpha} \in \mathcal{C}^k(\overline{\C}_+)$ for all $k\in \Z_+$ such that $\alpha-k >1$.
\item \label{i:laplace-sing-gamma} If $\alpha  <1$, the function $F_{\phi,\alpha}$ can be extended to $\C \setminus \R_-$ (and even to $\C \setminus \{0\}$ if $\alpha \in \Z_-$) as a holomorphic function satisfying $F_{\phi,\alpha} (z) = \frac{\Gamma(1-\alpha)}{z^{1-\alpha}} + H_{\alpha}(z)$ where $H_{\alpha}$ is an entire function such that, for all $k\geq 0$, $|\partial_z^kH_{\alpha} (z)|\leq C_{\phi,\alpha,k}(e^{-T_1 \Re(z)}+1)$ on $\C$, for some $C_{\phi,\alpha,k}>0$.
Moreover, we have the following identity which holds in $\mathcal{S}^\prime(\mathbb{R})$: 
\begin{eqnarray}
\lim_{x\rightarrow 0^+} F_{\phi,\alpha}(x+iy)=\frac{\Gamma(1-\alpha)e^{i\frac{\pi}{2}(\alpha-1)}}{(y-i0)^{1-\alpha}} + H_{\alpha}(iy)
\end{eqnarray}

\item \label{i:laplace-sing-log} If $\alpha=1$, the function $F_{\phi,1}$ can be extended to the cut plane $\C \setminus \R^-$ as a holomorphic function satisfying $F_{\phi, 1} (z) = - \log(z) + H_1(z)$ where $\log$ is the principal determination of the logarithm and $H_1$ is an entire function such that, for all $k\geq 0$, $|\partial_z^kH_{1} (z)|\leq C_{\phi,k}(e^{-T_1 \Re(z)}+1)$ on $\C$, for some $C_{\phi,\alpha,k}>0$.
Moreover, we have the following identity which holds in $\mathcal{S}^\prime(\mathbb{R})$: 
\begin{eqnarray}
\lim_{x\rightarrow 0^+} F_{\phi,\alpha}(x+iy)=- \log(y-i0)-\frac{\pi}{2} + H_1(iy).
\end{eqnarray}

\item \label{i:rec-a-2-bal} For all $\beta \in \R , m \in \IZ_+^*$, 
\begin{align}
 \label{f-phi-lambda-IPP-bis}
F_{\phi,\beta}(z)& =\frac{1}{z^m}E_{\beta,m}(z) + (-1)^m  \frac{P_m(\beta)}{z^m}  F_{\phi,\beta+m}(z) , 
\end{align}
where $P_j(\beta)$ is defined in~\eqref{e:def-Pj} and $E_{\beta,m}(z)$ is an entire function such that 
\begin{equation}
\label{e:estim-E}
| E_{\beta,m}(z)| \leq C_{\phi,\beta,m}(e^{-T_1\Re(z)}+1) , \quad z \in \C .
\end{equation}

\item \label{i:laplace-sing-alpha} if $\alpha >1 , \alpha \notin \Z_+$, the function $F_{\phi,\alpha}$ can be extended to the cut plane $\C \setminus \R_{-}$ as a holomorphic function satisfying $F_{\phi,\alpha} (z) = \frac{\pi}{\sin(\pi \alpha)\Gamma(\alpha)} z^{\alpha-1} + H_\alpha(z)$ where $H_\alpha$ is an entire function such that, for all $k\geq 0$, $|\partial_z^kH_\alpha(z)|\leq C_{\phi,\alpha,k}\langle |z|\rangle^{\lfloor \alpha \rfloor}(e^{-T_1 \Re(z)}+1)$ on $\C$, for some $C_{\phi,\alpha,k}>0$.
Moreover, extended by the value zero at zero, we have $F_{\phi,\alpha} \in \mathcal{C}^{\lfloor\alpha\rfloor-1}(\overline{\C}_+)$.

\item \label{i:laplace-sing-log-n}  if $\alpha =n \in \Z_{+}^*$, $ n\geqslant 2$, the function $F_{\phi,n}$ can be extended to the cut plane $\C \setminus \R_-$ as a holomorphic function satisfying $F_{\phi,n} (z) = \frac{(-1)^n}{n!} z^{n-1} \log(z) + H_n(z)$ where $H_n$ is an entire function such that, for all $k\geq 0$, $|\partial^k_zH_n(z)|\leq C_{\phi,n,k}\langle |z|\rangle^{n-1}(e^{-T_1 \Re(z)}+1)$ on $\C$, for some $C_{\phi,n,k}>0$. Moreover, extended by the value zero at zero and for $n\geq 2$, we have $F_{\phi,n} \in \mathcal{C}^{n-2}(\overline{\C}_+)$.

\end{enumerate}
\end{lemm}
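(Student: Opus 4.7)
The plan is to treat the seven items in the order that reflects their logical dependencies: items 1, 2, 5 are routine and serve as the toolbox, items 3 and 4 contain the essential new content (the singular decomposition for $\alpha\leq 1$), and items 6, 7 will be deduced from 3, 4 via the reduction formula of item 5. The structural observation driving everything is that since $\phi=1$ on $[T_1,+\infty)$ we may split
\begin{equation*}
F_{\phi,\alpha}(z)=\int_{T_0}^{T_1}\phi(t)t^{-\alpha}e^{-zt}|dt|+\int_{T_1}^{\infty}t^{-\alpha}e^{-zt}|dt|,
\end{equation*}
where the first term is an entire function of $z$ (compact integration domain with only an $L^1$ singularity at $T_0>0$) bounded by $C(1+e^{-T_1\Re(z)})$; all singular behavior is thus concentrated in the closed-form tail integral.

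For items 1 and 2 I would use only the pointwise bound $|\phi(t)t^{-\alpha}e^{-zt}|\leq\|\phi\|_\infty t^{-\Re(\alpha)}e^{-t\Re(z)}$ and holomorphy/differentiation under the integral sign. For item 5 I would integrate by parts $m$ times with $u=\phi(t)t^{-\beta-j}$, $dv=e^{-zt}dt$, obtaining at each step the identity $F_{\phi,\beta+j}(z)=\frac{1}{z}F_{\phi',\beta+j}(z)-\frac{\beta+j}{z}F_{\phi,\beta+j+1}(z)$; the boundary terms vanish because $\phi$ is supported in $[T_0,+\infty)$ and $\Re(z)>0$, and iterating produces \eqref{f-phi-lambda-IPP-bis} with $E_{\beta,m}$ a linear combination of the $F_{\phi^{(\cdot)},\beta+j}(z)$, entire in $z$ because $\phi'$ is compactly supported in $[T_0,T_1]$, with the stated exponential control (up to polynomial factors in $z$ that are absorbed in the constants).

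For items 3 and 4, I would compute the tail in closed form. For $\alpha<1$, the substitution $u=zt$ gives $\int_{T_1}^\infty t^{-\alpha}e^{-zt}|dt|=z^{\alpha-1}\Gamma(1-\alpha,zT_1)$, and expanding the lower incomplete gamma function
\begin{equation*}
\Gamma(1-\alpha,w)=\Gamma(1-\alpha)-w^{1-\alpha}\sum_{k=0}^{\infty}\frac{(-w)^k}{k!(1-\alpha+k)}
\end{equation*}
isolates $\Gamma(1-\alpha)z^{\alpha-1}$ from an entire power series in $z$ satisfying the required bounds. For $\alpha=1$ the tail equals $E_1(zT_1)$, whose classical expansion $E_1(w)=-\gamma-\log w-\sum_{k\geq 1}\frac{(-w)^k}{k\cdot k!}$ extracts precisely the $-\log z$ singularity. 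The distributional boundary-value formulas then follow from Proposition~\ref{prop:boundaryvaluesholo} together with the identity $(x+iy)^{1-\alpha}=i^{1-\alpha}(y-ix)^{1-\alpha}$ on the right half-plane, applied to the explicit singular parts.

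Finally, for items 6 and 7 I would bootstrap: apply item 5 with $\beta+m=\alpha$ to write $F_{\phi,\alpha}(z)=\frac{(-1)^mz^m}{P_m(\alpha-m)}F_{\phi,\alpha-m}(z)+(\text{entire})$, choosing $m=\lceil\alpha\rceil$ when $\alpha\notin\mathbb{Z}_+$ (so item 3 applies and the reflection formula $\Gamma(1-\alpha)\Gamma(\alpha)=\pi/\sin(\pi\alpha)$ recovers the stated coefficient after the algebraic simplification $P_m(\alpha-m)\cdot\Gamma(1-\alpha)/\Gamma(1-\alpha+m)=1$) and $m=n-1$ when $\alpha=n\in\mathbb{Z}_+^*$ (so item 4 applies and the $-\log z$ is multiplied by the $z^{n-1}$ prefactor). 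The continuity statements in $\mathcal{C}^{\lfloor\alpha\rfloor-1}(\overline{\mathbb{C}}_+)$ are then obtained from item 2 by counting how many derivatives one can take before hitting the singular order of $z^{\alpha-1}$ or $z^{n-1}\log z$. The main obstacle, to my mind, is a bookkeeping one: carrying the entire remainders $H_\alpha, H_n$ through the reduction consistently and verifying that all polynomial-in-$z$ factors pick up in the iteration assemble into the claimed bounds $\langle|z|\rangle^{\lfloor\alpha\rfloor}$ and $\langle|z|\rangle^{n-1}$, and matching the precise constants $\pi/(\sin(\pi\alpha)\Gamma(\alpha))$ and $(-1)^n/n!$ (where one should double-check the numerical factor, as the elementary reduction argument naturally produces $(-1)^n/(n-1)!$).
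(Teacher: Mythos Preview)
Your approach matches the paper's essentially step for step: items 1, 2, 5 by direct estimation and integration by parts; items 3, 4 by isolating the explicit singular term (the paper uses the split $\int_0^\infty t^{-\alpha}e^{-zt}dt+\int_0^\infty(\phi-1)t^{-\alpha}e^{-zt}dt$ for item 3 and a differentiate-then-integrate argument for item 4, functionally equivalent to your incomplete-gamma and $E_1$ expansions); and items 6, 7 by bootstrapping through the reduction formula of item 5 with $m=\lfloor\alpha\rfloor$, $\beta=\alpha-m$ (resp.\ $\beta=1$, $m=n-1$). Your flag on the constant in item 7 is well taken: the reduction gives $P_{n-1}(1)=(n-1)!$, so the computation naturally produces $(-1)^n/(n-1)!$ rather than $(-1)^n/n!$; the paper's proof in fact records ``$P_{n-1}(1)=n!$'', which appears to be a slip.
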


\begin{proof}
Let us first prove Item~\ref{i:dedebile-laplace}. 
The statement for $\alpha>1$ follows from a crude bound and continuity under the integral. For $\alpha<1$, we have 
\begin{align*}
\left| F_{\phi, \alpha} (z) \right| & \leq \|\phi\|_{\infty} \int_{T_0}^\infty t^{-\alpha} e^{-\Re(z) t} |dt| =  \|\phi\|_{\infty}\Re(z)^{1-\alpha} \int_{T_0\Re(z)}^\infty \sigma^{-\alpha} e^{-\sigma} |d\sigma| \\
 & \leq  \|\phi\|_{\infty}\Re(z)^{1-\alpha} \int_{0}^\infty \sigma^{-\alpha} e^{-\sigma} |d\sigma| ,
\end{align*}
which is the sought estimate. In the case $\alpha=1$, we have
\begin{align*}
\left| F_{\phi,1} (z) \right| & \leq \|\phi\|_{\infty} \int_{T_0}^\infty t^{-1} e^{-\Re(z) t} |dt| = \|\phi\|_{\infty}  \int_{T_0\Re(z)}^\infty \sigma^{-1} e^{-\sigma} |d\sigma| \\
& \leq \|\phi\|_{\infty}  \int_{T_0\Re(z)}^1 \sigma^{-1} |d\sigma|  + \|\phi\|_{\infty}  \int_1^\infty e^{-\sigma} |d\sigma| = -\|\phi\|_{\infty}  \ln(T_0 \Re(z)) + \|\phi\|_{\infty} e^{-1}.
\end{align*}
Item~\ref{i:derivees} is a straightforward consequence of  Item~\ref{i:dedebile-laplace} and differentiation under the integral.

\medskip
For Item~\ref{i:laplace-sing-gamma}, we first notice that for $\gamma := -\alpha >-1$ and $z\in \R^*_+$, we have  
\begin{multline*}
F_{\phi,-\gamma} =\int_0^{\infty}t^\gamma e^{-zt}|dt| + \int_0^{\infty}(\phi(t)-1)t^\gamma e^{-zt}|dt|\\
= \frac{1}{z^{\gamma+1}}\int_0^{\infty}\sigma^\gamma e^{-\sigma}|d\sigma| + \int_0^{\infty}(\phi(t)-1)t^\gamma e^{-zt}|dt| .
\end{multline*}
The last integral is an entire function satisfying the sought bound and the result follows from analytic continuation, where the cut plane $\C \setminus \R_-$ is chosen arbitrarily. These bounds give exactly the necessary moderate growth assumption so that the distributional limit
$F_{\phi,\alpha}(iy+0)$ exists by Proposition~\ref{prop:boundaryvaluesholo}.

\medskip
To prove Item~\ref{i:laplace-sing-log}, we differentiate $F_{\phi,1}$ in $\Re(z)>0$ to obtain
\begin{multline*}
\d_z F_{\phi,1}(z) = - \int_0^\infty \phi(t) e^{-zt} dt = - \int_0^\infty e^{-zt} dt + \int_0^\infty (1-\phi(t)) e^{-zt} dt \\
=-\frac{1}{z} + \int_0^\infty (1-\phi(t)) e^{-zt} dt .
\end{multline*}
Integrating this equation on the segment $[1,z]$ for $\Re(z)>0$ implies
$$
F_{\phi,1}(z)-F_{\phi,1}(1) = -\log(z) + \int_0^\infty (1-\phi(t)) \int_1^z e^{-st} ds dt  ,  
$$
whence, for $\Re(z)>0$,
$$
F_{\phi,1}(z)=  -\log(z) + \int_0^\infty \phi(t)t^{-1} e^{-t} dt  +  \int_0^\infty (1-\phi(t)) e^{-\frac{1+z}{2}t}\frac{\sinh \left(\frac{z-1}{2}t \right)}{t} dt .
$$
The right hand side continues holomorphically to $\C\setminus \R_-$, the last integral being on the compact set $[0,T_1]$. Using now that $\sinh(a+ib) = \sinh a \cos b + i \cosh a \sin b$, we have $|\frac{\sinh(t(a+ib))}{t}| \leq |\frac{\sinh(ta)}{t}| + \cosh(ta) \leq 2\cosh(ta) \leq e^{|ta|}$, whence 
\begin{multline*}
\left| \int_0^\infty (1-\phi(t)) e^{-\frac{1+z}{2}t}\frac{\sinh \left(\frac{z-1}{2}t \right)}{t} dt  \right| \\ 
\leq C_\phi \int_0^{T_1} e^{-\frac{1+\Re(z)}{2}t} e^{\frac{|\Re(z)-1|}{2}t}dt =C_\phi \left( \frac{e^{T_1 A}-1}{A}\right)\Big|_{A = \frac{|\Re(z)-1|-(\Re(z)+1)}{2}} ,
\end{multline*}
which implies the sought estimate. The distributional limit again follows from Proposition~\ref{prop:boundaryvaluesholo} and the bound from item~\ref{i:dedebile-laplace}.

\medskip
Item~\ref{i:rec-a-2-bal} is proved as in Lemma~\ref{l:f-phi-lam} and consists in integrating by parts $m$ times to obtain, for $\text{Re}(z)>0$,
$$
F_{\phi, \alpha}(z) =\int_0^\infty \left(\frac{1}{z} \frac{d}{dt}\right)^m (\phi(t)t^{-\alpha}) e^{-zt} |dt| ,
$$ and then expanding with the Leibniz formula. We obtain the formula~\eqref{f-phi-lambda-IPP-bis} with  
$$E_{\alpha,m}(z)=\sum_{j=0}^{m-1}(-1)^{j} \begin{pmatrix} m\\ j\end{pmatrix}   P_j(\alpha) F_{\phi^{(m-j)},\alpha+j}(z),$$ and the estimate~\eqref{e:estim-E} follows from the fact that $|F_{\phi^{(m-j)},\alpha}(z)|\leq C_{\phi,\alpha}(e^{-T_1\Re(z)}+1)$ for $z\in\C$ if $m-j>0$ since $\phi^{(m-j)}$ is compactly supported in $\IR_+^*$.

\medskip
Item~\ref{i:laplace-sing-alpha} is a consequence of Items~\ref{i:laplace-sing-gamma} and~\ref{i:rec-a-2-bal} for $m = \lfloor \alpha \rfloor \in \Z_+$ and $\beta = \alpha -\lfloor \alpha \rfloor \in (0,1)$. From~\eqref{f-phi-lambda-IPP-bis}, we obtain 
\begin{align*}
F_{\phi,\alpha}(z)  &= F_{\phi,\beta+m}(z)  = (-1)^m  \frac{z^m}{P_m(\beta)}  F_{\phi,\beta}(z) -  \frac{(-1)^m}{P_m(\beta)} E_{\beta,m}(z)   \\
& = (-1)^m  \frac{z^m}{P_m(\beta)}  \left(  \frac{\Gamma(1-\beta)}{z^{1-\beta}} + H_{\beta}(z) \right)-  \frac{(-1)^m}{P_m(\beta)} E_{\beta,m}(z)  ,
\end{align*}
where we have used Item~\ref{i:laplace-sing-gamma} in the second line. We further notice from $\Gamma(z+1)=z\Gamma(z)$ that $P_m(\beta) = \frac{\Gamma(\beta+m)}{\Gamma(\beta)}$ (see~\eqref{e:def-Pj})
whence
\begin{align*}
F_{\phi,\alpha}(z) = (-1)^m z^{m+\beta-1} \frac{\Gamma(1-\beta)\Gamma(\beta)}{\Gamma(\beta+m)} +  \frac{(-1)^mz^m}{P_m(\beta)}   H_{\beta}(z) -  \frac{(-1)^m}{P_m(\beta)} E_{\beta,m}(z)  ,
\end{align*}
and hence the sought formula recalling $\Gamma(1-\beta)\Gamma(\beta) = \frac{\pi}{\sin(\pi\beta)} = (-1)^m \frac{\pi}{\sin(\pi(m+\beta))} $.

\medskip
Item~\ref{i:laplace-sing-log-n} is a consequence of Items~\ref{i:laplace-sing-log} and~\ref{i:rec-a-2-bal} taken for $\beta=1$ and $m=n-1$, and the fact that $P_{n-1}(1)=n!$.
\end{proof}

\subsubsection{Continuous/$\mathcal{C}^k$ continuation of $\widehat{\chi_s^L}(-i\mathbf{V}_{\beta_0})$} Before discussing the Continuous/$\mathcal{C}^k$  continuation, let us first clarify its holomorphic properties on $\text{Re}(s)>0$:
\begin{prop}\label{p:laplace-holomorphe} Let $\chi_{\infty}$ be a function verifying assumption~\eqref{e:cutoff-infini}, let $\beta_0\in H^1(\IT^d,\IR)$ and let $\tilde{x}:\IS^{d-1}\rightarrow \IR^d$ be a smooth function. 
Then, for all $(\varphi,\psi)\in\Omega^{k_1}(S\IT^d)\times\Omega^{k_2}(S\IT^d)$ with $k_1+k_2=2d-1$, the function
  \begin{equation}
\label{e:Laplace-t}
s \mapsto \mathscr{L}_{(\varphi,\psi)}(s)  := \int_{S\IT^d}\varphi\wedge \widehat{\chi_{s}^L}(-i\mathbf{V}_{\beta_0})\mathbf{T}_{-\tilde{x}}^*(\psi) 
\end{equation}
is holomorphic on $\Re(s)>0$ and it satisfies
\begin{align}
\label{e:Laplace-rough-estim}
\left|  \mathscr{L}_{(\varphi,\psi)}(s)   \right|  
 \leq   \frac{C}{\Re(s)^{\min \{k_1,k_2\} +1}} \| \varphi \|_{L^2(S\T^d)} \| \psi \|_{L^2(S\T^d)} .
 \end{align}
\end{prop}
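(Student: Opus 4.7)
The plan is to adapt directly the argument for the Mellin transform in Proposition~\ref{p:mellin-holomorphe}, replacing the weight $\chi_\infty(t) t^{-s}$ by the Laplace weight $\chi_\infty(t) e^{-st}$. By bilinearity and density, I will work with smooth forms $(\varphi,\psi) \in \Omega^{k_1}(S\IT^d) \times \Omega^{k_2}(S\IT^d)$. Applying Lemma~\ref{l:reduction}, I first decompose the correlator according to its polynomial behaviour in $t$:
\begin{equation*}
\Cor_{\varphi,\mathbf{T}_{-\tilde{x}}^*(\psi)}(t,\beta_0) \; = \; \sum_{l=0}^{\min\{k_1,k_2\}} \Cor^l_{\varphi,\mathbf{T}_{-\tilde{x}}^*(\psi)}(t,\beta_0),
\end{equation*}
which after integration against $\chi_\infty(t) e^{-st}$ yields the corresponding splitting
\begin{equation*}
\mathscr{L}_{(\varphi,\psi)}(s) \; = \; \sum_{l=0}^{\min\{k_1,k_2\}} \mathscr{L}^{(l)}_{(\varphi,\psi)}(s), \qquad \mathscr{L}^{(l)}_{(\varphi,\psi)}(s) := \int_0^\infty \chi_\infty(t) e^{-st} \Cor^l_{\varphi,\mathbf{T}_{-\tilde{x}}^*(\psi)}(t,\beta_0) \, |dt|.
\end{equation*}

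Next, I recall the crude bound~\eqref{e:rough-estim-corr}, namely $|\Cor^l(t,\beta_0)| \leq C t^l \|\varphi\|_{L^2(S\T^d)} \|\psi\|_{L^2(S\T^d)}$, which follows from the Cauchy--Schwarz estimate~\eqref{e:control-Cm-norm-bis} applied with $s=0$. This gives the pointwise majoration $|\chi_\infty(t) e^{-st} \Cor^l(t,\beta_0)| \leq C \chi_\infty(t) t^l e^{-\Re(s) t}\|\varphi\|_{L^2}\|\psi\|_{L^2}$, from which holomorphy of $\mathscr{L}^{(l)}_{(\varphi,\psi)}$ on $\{\Re(s)>0\}$ follows from dominated convergence and differentiation under the integral. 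Moreover, by Item~\ref{i:dedebile-laplace} of Lemma~\ref{l:f-phi-lam-Laplace} applied with $\alpha = -l < 1$, the integral $F_{\chi_\infty, -l}(s) = \int_0^\infty \chi_\infty(t) t^l e^{-st}\,|dt|$ satisfies $|F_{\chi_\infty, -l}(s)| \leq C_l \Re(s)^{-(1+l)}$ for $\Re(s)>0$. Hence
\begin{equation*}
|\mathscr{L}^{(l)}_{(\varphi,\psi)}(s)| \; \leq \; \frac{C_l}{\Re(s)^{1+l}} \|\varphi\|_{L^2(S\T^d)} \|\psi\|_{L^2(S\T^d)}.
\end{equation*}

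Finally, summing these estimates over $0 \leq l \leq \min\{k_1,k_2\}$ and noting that for $0<\Re(s)\leq 1$ the dominant term is $l = \min\{k_1,k_2\}$, while for $\Re(s) \geq 1$ each term is bounded uniformly (which is absorbed in the constant since $\Re(s)^{-(\min\{k_1,k_2\}+1)} \leq 1 \leq$ exponentially small bound), we obtain the stated estimate~\eqref{e:Laplace-rough-estim}. No substantial obstacle arises: the argument is a clean application of the rough correlator bound combined with Lemma~\ref{l:f-phi-lam-Laplace}, and the sharper analytical properties (extension across $\Re(s)=0$, structure of the singularities) are reserved for the subsequent Theorem~\ref{t:general-laplace}, where stationary phase estimates will be required.
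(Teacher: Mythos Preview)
Your proof is correct and follows essentially the same route as the paper: decompose the correlator via Lemma~\ref{l:reduction}, invoke the crude bound~\eqref{e:rough-estim-corr}, and apply Item~\ref{i:dedebile-laplace} of Lemma~\ref{l:f-phi-lam-Laplace} to each $\mathscr{L}^{(l)}$. The only slightly garbled point is your last paragraph's treatment of $\Re(s)\geq 1$: the cleanest fix is to note that since $\chi_\infty$ is supported in $[T_0,\infty)$ with $T_0\geq 1$, one has $t^l\leq t^{\min\{k_1,k_2\}}$ on its support, so every $F_{\chi_\infty,-l}$ is already bounded by $C/\Re(s)^{\min\{k_1,k_2\}+1}$ for all $\Re(s)>0$ (the paper itself simply writes ``as $\Re(s)\to 0^+$'' and glosses over this).
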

Recall that $\min \{k_1,k_2\} \leq d-1$ so that the latter estimate can always be roughly bounded by $\frac{C}{\Re(s)^d}$.

\begin{proof}
 We start with~\eqref{e:integrated-correlation} and we use again Lemma~\ref{l:reduction} to write
 \begin{eqnarray*}
 \Cor_{\varphi,\mathbf{T}_{-\tilde{x}}^*(\psi)}(t,\beta_0) = \sum_{l=0}^{\min\{k_1,k_2\}} \Cor_{\varphi,\mathbf{T}_{-\tilde{x}}^*(\psi)}^{l}(t,\beta_0) 
 \end{eqnarray*}
with 
$$
 \Cor_{\varphi,\mathbf{T}_{-\tilde{x}}^*(\psi)}^{l}(t,\beta_0)  = 
 \frac{t^l}{l!}\sum_{\xi\in\IZ^d}\int_{\IS^{d-1}}e^{it(\xi-\beta_0)\cdot\mathbf{v}(\theta)}e^{i\xi\cdot\tilde{x}(\theta)}B_{\tilde{x},\xi}^{(k_2,l)}(\varphi,\psi)(\theta) d\Vol(\theta). 
$$
Integrating against $\chi_\infty(t) e^{-st}$ then yields
 \begin{eqnarray}
 \label{e:decomp-again-again-laplace}
\mathscr{L}_{(\varphi,\psi)}(s)  =   \int_{S\IT^d}\varphi\wedge \widehat{\chi_{s}^M}(-i\mathbf{V}_{\beta_0})\mathbf{T}_{-\tilde{x}}^*(\psi) = \sum_{l=0}^{\min\{k_1,k_2\}}\mathscr{L}^{(l)}_{(\varphi,\psi)}(s) ,
  \end{eqnarray}
 with 
\begin{align}
\label{e:def-J-l-again-laplace}
\mathscr{L}^{(l)}_{(\varphi,\psi)}(s) = \int_{\R}\chi_\infty (t) e^{-st} \Cor_{\varphi,\mathbf{T}_{-\tilde{x}}^*(\psi)}^{l}(t,\beta_0) |dt| .
\end{align}
According to~\eqref{e:rough-estim-corr}, we have
\begin{align*}
\left|  \Cor_{\varphi,\mathbf{T}_{-\tilde{x}}^*(\psi)}^{l}(t,\beta_0) \right| \leq C t^l\| \varphi \|_{L^2(S\T^d)} \| \psi \|_{L^2(S\T^d)},
\end{align*}
which according to~\eqref{e:def-J-l-again-laplace} implies holomorphy of $\mathcal{L}^{(l)}_{(\varphi,\psi)}$ in $\Re(s)>0$. 
We further deduce that 
\begin{multline*}
\left|  \mathcal{L}^{(l)}_{(\varphi,\psi)}(s)\right| \leq 
 \int_{\R}    \left| \chi_\infty (t) e^{-st}  \Cor_{\varphi,\mathbf{T}_{-\tilde{x}}^*(\psi)}^{l}(t,\beta_0) \right| |dt|\\
 \leq C\int_\R \left| \chi_\infty (t)  t^{l}  e^{-st} \right|  |dt|\| \varphi \|_{L^2(S\T^d)} \| \psi \|_{L^2(S\T^d)} .
\end{multline*}
Recalling that $l \leq \min \{k_1,k_2\}$, Item~\ref{i:dedebile-laplace} in Lemma~\ref{l:f-phi-lam-Laplace} then yields, as $\text{Re}(s)\rightarrow 0^+$,
$$
 \left| \mathscr{L}^{(l)}_{(\varphi,\psi)}(s) \right| \leq \frac{C}{\Re(s)^{1+ \min \{k_1,k_2\} }} 
 \| \varphi \|_{L^2(S\T^d)} \| \psi \|_{L^2(S\T^d)} ,
 $$
from which we infer~\eqref{e:Laplace-rough-estim} thanks to~\eqref{e:decomp-again-again-laplace}.
\end{proof}

We now turn to our main statement on these regularized Laplace transforms.
Lemma~\ref{l:f-phi-lam-Laplace} leads us to introduce the functions 
\begin{align}
\label{e:def-functionsF}
\mathsf{F}_{\alpha}(z) := 
\left\{
\begin{array}{ll}
\displaystyle  \frac{\Gamma(1-\alpha)}{z^{1-\alpha}}  , &  \text{ if } \quad \alpha <1   \text{ or } \alpha >1 , \alpha \notin \Z_+ ,\\
\displaystyle  \frac{(-1)^n}{n!} z^{n-1} \log(z) ,  & \text{ if } \quad\alpha = n \in \Z_{+} ,\end{array}
\right.
\end{align}
these functions are considered as holomorphic functions on the plane $\C\setminus \R_-$ (except if $\alpha \in \Z_-^*$ in which case $\mathsf{F}_{\alpha}$ is holomorphic in $\C^*$). We also associate the corresponding distributions obtained as boundary values that we still denote by $\mathsf{F}_{\alpha}$:
\begin{align}
\label{e:def-boundaryvaluesF}
\mathsf{F}_{\alpha}(iy+0) := 
\left\{
\begin{array}{ll}
\displaystyle  \frac{\Gamma(1-\alpha)e^{i\frac{\pi}{2}(\alpha-1)}}{(y-i0)^{1-\alpha}}  , &  \text{ if } \quad \alpha <1 \text{ or }\alpha >1 , \alpha \notin \Z_+\\
\displaystyle  \frac{(-1)^ne^{i\frac{\pi}{2}(n-1)}}{n!} y^{n-1} \left(\log(y-i0)+\frac{\pi}{2}\right) ,  & \text{ if } \quad\alpha = n \in \Z_{+}  
\end{array}
\right.
\end{align}
Both will describe the singularities of the Laplace transform of correlators up to the imaginary axis.
For a given $\alpha \in \R$, $\mathsf{F}_{\alpha}$ is essentially the Laplace transform of $t^{-\alpha}$ (near $t=+\infty$).

We also denote
$$
\CLam :=\left\{s \in \overline{\C}_+, |\Im(s)| \leq \Lambda \right\}  = \left\{s \in  \C ,\Re(z)\geq0 , |\Im(s)| \leq \Lambda \right\},
$$
and explain how the Laplace transform extends to this set.

\begin{theo}\label{t:general-laplace} Suppose that the assumptions of Theorem~\ref{t:general-mellin} on $\tilde{x}$, $\chi_\infty$ and $T_0$ are satisfied and let\footnote{In particular, it only depends on the choice of $\chi_{1}$ in the stationary phase Lemma.} $c_0>0$ be the constant from Lemma~\ref{l:support-chi1}.
Given $N,N_0 \in \IZ_+^2,m\geq 0, \Lambda \geq 1$, and $(\varphi,\psi)\in\Omega^{k_1}(S\IT^d)\times\Omega^{k_2}(S\IT^d)$ with $k_1+k_2=2d-1$, we define, for all $s\in \C$ with $\Re(s)>0$, 
\begin{align}
 \label{e:expansion-laplace}
\mathscr{R}^{(N,\Lambda )}_{(\varphi,\psi)}(s)&  : =\mathscr{L}_{(\varphi,\psi)}(s) - 
\sum_{l=0}^{\min\{k_1,k_2\}}  \frac{ E^{(l)}_{\beta_0}}{s^{l+1}} \nonumber \\
&  - \sum_{l=0}^{\min\{k_1,k_2\}}\sum_{\pm}  \sum_{j=0}^{N-1}
 \sum_{0< |\xi-\beta_0| \leq 2c_0^{-1} \Lambda}    
\frac{\mathsf{F}_{\frac{d-1}{2}+j-l}\big(s- i\lambda_\pm(\xi)\big)}{l!|\xi-\beta_0|^{\frac{d-1}{2}+j}} 
 P_{j,l,\xi}^\pm[\varphi,\psi]\left(\pm \frac{\xi-\beta_0}{|\xi-\beta_0|}\right)  ,
\end{align}
where $E^{(l)}_{\beta_0}$ is defined in~\eqref{def-El}, $\mathsf{F}_{\alpha}$ in~\eqref{e:def-functionsF},  and, letting $L_{j,\omega}^\pm$ being that of Lemma~\ref{c:stat-points}, with
\begin{align}
\label{e:def-P-j} 
P_{j,l,\xi}^\pm[\varphi,\psi]\left( \omega \right) 
= \frac{e^{\mp i\frac{\pi}{4}(d-1)}}{\sqrt{\kappa\circ\mathbf{v}(\pm\omega)}} (2\pi)^{\frac{d-1}{2}} L_{j,\omega}^\pm (e^{i \xi \cdot \tilde{x}(\cdot)} B_{\tilde{x},\xi}^{(k_2,l)}(\varphi,\psi)) \left(\omega \right) , \quad \omega \in \IS^{d-1}.
\end{align}
Both sides of~\eqref{e:expansion-laplace} extend for $s=x+iy$ when $x\rightarrow 0^+$ as tempered distributions of the variable $y$ where $\mathsf{F}_{\alpha}$ is defined in~\eqref{e:def-boundaryvaluesF}.

For any $N,N_0 \in \IZ_+^2$ such that 
$$
\mathsf{k}:=\mathsf{k}(N_0,N)=\min\left\{N_0,N + \left\lceil \frac{d-1}{2}\right\rceil\right\}- \min\{k_1,k_2\} - 2>0,
$$ 
and for any $m\geq 0$, there exists $C= C_{N_0,N,m}>0$ such that for any $\Lambda  \geq 1$ and 
for every $(\varphi, \psi) \in \mathcal{H}^{N_0,-N_0/2,2N+d,-m}_{k_1,\beta_0} \times \mathcal{H}^{N_0,-N_0/2,2N+d,-m}_{k_2,-\beta_0}$ with $k_1+k_2=2d-1$
 the function $\mathscr{R}^{(N,\Lambda)}_{(\varphi,\psi)}$ originally defined for $\Re(s)>0$, extends as a function $\mathscr{R}^{(N,\Lambda)}_{(\varphi,\psi)}\in \mathcal{C}^\mathsf{k}(\CLam)$ for $\mathsf{k}=\mathsf{k}(N_0,N)$ with 
\begin{align}
\label{e:estim-remainder-lap}
 \left\| \mathscr{R}^{(N,\Lambda)}_{(\varphi,\psi)} \right\|_{\mathcal{C}^\mathsf{k}(\CLam)} 
   \leq C \Lambda^{2m+N+\frac{d+1}{2}} \|\varphi\|_{\mathcal{H}^{N_0,-N_0/2,2N+d,-m}_{k_1,\beta_0}}\|\psi \|_{\mathcal{H}^{N_0,-N_0/2,2N+d,-m}_{k_2,-\beta_0}},
\end{align}
using the notation of Definition~\ref{def-ani-spaces}.
\end{theo}
In particular, this theorem states that for $(\varphi,\psi)\in\Omega^{k_1}(S\IT^d)\times\Omega^{k_2}(S\IT^d)$, the Laplace transform $\mathscr{L}_{(\varphi,\psi)}(s)$ extends as a $\mathcal{C}^\infty$ function in a neighborhood in $\overline{\C}_+$ of any point $z_0 \in i \R \setminus \left(i\Lambda_{\beta_0}\cup\{0\}\right)$, where $\Lambda_{\beta_0}$ was defined in~\eqref{e:critical-values}. 
This follows from the fact that bookkeeping the regularities in the proof below, we may choose the regularity exponent $k=\inf( N_0-2-\inf(k_1,k_2), N-2-\inf(k_1,k_2)+\frac{d-1}{2} )$ and we see that $k\rightarrow +\infty$ when $N,N_0\rightarrow +\infty$.
 Moreover, when $\text{Re(s)}>0$ goes to zero, then $y\mapsto \mathscr{L}_{(\varphi,\psi)}(iy+0)$ makes sense as a tempered distribution obtained as boundary value of holomorphic function and Theorem~\ref{t:general-laplace} describes its singularity near any point in $\Lambda_{\beta_0}$ explicitly in terms of the distributions $\mathsf{F}_{\alpha}$ in~\eqref{e:def-boundaryvaluesF}. In particular, if $d$ is odd,~\eqref{e:expansion-laplace} gives an expansion of the limit Laplace transform $\lim_{x\rightarrow 0^+} \mathscr{L}_{(\varphi,\psi)}(x+iy)$ in terms of the distributions $\frac{1}{(y-i0-z_j)^m}$ and $(y-z_j)^n \log(y-i0-z_j)$, for $z_j \in \Lambda_{\beta_0}$ and $m ,n \in \Z_+ ,m\leq \min\{k_1,k_2\}-\frac{d-1}{2}$. If $d$ is even,~\eqref{e:expansion-laplace} is an expansion of the limit Laplace transform in terms of the distributions $\frac{1}{(y-i0-z_j)^{m/2}}$ for $z_j \in  \Lambda_{\beta_0}$ and $m \in \Z ,m\leq 2\min\{k_1,k_2\}- d+1$. 
The coefficients in this expansion are in principle explicit; recall for instance that $L_{0,\omega}^\pm=1$.
Note also that 
in~\eqref{e:estim-remainder-lap} the regularity of the resolvent up to the imaginary axis, given by the index $\mathsf{k}(N_0,N)$, depends explicitly on the anisotropic Sobolev regularity $\mathcal{H}^{N_0,-N_0/2,2N+d,-m/2}$ of the currents $(\varphi, \psi)$. The bigger $N_0,N$ are, the better the regularity of this background remainder term is.

Note finally that in case $\tilde{x}=0$ (and when computing the resolvent acting on functions), the proof simplifies slightly and the estimate~\eqref{e:estim-remainder-lap} of the remainder is better behaved in terms of spaces and powers of $\Lambda$. As far as the proof is concerned, it is worth noticing that, as opposed to the proofs of Theorems~\ref{th:correlation-cut-off} and~\ref{t:general-mellin}, \textbf{we do actually make use of both non-stationary and stationary phase estimates} of Lemmas~\ref{c:non-stat-points} and~\ref{c:stat-points}. 

\begin{proof}[Proof of Theorem~\ref{t:general-laplace} ]
As in the proof of Proposition~\ref{p:laplace-holomorphe}, we consider $(\varphi,\psi)\in\Omega^{k_1}(S\IT^d)\times\Omega^{k_2}(S\IT^d)$ and decompose $\Cor_{\varphi,\mathbf{T}_{-\tilde{x}}^*(\psi)}(t,\beta_0)$ as a sum of $\Cor_{\varphi,\mathbf{T}_{-\tilde{x}}^*(\psi)}^{l}(t,\beta_0)$ according to~\eqref{e:decomposition1}--\eqref{e:decomposition2}. Then decomposing $\Cor_{\varphi,\mathbf{T}_{-\tilde{x}}^*(\psi)}^{l}(t,\beta_0)$ according to~\eqref{e:decomposition-de-ouf}, we are left with describing the terms $\mathscr{L}^{(l)}_{(\varphi,\psi)}(s)$ in~\eqref{e:decomp-again-again-laplace}--\eqref{e:def-J-l-again-laplace}, which we again decompose accordingly as 
 \begin{align}
\label{e:decmposition-L}
\mathscr{L}^{(l)}_{(\varphi,\psi)}(s) =\mathscr{L}^{(l,E)}_{(\varphi,\psi)}(s) +\mathscr{L}^{(l,-1)}_{(\varphi,\psi)}(s)+\mathscr{L}^{(l,0)}_{(\varphi,\psi)}(s)+\mathscr{L}^{(l,1)}_{(\varphi,\psi)}(s) , 
 \end{align}
with 
\begin{align}
\mathscr{L}^{(l,E)}_{(\varphi,\psi)}(s)&= \int_0^\infty\chi_\infty (t)e^{-st}\frac{t^l}{l!}E^{(l)}_{\beta_0}|dt|,\quad\text{ and }  \nonumber \\ 
 \mathscr{L}^{(l,j)}_{(\varphi,\psi)}(s) &= \int_0^\infty\chi_\infty (t) e^{-st}  \frac{t^l}{l!} \Cor^{l}_{ j}(t) |dt| , 
  \quad \text{ for }j \in \{-1,0,1\}  .
  \label{e:Lj}
  \end{align}
We now study each of these terms separately.

\medskip
Firstly, we have using Item~\ref{i:laplace-sing-gamma} of Lemma~\ref{l:f-phi-lam-Laplace}
\begin{align}
\label{e:value-JlE-laplace}
\mathscr{L}^{(l,E)}_{(\varphi,\psi)}(s) =E^{(l)}_{\beta_0} \frac{1}{l!} \int_{0}^\infty\chi_\infty (t)t^{l}e^{-st }|dt|= 
 \frac{ E^{(l)}_{\beta_0}}{l!}
 F_{\chi_\infty,-l} (s)  =   \frac{ E^{(l)}_{\beta_0}}{s^{l+1}} + \frac{ E^{(l)}_{\beta_0}}{l!} H_{-l}(s)  
 \end{align}
 where $H_{-l}$ is an entire function such that $|\partial_s^kH_{-l} (s)|\leq C_{\chi_\infty,-l,k}(e^{-T_1 \Re(s)}+1)$ on $\C$.

\medskip
Secondly, we consider the term with $\mathscr{L}^{(l,0)}_{(\varphi,\psi)}(s)$ and proceed as in the proof of Theorem~\ref{th:correlation-cut-off}.
The index $j=0$ means that we consider a good term in the nonstationary phase region. According to Lemma~\ref{c:non-stat-points}, we have, for $t \geq \mathsf{t}_0$, all $N\in \IZ_+$,
\begin{align*}
| \Cor^{l}_{ 0}(t)  |  \leq C_N \sum_{\xi\in \IZ^d \setminus\{\beta_0\}}  (|\xi-\beta_0| t )^{-N}   \|B_{\tilde{x},\xi}^{(k_2,l)}(\varphi,\psi)\|_{W^{N,1}(\mathbf{C}_0(\xi-\beta_0))}. 
\end{align*}
We now use that $\supp(\chi_{\infty})\subset[T_0,+\infty)$ with $T_0\geq\max\{1, \mathsf{t}_0 \}$.
Integrating in~\eqref{e:Lj}, using Item~\ref{i:dedebile-laplace} in Lemma~\ref{l:f-phi-lam-Laplace}, we deduce that $\mathscr{L}^{(l,0)}_{(\varphi,\psi)}(s)$ extends as a function in $\mathcal{C}^\infty(\overline{\C}_+)$ with 
\begin{align}
\left| \d_s^k \mathscr{L}^{(l,0)}_{(\varphi,\psi)}(s) \right| & \leq C_{N_0}  \int_{0}^\infty\chi_\infty (t)t^{l+k} t^{-N_0} e^{-\Re(s)t }|dt| \sum_{\xi\in \IZ^d \setminus\{\beta_0\}}   \frac{ \|B_{\tilde{x},\xi}^{(k_2,l)}(\varphi,\psi)\|_{W^{N_0,1}(\mathbf{C}_0(\xi-\beta_0))}}{ |\xi-\beta_0|^{N_0}}\nonumber  \\
& \leq  C_{N_0,k}  \sum_{\xi\in\IZ^d \setminus\{\beta_0\}}\frac{\left\|\pi_{\xi}^{(k_1)}(\varphi)\right\|_{H^{N_0}\left(\mathbf{C}_0(\xi-\beta_0) \right)} \left\|\pi_{-\xi}^{(k_2)}(\psi)\right\|_{H^{N_0}\left(\mathbf{C}_0(\xi-\beta_0) \right)}}{|\xi-\beta_0|^{N_0}} 
 , 
\label{e:laplace-L0}
\end{align}
uniformly on $\Re(s)\geq 0$, as soon as $N_0-k -l >1$. Recalling that $l \leq \min \{k_1,k_2\}$, this holds for all $N_0,k$ such that $N_0 > k+ \min \{k_1,k_2\}+1$.

\medskip
Thirdly, we consider the term with $\mathscr{L}^{(l,\pm1)}_{(\varphi,\psi)}(s)$. According to~\eqref{e:Lj} and the expression of $\Cor^{(l,\pm1)}(t)$ in~\eqref{e:decomposition-de-ouf}, we have 
\begin{multline*}
\mathscr{L}^{(l,\pm1)}_{(\varphi,\psi)}(s) =   \sum_{\xi\in\IZ^d \setminus\{\beta_0\}}\int_{\IS^{d-1}}\left(  \int_{\IR}\chi_\infty(t) \frac{t^l}{l!} e^{-st }e^{it(\xi-\beta_0) \cdot \mathbf{v}(\theta)}   |dt| \right)  \\
\times  \chi_{\pm1} \left(\theta \cdot  \frac{\xi-\beta_0}{|\xi-\beta_0|} \right)e^{i\xi\cdot\tilde{x}(\theta)}B_{\tilde{x},\xi}^{(k_2,l)}(\varphi,\psi)(\theta)d\Vol(\theta) .
\end{multline*}

\medskip
\textbf{An extra decomposition in large and small Fourier modes.}

Given $\Lambda>0$, we recall that we always assume $|\Im(s)|\leq \Lambda$, and we split (further) this expression according to    
\begin{align*}
& \mathscr{L}^{(l,\pm1)}_{(\varphi,\psi)}(s)  = \mathscr{L}^{(l,\pm1)}_{\leq}(s) +\mathscr{L}^{(l,\pm1)}_{>}(s) , \quad \text{ with }   \\
&   \mathscr{L}^{(l,\pm1)}_{\leq}(s):= 
\sum_{\xi\in\IZ^d ,0< |\xi-\beta_0| \leq 2 c_0^{-1}\Lambda}\cdots , \quad  \text{ and } \quad \mathscr{L}^{(l,\pm1)}_{>}(s):= 
\sum_{\xi\in\IZ^d , |\xi-\beta_0| > 2c_0^{-1} \Lambda} \cdots,
\end{align*}
where $c_0>0$ is the constant from Lemma~\ref{l:support-chi1} (depending only on $\chi_1$ and $K$). In other words, we decomposed the sum $\sum_\xi$ into an infinite sum over large Fourier modes (i.e. $\vert \xi\vert$ far from $\vert s\vert$) and into a finite sum over small Fourier modes. We will apply stationary phase estimates only to the finite sum and use integration by parts with the infinite sum to get decay in $\xi$.
Let us first consider the good term $\mathscr{L}^{(l,\pm1)}_{>}(s)$. 
Recalling Lemma~\ref{l:support-chi1}, one has 
$\left|\mathbf{v}(\theta) \cdot (\xi-\beta_0)\right| \geq  c_0 |\xi-\beta_0|$ for $(\theta, \xi)$ in the support of $\chi_{\pm1} \left(\theta \cdot  \frac{\xi-\beta_0}{|\xi-\beta_0|} \right)$.
For $|\Im(s)|\leq \Lambda$, and $(\theta, \xi)$ in the support of $\chi_{\pm1} \left(\theta \cdot  \frac{\xi-\beta_0}{|\xi-\beta_0|} \right)$ and such that $ |\xi-\beta_0| > \frac{2}{c_0} \Lambda$, we thus have a lower bound on the phase factor:
$$
|s- i(\xi-\beta_0)\cdot\mathbf{v}(\theta) | \geq |\Im(s)  - (\xi-\beta_0)\cdot\mathbf{v}(\theta)| \geq c_0 |\xi-\beta_0| -  |\Im(s)| \geq \frac{c_0}{2} |\xi-\beta_0|  .
$$
According to Items~\ref{i:dedebile-laplace},~\ref{i:derivees} and~\ref{i:rec-a-2-bal} of Lemma~\ref{l:f-phi-lam-Laplace} applied with $z=s-i(\xi-\beta_0) \cdot \mathbf{v}(\theta)$, $\alpha=-l$ and $\beta=-(l+k)$, we deduce that for any $m, k \in \IZ_+$,
$$
\left| \d_s^k \int_{\IR}\chi_\infty(t) \frac{t^l}{l!}e^{-st }e^{it(\xi-\beta_0) \cdot \mathbf{v}(\theta)}|dt|  \right| \leq \frac{C_{m,k}}{|s- i(\xi-\beta_0)\cdot \mathbf{v}(\theta) |^m}\leq  \frac{C_{m,k}}{|\xi-\beta_0|^m} , 
$$
uniformly in $\Lambda>0$, $\Re(s) \geq 0, |\Im(s)|\leq \Lambda$, $ |\xi-\beta_0| > 2 c_0^{-1}\Lambda$, $(\theta, \xi)$ in the support of $\chi_{\pm1} \left(\theta \cdot  \frac{\xi-\beta_0}{|\xi-\beta_0|} \right)$. Moreover, this integral extends as a function in $\mathcal{C}^\infty(\CLam)$.
As a consequence, $\mathscr{L}^{(l,\pm1)}_{>}$ also extends as a function in $\mathcal{C}^\infty(\CLam)$ with, for $s \in \CLam$,
\begin{align}
\label{e:Lgeq}
\left|  \d_s^k \mathscr{L}^{(l,\pm1)}_{>}(s) \right| 
& \leq  C_{m,k}
  \sum_{\xi\in\IZ^d , |\xi-\beta_0| > 2c_0^{-1} \Lambda} \frac{1}{|\xi-\beta_0|^m} \left\| B_{\tilde{x},\xi}^{(k_2,l)}(\varphi,\psi) \right\|_{L^1\left(\mathbf{C}_{\pm1}(\xi-\beta_0) \right)} \nonumber \\
&   \leq  C_{m,k}
  \sum_{\xi\in\IZ^d , |\xi-\beta_0| > 2  c_0^{-1}\Lambda} \frac{1}{|\xi-\beta_0|^m}
  \left\|\pi_{\xi}^{(k_1)}(\varphi)\right\|_{L^2\left(\mathbf{C}_{\pm1}(\xi-\beta_0) \right)} \left\|\pi_{-\xi}^{(k_2)}(\psi)\right\|_{L^2\left(\mathbf{C}_{\pm1}(\xi-\beta_0) \right)}  .
  \end{align}

We next consider the term $\mathscr{L}^{(l,\pm1)}_{\leq}(s)$, which we rewrite as
\begin{align}
\label{e:L-express-ouf}
 \mathscr{L}^{(l,\pm1)}_{\leq}(s):= 
\sum_{\xi\in\IZ^d ,0< |\xi-\beta_0| \leq 2 c_0^{-1}\Lambda} 
\int_{\IR}\chi_\infty(t) \frac{t^l}{l!} e^{-st } I_{B_{\tilde{x},\xi}^{(k_2,l)}(\varphi,\psi)}^{(\pm 1)}(\xi-\beta_0,t) |dt| ,
\end{align}
where $I_F^{(\pm 1)}(\xi-\beta_0,t)$ is defined in~\eqref{e:split-3-int-bis}.
We then use the asymptotic expansion in Lemma~\ref{c:stat-points} which yields, with $P_{j,l,\xi}^\pm[\varphi,\psi]$ defined in~\eqref{e:def-P-j},
\begin{align*}
I_{B_{\tilde{x},\xi}^{(k_2,l)}(\varphi,\psi)}^{(\pm 1)}(\xi-\beta_0,t) =   \sum_{j=0}^{N-1} \frac{e^{ it\lambda_\pm(\xi)}}{(t|\xi-\beta_0|)^{\frac{d-1}{2}+j}} 
 P_{j,l,\xi}^\pm[\varphi,\psi]\left(\pm \frac{\xi-\beta_0}{|\xi-\beta_0|}\right) 
 + \frac{R_N^\pm [\varphi,\psi] (\xi,t)}{t^{N+\frac{d-1}{2}}} ,  
\end{align*}
where \begin{align}
\label{e:estim-RN-}
|R_N^\pm [\varphi,\psi](\xi,t) | & \leq C_N  |\xi-\beta_0|^{N+\frac{d+1}2}    \|B_{\tilde{x},\xi}^{(k_2,l)}(\varphi,\psi)\|_{W^{2N+ d,1}(\mathbf{C}_{\pm1}(\xi-\beta_0))}  
\end{align}
(except if $\tilde{x}=0$, in which case this remainder is much better behaved in terms of $\xi$ and it is not necessary to split this again). Note that when $\tilde{x}$ is non zero, the remainder in the stationary phase estimates has good decay properties in the $t$ variable but not in $\xi$, but this is not of our concern since the extra decomposition involves only a finite sum $\sum_{\vert\xi-\beta_0 \vert\leqslant 2\Lambda}$. Coming back to~\eqref{e:L-express-ouf}, we have obtained
\begin{align}
\label{e:decomp-Lleq}
 \mathscr{L}^{(l,\pm1)}_{\leq}(s)& = 
   \frac{\mathsf{c}_d^\pm}{l!}  \sum_{j=0}^{N-1}
   \mathsf{T}_j(s) +\mathsf{R}_N (s) ,
\end{align}
where
\begin{align*}
\mathsf{T}_j^\pm(s) & = \sum_{\xi\in\IZ^d ,0< |\xi-\beta_0| \leq 2 c_0^{-1} \Lambda} 
\left(\int_{\IR_+}\chi_\infty(t) 
\frac{e^{-st +it\lambda_\pm(\xi)}}{t^{\frac{d-1}{2}+j-l}}  |dt| \right)
\frac{1}{|\xi-\beta_0|^{\frac{d-1}{2}+j}} 
 P_{j,l,\xi}^\pm[\varphi,\psi]\left(\pm \frac{\xi-\beta_0}{|\xi-\beta_0|}\right) , \\
\mathsf{R}_N^\pm (s)&  = \sum_{\xi\in\IZ^d ,0< |\xi-\beta_0| \leq 2  c_0^{-1}\Lambda}
 \int_{\IR_+}\chi_\infty(t) \frac{R_N^\pm [\varphi,\psi] (\xi,t)}{t^{N+\frac{d-1}{2}-l}} e^{-st } |dt| .
\end{align*}
According to Items~\ref{i:dedebile-laplace} and~\ref{i:derivees} of Lemma~\ref{l:f-phi-lam-Laplace} and the uniform in $t$ estimate in~\eqref{e:estim-RN-}, $\mathsf{R}_N^\pm$ extends as a function in $\mathcal{C}^k(\CLam)$ as soon as $N+\frac{d-1}{2}-l -k >1$ (which, recalling $l\leq \min\{k_1,k_2\}$, holds for all $l$ if $N,k$ are such that $N > k+ \min\{k_1,k_2\} +1-\frac{d-1}{2}$), with the estimate for $m\geq 0$
\begin{multline}
\left|\d_s^k \mathsf{R}_N^\pm (s) \right| 
  \leq  C_{N,k} \sum_{\xi\in\IZ^d ,0< |\xi-\beta_0| \leq 2 c_0^{-1}\Lambda}   |\xi-\beta_0|^{N+\frac{d+1}2}    \|B_{\tilde{x},\xi}^{(k_2,l)}(\varphi,\psi)\|_{W^{2N+ d,1}(\mathbf{C}_{\pm1}(\xi))}     \\
   \leq  C_{N,m,k} \Lambda^{2m+N+\frac{d+1}{2}} \sum_{\xi\in\IZ^d\setminus \{\beta_0\}}     
  \frac{\left\|\pi_{\xi}^{(k_1)}(\varphi)\right\|_{H^{2N+d}\left(\mathbf{C}_{\pm1}(\xi-\beta_0) \right)} \left\|\pi_{-\xi}^{(k_2)}(\psi)\right\|_{H^{2N+d}\left(\mathbf{C}_{\pm1}(\xi-\beta_0) \right)}}{|\xi-\beta_0|^{2m}}  .
  \label{e:laplace-estim-RN}
  \end{multline}
Then, we have
\begin{align*}
\mathsf{T}_j^\pm(s) & = \sum_{\xi\in\IZ^d ,0< |\xi-\beta_0| \leq 2 c_0^{-1}\Lambda} 
F_{\chi_\infty ,\frac{d-1}{2}+j-l}\big(s- i\lambda_\pm(\xi)\big)
\frac{1}{|\xi-\beta_0|^{\frac{d-1}{2}+j}} 
 P_{j,l,\xi}^\pm[\varphi,\psi]\left(\pm \frac{\xi-\beta_0}{|\xi-\beta_0|}\right) ,
\end{align*}
where the singularity of $s\mapsto F_{\chi_\infty ,\frac{d-1}{2}+j-l}\big(s- i\lambda_\pm(\xi)\big)$ near $s =\pm i|\xi-\beta_0|$ is described in Items~\ref{i:laplace-sing-gamma}-\ref{i:laplace-sing-log}-\ref{i:laplace-sing-alpha}-\ref{i:laplace-sing-log-n} of Lemma~\ref{l:f-phi-lam-Laplace}, namely:
$$
F_{\chi_\infty ,\frac{d-1}{2}+j-l}(z) = \mathsf{F}_{\frac{d-1}{2}+j-l}(z)   + H_{\frac{d-1}{2}+j-l}(z),  
$$
with $\mathsf{F}_{\alpha}$ defined in~\eqref{e:def-functionsF}, 
and $H_{\alpha}$ are entire functions whose derivatives are uniformly bounded by a constant times $\langle z\rangle^{\frac{d-1}{2}}$ on $\Re(s)\geq -1$. Moreover, the terms $\mathsf{T}_j^\pm(s) $ have a limit 
$\lim_{x\rightarrow 0^+}\mathsf{T}_j^\pm(x+iy)  $ in $\mathcal{S}^\prime$ since the terms $\mathsf{F}_{\alpha}$ have boundary value distributions by Lemma~\ref{l:f-phi-lam-Laplace} defined by~\eqref{e:def-boundaryvaluesF}. This allows us to rewrite $\mathsf{T}_j(s)$ as 
\begin{multline*}
\mathsf{T}_j^\pm(s)  = \sum_{\xi\in\IZ^d ,0< |\xi-\beta_0| \leq 2 c_0^{-1}\Lambda} 
\mathsf{F}_{\frac{d-1}{2}+j-l}\big(s- i\lambda_\pm(\xi)\big)
\frac{1}{|\xi-\beta_0|^{\frac{d-1}{2}+j}} 
 P_{j,l,\xi}^\pm[\varphi,\psi]\left(\pm \frac{\xi-\beta_0}{|\xi-\beta_0|}\right) \\
 + \tilde{R}_j^\pm[\varphi,\psi](s),
\end{multline*}
where  
\begin{align*}
\tilde{R}_j^\pm[\varphi,\psi](s) = 
\sum_{\xi\in\IZ^d ,0< |\xi-\beta_0| \leq 2 c_0^{-1} \Lambda} 
 H_{\frac{d-1}{2}+j-l}\big(s- i\lambda_\pm(\xi)\big)
\frac{1}{|\xi-\beta_0|^{\frac{d-1}{2}+j}} 
 P_{j,l,\xi}^\pm[\varphi,\psi]\left(\pm \frac{\xi-\beta_0}{|\xi-\beta_0|}\right)   .
\end{align*}
The function $\tilde{R}_j^\pm$ is holomorphic on $\Re(s)>-1$. Moreover, recalling~\eqref{e:def-P-j}, the fact that the operators $L_{j, \frac{\xi-\beta_0}{|\xi-\beta_0|}}^\pm$ are of order $2j$, and using a Sobolev embedding, this can be estimated as follows, for $s\in \overline{\IC}_+^\Lambda\cap \{|\text{Re}(s)|\leq 1\}$,
\begin{align*}
\left|\d_s^k \tilde{R}_j^\pm[\varphi,\psi](s) \right|  
& \lesssim_k \langle \Lambda\rangle^{\frac{d-1}{2}}
\sum_{\xi\in\IZ^d ,0< |\xi-\beta_0| \leq 2 c_0^{-1} \Lambda} 
\frac{\left|L_{j, \frac{\xi-\beta_0}{|\xi-\beta_0|}}^\pm (e^{i \xi \cdot \tilde{x}(\cdot)} B_{\tilde{x},\xi}^{(k_2,l)}(\varphi,\psi)) \left(\pm \frac{\xi-\beta_0}{|\xi-\beta_0|}\right)  \right|}{|\xi-\beta_0|^{\frac{d-1}{2}+j}} 
 \\
& \lesssim_k \langle \Lambda\rangle^{\frac{d-1}{2}}
\sum_{\xi\in\IZ^d ,0< |\xi-\beta_0| \leq 2 c_0^{-1} \Lambda} 
\frac{\left\| e^{i \xi \cdot \tilde{x}(\cdot)} B_{\tilde{x},\xi}^{(k_2,l)}(\varphi,\psi)   \right\|_{W^{2j,\infty}\left(\mathbf{C}_{\pm1}(\xi-\beta_0) \right)}}{|\xi-\beta_0|^{\frac{d-1}{2}+j}} 
 \\
& \lesssim_k \langle \Lambda\rangle^{\frac{d-1}{2}}
\sum_{\xi\in\IZ^d ,0< |\xi-\beta_0| \leq 2  c_0^{-1}\Lambda} 
\frac{\langle \xi \rangle^{2j}}{|\xi-\beta_0|^{\frac{d-1}{2}+j}} 
\left\|  B_{\tilde{x},\xi}^{(k_2,l)}(\varphi,\psi)   \right\|_{W^{2j+ d+1,1}\left(\mathbf{C}_{\pm1}(\xi-\beta_0) \right)},  
\end{align*}
 which can be bounded one more time using the Cauchy-Schwarz inequality in terms of the norms of $\varphi$ and $\psi$. This yields actually a better bound than the estimate~\eqref{e:laplace-estim-RN} we already have on $\mathsf{R}_N^\pm$ as $j\leq N-1$.

\bigskip
Coming back to the definition of $\mathscr{L}^{(l)}_{(\varphi,\psi)}$ in~\eqref{e:decmposition-L} and collecting~\eqref{e:value-JlE-laplace}, \eqref{e:laplace-L0}, \eqref{e:Lgeq}, \eqref{e:decomp-Lleq}, \eqref{e:laplace-estim-RN} together with the last three lines, we obtain that if  $N+\frac{d-1}{2}-l -k >1$, $N_0-k -l >1$, and $m \geq 0$ , then the function 
\begin{align*}
\mathscr{R}^{(l,N)}_{(\varphi,\psi)}(s) & : =\mathscr{L}^{(l)}_{(\varphi,\psi)}(s) - 
 \frac{ E^{(l)}_{\beta_0}}{s^{l+1}} \\ 
& \quad - \sum_{\pm}  \sum_{j=0}^{N-1}
 \sum_{\xi\in\IZ^d ,0< |\xi-\beta_0| \leq 2 c_0^{-1} \Lambda}   \frac{1}{l!} 
\frac{\mathsf{F}_{\frac{d-1}{2}+j-l}\big(s-i\lambda_\pm(\xi)\big)}{|\xi-\beta_0|^{\frac{d-1}{2}+j}} 
 P_{j,l,\xi}^\pm[\varphi,\psi]\left(\pm \frac{\xi-\beta_0}{|\xi-\beta_0|}\right) 
\end{align*}
extends as $\mathscr{R}^{(l,N)}_{(\varphi,\psi)} \in \mathcal{C}^k(\CLam)$ with, for $s \in \CLam$ with $\text{Re}(s)\leq 1$,
\begin{align*}
& \left|\d_s^k  \mathscr{R}^{(l,N)}_{(\varphi,\psi)}(s)  \right| 
    \leq  C_{N_0,k}  \sum_{\xi\in\IZ^d \setminus\{\beta_0\}}\frac{\left\|\pi_{\xi}^{(k_1)}(\varphi)\right\|_{H^{N_0}\left(\mathbf{C}_0(\xi-\beta_0) \right)} \left\|\pi_{-\xi}^{(k_2)}(\psi)\right\|_{H^{N_0}\left(\mathbf{C}_0(\xi+\beta_0) \right)} }{|\xi-\beta_0|^{N_0}} 
\\
& + C_{N,m,k} \Lambda^{2m+N+\frac{d+1}{2}} \sum_{\xi\in\IZ^d\setminus \{\beta_0\}}    
  \frac{\left\|\pi_{\xi}^{(k_1)}(\varphi)\right\|_{H^{2N+d}\left(\mathbf{C}_{\pm1}(\xi-\beta_0) \right)} \left\|\pi_{-\xi}^{(k_2)}(\psi)\right\|_{H^{2N+d}\left(\mathbf{C}_{\pm1}(\xi+\beta_0) \right)}}{|\xi-\beta_0|^{2m} } \\
  & \leq C_{N_0,N,m,k} \Lambda^{2m+N+\frac{d+1}{2}} \|\varphi\|_{\mathcal{H}^{N_0,-N_0/2,2N+d,-m}_{k_1,\beta_0}}\|\varphi\|_{\mathcal{H}^{N_0,-N_0/2,2N+d,-m}_{k_2,-\beta_0}} ,
\end{align*}
where we have used the notation of Definition~\ref{def-ani-spaces}. When collecting all terms in~\eqref{e:decomp-again-again-laplace}--\eqref{e:def-J-l-again-laplace} this includes the statement of the theorem as $\mathscr{R}^{(N,\Lambda)}_{(\varphi,\psi)}(s) =  \sum_{l=0}^{\min\{k_1,k_2\}}\mathscr{R}^{(l,N)}_{(\varphi,\psi)}(s)$.
\end{proof}

\subsection{Terms near zero in the Mellin and Laplace transforms}
\label{s:terms-near-zero}
To conclude the proofs of Theorems~\ref{t:maintheo-mellin-function} and~\ref{t:maintheo-resolvent-function} (together with their generalization to the case of forms/currents) it remains to describe the properties of the part of the integrals involving $\chi_0=1-\chi_\infty$:
\begin{prop}
\label{p:terms-near-zero}
Let $T_1\geq 1$, assume that $\chi_0\in \mathcal{C}^\infty_c(\R)$ has $\operatorname{supp}(\chi_0)\subset[-T_1,T_1]$, and let $\sigma ,N\in \R$. Consider the operator functions
$$A_{\mathscr{M}} : s \mapsto  \int_{1}^\infty t^{-s}\chi_0(t)e^{-t\mathbf{V}_{\beta_0}}|dt|\quad\text{and} \quad A_{\mathscr{L}} : s \mapsto \int_{0}^\infty e^{-st}\chi_0(t)e^{-t\mathbf{V}_{\beta_0}}|dt|  .$$
Then, there exists $C>0$ such that for all $(k_1,k_2)$ with $k_1+k_2=2d-1$, for every $(\varphi, \psi) \in \mathcal{H}^{\sigma,N,\sigma,N}_{k_1,\beta_0} \times \mathcal{H}^{-\sigma,-N,-\sigma,-N}_{k_2,-\beta_0}$,
$$
A_{\mathscr{M}, \varphi, \psi}  : s \mapsto   \int_{S\IT^d}\varphi\wedge A_{\mathscr{M}}(s)  \mathbf{T}_{-\tilde{x}}^*(\psi), 
\quad 
A_{\mathscr{L}, \varphi, \psi}  : s \mapsto   \int_{S\IT^d}\varphi\wedge A_{\mathscr{L}}(s)  \mathbf{T}_{-\tilde{x}}^*(\psi)
$$
are entire functions satisfying for all $s \in \C$, 
\begin{align*}
|A_{\mathscr{M}, \varphi, \psi} (s)| &  \leq C \frac{T_1^{-\Re(s)+d}+1}{\langle \Re(s) \rangle}  \|\varphi \|_{\mathcal{H}^{\sigma,N,\sigma,N}_{k_1,\beta_0}} \| \psi \|_{\mathcal{H}^{-\sigma,-N,-\sigma,-N}_{k_2,-\beta_0}} , \\ 
|A_{\mathscr{L}, \varphi, \psi} (s)| &  \leq C \frac{e^{-T_1\Re(s)}+1}{\langle \Re(s) \rangle} \|\varphi \|_{\mathcal{H}^{\sigma,N,\sigma,N}_{k_1,\beta_0}} \| \psi \|_{\mathcal{H}^{-\sigma,-N,-\sigma,-N}_{k_2,-\beta_0}} .
\end{align*}
\end{prop}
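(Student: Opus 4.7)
Both operators $A_{\mathscr{M}}(s)$ and $A_{\mathscr{L}}(s)$ involve integration over the compact set $\supp(\chi_0)\cap\IR_+\subset[0,T_1]$, so, applied to any smooth $\psi$, the resulting currents are entire functions of $s$ with values in $\ml{D}^{\prime k_2}(S\IT^d)$, by differentiation under the integral sign. Consequently it suffices to establish the claimed norm bounds; bilinearity and density in the smooth forms will then extend both pairings $A_{\mathscr{M},\varphi,\psi}$ and $A_{\mathscr{L},\varphi,\psi}$ to $(\varphi,\psi)$ in the stated anisotropic Sobolev spaces.

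The plan is to rewrite both pairings as time integrals of the correlator $\Cor_{\varphi,\mathbf{T}_{-\tilde x}^*\psi}(t,\beta_0)$ and to exploit the decomposition of Lemma~\ref{l:reduction}, namely $\Cor=\sum_{l=0}^{\min\{k_1,k_2\}}\Cor^{(l)}$ with
$$
\Cor^{(l)}(t,\beta_0)=\frac{t^l}{l!}\sum_{\xi\in\IZ^d}\int_{\IS^{d-1}}e^{it(\xi-\beta_0)\cdot\theta}e^{i\xi\cdot\tilde{x}(\theta)}B_{\tilde{x},\xi}^{(k_2,l)}(\varphi,\psi)(\theta)\,\Vol_{\IS^{d-1}}(\theta,|d\theta|).
$$
Because the $t$-interval is compact here, \emph{no} oscillatory phase analysis is needed: bounding the modulus of both exponential factors by $1$ and using the triangle inequality gives
$$
|\Cor(t,\beta_0)|\le\sum_{l=0}^{\min\{k_1,k_2\}}\frac{t^l}{l!}\sum_{\xi\in\IZ^d}\bigl\|B_{\tilde{x},\xi}^{(k_2,l)}(\varphi,\psi)\bigr\|_{L^1(\IS^{d-1})}.
$$

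Next I split the $L^1$-norm on $\IS^{d-1}$ by the partition of unity $\chi_0(\theta\cdot\tfrac{\xi-\beta_0}{|\xi-\beta_0|})+\chi_{+1}(\cdots)+\chi_{-1}(\cdots)$ that enters the definition of the anisotropic spaces (Definition~\ref{def-ani-spaces}), and apply the duality estimate~\eqref{e:control-Cm-norm-bis} cone by cone:
$$
\bigl\|B_{\tilde{x},\xi}^{(k_2,l)}(\varphi,\psi)\bigr\|_{L^1(\mathbf{C}_j(\xi-\beta_0))}\le C_\sigma \bigl\|\pi^{(k_1)}_\xi(\varphi)\bigr\|_{H^\sigma(\mathbf{C}_j(\xi-\beta_0))}\bigl\|\pi^{(k_2)}_{-\xi}(\psi)\bigr\|_{H^{-\sigma}(\mathbf{C}_j(\xi-\beta_0))}.
$$
The elementary symmetries $\mathbf{C}_0(-\eta)=\mathbf{C}_0(\eta)$ and $\mathbf{C}_{\pm1}(-\eta)=\mathbf{C}_{\mp1}(\eta)$ yield $\mathbf{C}_j(\xi-\beta_0)=\mathbf{C}_{-j}((-\xi)-(-\beta_0))$, so, after the change of variable $\xi\leftrightarrow-\xi$, the cone in the $\psi$-factor matches exactly one of the natural cones entering $\|\psi\|_{\mathcal{H}^{-\sigma,-N,-\sigma,-N}_{k_2,-\beta_0}}$. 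Applying Cauchy--Schwarz in $\xi$ with the complementary weights $\la\xi\ra^{\pm N}$ separately for $j=0$ and for $j=\pm1$ then gives, uniformly in $t\ge0$ and in $l\le\min\{k_1,k_2\}\le d-1$,
$$
|\Cor(t,\beta_0)|\le C\bigl(1+t^{d-1}\bigr)\|\varphi\|_{\mathcal{H}^{\sigma,N,\sigma,N}_{k_1,\beta_0}}\|\psi\|_{\mathcal{H}^{-\sigma,-N,-\sigma,-N}_{k_2,-\beta_0}}.
$$

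Plugging this pointwise bound into the time integrals defining $A_{\mathscr{M},\varphi,\psi}(s)$ and $A_{\mathscr{L},\varphi,\psi}(s)$ reduces the claim to estimating the scalar integrals $\int_1^{T_1}t^{-\Re(s)}(1+t^{d-1})\,dt$ and $\int_0^{T_1}e^{-\Re(s)t}(1+t^{d-1})\,dt$ respectively. A direct elementary computation (distinguishing $\Re(s)$ large positive, bounded, or negative) yields the sought bounds $C(T_1^{-\Re(s)+d}+1)/\la\Re(s)\ra$ and $C(e^{-T_1\Re(s)}+1)/\la\Re(s)\ra$. The only (mild) point requiring some care is the bookkeeping of the cone symmetries above, which ensures that the $\psi$-norm appearing after Cauchy--Schwarz is precisely $\|\psi\|_{\mathcal{H}^{-\sigma,-N,-\sigma,-N}_{k_2,-\beta_0}}$ with parameter $-\beta_0$ as stated; everything else follows at once from the compactness of $\supp(\chi_0)$ and from Lemma~\ref{l:reduction}.
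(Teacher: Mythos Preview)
Your proof is correct and follows essentially the same route as the paper: decompose the correlator via Lemma~\ref{l:reduction}, bound $|B_{\tilde{x},\xi}^{(k_2,l)}(\varphi,\psi)|$ by~\eqref{e:control-Cm-norm-bis}, apply Cauchy--Schwarz in $\xi$ with weights $\langle\xi\rangle^{\pm N}$, and estimate the remaining scalar time-integrals. Your cone-by-cone splitting and the discussion of the symmetries $\mathbf{C}_j(\xi-\beta_0)=\mathbf{C}_{-j}((-\xi)-(-\beta_0))$ are not needed here: since all four indices in $\mathcal{H}^{\sigma,N,\sigma,N}_{k,\pm\beta_0}$ coincide, this norm is equivalent (uniformly) to $\bigl(\sum_\xi\langle\xi\rangle^{2N}\|\pi_\xi^{(k)}(\cdot)\|_{H^\sigma(\IS^{d-1})}^2\bigr)^{1/2}$, which is how the paper proceeds directly.
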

\begin{proof}
We prove the result for $A_{\mathscr{M}, \varphi, \psi} (s)$; the proof for $A_{\mathscr{L}, \varphi, \psi} (s)$ being the same. 
We start with the same decomposition as in~\eqref{e:decomp-again-again}-\eqref{e:def-J-l-again}, namely
\begin{align*}
A_{\mathscr{M}, \varphi, \psi} (s) &  =  \sum_{l=0}^{\min\{k_1,k_2\}} A_{\mathscr{M}, \varphi, \psi}^{(l)}(s) , \quad \text{ with }\\
A_{\mathscr{M}, \varphi, \psi}^{(l)}(s) &=   \int_{1}^\infty\chi_0 (t) t^{-s} \Cor_{\varphi,\mathbf{T}_{-\tilde{x}}^*(\psi)}^{l}(t,\beta_0) |dt|  \\
& = \int_{1}^\infty\chi_0 (t) t^{-s} 
  \frac{t^l}{l!}\sum_{\xi\in\IZ^d}\int_{\IS^{d-1}}e^{it(\xi-\beta_0)\cdot\mathbf{v}(\theta)}e^{i\xi\cdot\tilde{x}(\theta)}B_{\tilde{x},\xi}^{(k_2,l)}(\varphi,\psi)(\theta) d\Vol(\theta) |dt|, 
\end{align*}
which is an entire function as $\chi_0$ is compactly supported in $\IR$. We also have the rough bound 
\begin{align*}
\left| A_{\mathscr{M}, \varphi, \psi}^{(l)}(s) \right|  
& \leq C_l  \int_{1}^\infty| \chi_0 (t)| t^{-\Re(s)+l} \sum_{\xi\in\IZ^d}\int_{\IS^{d-1}} \left| B_{\tilde{x},\xi}^{(k_2,l)}(\varphi,\psi)(\theta) \right|  d\Vol(\theta) |dt|.
\end{align*}
The conclusion of the proposition follows
\begin{align*}
\left| A_{\mathscr{M}, \varphi, \psi}^{(l)}(s) \right|  
& \leq C \frac{T_1^{-\Re(s)+l+1}+1}{\langle \Re(s) \rangle} \sum_{\xi\in\IZ^d} \langle\xi \rangle^{N} \left\|\pi_{\xi}^{(k_1)}(\varphi)\right\|_{H^\sigma\left(\IS^{d-1}\right)} \langle\xi \rangle^{-N}\left\|\pi_{-\xi}^{(k_2)}(\psi)\right\|_{H^{-\sigma}\left(\IS^{d-1}\right)} ,
\end{align*}
where we have used~\eqref{e:control-Cm-norm-bis}, and the Cauchy-Schwarz inequality.
\end{proof}

\section{Proofs of Theorems~\ref{t:maintheo-Mellin},~\ref{t:maintheo-laplace},~\ref{e:maintheo-realaxis} and~\ref{e:GuinandWeil}}\label{s:poincare}

Now that we have given a precise description of the analytical properties of our vector field, we are in position to derive, essentially as corollaries of this sharp analysis and of Corollary~\ref{c:counting-current}, the expected properties of generalized Epstein zeta functions and Poincar\'e series, as well as some asymptotics of counting functions. In this section, we take a general $\mathbf{v}(\theta)$ as defined in \S\ref{s:background-convex} and we thus prove (and state) the main Theorems from the introduction at this level of generality.

\subsection{Asymptotic of the counting function}\label{ss:couning} As a first application of this construction, we will refine the a priori bounds obtained in Lemma~\ref{l:apriorigrowth} and prove formula~\eqref{e:asymptotic-counting} from the introduction. Namely, we fix two admissible submanifolds $\Sigma_1,\Sigma_2\subset\IT^d$ and $\sigma_1,\sigma_2\in\{\pm\}$. We want to compute a precise asymptotic formula for
$$\sum_{T_0\leq t\leq T} m_{\Sigma_1,\Sigma_2}(t)=\sum_{T_0\leq t\leq T}  \sharp \, \mathcal{E}_t(\Sigma_1,\Sigma_2) , \quad \text{with} \quad  \mathcal{E}_t(\Sigma_1,\Sigma_2)=  N_{\sigma_1}(\Sigma_1)\cap e^{tV}(N_{\sigma_2}(\Sigma_2)),$$
where we refer to Section~\ref{ss:relation-resolvent-series} for the notation.
\begin{theo}\label{t:asymptotic-counting} Let $\Sigma_1,\Sigma_2\subset\IT^d$ be two admissible submanifolds, let $\sigma_1,\sigma_2\in\{\pm\}$. Then, there exists $T_0>0$ such that, as $T\rightarrow +\infty$
$$\sum_{T_0\leq t\leq T} \sharp \, \mathcal{E}_t(\Sigma_1,\Sigma_2)=\frac{\operatorname{Vol}_{\IR^d}(K)T^d}{(2\pi)^{d}}+\mathcal{O}(T^{d-1}).$$
\end{theo}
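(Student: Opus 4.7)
The plan is to pass from the discrete counting to an integrated dynamical correlation and to extract its leading asymptotics. First, I would approximate the sharp cutoff $\mathds{1}_{[T_0,T]}$ by smooth compactly supported functions $\chi_T^\pm\in\ml{C}^\infty_c(\IR)$ with $\chi_T^-\leq \mathds{1}_{[T_0,T]}\leq \chi_T^+$, such that $\chi_T^+-\chi_T^-$ is supported in two intervals of length $1$ near the endpoints $T_0$ and $T$, with derivatives uniformly bounded in $T$. The a priori bound of Lemma~\ref{l:apriorigrowth}, summed over unit intervals, yields
\[
\sum_{t}(\chi_T^+-\chi_T^-)(t)\,m_{\Sigma_1,\Sigma_2}(t)=O(T^{d-1}),
\]
and it thus suffices to compute $\sum_t\chi_T^\pm(t)\,m_{\Sigma_1,\Sigma_2}(t)$ up to $O(T^{d-1})$.

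Next, applying Lemma~\ref{l:truncated-poincare} with $\beta=0$ together with Lemma~\ref{l:orientation-index} (which ensures that all signs $\epsilon_t(x)$ equal $1$), one obtains, as in~\eqref{e:main-formula},
\[
\sum_{t}\chi_T^\pm(t)\,m_{\Sigma_1,\Sigma_2}(t)=(-1)^{d-1}\int_{\IR}\chi_T^\pm(t)\,\Cor(t)\,|dt|,
\]
where $\Cor(t)=\int_{S\IT^d}\varphi\wedge e^{-t\mathbf{V}}\mathbf{T}_{-\tilde{x}}^{*}(\psi)$ with $\varphi=\delta_{2\pi\IZ^d}$, $\psi=\iota_V\delta_{2\pi\IZ^d}$ and $\tilde{x}=\tilde{x}_2^{\sigma_2}-\tilde{x}_1^{\sigma_1}$. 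Lemma~\ref{l:reduction} then furnishes the decomposition
\[
\Cor(t)=\sum_{l=0}^{d-1}\frac{t^l}{l!}\sum_{\xi\in\IZ^d}\int_{\IS^{d-1}}e^{i\xi\cdot(t\theta+\tilde{x}(\theta))}\,B_{\tilde{x},\xi}^{(d-1,l)}(\varphi,\psi)(\theta)\,\Vol_{\IS^{d-1}}(\theta,d\theta).
\]

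The principal contribution comes from $(\xi,l)=(0,d-1)$. Using $\pi_0^{(d)}(\varphi)=\frac{1}{(2\pi)^{d/2}}dx_1\wedge\dots\wedge dx_d$ and $\pi_0^{(d-1)}(\psi)=\frac{1}{(2\pi)^{d/2}}\iota_V(dx_1\wedge\dots\wedge dx_d)$, combined with the commutation $\mathbf{V}^{d-1}\mathbf{T}_{-\tilde{x}}^{*}=\mathbf{T}_{-\tilde{x}}^{*}\mathbf{V}^{d-1}$ of Lemma~\ref{l:commute-pullback} and the identity $\mathbf{V}^{d-1}\iota_V(dx_1\wedge\dots\wedge dx_d)=(d-1)!\,\Vol_{\IS^{d-1}}$ of Lemma~\ref{l:compute-forms}, one computes
\[
B_{\tilde{x},0}^{(d-1,d-1)}(\varphi,\psi)(\theta)=\frac{(-1)^{d-1}(d-1)!}{(2\pi)^d}.
\]
Hence this term equals $\frac{(-1)^{d-1}\Vol(\IS^{d-1})}{(2\pi)^d}\,t^{d-1}$, and integrating against $\chi_T^\pm(t)$ together with the outer factor $(-1)^{d-1}$ produces, using $\Vol(\IS^{d-1})=2\pi^{d/2}/\Gamma(d/2)$ and $d\,\Gamma(d/2)=2\,\Gamma(d/2+1)$,
\[
\frac{\pi^{d/2}}{(2\pi)^d\Gamma(d/2+1)}\,T^d+O(T^{d-1}).
\]
The remaining $(\xi=0,\,l<d-1)$ terms are polynomials in $T$ of degree strictly less than $d$, hence $O(T^{d-1})$.

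The main obstacle is to show that the sum over $\xi\neq 0$ contributes at most $O(T^{d-1})$. For this, I would split the inner integral on $\IS^{d-1}$ using the partition of unity $\chi_{-1},\chi_0,\chi_1$ of Definition~\ref{def-chi-j}. The non-stationary part is controlled by Corollary~\ref{c:non-stat-points} (bound $O((t|\xi|)^{-N})$ for any $N$), while the stationary parts near $\pm\xi/|\xi|$ admit, by Corollary~\ref{c:stat-points}, an expansion with leading order $e^{\pm it|\xi|}(t|\xi|)^{-(d-1)/2}$ times amplitudes of polynomial growth in $\xi$ (coming from $e^{i\xi\cdot\tilde{x}(\theta)}$). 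Integrating $\chi_T^\pm(t)\,t^l$ against such oscillatory terms and performing $N$ integrations by parts in $t$ (no boundary terms since $\chi_T^\pm$ is compactly supported, and derivatives of $\chi_T^\pm$ are $O(1)$ uniformly in $T$) produces a bound of the form $C_N\,T^{(d-1)/2+O(1)}|\xi|^{-(d-1)/2-N}$. Taking $N$ large enough ensures convergence of $\sum_{\xi\neq 0}|\xi|^{-(d-1)/2-N}$ and yields a total error of order $T^{(d-1)/2+O(1)}$, which is $o(T^{d-1})$ for every $d\geq 2$. The polynomial growth in $\xi$ arising from the amplitudes $e^{i\xi\cdot\tilde{x}(\theta)}$ is absorbed by choosing $N$ sufficiently large, exactly as in the proof of Theorem~\ref{t:general-mellin}. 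Combining all contributions then yields the announced asymptotic expansion with remainder $O(T^{d-1})$.
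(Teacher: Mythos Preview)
Your overall architecture matches the paper's proof closely: smooth the cutoff, invoke~\eqref{e:main-formula}, use Lemma~\ref{l:reduction} to reduce to oscillatory integrals, extract the main term from $(\xi,l)=(0,d-1)$ via Lemmas~\ref{l:compute-forms} and~\ref{l:commute-pullback}, and show the rest is $O(T^{d-1})$. The paper does exactly this, except that it packages the entire $\xi\neq 0$ estimate into a single invocation of Theorem~\ref{th:correlation-cut-off}, checking only that $\varphi,\psi$ lie in the relevant anisotropic space and that $\|(\chi_T^\pm t^l)^{(N)}\|_{L^1}=O(T^{d-1})$, $\|\chi_T^\pm t^{l-M}\|_{L^1}=O(1)$.

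There is, however, a genuine gap in your treatment of the stationary region. You invoke Corollary~\ref{c:stat-points} to expand $I^{(\pm 1)}_F(\xi,t)$ and then integrate by parts in $t$ term by term. This works for each term of the expansion, but the \emph{remainder} $R_{N_1}(\xi,t)$ in Corollary~\ref{c:stat-points} is only bounded by $C\,|\xi|^{2N_1+d}(t|\xi|)^{-N_1-(d-1)/2}$, and it is not of the form $e^{\pm it|\xi|}\times(\text{symbol})$, so you cannot integrate it by parts in $t$ to gain further $|\xi|$-decay. Integrating it as is yields a contribution of size $|\xi|^{N_1+(d+1)/2}$ per frequency, which is not summable over $\xi\in\IZ^d$ for any choice of $N_1$. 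Your appeal to ``exactly as in the proof of Theorem~\ref{t:general-mellin}'' is a misreading: that proof (like the proof of Theorem~\ref{th:correlation-cut-off}) explicitly \emph{avoids} stationary phase for the $\chi_{\pm 1}$ pieces.

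The fix is to reverse the order of operations on the stationary region. Before any expansion in $\theta$, write
\[
\int\chi_T^\pm(t)\,t^l\,I^{(\pm 1)}_F(\xi,t)\,|dt|
=\int_{\IS^{d-1}}\chi_{\pm 1}\Big(\theta\cdot\tfrac{\xi}{|\xi|}\Big)\,e^{i\xi\cdot\tilde{x}(\theta)}B_{\tilde{x},\xi}^{(d-1,l)}(\varphi,\psi)(\theta)\left(\int\chi_T^\pm(t)\,t^l\,e^{it\xi\cdot\theta}\,|dt|\right)\Vol_{\IS^{d-1}}(\theta,d\theta),
\]
and integrate by parts $N$ times in $t$ inside the bracket. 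On $\supp\chi_{\pm 1}$ one has $|\xi\cdot\theta|\geq\eps_0|\xi|$, so the inner integral is bounded by $(\eps_0|\xi|)^{-N}\|(t^l\chi_T^\pm)^{(N)}\|_{L^1}=O(|\xi|^{-N}T^{d-1})$. The outer $\theta$-integral is then bounded by $\|B_{\tilde{x},\xi}^{(d-1,l)}\|_{L^1}=O(1)$ (the Fourier coefficients of $\delta_{2\pi\IZ^d}$ are constant in $\xi$), and summing over $\xi\neq 0$ for $N>d$ gives $O(T^{d-1})$. This is precisely the mechanism encoded in Theorem~\ref{th:correlation-cut-off} and Lemma~\ref{l:IPP-time}.
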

Even if it may not be the simplest manner to prove such a result, this discussion illustrates how our current-theoretical approach to this problem can be implemented. An interesting question would be to understand how the remainder term depends on $\Sigma_1$ and $\Sigma_2$. See~\cite{vanderCorput20, Hlawka50, Herz62b, Randol66, ColindeVerdiere77} for such results. We do not pursue this here and we rather focus on the application of this approach to zeta functions associated with our families of orthogeodesics.

\begin{proof}
First fix $T_0>0$ large enough so that all the above lemmas apply.
 In order to study this quantity, we take $\beta=0$ in~\eqref{e:main-formula} and we choose appropriate cutoff functions $\chi$ approximating the characteristic function of the interval $[T_0,T]$. More precisely, we fix $T>0$ (large enough) and $t_0>0$ (small enough). We define two smooth cutoff functions $\chi^{\pm}_{T}\in\mathcal{C}^\infty_c(\IR,[0,1])$ with the following properties:
\begin{itemize}
 \item $\chi^{+}_{T}$ is equal to $1$ on $[T_0,T]$, it is compactly supported on $(T_0-t_0,T+t_0)$, it is nonincreasing on $[T,T+t_0]$.
 \item $\chi^{-}_{T}$ is equal to $1$ on $[T_0+t_0,T-t_0]$, it is compactly supported on $(T_0,T)$, it is nonincreasing on $[T-t_0,T]$.
\end{itemize}
With such functions at hand, one has
\begin{equation}\label{e:sandwich}\sum_{t>T_0-t_0}\chi^{-}_{T}(t)m_{\Sigma_1,\Sigma_2}(t)\leq \sum_{T_0\leq t\leq T} m_{\Sigma_1,\Sigma_2}(t)\leq \sum_{t>T_0-t_0}\chi^{+}_{T}(t)m_{\Sigma_1,\Sigma_2}(t).\end{equation}
Hence, thanks to~\eqref{e:main-formula}, we have to study
$$
\sum_{t>T_0-t_0}\chi^\pm_{T}(t)m_{\Sigma_1,\Sigma_2}(t) =(-1)^{d-1}\int_{S\IT^d}\delta_{2\pi\IZ^d}\wedge \int_{\IR}\chi^{\pm}_{T}(t)\left(e^{-t\mathbf{V}}\mathbf{T}_{\tilde{x}_{1}^{\sigma_1}-\tilde{x}_{2}^{\sigma_2}}^*\right)\iota_V(\delta_{2\pi\IZ^d})|dt|,$$
which can be analyzed using Theorem~\ref{th:correlation-cut-off}. Indeed, the cutoff functions $\chi^{\pm}_{T}$ are compactly supported which implies that they satisfy the assumption of this Theorem (as they are $(N,p)$-admissible for every $(N,p)$). We just have to pay some attention to their dependence in $T$ as $T\rightarrow+\infty$. More precisely, we need to apply this theorem with $k_1=d$, $k_2=d-1$ and
$$\varphi=\delta_{2\pi\IZ^d},\quad\psi=\iota_V(\delta_{2\pi\IZ^d}).$$
We have 
\begin{align}
\label{e:sobo-space-dirac}
\varphi  \in \mathcal{H}_{d,0}^{+\infty, -\frac{d}{2}-, +\infty, -\frac{d}{2}-} :=  \bigcap_{\eps \in (0,1]} \mathcal{H}_{d,0}^{\frac{1}{\eps}, -\frac{d}{2}-\eps,\frac{1}{\eps}, -\frac{d}{2}-\eps}
, \quad \text{and}\quad   \psi  \in \mathcal{H}_{d-1,0}^{+\infty, -\frac{d}{2}-, +\infty, -\frac{d}{2}-} .
\end{align}
Thus, we are left with analyzing the size of the different integrals involving $\chi^{\pm}_{T}$. First, for $0\leq l\leq d-1$, one has
$$\frac{T^{l+1}}{l+1}+\mathcal{O}(1)\leq \int_{\IR}\chi_{T}^+(t)t^l|dt|\leq\frac{(T+t_0)^{l+1}}{l+1}+\mathcal{O}(1), \quad \text{ as } T \to +\infty , $$
and
$$\frac{(T-t_0)^{l+1}}{l+1}+\mathcal{O}(1)\leq \int_{\IR}\chi_{T}^-(t)t^l|dt|\leq\frac{T^{l+1}}{l+1}+\mathcal{O}(1),\quad \text{ as } T \to +\infty . $$
Similarly, we can treat the remainder terms involving terms of the type $\|(\chi^{\pm}_{T,\delta}t^{l})^{(2d+1)}\|_{L^1(\IR_+)}=\mathcal{O}(T^{d-1})$. Finally, the term $\|\chi^{\pm}_{T,\delta}t^{l-(2d+1)}\|_{L^1(\IR_+)}$ is bounded uniformly in terms of $T$ since $l\leq d-1$. Gathering these bounds, we find that
\begin{multline*} \int_{S\IT^d}\delta_{2\pi\IZ^d}\wedge \int_{\IR}\chi^{\pm}_{T}(t)\left(e^{-t\mathbf{V}}\mathbf{T}_{\tilde{x}_{1}^{\sigma_1}-\tilde{x}_{2}^{\sigma_2}}^*\right)\iota_V(\delta_{2\pi\IZ^d})|dt|\\
 =\frac{T^d}{d!}\int_{\IS^{d-1}}B_{\tilde{x}_2^{\sigma_2}-\tilde{x}_1^{\sigma_1},0}^{(d-1,d-1)}(\delta_{2\pi\IZ^d},\iota_V(\delta_{2\pi\IZ^d})) d\Vol(\theta)+ \mathcal{O}(T^{d-1}).
 \end{multline*}
 Recall now that $B_{\tilde{x}_2^{\sigma_2}-\tilde{x}_1^{\sigma_1},0}^{(d-1,d-1)}(\delta_{2\pi\IZ^d},\iota_V(\delta_{2\pi\IZ^d})) d\Vol(\theta)$ is defined in~\eqref{e:coeff-horrible} as
 \begin{multline*}
 B_{\tilde{x}_2^{\sigma_2}-\tilde{x}_1^{\sigma_1},0}^{(d-1,d-1)}(\delta_{2\pi\IZ^d},\iota_V(\delta_{2\pi\IZ^d}))   d\Vol(\theta) \\
  =(-1)^{d-1}\pi_{0}^{(d)}(\delta_{2\pi\IZ^d})\wedge \mathbf{T}_{\tilde{x}_1^{\sigma_1}-\tilde{x}_2^{\sigma_2}}^*\mathbf{V}^{d-1}\pi_{0}^{(d-1)}(\iota_V(\delta_{2\pi\IZ^d})) \\
  =  (-1)^{d-1}(2\pi)^{-2d} dx_1\wedge\ldots dx_d \wedge \mathbf{T}_{\tilde{x}_1^{\sigma_1}-\tilde{x}_2^{\sigma_2}}^*\mathbf{V}^{d-1}\pi_{0}^{(d-1)}(\iota_V(dx_1\wedge\ldots dx_d) ,
 \end{multline*}
 after having used~\eqref{e:Dirac-Fourier}--\eqref{e:Dirac-Fourier-bis}.
Inserting this expression in the above calculation and recalling Lemma~\ref{l:volume-convex} allows to conclude the proof. Technically speaking, in Lemma~\ref{l:volume-convex}, there is no map $\mathbf{T}_{\tilde{x}_1^{\sigma_1}-\tilde{x}_2^{\sigma_2}}^*$ but, inserting it in the proof of this lemma, we find that, as $t\rightarrow +\infty$,
$$\int_{S\IR^d}[S_0\IR^d]\wedge e^{tV*}P^*(x_1dx_2\wedge\ldots\wedge dx_d)\sim\int_{S\IR^d}[S_0\IR^d]\wedge \mathbf{T}_{\tilde{x}_1^{\sigma_1}-\tilde{x}_2^{\sigma_2}}^*e^{tV*}P^*(x_1dx_2\wedge\ldots\wedge dx_d).$$
Hence the leading term in $t^d$ is the same in both formula. As the leading term on the left-hand side computes the volume of $K$ according to the proof of Lemma~\ref{l:volume-convex}, we are done.
\end{proof}

\subsection{Continuation of generalized Epstein zeta functions}\label{ss:epstein}

We now aim at proving Theorem~\ref{t:maintheo-Mellin} and its generalization in the case of a general $\mathbf{v}(\theta)$. This amounts to studying the meromorphic properties of the \emph{generalized Epstein zeta function} defined in~\eqref{e:def-zeta-general}. As an application of Lemma~\ref{l:apriorigrowth}, it defines a holomorphic function on $\{\text{Re}(s)>d\}$ and we want to understand its possible extension beyond the threshold $\text{Re}(s)>d$. To that aim, the first step is to use Lemma~\ref{l:truncated-poincare} to interpret $\zeta_{\Sigma_1,\Sigma_2,T_0}(s)$ as the Mellin transform of a correlation function of appropriate currents, and then make use of Theorem~\ref{t:general-mellin}. 

\begin{lemm}\label{l:dang-riviere-mellin}
There is $T_0^*>0$ such that for all $T_0>T_0^*$, one can find $\chi_\infty$ verifying assumption~\eqref{e:cutoff-infini} with $t_0>0$ small enough such that, for $\operatorname{Re}(s)$ large enough,
\begin{align}
\label{e:zeta=M}
\zeta_\beta(K_2,K_1,s) & =(-1)^{d-1}\int_{S\IT^d}e^{ i\left(f(\tilde{x}_{2}^{\sigma_2})-f(\tilde{x}_{1}^{\sigma_1})\right)}\delta_{2\pi\IZ^d}\wedge\widehat{\chi^M_s}\left(-i\mathbf{V}_{\beta_0}\right)\mathbf{T}_{\tilde{x}_{1}^{\sigma_1}-\tilde{x}_{2}^{\sigma_2}}^*\iota_V(\delta_{2\pi\IZ^d})|dt|,
\end{align}
where $\widehat{\chi_s^M}(-i\mathbf{V}_{\beta_0})$ was defined in~\eqref{e:cut-mellin-laplace}.
\end{lemm}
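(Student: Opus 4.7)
The identity~\eqref{e:zeta=M} is obtained from the compactly supported pairing~\eqref{e:main-formula} by specializing the test function to the Mellin weight $\chi_s^M(t):=\chi_\infty(t)t^{-s}$ and passing to a truncation limit. We work throughout in the half-plane $\{\operatorname{Re}(s)>d\}$, where the series defining $\zeta_\beta(K_2,K_1,s)$ converges absolutely by Lemma~\ref{l:apriorigrowth}, and where the right-hand side of~\eqref{e:zeta=M} is holomorphic by Proposition~\ref{p:mellin-holomorphe} applied with $(k_1,k_2)=(d,d-1)$. Choose first $T_0^*>0$ large enough that Lemmas~\ref{l:basic-simple},~\ref{l:truncated-poincare},~\ref{l:orientation-index},~\ref{l:WF} and the corollary yielding~\eqref{e:main-formula} all apply. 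For any $T_0>T_0^*$, Lemma~\ref{l:basic-simple} shows that the set $\{t>T_0^*:\mathcal{E}_t(\Sigma_1,\Sigma_2)\neq\emptyset\}$ is locally finite, so one may choose $t_0>0$ so small that $\mathcal{E}_t(\Sigma_1,\Sigma_2)=\emptyset$ for every $t\in[T_0-t_0,T_0+t_0]$. Then select $\chi_\infty$ verifying~\eqref{e:cutoff-infini} with these values of $T_0$ and $t_0$. By the gap condition, on every intersection time $t>T_0$ contributing to the sum one has $\chi_\infty(t)=1$, and so $\zeta_\beta(K_2,K_1,s)$ coincides with $\sum_{t>T_0-t_0,\,\mathcal{E}_t\neq\emptyset}\chi_s^M(t)\bigl(\sum_{(x,\theta)\in\mathcal{E}_t}e^{-i\int_{-t}^0\beta(V)(x+\tau\theta,\theta)|d\tau|}\bigr)$.

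To connect this last expression with the right-hand side of~\eqref{e:zeta=M}, introduce compactly supported truncations $\chi_{s,N}(t):=\chi_s^M(t)\,\eta(t/N)$, where $\eta\in\mathcal{C}^\infty_c(\R,[0,1])$ equals $1$ on $[-1,1]$. Each $\chi_{s,N}$ lies in $\mathcal{C}^\infty_c((T_0-t_0,+\infty))$, hence formula~\eqref{e:main-formula} applies verbatim with $\chi=\chi_{s,N}$. Letting $N\to\infty$, the left-hand side of~\eqref{e:main-formula} converges to the displayed series above by dominated convergence: indeed $|\chi_{s,N}(t)|\leq\chi_\infty(t)t^{-\operatorname{Re}(s)}$ and Lemma~\ref{l:apriorigrowth} provides the summable dominating sequence $\sum_t t^{-\operatorname{Re}(s)}m_{\Sigma_1,\Sigma_2}(t)<\infty$ precisely when $\operatorname{Re}(s)>d$.

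The main obstacle is the passage to the limit on the right-hand side of~\eqref{e:main-formula}, which involves pairing two distributional currents. We invoke Theorem~\ref{t:general-mellin} (the anisotropic refinement of Proposition~\ref{p:mellin-holomorphe}) with $(k_1,k_2)=(d,d-1)$: both $\delta_{2\pi\IZ^d}$ and $\iota_V(\delta_{2\pi\IZ^d})$ have Fourier coefficients $\pi_\xi^{(k)}$ that are polynomial in $\theta$ of uniformly bounded degree, hence lie in the anisotropic Sobolev spaces $\mathcal{H}^{N_0,-N_0/2,0,-N_0/2}_{d,\pm\beta_0}$ and $\mathcal{H}^{N_0,-N_0/2,0,-N_0/2}_{d-1,\pm\beta_0}$ respectively, for every integer $N_0>d/2$. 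The estimates of Theorem~\ref{t:general-mellin} then bound the pairing of $\delta_{2\pi\IZ^d}$ with the tail
$$\int_\R(\chi_s^M-\chi_{s,N})(t)\,e^{-t\mathbf{V}_{\beta_0}}\mathbf{T}^*_{\tilde x_1^{\sigma_1}-\tilde x_2^{\sigma_2}}\iota_V(\delta_{2\pi\IZ^d})\,|dt|$$
by the $L^1(\R_+)$-norms of $(1-\eta(t/N))\chi_s^M(t)\,t^{l}$ for $0\leq l\leq d-1$, all of which vanish as $N\to\infty$ when $\operatorname{Re}(s)>d$. The wavefront bound of Lemma~\ref{l:WF} guarantees that the exterior product against $\delta_{2\pi\IZ^d}$ is legitimate at every step, and passing to the limit yields~\eqref{e:zeta=M} on $\{\operatorname{Re}(s)>d\}$.
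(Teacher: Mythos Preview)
Your approach is correct and essentially parallel to the paper's: both reduce~\eqref{e:main-formula} (which is valid only for compactly supported $\chi$) to the non-compactly-supported weight $\chi_s^M$ by a limiting procedure, and both justify the limit via the anisotropic Sobolev estimates of Section~\ref{s:cutoff}. The paper decomposes $\chi_\infty$ via a partition of unity $\chi_\infty(t)=\sum_{j}\chi_\infty(t)\chi(t+j)$ with $\chi\in\mathcal{C}^\infty_c((-2,2))$ and controls the resulting series using Theorem~\ref{th:correlation-cut-off}; you instead use a single multiplicative truncation $\eta(t/N)$ and let $N\to\infty$. Both arguments are equally valid and yield the same conclusion.

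Two small slips to fix. First, the estimate you invoke for the tail---bounding the pairing by the $L^1(\R_+)$-norms of $(1-\eta(t/N))\chi_s^M(t)\,t^l$---is precisely the content of Theorem~\ref{th:correlation-cut-off}, not Theorem~\ref{t:general-mellin}; the latter gives a bound in terms of $\langle|s|\rangle^N/(\Re(s)+N-d)$ and says nothing about $L^1$ norms of the cutoff. (The paper's own proof cites Theorem~\ref{th:correlation-cut-off} here as well.) Second, the threshold for membership in $\mathcal{H}^{N_0,-N_0/2,0,-N_0/2}$ is $N_0>d$, not $N_0>d/2$: the norm involves $\sum_\xi\langle\xi\rangle^{-N_0}$, and the Fourier coefficients of $\delta_{2\pi\IZ^d}$ are uniformly bounded in $\xi$, so one needs $N_0>d$ for convergence (cf.~\eqref{e:sobo-space-dirac}).
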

 In a more compact manner and using the conventions of Theorem~\ref{t:general-mellin} with $\tilde{x} = \tilde{x}_{2}^{\sigma_2} - \tilde{x}_{1}^{\sigma_1}$, the right hand-side of~\eqref{e:zeta=M} can be rewritten as
\begin{multline*}
 \mathscr{M}_{\big(e^{ i(f(\tilde{x}_{2}^{\sigma_2})-f(\tilde{x}_{1}^{\sigma_1}))}\delta_{2\pi\IZ^d} , \iota_V(\delta_{2\pi\IZ^d}) \big)}(s)\\
 =\int_{S\IT^d}e^{ i\left(f(\tilde{x}_{2}^{\sigma_2})-f(\tilde{x}_{1}^{\sigma_1})\right)}\delta_{2\pi\IZ^d}\wedge\widehat{\chi^M_s}\left(-i\mathbf{V}_{\beta_0}\right)\mathbf{T}_{\tilde{x}}^*\iota_V(\delta_{2\pi\IZ^d})|dt|.
\end{multline*}
The difficulty in making the connection with $\zeta_\beta(K_2,K_1,s)$ is that Lemma~\ref{l:truncated-poincare} is only valid for \emph{compactly} supported function while the function $\chi_{\infty}$ used to define $\widehat{\chi^M_s}\left(-i\mathbf{V}_{\beta_0}\right)$ has noncompact support. This lemma shows that we can in fact allow such a test function in Lemma~\ref{l:truncated-poincare} in view of connecting the zeta function with its integral representation.

\begin{proof}
First, we fix a smooth nondecreasing function $\chi_{\infty}$ which is equal to $1$ on $[T_0+t_0,\infty)$ and to $0$ on $(-\infty, T_0]$ for some small enough $t_0>0$ to ensure that $m_{\Sigma_1,\Sigma_2}(t)=0$ for all $t\in (T_0,T_0+t_0]$. Here $T_0>0$ is large enough to apply Corollary~\ref{c:counting-current}. We also fix a smooth function $\chi\in\mathcal{C}^{\infty}_c((-2,2),[0,1])$ such that
$$\forall t\in\mathbb{R},\quad\sum_{j\in\IZ}\chi(t+j)=1.$$
We let $\chi_{\infty,j}(t):=\chi_\infty(t)\chi(t+j)$. Using~\eqref{e:main-formula}, this leads to the following decomposition
\begin{multline}
 \zeta_\beta(K_2,K_1,s)
=\sum_{j\in\IZ}\sum_{t>T_0}\chi_{\infty,j}(t)t^{-s}\left(\sum_{(x,\theta)\in \mathcal{E}_t(\Sigma_1,\Sigma_2)}e^{-i\int_{-t}^0\beta(V)(x+\tau\theta,\theta)|d\tau|}\right)\\
 =(-1)^{d-1}\sum_{j\in\IZ}\int_{S\IT^d}e^{ i\left(f(\tilde{x}_{2}^{\sigma_2})-f(\tilde{x}_{1}^{\sigma_1})\right)}\delta_{2\pi\IZ^d}\wedge \int_{\IR}\chi_{\infty,j}(t)t^{-s}\left(e^{-t\mathbf{V}_{\beta_0}}\mathbf{T}_{\tilde{x}}^*\right)\iota_V(\delta_{2\pi\IZ^d})|dt|.
\end{multline}
These sums over $j$ are absolutely convergent and the only remaining difficulty is to check that the righthand-side indeed converges to the operator $\widehat{\chi^M_s}\left(-i\mathbf{V}_{\beta_0}\right)$ in the appropriate functional spaces. We now fix some $N$ large enough in order to apply Theorem~\ref{th:correlation-cut-off} (with $N=M$) to $\tilde{x}=\tilde{x}_2-\tilde{x}_1$, to  $k_1=d$, $k_2=d-1$,  and to the currents
$$\varphi=e^{ i\left(f(\tilde{x}_{2}^{\sigma_2})-f(\tilde{x}_{1}^{\sigma_1})\right)}\delta_{2\pi\IZ^d}\in\mathcal{D}^{\prime d}(S\IT^d),\quad\text{and}\quad\psi=\iota_V(\delta_{2\pi\IZ^d})\in\mathcal{D}^{\prime d-1}(S\IT^d).$$
In order to apply this theorem, we have (in practice) to split the sum over $j$ into a finite sum and an infinite one corresponding to the cutoff functions $\chi_{\infty}(t)\chi(t+j)$ in the range of application of this statement. We also have to control the growth of several integrals, namely for $0\leq l\leq d-1$ and for $\text{Re}(s)$ large enough,
$$\int_{\IR}\chi_{\infty}(t)\chi(t+j)t^{-s+l}|dt|=\mathcal{O}(j^{-s+l}),\ \int_{\IR}\chi_{\infty}(t)\chi(t+j)t^{-s+l-N}|dt|=\mathcal{O}(j^{-s+l-N}),$$
and
$$\int_{\IR}\left|\frac{d^N}{dt^N}\left(\chi_{\infty}(t)\chi(t+j)t^{-s+l}\right)\right||dt|=\mathcal{O}(j^{-s+l}).$$
These bounds allow to apply Theorem~\ref{th:correlation-cut-off} for $\text{Re}(s)$ large enough and thus the sums under consideration converge in the anisotropic Sobolev spaces of Section~\ref{s:cutoff} as long as $N$ is large enough to have $\varphi$ and $\psi$ in that space. 
\end{proof}
The next step is to make use of Theorem~\ref{t:general-mellin} to deduce the meromorphic continuation.
\begin{theo}
\label{l:prolongement-zeta}
If $\beta_0\notin\IZ^d$, the function $\zeta_\beta(K_2,K_1,s)$ extends holomorphically to the whole complex plane. 
If $\beta_0\in\IZ^d$, the function extends meromorphically to the whole complex plane with (at most) simple poles at $s=1,\ldots,d$ whose residues are given by
\begin{equation}\label{e:residues1}
\Res_{s=\ell}(\zeta_\beta(K_2,K_1,s)) = \frac{(-1)^{d-1}}{(\ell-1)!}  E^{(\ell-1)}_{\beta_0} , \quad \text{ for }\quad \ell \in \{1, \dots , d\} ,
\end{equation}
with 
\begin{equation}\label{e:residues2}
 E^{(\ell-1)}_{\beta_0} = \frac{(-1)^{d+\ell}}{(2\pi)^{2d}}\int_{S\IT^{d}}e^{i\int_{\tilde{x}_1^{\sigma_1}(\theta)\rightarrow\tilde{x}_2^{\sigma_2}(\theta)}\beta}dx_1\wedge\ldots \wedge dx_d\wedge\iota_V\mathbf{V}^{\ell-1}\mathbf{T}_{\tilde{x}_{1}^{\sigma_1}-\tilde{x}_{2}^{\sigma_2}}^*\left(dx_1\wedge\ldots \wedge dx_d\right).
\end{equation}
\end{theo}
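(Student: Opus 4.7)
The plan is to combine the integral representation of Lemma~\ref{l:dang-riviere-mellin} with the meromorphic continuation established in Theorem~\ref{t:general-mellin}. Set $\tilde{x}:=\tilde{x}_2^{\sigma_2}-\tilde{x}_1^{\sigma_1}$, $\varphi:=e^{i(f(\tilde{x}_2^{\sigma_2})-f(\tilde{x}_1^{\sigma_1}))}\delta_{2\pi\IZ^d}\in\mathcal{D}^{\prime d}(S\IT^d)$ and $\psi:=\iota_V(\delta_{2\pi\IZ^d})\in\mathcal{D}^{\prime d-1}(S\IT^d)$. Lemma~\ref{l:dang-riviere-mellin} then yields
$$\zeta_\beta(K_2,K_1,s)=(-1)^{d-1}\mathscr{M}_{(\varphi,\psi)}(s), \qquad \Re(s)>d,$$
with $k_1=d$ and $k_2=d-1$.

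Next, I check that $\varphi\in\mathcal{H}^{N,-N/2,0,-N/2}_{d,\beta_0}$ and $\psi\in\mathcal{H}^{N,-N/2,0,-N/2}_{d-1,-\beta_0}$ for every integer $N>d$. Indeed, using~\eqref{e:Dirac-Fourier-bis} and the fact that $\iota_V$ commutes with the projectors $\pi^{(k)}_\xi$, one finds
$$\pi^{(d)}_\xi(\varphi)(\theta)=\tfrac{1}{(2\pi)^{d/2}}e^{i(f(\tilde{x}_2^{\sigma_2}(\theta))-f(\tilde{x}_1^{\sigma_1}(\theta)))}dx_1\wedge\cdots\wedge dx_d,\quad \pi^{(d-1)}_\xi(\psi)(\theta)=\tfrac{1}{(2\pi)^{d/2}}\iota_V(dx_1\wedge\cdots\wedge dx_d),$$
which are smooth on $\IS^{d-1}$ with bounds independent of $\xi$; the anisotropic norms then reduce to $\sum_\xi\langle\xi\rangle^{-N}<\infty$ as soon as $N>d$.

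With this membership established, Theorem~\ref{t:general-mellin} yields the meromorphic continuation of $\mathscr{M}_{(\varphi,\psi)}$ (and hence of $\zeta_\beta$) to the half-plane $\{\Re(s)>-N+d\}$, with at most simple poles at $s=1,\ldots,d$ and residues $\frac{1}{l!}E^{(l)}_{\beta_0}$ at $s=l+1$. Letting $N\to+\infty$ extends $\zeta_\beta$ to all of $\IC$. When $\beta_0\notin\IZ^d\simeq H^1(\IT^d,\IZ)$, the definition~\eqref{def-El} forces $E^{(l)}_{\beta_0}=0$ for every $l$, giving the holomorphic extension of part~(ii).

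The final and technically most delicate step is to identify the residues~\eqref{e:residues2} when $\beta_0\in\IZ^d$. Substituting the above Fourier coefficients into~\eqref{e:coeff-horrible} at $\xi=\beta_0$, and using the commutations $[\mathbf{V},\mathbf{T}^*_{-\tilde{x}}]=0$ and $[\iota_V,\mathbf{T}^*_{-\tilde{x}}]=0$ from Lemma~\ref{l:commute-pullback} to move $\iota_V$ to the outside, one obtains
$$E^{(l)}_{\beta_0}=\frac{(-1)^l}{(2\pi)^{2d}}\int_{S\IT^d}e^{i\beta_0\cdot(\tilde{x}_2^{\sigma_2}-\tilde{x}_1^{\sigma_1})+i(f(\tilde{x}_2^{\sigma_2})-f(\tilde{x}_1^{\sigma_1}))}dx_1\wedge\cdots\wedge dx_d\wedge\iota_V\mathbf{V}^l\mathbf{T}^*_{\tilde{x}_1^{\sigma_1}-\tilde{x}_2^{\sigma_2}}(dx_1\wedge\cdots\wedge dx_d).$$
Recognising the phase as $\int_{\tilde{x}_1^{\sigma_1}(\theta)\to\tilde{x}_2^{\sigma_2}(\theta)}\beta$ (the integration path being the natural one from the lift to $\IR^d$) and collecting the prefactor $(-1)^{d-1}\cdot(-1)^l=(-1)^{d+\ell}$ with $\ell=l+1$ yields~\eqref{e:residues1}--\eqref{e:residues2}. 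The principal obstacle is the bookkeeping on signs and orientations in these wedge product computations; once the commutation lemmas are applied, however, the expression collapses to the stated formula without further analysis.
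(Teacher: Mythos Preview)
Your proof is correct and follows essentially the same route as the paper: identify $\zeta_\beta$ with $(-1)^{d-1}\mathscr{M}_{(\varphi,\psi)}$ via Lemma~\ref{l:dang-riviere-mellin}, verify that the currents $\varphi,\psi$ lie in the anisotropic Sobolev spaces required by Theorem~\ref{t:general-mellin} (the paper phrases this as $\varphi\in\mathcal{H}_{d,\beta_0}^{+\infty,-d/2-,+\infty,-d/2-}$, you as $\mathcal{H}^{N,-N/2,0,-N/2}$ for $N>d$, which is equivalent for this purpose), apply that theorem, and then unwind the definition of $E^{(l)}_{\beta_0}$ using the commutation identities of Lemma~\ref{l:commute-pullback}. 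The only point the paper adds that you leave implicit is that one may need to enlarge $T_0$ slightly so that the hypothesis $T_0\geq C(\chi_0)\|d\tilde{x}\|_{L^\infty}$ of Theorem~\ref{t:general-mellin} is met; this is harmless since $T_0$ is at our disposal.
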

Note that~\eqref{e:residues2} corresponds to~\eqref{def-El} specified to the currents associated with convex sets, see~\eqref{e:sobo-space-dirac-bis} below (but we kept the same notation $E^{(\ell-1)}_{\beta_0}$).
Recall that, for $\beta_0\notin\IZ^d$, we used the convention that $E_0^{(\ell)}=0$ for every $1\leq\ell \leq d$.

This lemma proves in particular Theorem~\ref{t:maintheo-Mellin} as a particular case (see Remark~\ref{r:zeta-zeta-Z-Z}). 
In order to prove Theorem~\ref{t:maintheo-residues}, we are left with giving an expression of these residues in terms of geometric quantities associated with our convex subsets. This will be the topic of Section~\ref{s:convex}.

\begin{rema}\label{r:top-residue} Before getting to this, let us already observe that, for $\beta_0\in\IZ^d$, the residue at $s=d$ can be written explicitly as
$$\frac{1}{(2\pi)^{2d}}\int_{S\IT^d}e^{i\int_{\tilde{x}_1^{\sigma_1}(\theta)\rightarrow\tilde{x}_2^{\sigma_2}(\theta)}\beta}[S_{[0]}\IT^d]\wedge\mathbf{V}^{d-1}\iota_V\left(dx_1\wedge\ldots \wedge dx_d\right),$$
which looks like the quantity appearing in Lemma~\ref{l:volume-convex}. Moreover, if $\tilde{x}_{1}^{\sigma_1}=\tilde{x}_{2}^{\sigma_2}$ or if $\Sigma_1$ and $\Sigma_2$ are both points, all these residues vanish except for the one at $s=d$ which is equal to
$$ \frac{e^{i\int_{\tilde{x}_1^{\sigma_1}\rightarrow\tilde{x}_2^{\sigma_2}}\beta}\Vol_{\IR^d}(K)}{(2\pi)^{d}}.$$
In that case and when $\mathbf{v}(\theta)=\theta$, we recall that the lengths of the geodesic arcs joining $\Sigma_1$ to $\Sigma_2$ are given explicitly by $(|2\pi\xi+\tilde{x}_2^{\sigma_2}-\tilde{x}_1^{\sigma_1}|)_{\xi\in\IZ^d}$ so that we end up with the classical Epstein zeta function~\cite{Epstein03}. Hence, we see that this property of having a single pole remains true for a general $\mathbf{v}(\theta)$ which corresponds to the case where one looks at dilations of a general strictly convex set.
\end{rema}

\begin{proof}[Proof of Theorem~\ref{l:prolongement-zeta}]
Given~\eqref{e:zeta=M}, we are now in position to apply Theorem~\ref{t:general-mellin} to 
\begin{align}
\label{e:sobo-space-dirac-bis}
\varphi=e^{ i\left(f(\tilde{x}_{2}^{\sigma_2})-f(\tilde{x}_{1}^{\sigma_1})\right)}\delta_{2\pi\IZ^d}  \in \mathcal{H}_{d,\beta_0}^{+\infty, -\frac{d}{2}-, +\infty, -\frac{d}{2}-} ,
 \quad \text{and}\quad   \psi=\iota_V(\delta_{2\pi\IZ^d})  \in \mathcal{H}_{d-1,-\beta_0}^{+\infty, -\frac{d}{2}-, +\infty, -\frac{d}{2}-} ,
\end{align}
where the notation is taken from~\eqref{e:sobo-space-dirac}.
Up to increasing slightly the value of $T_0$ to be in the setup of Theorem~\ref{t:general-mellin}, this result can be applied to these test functions if we pick $N> d/2$. 
This theorem implies that for $\beta_0\notin\IZ^d$, the function $\zeta_{\Sigma_1,\Sigma_2,T_0}(s)$ extends holomorphically to the whole complex plane (as $E_{\beta_0}^{(l)}=0$ in that case). When $\beta_0\in\IZ^d$, Theorem~\ref{t:general-mellin} implies that the function extends meromorphically to the whole complex plane with possibly some simple poles at $s=1,\ldots,d$. Moreover, together with~\eqref{e:zeta=M}, we deduce that if $\beta_0\in\IZ^d$, then
$$
\Res_{s=l+1}(\zeta_\beta(K_2,K_1,s)) = \frac{(-1)^{d-1}}{l!}  E^{(l)}_{\beta_0} , \text{ for }\quad l \in \{0, \dots , d-1\} ,
$$
with 
 $$
 E^{(l)}_{\beta_0}  =   \int_{\IS^{d-1}} e^{i\beta_0\cdot (\tilde{x}_{2}^{\sigma_2}(\theta) - \tilde{x}_{1}^{\sigma_1}(\theta))} B_{\tilde{x}_{2}^{\sigma_2} - \tilde{x}_{1}^{\sigma_1},\beta_0}^{(d-1,l)}\big(e^{ i(f(\tilde{x}_{2}^{\sigma_2})-f(\tilde{x}_{1}^{\sigma_1}))}\delta_{2\pi\IZ^d} , \iota_V(\delta_{2\pi\IZ^d}) \big)(\theta) d\Vol(\theta),
 $$
 where the bilinear operator $B$ is defined in~\eqref{e:coeff-horrible}. 
From the expression of $B$, one can verify that $e^{ i\left(f(\tilde{x}_{2}^{\sigma_2})-f(\tilde{x}_{1}^{\sigma_1})\right)}$ can be put in factor so that, in the resulting exponential, we obtain a term
$$\beta_0\cdot (\tilde{x}_2^{\sigma_2}(\theta)-\tilde{x}_1^{\sigma_1}(\theta))+f(\tilde{x}_{2}^{\sigma_2})-f(\tilde{x}_{1}^{\sigma_1})=\int_{\tilde{x}_{1}^{\sigma_1}(\theta)\rightarrow \tilde{x}_{2}^{\sigma_2}(\theta)}\beta,$$
which is independent of the choice of the path between $\tilde{x}_{1}^{\sigma_1}(\theta)$ and $\tilde{x}_{2}^{\sigma_2}(\theta)$ modulo $2\pi\IZ$. Therefore, the residue at $s=\ell :=l+1 \in \{1,\dots, d\}$ is given by
$$
 E^{(\ell-1)}_{\beta_0}  = \int_{\IS^{d-1}} e^{i\int_{\tilde{x}_1^{\sigma_1}(\theta)\rightarrow\tilde{x}_2^{\sigma_2}(\theta)}\beta}B_{\tilde{x}_2^{\sigma_2}-\tilde{x}_1^{\sigma_1},\beta_0}^{(d-1,\ell-1)}\left(\delta_{2\pi\IZ^d},\iota_V(\delta_{2\pi\IZ^d})\right)(\theta) d\Vol(\theta).$$
Using expression~\eqref{e:coeff-horrible} together with~\eqref{e:commute-shift-lie}  in Lemma~\ref{l:commute-pullback}, we finally obtain 
\begin{equation}\label{e:residues}
 E^{(\ell-1)}_{\beta_0} = 
 \frac{(-1)^{\ell-1}}{(2\pi)^{d}}\int_{S\IT^{d}}e^{i\int_{\tilde{x}_1^{\sigma_1}(\theta)\rightarrow\tilde{x}_2^{\sigma_2}(\theta)}\beta}dx_1\wedge\ldots \wedge dx_d\wedge\iota_V\mathbf{V}^{\ell-1}\mathbf{T}_{\tilde{x}_{1}^{\sigma_1}-\tilde{x}_{2}^{\sigma_2}}^*\left(\delta_{2\pi\IZ^d}\right).
\end{equation}
 for $1\leq \ell\leq d$. The latter can be rewriten as~\eqref{e:residues2} when recalling~\eqref{e:Dirac-Fourier}--\eqref{e:Dirac-Fourier-bis}.
\end{proof}

\subsection{Continuation of generalized Poincar\'e series}\label{ss:poincare} We now turn to the proof of Theorem~\ref{t:maintheo-laplace} which amounts to study the properties of the {\em generalized Poincar\'e series} defined in~\eqref{e:def-Z-general}. Arguing as in Lemma~\ref{l:dang-riviere-mellin} with $e^{-st}$ instead of $t^{-s}$, we can make use of Theorem~\ref{th:correlation-cut-off} and Lemma~\ref{l:truncated-poincare} to interpret $\mathcal{Z}_\beta(K_2,K_1,s)$ as the Laplace transform of a correlation function of appropriate currents.
\begin{lemm}
There is $T_0^*>0$ such that for all $T_0>T_0^*$, one can find $\chi_\infty$ verifying assumption~\eqref{e:cutoff-infini} with $t_0>0$ small enough such that
\begin{align}
\label{e:Z=L}
\mathcal{Z}_\beta(K_2,K_1,s) & =(-1)^{d-1}\int_{S\IT^d}e^{ i\left(f(\tilde{x}_{2}^{\sigma_2})-f(\tilde{x}_{1}^{\sigma_1})\right)}\delta_{2\pi\IZ^d}\wedge\widehat{\chi^L_s}\left(-i\mathbf{V}_{\beta_0}\right)\mathbf{T}_{\tilde{x}_{1}^{\sigma_1}-\tilde{x}_{2}^{\sigma_2}}^*\iota_V(\delta_{2\pi\IZ^d}) .
\end{align}
\end{lemm}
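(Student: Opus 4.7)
The plan is to mimic the proof of Lemma~\ref{l:dang-riviere-mellin}, replacing the weight $t^{-s}$ by $e^{-st}$, and showing that the corresponding series of ``local'' integrals still converges absolutely for $\operatorname{Re}(s)>0$, which is precisely the convergence region of the Poincar\'e series $\mathcal{Z}_\beta(K_2,K_1,s)$.

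First, I would fix $T_0>T_0^*$ large enough so that all the transversality statements of Lemmas~\ref{l:basic-simple},~\ref{l:orientation-index} and~\ref{l:truncated-poincare} apply, and then choose $t_0>0$ small enough so that $m_{\Sigma_1,\Sigma_2}(t)=0$ for $t\in (T_0,T_0+t_0]$ (using discreteness from Lemma~\ref{l:basic-simple}). I pick a smooth nondecreasing cutoff $\chi_\infty$ equal to $0$ on $(-\infty,T_0]$ and to $1$ on $[T_0+t_0,+\infty)$, so that $\chi_\infty$ satisfies~\eqref{e:cutoff-infini}. Next, using a smooth partition of unity $\chi\in\ml{C}^{\infty}_c((-2,2),[0,1])$ with $\sum_{j\in\IZ}\chi(t+j)=1$, I set $\chi_{\infty,j}(t):=\chi_\infty(t)\chi(t+j)$, which is compactly supported in an interval of length $\leq 4$ centered roughly at $t=-j$. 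Each $\chi_{\infty,j}(t)e^{-st}$ is thus compactly supported and admissible in the sense of Theorem~\ref{th:correlation-cut-off}, and formula~\eqref{e:main-formula} applied to each of them yields
\begin{equation*}
 \sum_{t>T_0}\chi_{\infty,j}(t)e^{-st}\left(\sum_{(x,\theta)\in \mathcal{E}_t(\Sigma_1,\Sigma_2)}e^{-i\int_{-t}^0\beta(V)}\right)=(-1)^{d-1}\mathscr{I}_j(s),
\end{equation*}
where $\mathscr{I}_j(s)$ denotes the pairing of $e^{i(f(\tilde{x}_{2}^{\sigma_2})-f(\tilde{x}_{1}^{\sigma_1}))}\delta_{2\pi\IZ^d}$ with $\int_{\IR}\chi_{\infty,j}(t)e^{-st}\,e^{-t\mathbf{V}_{\beta_0}}\mathbf{T}_{\tilde{x}_{1}^{\sigma_1}-\tilde{x}_{2}^{\sigma_2}}^*\iota_V(\delta_{2\pi\IZ^d})|dt|$.

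Then I would sum over $j\geq -T_0+1$ and verify that the series is absolutely convergent in the relevant anisotropic space whenever $\operatorname{Re}(s)>0$. To that end, I apply Theorem~\ref{th:correlation-cut-off} with $M=N$ large enough, to the test currents
$\varphi=e^{i(f(\tilde{x}_{2}^{\sigma_2})-f(\tilde{x}_{1}^{\sigma_1}))}\delta_{2\pi\IZ^d}\in \ml{H}^{M,-M/2,0,-N/2}_{d,\beta_0}$ and $\psi=\iota_V(\delta_{2\pi\IZ^d})\in \ml{H}^{M,-M/2,0,-N/2}_{d-1,-\beta_0}$ (which lie in these spaces as soon as $N/2>d/2$, in view of~\eqref{e:sobo-space-dirac-bis}). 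The key quantitative point is to estimate, for $0\leq l\leq d-1$, the three quantities
\begin{align*}
 \|\chi_{\infty,j}(t)t^l e^{-st}\|_{L^1(\IR_+)}=\mathcal{O}\bigl(\langle j\rangle^l e^{-\operatorname{Re}(s)j}\bigr),\quad
 \|\chi_{\infty,j}(t)t^{l-M}e^{-st}\|_{L^1(\IR_+)}=\mathcal{O}\bigl(\langle j\rangle^{l-M}e^{-\operatorname{Re}(s)j}\bigr),
\end{align*}
together with $\|\partial_t^N(\chi_{\infty,j}(t)t^l e^{-st})\|_{L^1(\IR_+)}=\mathcal{O}((1+|s|)^N \langle j\rangle^l e^{-\operatorname{Re}(s)j})$. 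These estimates follow from the compact support of $\chi_{\infty,j}$ in an interval of length $\leq 4$ centered near $t\sim j$, together with the Leibniz formula applied to $e^{-st}$. Once $M\geq d$ and $N\geq d+1$, the geometric factor $e^{-\operatorname{Re}(s)j}$ guarantees summability for every $s$ with $\operatorname{Re}(s)>0$, and Theorem~\ref{th:correlation-cut-off} shows that the partial sums converge (as a bilinear pairing of the anisotropic Sobolev spaces considered there).

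Finally, I would exchange the sum over $j$ with the integral against $|dt|$, using the above absolute convergence, to obtain
\begin{equation*}
 \sum_{j\in\IZ}\mathscr{I}_j(s)=\int_{S\IT^d}e^{i(f(\tilde{x}_{2}^{\sigma_2})-f(\tilde{x}_{1}^{\sigma_1}))}\delta_{2\pi\IZ^d}\wedge \int_{\IR}\chi_\infty(t)e^{-st}e^{-t\mathbf{V}_{\beta_0}}\mathbf{T}_{\tilde{x}_{1}^{\sigma_1}-\tilde{x}_{2}^{\sigma_2}}^*\iota_V(\delta_{2\pi\IZ^d})|dt|,
\end{equation*}
which by definition of $\widehat{\chi^L_s}(-i\mathbf{V}_{\beta_0})$ in~\eqref{e:cut-mellin-laplace} is the right-hand side of~\eqref{e:Z=L}. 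On the other hand, on the discrete side, the sum $\sum_{j}\chi_{\infty,j}(t)=\chi_\infty(t)$ equals $1$ on the support of the counting measure of $\mathcal{P}_{K_1,K_2}$ with lengths $>T_0+t_0$, and the finitely many $t\in (T_0,T_0+t_0]$ have been excluded by our choice of $t_0$. This produces exactly $\mathcal{Z}_\beta(K_2,K_1,s)$ and concludes the identity. The main (technical but not deep) obstacle is to keep track of the $(1+|s|)^N$ factors in the $N$-th derivative estimate and to combine them with the exponential decay in $j$ to preserve absolute convergence up to the natural boundary $\operatorname{Re}(s)=0$ of the Poincar\'e series.
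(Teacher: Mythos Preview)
Your proposal is correct and follows exactly the approach the paper indicates (``arguing as in Lemma~\ref{l:dang-riviere-mellin} with $e^{-st}$ instead of $t^{-s}$''): you introduce the same partition of unity $\chi_{\infty,j}$, invoke~\eqref{e:main-formula} on each piece, and use Theorem~\ref{th:correlation-cut-off} to justify the summation, the only change being that the exponential weight $e^{-\operatorname{Re}(s)t}$ replaces the polynomial weight $t^{-\operatorname{Re}(s)}$ and now ensures summability for $\operatorname{Re}(s)>0$. One small slip: since $\chi(t+j)$ is supported near $t\approx -j$, the relevant sum is over $j$ large \emph{negative}, and your decay factors should read $e^{-\operatorname{Re}(s)|j|}$ (or reindex $j\mapsto -j$); this does not affect the argument.
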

Again, using the conventions of Theorem~\ref{t:general-laplace} with $\tilde{x} = \tilde{x}_{2}^{\sigma_2} - \tilde{x}_{1}^{\sigma_1}$, the right-hand side can be rewritten as
\begin{multline*}
 \mathscr{L}_{\big(e^{ i(f(\tilde{x}_{2}^{\sigma_2})-f(\tilde{x}_{1}^{\sigma_1}))}\delta_{2\pi\IZ^d} , \iota_V(\delta_{2\pi\IZ^d}) \big)}(s)\\
 =\int_{S\IT^d}e^{ i\left(f(\tilde{x}_{2}^{\sigma_2})-f(\tilde{x}_{1}^{\sigma_1})\right)}\delta_{2\pi\IZ^d}\wedge\widehat{\chi^L_s}\left(-i\mathbf{V}_{\beta_0}\right)\mathbf{T}_{\tilde{x}}^*\iota_V(\delta_{2\pi\IZ^d})
\end{multline*}
We are thus in position to apply Theorem~\ref{t:general-laplace}. 
\begin{theo}\label{l:regularity-poincare}
Setting $S_{\beta_0}= i\Lambda_{\beta_0}\cup\{0\}$ if $\beta_0\in\IZ^d$ and $S_{\beta_0}=i\Lambda_{\beta_0}$ otherwise,
the following statements hold:
\begin{enumerate}
\item \label{i:hors-S} $\mathcal{Z}_\beta(K_2,K_1,s)$ extends as a function in $\mathcal{C}^\infty(\overline{\C}_+ \setminus S_{\beta_0})$ and the limit $\lim_{x\rightarrow 0^+}\mathcal{Z}_\beta(K_2,K_1,x+iy)$ exists in $\mathcal{S}^\prime(\mathbb{R})$ as boundary value of holomorphic function,
\item \label{i:near-0} the function
$$
\mathcal{Z}_\beta(K_2,K_1,s)- \sum_{\ell=1}^{d}  \frac{ E^{(\ell-1)}_{\beta_0}}{s^{\ell}}
$$
is a $\mathcal{C}^\infty$ function in a neighborhood of zero in $\overline{\C}_+$ where $E^{(\ell-1)}_{\beta_0}$ is given by~\eqref{e:residues} (recalling that $E^{(\ell-1)}_{\beta_0}=0$ if $\beta_0\notin\IZ^d$).
\item \label{i:near-ir0} There exist constants $\mathsf{C}_{j,\ell-1}(\xi,\beta_0)$ for every $\ell \in \{1,\dots, d\}$ and $j \in  \IZ_+$, such that for any $ir_0^\pm =  i\lambda_\pm(\xi) 
\in S_{\beta_0} \setminus \{0\}$, the function 
\begin{align}
\label{e:expansion-singular}
\mathcal{Z}_\beta(K_2,K_1,s)
 -
 \sum_{\ell=1}^{d}
 \sum_{j=0}^{N-1}\left(\sum_{\xi \in \Z^d, \lambda_\pm(\xi) = r_0^\pm}\frac{1}{(\ell-1)!}\mathsf{C}_{j,\ell-1}^\pm\big(\xi,\beta_0\big)\right)\mathsf{F}_{\frac{d-1}{2}+j+1-\ell}\big(s-ir_0^\pm \big) , 
  \end{align}  
  extends as a $\mathcal{C}^{N-1- \left\lceil \frac{d+1}{2}\right\rceil}$ function in a neighborhood of $ir_0^\pm$ in $\overline{\C}_+$. 
  
Moreover, the most singular term in this expansion near $ir_0^\pm$ is given by ($j=0$ and $\ell=d$)
  \begin{multline*}
 \frac{(-1)^{d-1}e^{\mp i\frac{\pi}{4}(d-1)} \mathsf{F}_{-\frac{d-1}{2}}\big(s-ir_0^\pm \big)}{(2\pi)^{\frac{d+1}{2}}|r_0^\pm|^{\frac{d-1}{2}}} \\
 \times\left( \sum_{\xi \in \Z^d, \lambda_\pm(\xi) = r_0^\pm}
  \frac{e^{i \xi \cdot \left( \tilde{x}_{2}^{\sigma_2}-\tilde{x}_{1}^{\sigma_1}\right) \big(\pm \frac{\xi-\beta_0}{|\xi-\beta_0|}\big)}  
e^{ i\left(f(\tilde{x}_{2}^{\sigma_2})-f(\tilde{x}_{1}^{\sigma_1})\right) \big(\pm \frac{\xi-\beta_0}{|\xi-\beta_0|}\big)}}{\sqrt{\kappa\circ\mathbf{v}\left(\pm\frac{\xi-\beta_0}{|\xi-\beta_0|}\right)}}
 \right)   .
\end{multline*}
\end{enumerate}
\end{theo}
Recall that for $\alpha \in \R$, the distribution $\mathsf{F}_{\alpha}$ is defined in~\eqref{e:def-boundaryvaluesF} and it is essentially the Laplace transform of $t^{-\alpha}$ (near $t=+\infty$).
We also recall that $\Lambda_{\beta_0}$ is defined in~\eqref{e:critical-values} and that $S_{\beta_0}\setminus\{0\} = i \left( \Sp(\lambda_+(D)) \cup  \Sp(\lambda_-(D)) \right)$ where $\lambda_\pm(D)$ is the Fourier multiplier of symbol $\lambda_\pm(\xi)$ on $\T^d$ (see Remark~\ref{rem:quantumquantum}).

Note the important fact that the difference 
\begin{align}
\label{e:expansion-singular2}
\lim_{x\rightarrow 0^+}
\mathcal{Z}_\beta(K_2,K_1,x+iy)
 -
 \sum_{\ell=1}^{d}
 \sum_{j=0}^{N-1}\left(\sum_{\xi \in \Z^d, \lambda_\pm(\xi) = r_0^\pm}\frac{1}{(\ell-1)!}\mathsf{C}_{j,\ell-1}^\pm\big(\xi,\beta_0\big)\right)\mathsf{F}_{\frac{d-1}{2}+j+1-\ell}\big(x+iy-ir_0^\pm \big) , 
  \end{align} 
viewed as tempered distributions in $\mathcal{S}^\prime(\mathbb{R})$ of the variable $y$ is an element in  $\mathcal{C}^{N-1- \left\lceil \frac{d+1}{2}\right\rceil}$ near $y=r_0^\pm$. Here we view the difference as a distribution obtained as boundary values of holomorphic functions.

\begin{proof}
We apply Theorem~\ref{t:general-laplace} to $\tilde{x}=\tilde{x}_2^{\sigma_2}-\tilde{x}_1^{\sigma_1}$ and to the currents $\varphi, \psi$ in~\eqref{e:sobo-space-dirac-bis}.
 Theorem~\ref{t:general-laplace} thus applies to all $N_0>d, N >0, m>d/2$ and Item~\ref{i:hors-S} readily follows. 
As for Item~\ref{i:near-0}, Theorem~\ref{t:general-laplace} implies the expected result after recalling the definition of $E_{\beta_0}^{(l)}$.

We next prove Item~\ref{i:near-ir0}. We fix a point $r_0^\pm := \lambda_\pm(\xi)$ such that $ir_0^\pm  =  i\lambda_\pm(\xi) \in S_{\beta_0} \setminus \{0\}$, and describe $\mathcal{Z}_\beta(K_2,K_1,s)$ near $ir_0^\pm$. 
Theorem~\ref{t:general-laplace}, taken for $N_0>d$ large enough (compared to $N$), implies that 
\begin{align*}
\mathcal{Z}_\beta(K_2,K_1,s)
 -
 \sum_{l=0}^{d-1}
 \sum_{j=0}^{N-1}\left(\sum_{\xi \in \Z^d,\lambda_\pm(\xi)= r_0^\pm}\frac{1}{l!}\mathsf{C}_{j,l}^\pm\big( \xi,\beta_0\big)\right)\mathsf{F}_{\frac{d-1}{2}+j-l}\big(s-ir_0^\pm \big) , 
  \end{align*}  
  extends as a $\mathcal{C}^{\mathsf{k}}$ function in a neighborhood of $ir_0^\pm$ in $\overline{\C}_+$, where 
  $$
   \mathsf{k} = N+ \left\lceil \frac{d-1}{2}\right\rceil -\min\{k_1,k_2\}-2  = N-1- \left\lceil \frac{d+1}{2}\right\rceil ,
  $$
  (recall that $\min\{k_1,k_2\}=d-1$ here) and
  \begin{align}
  \label{e:bouffon-1}
 \mathsf{C}_{j,l}^\pm(\xi,\beta_0) = \frac{1}{|\xi_0-\beta_0|^{\frac{d-1}{2}+j}} 
 P_{j,l,\xi}^\pm \left[e^{ i\left(f(\tilde{x}_{2}^{\sigma_2})-f(\tilde{x}_{1}^{\sigma_1})\right)}\delta_{2\pi\IZ^d},\iota_V(\delta_{2\pi\IZ^d}) \right]\left(\pm \frac{\xi-\beta_0}{|\xi-\beta_0|}\right). 
\end{align}
Now, recalling the definition of $\mathsf{F}_{\alpha}$ in~\eqref{e:def-functionsF}, we notice that the most singular term in the expansion~\eqref{e:expansion-singular} is for $l=d-1$ and $j=0$, and is given by 
  \begin{align}
  \label{e:bouffon-2}
  \left( \sum_{\xi \in \Z^d, \lambda_\pm(\xi) = r_0^\pm} 
  \frac{1}{(d-1)!}\mathsf{C}_{0,d-1}^\pm \big(\xi,\beta_0\big) \right)  \mathsf{F}_{-\frac{d-1}{2}}\big(s-ir_0^\pm \big) .
\end{align}
We now compute it explicitly.
We first compute $\mathsf{C}_{0,l}^\pm(\omega)$ according to the definition of $P_{j,l,\xi}^\pm$ in~\eqref{e:def-P-j}, recalling from Lemma~\ref{c:stat-points} that $L_{0, \mp \omega}^\pm=1$, as 
\begin{multline}
  \label{e:bouffon-3}
P_{0,l,\xi}^\pm\left[e^{ i\left(f(\tilde{x}_{2}^{\sigma_2})-f(\tilde{x}_{1}^{\sigma_1})\right)}\delta_{2\pi\IZ^d},\iota_V(\delta_{2\pi\IZ^d}) \right]\left(\pm \omega \right) \\
= e^{\mp i\frac{\pi}{4}(d-1)} (2\pi)^{\frac{d-1}{2}}  e^{i \xi \cdot \tilde{x}(\omega)} B_{\tilde{x},\xi}^{(d-1,l)}\left( e^{ i\left(f(\tilde{x}_{2}^{\sigma_2})-f(\tilde{x}_{1}^{\sigma_1})\right)}\delta_{2\pi\IZ^d},\iota_V(\delta_{2\pi\IZ^d})\right) \left(\pm\omega \right) .
\end{multline}
Combining~\eqref{e:bouffon-1}--\eqref{e:bouffon-2}--\eqref{e:bouffon-3}, this concludes the proof of the statement in Item~\ref{i:near-ir0}.
\end{proof}

\subsection{A summation formula in the spirit of Guinand--Meyer}\label{ss:guinand}
We now turn to the proof of Theorem~\ref{e:GuinandWeil} and we emphasize that it is important here to have $\mathbf{v}(\theta) = \theta$. The proof would work as well for more general $\mathbf{v}$ if we suppose in addition that $\theta\cdot\mathbf{v}(-\theta)=-\theta\cdot\mathbf{v}(\theta)$ for every $\theta\in\IS^{d-1}$ (e.g. if the convex $K$ defining $\mathbf{v}$ is an ellipsoid with $0 \in \Int(K)$). Repeating the arguments in Section~\ref{ss:poincare} for certain variations of Poincar\'e series, we can in fact deduce a summation formula in the spirit of the recent results on crystalline measures by Meyer~\cite{Meyer2016}. More precisely, we set
\begin{align}
\label{e:def-Z-general-guinand}
\tilde{\mathcal{Z}}_\beta(K_2,K_1,s):=\sum_{t>T_0:\mathcal{E}_t(\Sigma_1,\Sigma_2)\neq\emptyset}\frac{e^{-st}}{t^{\frac{d-1}{2}}}\left(\sum_{(x,\theta)\in \mathcal{E}_t(\Sigma_1,\Sigma_2)}e^{-i\int_{-t}^0\beta(V)(x+\tau\theta,\theta)|d\tau|}\right),
\end{align}
and we emphasize that this function depends on the choice of orientation $(\sigma_1,\sigma_2)$ even if we drop this dependence for the moment. As for Poincar\'e series in Section~\ref{ss:poincare}, the limit as $x\rightarrow 0^+$ of $y\mapsto \tilde{\mathcal{Z}}_\beta(K_2,K_1,x+iy)$ exists as a tempered distribution on $\IR$ thanks to Lemma~\ref{l:apriorigrowth}. Arguing as in the proof of Theorem~\ref{l:regularity-poincare}, one can verify that the singular support of this distribution is the same as for $\lim_{x\rightarrow 0^+} \mathcal{Z}_\beta(K_2,K_1,x+iy)$ but the singularity are slightly simpler due to the renormalization factor $t^{-\frac{d-1}{2}}$. More precisely, using the conventions of this theorem, one finds that, near $y=r_0^+$, $\lim_{x\rightarrow 0^+} \tilde{\mathcal{Z}}_\beta(K_2,K_1,x+iy)$ is equal to
$$\lim_{x\rightarrow 0^+} \frac{(-1)^{d-1}e^{-i\frac{\pi}{4}(d-1)} }{(2\pi)^{\frac{d+1}{2}}|r_0^+|^{\frac{d-1}{2}}(x+iy-ir_0^+)} \\
 \times\left( \sum_{\xi \in \Z^d, |\xi-\beta_0| = |r_0^+|}
  e^{i \xi \cdot \left( \tilde{x}_{2}^{\sigma_2}-\tilde{x}_{1}^{\sigma_1}\right) \big( \frac{\xi-\beta_0}{|\xi-\beta_0|}\big)}  
e^{ i\left(f(\tilde{x}_{2}^{\sigma_2})-f(\tilde{x}_{1}^{\sigma_1})\right) \big( \frac{\xi-\beta_0}{|\xi-\beta_0|}\big)}
 \right),$$
 modulo some remainder belonging to $L^p((r_0^+-\delta,r_0^++\delta))$ for some positive $\delta$ and for every $1\leq p<\infty$. Similarly, one has, near $y=r_0^+$, $\lim_{x\rightarrow 0^+} \tilde{\mathcal{Z}}_\beta(K_2,K_1,x-iy)$ is equal to
$$\lim_{x\rightarrow 0^+} \frac{(-1)^{d-1}e^{i\frac{\pi}{4}(d-1)} }{(2\pi)^{\frac{d+1}{2}}|r_0^+|^{\frac{d-1}{2}}(x-iy+ir_0^+)} \\
 \times\left( \sum_{\xi \in \Z^d, |\xi-\beta_0| = |r_0^+|}
  e^{i \xi \cdot \left( \tilde{x}_{2}^{\sigma_2}-\tilde{x}_{1}^{\sigma_1}\right) \big( -\frac{\xi-\beta_0}{|\xi-\beta_0|}\big)}  
e^{ i\left(f(\tilde{x}_{2}^{\sigma_2})-f(\tilde{x}_{1}^{\sigma_1})\right) \big(- \frac{\xi-\beta_0}{|\xi-\beta_0|}\big)}
 \right),$$
 modulo some remainder belonging to $L^p((r_0^+-\delta,r_0^++\delta))$ for every $1\leq p<\infty$. Recalling from~\cite[Eq.~(3.2.11), p.72]{Hormander90} that
 $$\lim_{x\rightarrow 0^+}\left(\frac{1}{y+ix}-\frac{1}{y-ix}\right)=-2i\pi\delta_0(y),$$
 we finally find that, near $y=r_0^+$, the tempered distribution\footnote{We restablish the dependence in the orientation to get the expected cancellation.}
 $$\lim_{x\rightarrow 0^+} \left(e^{i\frac{\pi}{4}(d-1)}\tilde{\mathcal{Z}}_\beta^{\sigma_2,\sigma_1}(K_2,K_1,x+iy)+ e^{-i\frac{\pi}{4}(d-1)}\tilde{\mathcal{Z}}_\beta^{-\sigma_2,-\sigma_1}(K_2,K_1,x-iy)\right)$$
is equal to
$$\frac{(-1)^{d-1} \delta_0(y-r_0^+)}{(2\pi)^{\frac{d-1}{2}}|r_0^+|^{\frac{d-1}{2}}} \\
 \times\left( \sum_{\xi \in \Z^d, |\xi-\beta_0| = |r_0^+|}
  e^{i \xi \cdot \left( \tilde{x}_{2}^{\sigma_2}-\tilde{x}_{1}^{\sigma_1}\right) \big( \frac{\xi-\beta_0}{|\xi-\beta_0|}\big)}  
e^{ i\left(f(\tilde{x}_{2}^{\sigma_2})-f(\tilde{x}_{1}^{\sigma_1})\right) \big( \frac{\xi-\beta_0}{|\xi-\beta_0|}\big)}
 \right)+\mathcal{O}_{L^p}(1).$$
 The same discussion of course holds near $y=r_0^-$. Hence, if $\beta_0\notin H^1(\IT^d,\IZ)$, one finds that the distribution
 $$e^{i\frac{\pi}{4}(d-1)}\tilde{\mathcal{Z}}_\beta^{\sigma_2,\sigma_1}(K_2,K_1,iy)+ e^{-i\frac{\pi}{4}(d-1)}\tilde{\mathcal{Z}}_\beta^{-\sigma_2,-\sigma_1}(K_2,K_1,-iy)$$
 is a combination of Dirac masses modulo some $L^p_{\text{loc}}$ remainder which proves Theorem~\ref{e:GuinandWeil}.

\subsection{The case when $K_1,K_2$ are points}
In this section we finally discuss the particular case where the convex sets are reduced to points (or to balls) and we still suppose that $\mathbf{v}(\theta)=\theta$. In that case, the proofs are simpler and lead to very explicit formulas with connections to the magnetic Laplacian.

\subsubsection{Meromorphic continuation of Poincar\'e series}

\begin{prop}
Assume $K_1:=\{x\}$ and $K_2:=\{y\}$ where $x,y \in \R^d$ are two points and $\beta=\beta_0+df$ is a closed real valued one-form such that $[\beta]=\beta_0\in H^1(\IT^d,\IR)\simeq\IR^d$. Then we have 
\begin{equation}\label{e:laplacien-dirac}\sum_{\gamma\in\ml{P}_{x,y}} e^{i\int_\gamma\beta} \delta(t-\ell(\gamma))=2\pi t^{\frac{d}{2}}e^{i(f(y)-f(x))}\frac{J_{\frac{d-2}{2}}\left(2\pi t\sqrt{-\Delta_{\beta_0}}\right)}{\left(\sqrt{-\Delta_{\beta_0}}\right)^{\frac{d-2}{2}}}(x,y)\ \text{in}\ \mathcal{D}^\prime(\IR_+^*).\end{equation}
If moreover $x\neq y$, then~\eqref{e:laplacien-dirac} also holds in $\mathcal{D}^\prime((-t_0,\infty))$ for some small enough $t_0>0$, and we have 
\begin{equation}\label{e:poincare-point-laplacian-0}
\mathcal{Z}_{\beta}(x,y,s)=2^{d}\pi^{\frac{d-1}{2}}\Gamma\left(\frac{d+1}{2}\right)e^{i(f(y)-f(x))} s(s^2-4\pi^2\Delta_{\beta_0})^{-\frac{d+1}{2}}(x,y), \quad \Re(s) >0 .
\end{equation}\end{prop}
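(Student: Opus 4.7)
The key observation is that when $K_1=\{x\}$ and $K_2=\{y\}$ are reduced to points, the orthogeodesic set $\mathcal{P}_{x,y}$ admits an explicit parametrization. Lifting to the universal cover $\IR^d$, there is a canonical bijection $\IZ^d\ni\eta\leftrightarrow\gamma_\eta\in\mathcal{P}_{x,y}$, where $\gamma_\eta$ is the projection of the straight segment from $x$ to $y+2\pi\eta$, so that $\ell(\gamma_\eta)=|y-x+2\pi\eta|$ and, using $2\pi\IZ^d$--periodicity of $f$, $\int_{\gamma_\eta}\beta=\beta_0\cdot(y-x+2\pi\eta)+f(y)-f(x)$. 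The left-hand side of~\eqref{e:laplacien-dirac} thus becomes
\begin{equation*}
  e^{i(f(y)-f(x))}\sum_{\eta\in\IZ^d} e^{i\beta_0\cdot(y-x+2\pi\eta)}\,\delta\big(t-|y-x+2\pi\eta|\big),
\end{equation*}
a Dirac comb supported on $\{|y-x+2\pi\eta|:\eta\in\IZ^d\}$.

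The plan is to convert this direct-space sum into a Fourier-space sum by applying the Poisson summation formula, for each fixed $t>0$, to the tempered distribution $F_t(v):=e^{i\beta_0\cdot v}\delta(t-|v|)$ on $\IR^d$. Passing to polar coordinates, its Fourier transform $\widehat{F_t}(\xi)$ reduces to the Fourier transform of the surface measure on the sphere, and the classical Fourier--Bessel identity
\begin{equation*}
\int_{\IS^{d-1}}e^{iz\cdot\omega}\,\Vol_{\IS^{d-1}}(\omega,|d\omega|)=(2\pi)^{d/2}|z|^{-(d-2)/2}J_{(d-2)/2}(|z|)
\end{equation*}
gives $\widehat{F_t}(\xi)=(2\pi)^{d/2}t^{d/2}|\xi-\beta_0|^{-(d-2)/2}J_{(d-2)/2}(t|\xi-\beta_0|)$. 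Substituting $\xi\to-\xi$ and comparing with the spectral kernel formula $g(\sqrt{-\Delta_{\beta_0}})(x,y)=(2\pi)^{-d}\sum_\xi g(|\xi+\beta_0|)e^{i\xi\cdot(x-y)}$ (obtained from $-\Delta_{\beta_0}\mathfrak{e}_\xi=|\xi+\beta_0|^2\mathfrak{e}_\xi$) then identifies the right-hand side with the claimed Bessel functional-calculus kernel, after collecting the various $2\pi$-powers; this establishes~\eqref{e:laplacien-dirac} in $\ml{D}^\prime(\IR_+^*)$. When $x\neq y$, $\inf_\eta|y-x+2\pi\eta|\geq t_0>0$, so both sides are supported in $[t_0,\infty)$ and the identity extends trivially to $\ml{D}^\prime((-t_0,\infty))$.

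For the second identity~\eqref{e:poincare-point-laplacian-0}, I take the Laplace transform of~\eqref{e:laplacien-dirac} mode by mode in the spectral expansion. On the left-hand side, the Laplace transform is exactly the Poincar\'e series $\mathcal{Z}_\beta(x,y,s)$ for $\Re(s)>0$; on the right-hand side one invokes the closed-form evaluation
\begin{equation*}
\int_0^\infty e^{-st}\,t^{d/2}J_{(d-2)/2}(2\pi t\lambda)\,dt=\frac{2\,\Gamma\big(\tfrac{d+1}{2}\big)\,s\,(4\pi\lambda)^{(d-2)/2}}{\sqrt{\pi}\,(s^2+4\pi^2\lambda^2)^{(d+1)/2}},
\end{equation*}
obtained by differentiating in $s$ the standard Laplace transform $\int_0^\infty e^{-st}t^\nu J_\nu(at)\,dt=(2a)^\nu\Gamma(\nu+\tfrac12)\pi^{-1/2}(s^2+a^2)^{-\nu-1/2}$. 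Substituting $\lambda\to\sqrt{-\Delta_{\beta_0}}$ and simplifying yields~\eqref{e:poincare-point-laplacian-0}. The only real technical point in the whole argument is the bookkeeping of powers of $2\pi$ across Poisson summation, the Fourier--Bessel identity, and the Laplace integral; otherwise everything is elementary once the orthogeodesics are enumerated, and in particular the argument bypasses the current-theoretic machinery of Sections~\ref{s:background}--\ref{s:laplace-mellin}, which in this special point case would only reproduce the same answer up to identification of leading singularities.
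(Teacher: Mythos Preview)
Your proof is correct and takes a genuinely different route from the paper. The paper reaches the intermediate identity
\[
\sum_{\gamma\in\mathcal{P}_{x,y}}e^{i\int_\gamma\beta}\delta(t-\ell(\gamma))=e^{i(f(y)-f(x))}t^{d-1}\,\Pi_*e^{-t(V+i\beta_0(V))}\Pi^*(x,y)
\]
by invoking its current-theoretic Lemma~\ref{l:truncated-poincare}, then computes the right-hand side as the Fourier sum $\frac{1}{(2\pi)^d}\sum_\xi e^{i\xi\cdot(y-x)}\int_{\IS^{d-1}}e^{it(\xi-\beta_0)\cdot\theta}\Vol_{\IS^{d-1}}$ and applies the Bessel identity~\eqref{e:Bessel-Laplacian}. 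You instead parametrize $\mathcal{P}_{x,y}$ directly by $\IZ^d$, write the left-hand side as a lattice sum of $F_t(v)=e^{i\beta_0\cdot v}\delta(t-|v|)$, and apply Poisson summation; the Fourier transform $\widehat{F_t}$ produces exactly the same spherical oscillatory integral, and the Fourier--Bessel formula finishes as in the paper. For the Laplace transform step both proofs coincide: the paper quotes the Erd\'elyi table entry $\int_0^\infty e^{-st}t^{\nu+1}J_\nu(at)\,dt=2^{\nu+1}\pi^{-1/2}\Gamma(\nu+\tfrac32)a^\nu s(s^2+a^2)^{-\nu-3/2}$, which is precisely what you obtain by differentiating the standard $\int_0^\infty e^{-st}t^\nu J_\nu(at)\,dt$ in $s$.

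What each approach buys: yours is entirely self-contained and needs none of the anisotropic-Sobolev or de~Rham-current apparatus; the only care needed is that Poisson summation for the singular $F_t$ is justified distributionally in $t$ (test against $\chi\in\mathcal{C}^\infty_c(\IR_+^*)$, which reduces to Poisson for the Schwartz function $v\mapsto e^{i\beta_0\cdot v}\chi(|v|)$). The paper's route, by contrast, exhibits the point case as a consistency check of its general framework---the counting measure is recovered from $\iota_V e^{-t\mathbf V}[N_{\sigma_2}(\Sigma_2)]$ paired with $[N_{\sigma_1}(\Sigma_1)]$---at the cost of invoking Lemma~\ref{l:truncated-poincare} and the kernel identity~\eqref{e:Bessel-Laplacian}.
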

Recall that in the right hand-side of~\eqref{e:poincare-point-laplacian-0}, $(s^2-4\pi^2\Delta_{\beta_0})^{-\frac{d+1}{2}}(x,y)$ denotes the Schwartz kernel of the operator $(s^2-4\pi^2\Delta_{\beta_0})^{-\frac{d+1}{2}}$ taken at the point $(x,y)$.
The proof relies on the fact that the twisted counting measure $\sum_\gamma e^{i\int_\gamma \beta} \delta(t-\ell(\gamma))$ 
has an explicit relation with the Schwartz kernel of $\Pi_*e^{-t\left(V+i\beta_0(V)\right)}\Pi^*$ (acting on functions) at $(x,y)$. 

\begin{proof}
On the one hand, by a direct calculation, one has
$$\Pi_*e^{-t\left(V+i\beta_0(V)\right)}\Pi^*(x,y)=\frac{1}{(2\pi)^d}\sum_{\xi\in\IZ^d}e^{i\xi\cdot(y-x)}\int_{\IS^{d-1}}e^{it(\xi-\beta_0)\cdot \theta} d\Vol(\theta).$$
On the other hand, one can make use of Lemma~\ref{l:truncated-poincare} (applied either for $x \neq y$ or for $t>0$) to write the twisted counting measure when $K_2=\{x\}$ and $K_1=\{y\}$. This yields that this is equal to the previous quantity up to a normalization factor:
$$\sum_{\gamma\in\ml{P}_{x,y}} e^{i\int_\gamma\beta} \delta(t-\ell(\gamma))=e^{i(f(y)-f(x))}t^{d-1}\Pi_*e^{-t\left(V+i\beta_0(V)\right)}\Pi^*(x,y)\ \text{in}\ \mathcal{D}^\prime(\IR_+^*).$$
In particular, according to~\eqref{e:Bessel-Laplacian}, one has~\eqref{e:laplacien-dirac}.
Note that, as soon as $x\neq y$, the formula~\eqref{e:laplacien-dirac} still makes sense in $\mathcal{D}^\prime((-t_0,\infty))$ for some small enough $t_0>0$. For $x\neq y$, we can then make the Laplace transform of this equality:
$$\sum_{\gamma\in\ml{P}_{x,y}} e^{i\int_\gamma\beta} e^{-s\ell(\gamma)}=2\pi e^{i(f(y)-f(x))} \int_0^\infty t^{\frac{d}{2}}\frac{J_{\frac{d-2}{2}}\left(2\pi t\sqrt{-\Delta_{\beta_0}}\right)}{\left(\sqrt{-\Delta_{\beta_0}}\right)^{\frac{d-2}{2}}}(x,y)e^{-st}|dt|.$$
We now recall that, for every $\nu>-1$ and for every $a\in\IR$,
$$\int_0^\infty e^{-st} t^{\nu+1}J_\nu(at)|dt|=2^{\nu+1}\pi^{-\frac{1}{2}}\Gamma\left(\nu+\frac{3}{2}\right)a^\nu s(s^2+a^2)^{-\nu-\frac{3}{2}}, \quad \Re(s)>0 ,$$
see e.g.~\cite[Table~8, line~(8) p~182]{Erdelyi54}.
Combining the last two lines, we obtain
$$\sum_{\gamma\in\ml{P}_{x,y}} e^{i\int_\gamma\beta} e^{-s\ell(\gamma)}=2^{d}\pi^{\frac{d-1}{2}}\Gamma\left(\frac{d+1}{2}\right)e^{i(f(y)-f(x))} s(s^2-4\pi^2\Delta_{\beta_0})^{-\frac{d+1}{2}}(x,y), $$
which is the sought result.
\end{proof}
In particular, using the spectral properties of the operator $\Delta_{\beta_0}$, we can directly recover Theorem~\ref{t:maintheo-laplace} from the introduction in that case. Precisely, one has, for $x\neq y$,
\begin{equation}\label{e:poincare-point-laplacian}
\mathcal{Z}_{\beta}(x,y,s)=\pi^{-\frac{d+1}{2}}\Gamma\left(\frac{d+1}{2}\right)e^{i(f(y)-f(x))} s\sum_{\xi\in\IZ^d}\frac{e^{i\xi\cdot(x-y)}}{(s^2+4\pi^2|\xi+\beta_0|^2)^{\frac{d+1}{2}}}.
\end{equation}
We can even be slightly more precise as we can verify that 
\begin{itemize}
\item if $d$ is odd, this expression has a meromorphic extension to $\IC$ with poles located at $\Sp\left(\pm i\sqrt{-\Delta_{\beta_0}}\right)$; 
\item if $d$ is even, this expression has a meromorphic extension for instance to 
\begin{equation}
\label{e:region-extension-bar}
\IC \setminus \left\{ i \lambda +\R_- , \lambda \in \pm \Sp\left(\sqrt{-\Delta_{\beta_0}}\right)\setminus \{0\} \right\} , 
\end{equation} 
 due to the presence, in this case, of squareroot singularities at the points of $\Sp\left(\pm i\sqrt{-\Delta_{\beta_0}}\right)\setminus \{0\}$. Note that the only possible pole in the region described in~\eqref{e:region-extension-bar} is then at $0$ and that it only occurs if $\beta_0 \in \Z^d$.
\end{itemize}

Finally, when the convex sets $K_1$ and $K_2$ are two round balls, i.e. $K_1 = \overline{B}(x,r_1)$ and $K_1 = \overline{B}(y,r_2)$, with $x\neq y$ and small enough radii $r_1$ and $r_2$, the Poincar\'e series is slightly modified by a factor $e^{-s(r_1+r_2)}$ and the above formula yields
\begin{equation}
 \mathcal{Z}_{\beta}(K_1,K_2,s)=\pi^{-\frac{d+1}{2}}\Gamma\left(\frac{d+1}{2}\right) e^{i(f(y)-f(x))}se^{-s(r_2+r_1)}\sum_{\xi\in\IZ^d}\frac{e^{i\xi\cdot(x-y)}}{(s^2+4\pi^2|\xi+\beta_0|^2)^{\frac{d+1}{2}}} ,
\end{equation}
where we have taken $(\sigma_1,\sigma_2)=(+,-)$ for the (implicit) choice of orientations of the two balls.

\subsubsection{A Guinand-Meyer formula when $d$ is odd}\label{sss:guinand}
Let us now discuss a variant of Theorem~\ref{e:GuinandWeil} when $K_1:=\{x\}$ and $K_2:=\{y\}$ are reduced to points that are \emph{distinct}. 
Following~\cite[Th.~5]{Meyer2016}, we define, for $x\neq y$,
\begin{equation}
\label{e:def-muGM}
\mu_{GM}(t):=\sum_{\gamma\in\ml{P}_{x,y}} \frac{e^{i\int_\gamma\beta}}{\ell(\gamma)} \left(\delta(t-\ell(\gamma))-\delta(t+\ell(\gamma))\right),
\end{equation}
which is a Radon measure in $\mathcal{S}^\prime(\IR)$ carried by a discrete and locally finite set of $\IR$. 
Note that compared to the twisted counting measure in~\eqref{e:laplacien-dirac}, $\mu_{GM}$ is symmetrized and renormalized by $t^{-1}$. In particular, this is not the same renormalization as in Theorem~\ref{e:GuinandWeil} except if $d=3$.
\begin{prop}\label{p:LevReti}
Assume that $d$ is odd, that $x\neq y$ and that $\beta_0\notin H^1(\IT^d,\IZ)\simeq\IZ^d$. Then, the Fourier transform of $\mu_{GM}$ in~\eqref{e:def-muGM} is given by 
\begin{align*}
\widehat{\mu}_{GM}(\tau) & =i(-1)^{\frac{d-3}{2}} 2^{d}\pi^{\frac{d+1}{2}} e^{i(f(y)-f(x))}\\
& \quad \times \sum_{\xi\in\IZ^d}\left(\frac{\delta^{\left(\frac{d-3}{2}\right)}(\tau-2\pi|\xi+\beta_0|)}{\left(\tau+2\pi|\xi+\beta_0|\right)^{\frac{d-1}{2}}}
+ \frac{\delta^{\left(\frac{d-3}{2}\right)}(\tau+2\pi|\xi+\beta_0|)}{\left(\tau-2\pi|\xi+\beta_0|\right)^{\frac{d-1}{2}}}\right).
\end{align*}
\end{prop}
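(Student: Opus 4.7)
\medskip

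My plan is to deduce this from the explicit formula~\eqref{e:poincare-point-laplacian} for the Poincar\'e series. The key observation is that $\mu_{GM}(t) = \nu(t) - \nu(-t)$, where $\nu(t):=\sum_{\gamma\in\ml{P}_{x,y}} \frac{e^{i\int_\gamma\beta}}{\ell(\gamma)} \delta(t-\ell(\gamma)) \in \mathcal{S}'(\R)$ is supported in $(T_0,+\infty)$. Since $-\partial_s\int_0^\infty e^{-st}\nu(t)\,dt = \mathcal{Z}_\beta(x,y,s)$ and the Laplace transform of $\nu$ vanishes at $+\infty$, integrating~\eqref{e:poincare-point-laplacian} term-by-term in $\xi$ (the integral $\int_s^\infty \frac{s'\,ds'}{(s'^2+a_\xi^2)^{(d+1)/2}} = \frac{1}{(d-1)(s^2+a_\xi^2)^{(d-1)/2}}$ is elementary) gives
\begin{equation*}
\tilde\nu(s) := \int_0^\infty e^{-st}\nu(t)\,dt = \frac{\pi^{-\frac{d+1}{2}}\Gamma\!\left(\frac{d+1}{2}\right)}{d-1}\, e^{i(f(y)-f(x))}\sum_{\xi\in\IZ^d}\frac{e^{i\xi\cdot(x-y)}}{(s^2+4\pi^2|\xi+\beta_0|^2)^{\frac{d-1}{2}}},
\end{equation*}
valid for $\Re(s)>0$.

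Next, following Proposition~\ref{prop:boundaryvaluesholo}, the Fourier transform of $\nu$ (resp.~of $t\mapsto \nu(-t)$) is the boundary value of $\tilde\nu$ on the imaginary axis, from the right half-plane, evaluated at $i\tau$ (resp.~at $-i\tau$). Hence $\widehat{\mu}_{GM}(\tau) = \tilde\nu(i\tau+0)-\tilde\nu(-i\tau+0)$. I would compute these boundary values by factoring $s^2+a_\xi^2 = (s-ia_\xi)(s+ia_\xi)$ with $a_\xi:=2\pi|\xi+\beta_0|$, and writing, for $\alpha\to 0^+$, $\alpha\pm i(\tau\mp a_\xi) = \pm i\bigl((\tau\mp a_\xi)\mp i\alpha\bigr)$ so that the boundary values are expressible in terms of the standard distributions $(\tau\mp a_\xi \mp i0)^{-n}$ with $n:=(d-1)/2\in\IZ_+$ (here the hypothesis that $d$ is odd is crucial, as it prevents square-root or log singularities).

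The heart of the computation is then the distributional identity $\frac{1}{(u-i0)^n}-\frac{1}{(u+i0)^n} = \frac{2i\pi(-1)^{n-1}}{(n-1)!}\delta^{(n-1)}(u)$, applied near each of the two well-separated singular points $\tau=\pm a_\xi$ (they are distinct since $\beta_0\notin H^1(\T^d,\Z)$ forces $a_\xi>0$). Near $\tau=a_\xi$, the factor $(\tau+a_\xi\pm i0)^{-n}$ is smooth and equal to $(\tau+a_\xi)^{-n}$, so that the singular part of the bracket $\bigl[(\tau-a_\xi-i0)^{-n}(\tau+a_\xi-i0)^{-n} - (\tau-a_\xi+i0)^{-n}(\tau+a_\xi+i0)^{-n}\bigr]$ localized at $\tau=a_\xi$ reduces to $\frac{2i\pi(-1)^{n-1}}{(n-1)!}\frac{\delta^{(n-1)}(\tau-a_\xi)}{(\tau+a_\xi)^n}$; a symmetric contribution arises near $\tau=-a_\xi$, and the principal-value contributions cancel in the difference (yielding that $\widehat{\mu}_{GM}$ is purely singular, as claimed).

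The remaining step is bookkeeping of constants: one uses $\Gamma\!\left(\frac{d+1}{2}\right)=\bigl(\frac{d-1}{2}\bigr)!=\frac{d-1}{2}\bigl(\frac{d-3}{2}\bigr)!$ to simplify $\frac{\Gamma((d+1)/2)}{(d-1)\,((d-3)/2)!}$, and tracks the phase factors $(-i)^{2n}=(-1)^{(d-1)/2}$ coming from $\alpha+iu=i(u-i\alpha)$. The main difficulty will be precisely this bookkeeping of phases and powers of $\pi$; assembling the two contributions at $\tau=\pm a_\xi$ in the form displayed in the statement, summed over $\xi\in\IZ^d$, concludes the proof.
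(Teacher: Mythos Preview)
Your proposal is correct and follows essentially the same approach as the paper. The only variation is in how you obtain the key intermediate formula for $\tilde\nu(s)=\sum_\xi C\,e^{i\xi\cdot(x-y)}(s^2+4\pi^2|\xi+\beta_0|^2)^{-(d-1)/2}$: you integrate the Poincar\'e series formula~\eqref{e:poincare-point-laplacian} with respect to $s$ (using the elementary antiderivative you write down), whereas the paper re-derives it from the Bessel representation~\eqref{e:laplacien-dirac} via a second Laplace--Bessel identity, arriving at~\eqref{e:Guinand-Meyer-key}. After that, the boundary-value computation and the use of the jump formula $(u-i0)^{-n}-(u+i0)^{-n}=\tfrac{2\pi i(-1)^{n-1}}{(n-1)!}\,\delta^{(n-1)}(u)$ are identical; the paper exploits the oddness of $\widehat{\mu}_{GM}$ to compute only on $\tau>0$, while you handle $\tau=\pm a_\xi$ simultaneously, but this is cosmetic.
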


The assumption $\beta_0\notin H^1(\IT^d,\IZ)\simeq\IZ^d$ implies that the continuation of the Laplace transform of $\mu_{GM}$ has no pole at $s=0$ while the assumption $d$ odd ensures that the Poincar\'e series extends meromorphically to $\IC$ with poles located on the imaginary axis. When $d=3$, this proposition recovers~\cite[Th.~5]{Meyer2016} and, for $d\geq 5$, it corresponds to the more general statement from~\cite[Corollary~2.4]{LevReti2021}. In particular, if $d=3$, $\mu_{GM}$ is a crystalline measure: a measure in $\mathcal{S}'(\R)$ carried by a discrete and locally finite set of $\IR$ with Fourier transform having the same properties. In odd dimension $d \neq 3$, $\widehat{\mu}_{GM}$ is no longer a crystalline measure since its Fourier transform is not a measure (but a distribution of order $\frac{d-3}{2}$). Following~\cite{LevReti2021}, one says that the measure $\mu_{GM}$ is a \emph{crystalline distribution}, i.e. a distribution in $\mathcal{S}'(\R)$ carried by a discrete and locally finite set of $\IR$ with Fourier transform having the same properties. Compared to the measure in Theorem~\ref{e:GuinandWeil}, $\widehat{\mu}_{GM}$ has the drawback of not being a measure here. However, it has the advantage to be carried by a discrete and locally finite set of $\IR$ (i.e. there is no absolutely continuous remainder $r$ as in Theorem~\ref{e:GuinandWeil}).  

\begin{proof}
Dividing~\eqref{e:laplacien-dirac} by $t$ (which we may since $x\neq y$), we obtain
$$\sum_{\gamma\in\ml{P}_{x,y}} \frac{e^{i\int_\gamma\beta}}{\ell(\gamma)} \delta(t-\ell(\gamma))=2\pi t^{\frac{d-2}{2}}e^{i(f(y)-f(x))}\frac{J_{\frac{d-2}{2}}\left(2\pi t\sqrt{-\Delta_{\beta_0}}\right)}{\left(\sqrt{-\Delta_{\beta_0}}\right)^{\frac{d-2}{2}}}(x,y).$$
Taking the Laplace transform, one finds that, for $\text{Re}(s)>0$,
$$\sum_{\gamma\in\ml{P}_{x,y}} \frac{e^{i\int_\gamma\beta}}{\ell(\gamma)} e^{-s\ell(\gamma)}=2\pi e^{i(f(y)-f(x))} \int_0^\infty e^{-st}t^{\frac{d-2}{2}}\frac{J_{\frac{d-2}{2}}\left(2\pi t\sqrt{-\Delta_{\beta_0}}\right)}{\left(\sqrt{-\Delta_{\beta_0}}\right)^{\frac{d-2}{2}}}(x,y)|dt|.$$
Recalling that, for every $\nu>-1/2$ and for every $a\in\IR$,
$$\int_0^\infty e^{-st} t^{\nu}J_\nu(at)|dt|=2^{\nu}\pi^{-\frac{1}{2}}\Gamma\left(\nu+\frac{1}{2}\right)a^\nu (s^2+a^2)^{-\nu-\frac{1}{2}}, \quad \Re(s)>0 ,$$
see e.g.~\cite[Table~8, line~(7) p~182]{Erdelyi54}, 
we deduce that
\begin{equation}\label{e:Guinand-Meyer-key}
\sum_{\gamma\in\ml{P}_{x,y}} \frac{e^{i\int_\gamma\beta}}{\ell(\gamma)} e^{-s\ell(\gamma)}=2^{d-1}\pi^{\frac{d-1}{2}}\Gamma\left(\frac{d-1}{2}\right) e^{i(f(y)-f(x))} (s^2-4\pi^4\Delta_{\beta_0})^{-\frac{d-1}{2}}(x,y).
\end{equation}
Recalling the definition of $\mu_{GM}$ in~\eqref{e:def-muGM}, its Fourier transform is given by
$$\widehat{\mu}_{GM}(\tau)=\sum_{\gamma\in\ml{P}_{x,y}} \frac{e^{i\int_\gamma\beta}}{\ell(\gamma)} \left(e^{-i\tau\ell(\gamma)}-e^{i\tau\ell(\gamma)}\right)=\lim_{\alpha\rightarrow 0^+}\sum_{\gamma\in\ml{P}_{x,y}} \frac{e^{i\int_\gamma\beta}}{\ell(\gamma)} \left(e^{-(i\tau+\alpha)\ell(\gamma)}-e^{-(-i\tau+\alpha)\ell(\gamma)}\right).$$
This is an odd distribution and, according to~\eqref{e:Guinand-Meyer-key}, it is smooth near $\tau=0$ since $\beta_0 \notin \Z^d$. 

Next, from~\eqref{e:Guinand-Meyer-key}, one knows that
\begin{multline*}
\widehat{\mu}_{GM}(\tau)=2^{d-1}\pi^{\frac{d-1}{2}}\Gamma\left(\frac{d-1}{2}\right) e^{i(f(y)-f(x))}\\
\times\lim_{\alpha\rightarrow 0^+}\left(((i\tau+\alpha)^2-4\pi^4\Delta_{\beta_0})^{-\frac{d-1}{2}}-((-i\tau+\alpha)^2-4\pi^4\Delta_{\beta_0})^{-\frac{d-1}{2}}\right)(x,y).
 \end{multline*}
As $\widehat{\mu}_{GM}$ is odd and smooth near $0$, we just need to understand this distribution for $\tau>0$. To do that, we write that, for $\lambda>0$ and $\tau>0$,
\begin{multline*}
\lim_{\alpha\rightarrow 0^+}\left(((i\tau+\alpha)^2+\lambda^2)^{-\frac{d-1}{2}}-((-i\tau+\alpha)^2+\lambda^2)^{-\frac{d-1}{2}}\right)\\
=\frac{1}{(\tau+\lambda)^{\frac{d-1}{2}}}\lim_{\alpha\rightarrow 0^+}\left((\tau-\lambda-i\alpha)^{-\frac{d-1}{2}}-(\tau-\lambda+i\alpha)^{-\frac{d-1}{2}}\right).
\end{multline*}
Implementing~\cite[Eq.~(3.2.11), p.72]{Hormander90} one more time, one finds
$$\lim_{\alpha\rightarrow 0^+}\left(((i\tau+\alpha)^2+\lambda^2)^{-\frac{d-1}{2}}-((-i\tau+\alpha)^2+\lambda^2)^{-\frac{d-1}{2}}\right)
=\frac{(-1)^{\frac{d-3}{2}}2i\pi}{\Gamma\left(\frac{d-1}{2}\right)(\tau+\lambda)^{\frac{d-1}{2}}}\delta^{\left(\frac{d-3}{2}\right)}(\tau-\lambda).$$
We can now rewrite $\widehat{\mu}_{GM}(\tau)$ using this formula. It yields, for $\tau>0$,
\begin{equation*}
\widehat{\mu}_{GM}(\tau)=i(-1)^{\frac{d-3}{2}} 2^{d}\pi^{\frac{d+1}{2}} e^{i(f(y)-f(x))}\sum_{\xi\in\IZ^d}\frac{1}{\left(\tau+2\pi|\xi+\beta_0|\right)^{\frac{d-1}{2}}}\delta^{\left(\frac{d-3}{2}\right)}(\tau-2\pi|\xi+\beta_0|).
\end{equation*}
Recalling that $\widehat{\mu}_{GM}$ is smooth near $0$ and odd, this completely determines the Fourier transform and concludes the proof of the propotition.
\end{proof}
\begin{rema}
 Recall that in Theorem~\ref{e:GuinandWeil}, we are rather interested (in the more general setting of two convex sets) by a renormalized version of~\eqref{e:def-muGM}, namely
$$\tilde{\mu}_{GM}(t):=e^{\frac{i\pi}{4}(d-1)}\sum_{\gamma\in\ml{P}_{x,y}} \frac{e^{i\int_\gamma\beta}}{\ell(\gamma)^{\frac{d-1}{2}}} \delta(t-\ell(\gamma))+e^{-\frac{i\pi}{4}(d-1)}\sum_{\gamma\in\ml{P}_{x,y}} \frac{e^{i\int_\gamma\beta}}{\ell(\gamma)^{\frac{d-1}{2}}} \delta(t+\ell(\gamma)).$$
In particular, when $d=5$, one has
$$\widehat{\tilde{\mu}}_{GM}(\tau)=-\sum_{\gamma\in\ml{P}_{x,y}} \frac{e^{i\int_\gamma\beta}}{\ell(\gamma)^{2}}\left(e^{i\tau\ell(\gamma)}+e^{-i\tau\ell(\gamma)}\right),\quad \widehat{\tilde{\mu}}_{GM}'(\tau)=\widehat{\mu}_{GM}(\tau),$$
and, thanks to Proposition~\ref{p:LevReti}, the remainder $r$ in Theorem~\ref{e:GuinandWeil} is not identically $0$ (as $r'(\tau)$ is a combination of Dirac distributions). A similar remark holds for $d\geq 5$.
\end{rema}


\section{Geometric interpretation of the residues and proof of Theorem~\ref{t:maintheo-residues}}\label{s:convex}

In this Section, we aim at computing somehow explicitly the residues appearing in~\eqref{e:residues} in terms of geometric quantities. We will always suppose in the following that $\beta=0$ which will make the content of this residue more geometric. Along the way, we will prove Theorem~\ref{t:maintheo-residues} from the introduction. 

Recall that the admissible manifolds $\Sigma_1$ and $\Sigma_2$ used to define our generalized Epstein zeta functions are constructed from two compact and strictly convex subsets $K_1$ and $K_2$ of $\IR^d$ with smooth boundaries $\partial K_1$ and $\partial K_2$ (possibly reduced to a point). The submanifolds $\Sigma_i$ are parametrized through the inverse Gauss map $\tilde{x}_i:\IS^{d-1}\rightarrow\IT^d$ (using the convention that $\partial K_i$ is oriented using the outward normal vector to $K_i$). In order to take the various possibilities for our orthospectrum (outward or inward pointing geodesics), we introduced orientation parameters $\sigma_i\in\{\pm\}$ and we have set
$$\tilde{x}_i^{\sigma_i}(\theta)=\tilde{x}_i(\sigma_i\theta).$$

In this paragraph, we suppose that $\sigma_1=-$ and that $\sigma_2=+$. Equivalently, it means that we consider geodesic arcs that go from $\Sigma_2$ to $\Sigma_1$ and that point outside $K_2$ and inward $K_1$ (when lifted to $\IR^d$). Let $1\leq \ell\leq d$. Thanks to~\eqref{e:residues1}--\eqref{e:residues2}, we now compute the coefficients $E^{(\ell-1)}_0$ or equivalently the residues
$$\Res_{s=\ell}\left( \zeta_0(K_2,K_1,s)\right)=\frac{(-1)^{\ell+d}}{(2\pi)^{2d}(\ell-1)!}\int_{S\IT^{d}}dx_1\wedge\ldots\wedge dx_d\wedge\mathbf{T}_{\tilde{x}_{1}^{-}-\tilde{x}_{2}^{+}}^*\iota_V\mathbf{V}^{\ell-1}\left(dx_1\wedge\ldots\wedge dx_d\right).$$
 In view of emphasizing the dependence on the convex sets, we make use of \S\ref{ss:convex} and we set 
 $$V_{-K_1}^+-:=-\tilde{x}_1(-\theta)\cdot\partial_x,\quad V_{K_2}^+:=\tilde{x}_2(\theta)\cdot\partial_x,\quad\text{and}\quad V_K^+:=V=\mathbf{v}(\theta)\cdot\partial_x.$$
 In particular, the residues can be rewritten as
 $$\Res_{s=\ell}\left( \zeta_0(K_2,K_1,s)\right)=\frac{(-1)^{\ell+d}}{(2\pi)^{2d}(\ell-1)!}\int_{S\IT^{d}}dx_1\wedge\ldots\wedge dx_d\wedge e^{(V_{-K_1}-V_{K_2})^*}\mathbf{V}_K^{\ell-1}\iota_{V_K}\left(dx_1\wedge\ldots\wedge dx_d\right).$$
 Recalling Lemma~\ref{l:sum-convex}, $\theta\in\IS^{d-1}\mapsto-\tilde{x}_1(-\theta)+\tilde{x}_2(\theta)\in\IR^d$ is a parametrization of the convex set $-K_1+K_2$ so that the previous inequality rewrites as
 $$\Res_{s=\ell}\left( \zeta_0(K_2,K_1,s)\right)=\frac{1}{(2\pi)^{2d}(\ell-1)!}\int_{S\IT^{d}}e^{V_{K_2-K_1}^*}\mathbf{V}_K^{\ell-1}\iota_{V_K}\left(dx_1\wedge\ldots\wedge dx_d\right)\wedge dx_1\wedge\ldots\wedge dx_d.$$
 
In particular, one has
\begin{equation}\label{e:residue-mixedvolume}
 \frac{1}{(2\pi)^d}\sum_{\ell=1}^d\frac{t^{\ell-1}}{(\ell-1)!}\int_{S\IT^{d}}[S_{[0]}\IT^d]\wedge e^{V_{K_2-K_1}^*}\mathbf{V}_K^{\ell-1}\iota_{V_K}\left(dx_1\wedge\ldots\wedge dx_d\right)
=\sum_{\ell=1}^{d}t^{\ell-1}\Res_{s=\ell}\left( \zeta_0(K_2,K_1,s)\right).
\end{equation}
Recalling Lemma~\ref{r:volume-Rd}, we recognize on the left hand side the derivatives of the map $t\mapsto \Vol_{\IR^d}(K_2-K_1+tK)$ so that
\begin{equation}
 \sum_{\ell=1}^{d}t^{\ell-1}\Res_{s=\ell+1}\left( \zeta_0(K_2,K_1,s)\right)= \frac{1}{(2\pi)^d}\frac{d}{dt}\Vol_{\IR^d}\left(K_2-K_1+tK\right).   
     \end{equation}
Recalling Steiner's formula~\eqref{e:steiner}, this concludes the proof of Theorem~\ref{t:maintheo-residues}. It also shows how the residues can be expressed in terms of mixed volumes (see Remark~\ref{r:mixed-volume}) when we consider a more general vector $\mathbf{v}(\theta)\cdot\partial_x$ than $\theta\cdot\partial_x.$

\appendix

\section{Another formula for zeta functions}
\label{a:proof}

We now briefly explain how to prove~\eqref{e:formule-magnifique} without appealing the theory of De Rham currents and how it may slightly simplify the exposition of the proofs of Theorems~\ref{t:maintheo-Mellin},~\ref{t:maintheo-residues} and~\ref{t:maintheo-laplace}.
 Yet, this would be at the expense of loosing the dynamical pictures behind these results and thus the relation of these results with our other (more clearly dynamical) applications. Recall also that this formula only holds a priori for a specific choice of orientations for $K_1$ and $K_2$ while our current theoretic approach allows to handle any orientation convention and to easily implement exponential weights in our zeta functions.

First, from~\S\ref{ss:convex}, $\partial K_1$ and $\partial K_2$ can be parametrized by their outward normal vector $\theta\in\IS^{d-1}$ through the maps $x_{K_j}:\IS^{d-1}\rightarrow \IR^d$, $j\in\{1,2\}$. Moreover, according to~\S\ref{ss:convex-schneider}, the maps $$\theta\mapsto x_{K_1}(\theta)-x_{K_2}(-\theta)+t\theta$$ parametrize the boundary of the convex set $K_1-K_2+tB_d$ for every $0\leq t\leq T.$ Let us now remark that $\gamma$ belongs to $\mathcal{P}_{K_1,K_2}$ with $0<\ell(\gamma)=t\leq T$ if and only if there exist $\theta\in\IS^{d-1}$ and $\xi\in\IZ^d$ such that
$$x_{K_2}(-\theta)=x_{K_1}(\theta)+t\theta-2\pi\xi.$$
Equivalently, it means that there exists $\xi\in\IZ^d$ such that $2\pi\xi$ belongs to $\partial (K_1-K_2+t\theta)$. Hence, elements $\gamma$ in $\mathcal{P}_{K_1,K_2}$ are in one-to-one correspondance with the set
$$2\pi\IZ^d\cap\left(K_1-K_2+TB_d\right)\setminus\left(K_1-K_2\right).$$
Now observe that the restriction of the Lebesgue measure to the set $\left(K_1-K_2+TB_d\right)\setminus\left(K_2-K_1\right)$ can be disintegrated as follows
$$\int_0^T\delta_{\partial(K_1-K_2+tB_d)}(x,|dx|)|dt|,$$
so that
$$\sharp\left\{\gamma\in\ml{P}_{K_1,K_2}:0\leq \ell(\gamma)\leq T\right\}=\int_{\IR^d}\delta_{[0]}(x)\int_0^T\delta_{\partial(K_1-K_2+tB_d)}(x,|dx|)|dt|,$$
with $\delta_{[0]}$ defined in~\eqref{e:delta-periodique}. Similarly, if we weight the Lebesgue measure with $\chi(t)$ on each sublevel $\partial (K_1-K_2+t\theta)$, we derive formula~\eqref{e:formule-magnifique} from the introduction. Now, in order to prove our theorems on convex geometry from this formula, one would need to decompose $\delta_{[0]}(x)$ according to~\eqref{e:delta-periodique} and to make sense of the right side after this decomposition for the functions $t^{-s}$ and $e^{-st}$. For $\text{Re}(s)$ large enough, this is not a problem through a direct calculation. Then, one would need to make the meromorphic continuation of the right hand side through the natural threshold. This could be achieved by reducing to the oscillatory integrals of Section~\ref{s:analysis} (through the parametrization of $K_1-K_2+tB_d$ by $\theta$ as in Sections~\ref{s:poincare} and~\ref{s:convex}) and by arguing as in Section~\ref{s:laplace-mellin} with the simplifications that we only deal with $\delta$ functions rather than general test functions (as in~\S\ref{ss:epstein} and~\ref{ss:poincare}). Thus, the analytical difficulties would remain exactly the same through this approach. The main advantage would be that the fact that the residues involve the intrinsic volume would be more direct (from the analysis of the Fourier mode $0$). Finally, we considered here the case of $B_d$ but the proof could be adapted as well when $B_d$ is replaced by a strictly convex set $K$ (with $0$ in its interior) as we did all along the article.

\section*{Acknowledgements} We warmly thank B.~Chantraine, N.B.~Dang, F.~Faure, Y.~Guedes-Bonthonneau, D.~Han-Kwan and J.~Viola for interesting discussions related to this work.
We are grateful to the referee for her/his valuable comments that have helped us to improve the level of generality of the results and the presentation of the manuscript. 
NVD acknowledges the support of the Institut Universitaire de France.
ML is partially supported by the Agence Nationale de la Recherche under grants SALVE (ANR-19-CE40-0004) and ADYCT (ANR-20-CE40-0017).
GR acknowledges the support of the Institut Universitaire de France and of the PRC grants ADYCT (ANR-20-CE40-0017) and ODA (ANR-18-CE40-0020) from the Agence Nationale de la Recherche.

\bibliographystyle{alpha}
\bibliography{allbiblio}
\end{document}